\title{The congruence criterion for power operations in Morava
  $E$-theory}
\author{Charles Rezk}
\date{ \today}
\address{Department of Mathematics \\
University of Illinois at Urbana-Champaign \\ 
Urbana, IL}
\email{rezk@math.uiuc.edu}
\thanks{The author was supported under NSF grant DMS-0505056.}
\numberwithin{equation}{section}
  \let\c@subsection\c@equation
\theoremstyle{plain}   
\newtheorem{thm}[subsection]{Theorem}
\newtheorem*{thm*}{Theorem}
\newtheorem*{thma}{Theorem A}
\newtheorem*{thmb}{Theorem B}
\newtheorem{prop}[subsection]{Proposition}
\newtheorem{cor}[subsection]{Corollary}
\newtheorem{lemma}[subsection]{Lemma}
\theoremstyle{remark}
\newtheorem{rem}[subsection]{Remark}    
\newtheorem{exam}[subsection]{Example}
\theoremstyle{plain}
\DeclareMathOperator{\id}{id}
\DeclareMathOperator{\colim}{colim}
\DeclareMathOperator{\cok}{cok}
\DeclareMathOperator{\Ker}{Ker}
\DeclareMathOperator{\im}{im}
\newcommand{\op}{{\operatorname{op}}}
\newcommand{\ob}{{\operatorname{ob}}}
\newcommand{\Hom}{{\operatorname{Hom}}}
\newcommand{\ra}{\rightarrow}
\newcommand{\xra}{\xrightarrow}
\newcommand{\la}{\leftarrow}
\newcommand{\xla}{\xleftarrow}
\newcommand{\cat}[1]{{\operatorname{\EuScript{#1}}}}
\DeclareMathOperator{\Ext}{Ext}
\DeclareMathOperator{\Tor}{Tor}
\DeclareMathOperator{\hocolim}{hocolim}
\DeclareMathOperator{\holim}{holim}
\newcommand{\set}[2]{{\{\,#1\mid#2\,\}}}
\newcommand{\tensor}[1]{\underset{#1}{\otimes}}
\newcommand{\powser}[1]{[\![#1]\!]}
\newcommand{\F}{\mathbb{F}}
\newcommand{\Z}{\mathbb{Z}}
\newcommand{\N}{\mathbb{N}}
\newcommand{\R}{\mathbb{R}}
\newcommand{\Q}{\mathbb{Q}}
\newcommand{\C}{\mathbb{C}}
\newcommand{\point}{{\operatorname{pt}}}
\newcommand{\sm}{\wedge} 
\newcommand{\dfn}{\textbf}
\def\defeq{\overset{\mathrm{def}}=}
\begin{document}

\begin{abstract}
  We prove a congruence criterion for the algebraic theory of power
  operations in Morava $E$-theory, analogous to Wilkerson's congruence
  criterion for torsion free $\lambda$-rings.  In addition, we provide
  a geometric description of this congruence criterion, in terms of
  sheaves on the moduli problem of deformations of formal groups
  and Frobenius isogenies.
\end{abstract}

\maketitle

\newcommand{\Spec}{\operatorname{Spec}}
\newcommand{\Alg}[1]{\mathrm{Alg}_{#1}}
\newcommand{\grAlg}[1]{\mathrm{Alg}_{#1}^*}
\newcommand{\Mod}[1]{\mathrm{Mod}_{#1}}
\newcommand{\grMod}[1]{\mathrm{Mod}_{#1}^*}
\newcommand{\zgrMod}[1]{\mathrm{Mod}_{#1}^{\Z*}}
\newcommand{\Modff}[1]{\mathrm{Mod}^{\mathrm{ff}}_{#1}}
\newcommand{\Set}{\mathrm{Set}}
\newcommand{\Cat}{\mathrm{Cat}}
\newcommand{\grCat}{\mathrm{grCat}}
\newcommand{\mor}{\operatorname{mor}}
\newcommand{\Comod}[1]{\mathrm{Comod}_{#1}}

\newcommand{\Sh}{\operatorname{Sh}}
\newcommand{\DefFrob}{\mathrm{Def}}
\newcommand{\Sub}{\mathrm{Sub}}
\newcommand{\ShMod}{\mathrm{Sh}(\DefFrob,\mathrm{Mod})}
\newcommand{\ShAlg}{\mathrm{Sh}(\DefFrob,\mathrm{Alg})}
\newcommand{\grShMod}{\mathrm{Sh}(\DefFrob,\mathrm{Mod})^*}
\newcommand{\grShAlg}{\mathrm{Sh}(\DefFrob,\mathrm{Alg})^*}
\newcommand{\ShAlgCong}{\mathrm{Sh}(\DefFrob,\mathrm{Alg})_{\mathrm{cong}}}
\newcommand{\grShAlgCong}{\mathrm{Sh}(\DefFrob,\mathrm{Alg})^*_{\mathrm{cong}}}
\newcommand{\Pow}{\mathrm{Pow}}
\newcommand{\ShModPow}{\mathrm{Sh}(\Pow,\mathrm{Mod})}

\newcommand{\fulltensorv}[3]{\underset{#1}{\mathstrut^{#2}\otimes\mathstrut^{#3}}}
\newcommand{\fulltensor}[3]{\mathstrut^{#2}\otimes_{#1}\mathstrut^{#3}}

\newcommand{\Frob}{\operatorname{Frob}}
\newcommand{\testrings}{\mathcal{R}}
\newcommand{\hRing}{\widehat{\mathcal{R}}}
\newcommand{\Isogstack}[1]{\mathrm{Isog}^{#1}}
\newcommand{\Isog}[2]{\mathrm{Isog}^{#1}_{#2}}
\newcommand{\univ}{\mathrm{univ}}
\newcommand{\can}{\operatorname{can}}

\newcommand{\E}{\mathcal{E}}
\newcommand{\catC}{\mathcal{C}}

\newcommand{\Ab}{\mathrm{Ab}}

\renewcommand{\O}{\mathcal{O}}
\newcommand{\FuncG}{\mathcal{F}}

\newcommand{\Pfree}{\mathbb{P}}
\newcommand{\cPfree}{\widehat{\mathbb{P}}}
\newcommand{\cE}[1]{E^{\sm}_{#1}}
\newcommand{\rE}{\tilde{E}}
\newcommand{\Fin}{\operatorname{Fin}}

\newcommand{\Prim}{\mathrm{Prim}}
\newcommand{\Ind}{\mathrm{Ind}}

\newcommand{\algapprox}{\mathbb{T}}
\newcommand{\calgapprox}{\widehat{\mathbb{T}}}
\newcommand{\algcat}{\mathrm{Alg}_{\algapprox}}
\newcommand{\gralgcat}{\mathrm{Alg}_{\algapprox}^*}
\newcommand{\gralgcatQ}{\mathrm{Alg}_{\algapprox,\Q}^*}

\newcommand{\Si}{\Sigma^\infty}
\newcommand{\Sip}{\Sigma^\infty_+}
\newcommand{\funcspec}{\underline{\mathrm{hom}}}

\newcommand{\Sym}{\operatorname{Sym}}

\newcommand{\bA}{\overline{A}}

\newcommand{\CP}{\mathbb{CP}}
\newcommand{\bP}{\overline{P}}

\newcommand{\Sqrt}{\operatorname{Sqrt}}


\section{Introduction}

The purpose of this paper is to prove a congruence criterion for the
algebraic theory of power operations acting on the homotopy of a
$K(n)$-local commutative  $E$-algebra spectrum, where $E$ denotes a
spectrum of ``Morava $E$-theory'' at height $n$.   This criterion is
best understood as being a higher chromatic analogue of Wilkerson's
congruence criterion for $\lambda$-rings.  

\subsection{Algebraic theories of power operations}

By an ``algebraic theory of power
operations'' for a commutative ring spectrum $A$, we mean an algebraic
category which models all the algebraic structure which naturally
adheres to $\pi_*A$, the homotopy groups of $A$.  As an examplar of
this notion, consider the description of the natural
operations on the homotopy of a commutative $H\F_p$-algebra spectrum,
as given by McClure (see \cite{bmms-h-infinity-ring-spectra}
especially \S IX.2).  In this work,
it is shown (in modern language) that if $A$ is a commutative
$H\F_p$-algebra, then $\pi_*A$ is an algebra for a certain monad $C$ on graded
$\F_p$-vector spaces.  Furthermore, an algebraic description for
$C$-algebras is provided: a $C$-algebra amounts to a graded
commutative $\F_p$-algebra, together with the structure of a module
over the May-Dyer-Lashof algebra,  which (i) is compatible with
multiplication, in the sense of having a suitable Cartan formula, and
which (ii) satisfies an ``instability'' relation, which includes the
fact that the ``top'' Dyer-Lashof operation is equal to the $p$th power
map.  That this is the right answer is justified by the existence of a
natural isomorphism $C(\pi_*M)\approx \pi_*(\Pfree M)$, where $M$ is an
$H\F_p$-module and $\Pfree$ is the free $H\F_p$-algebra functor.

In this paper, we use the work of Ando, Hopkins and Strickland to
develop an analogous theory for the homotopy of a $K(n)$-local
commutative algebra over a Morava $E$-theory spectrum.  
Let $E$ denote the cohomology theory associated to the universal
deformations of a height $n$ formal group $G_0$ over a perfect field
$k$ of characteristic $p$.  There is a monad $\algapprox$ on the
category $\Mod{E_*}$ of graded $E_*=\pi_*E$-modules such that
\begin{enumerate}
\item $\algapprox E_* \approx \bigoplus_{m\geq0} \cE{*}B\Sigma_m$,
  where $\cE{*}({-})$ denotes $K(n)$-localized homology;
\item $\algapprox (M_*\oplus N_*) \approx \algapprox M_*\otimes_{E_*}
  \algapprox N_*$ for any $E_*$-modules $M_*$ and $N_*$;
\item
There is a natural map $\algapprox (\pi_* M) \ra \pi_* \Pfree M$ for
$E$-module spectra $M$, where $\Pfree M$ denotes the free commutative
$E$-algebra on $M$.  Furthermore, this map induces an isomorphism
$\algapprox (\pi_* M)\approx [\pi_*\Pfree M]_{\mathfrak{m}}^{\sm}$,
where $N^{\sm}_{\mathfrak{m}}$ denotes completion of an $E_*$-module
$N$ with respect to the maximal ideal $\mathfrak{m}\subset \pi_0E$.
\end{enumerate}
The category of algebras for this monad is denoted $\gralgcat$; the
monad $\algapprox$ and its category of algebras are described in
\S\ref{sec:completed-e-theory} and
\S\ref{sec:algebraic-approximation}.  


\subsection{Description of $\gralgcat$}

The next goal is to give a workable description of $\gralgcat$.  Every
object of $\gralgcat$ is (in particular) a graded commutative ring; it
is also a right-module for a certain associative ring $\Gamma$, which
we may call a ``Dyer-Lashof algebra'' (by analogy with the
May-Dyer-Lashof algebra for ordinary mod $p$ homology).

The ring
$\Gamma$ is defined in \S\ref{sec:additive-bialgebra}; effectively, $\Gamma$
is the ring of endomorphisms of the forgetful functor $\gralgcat \ra
\Ab$ which sends an $\algapprox$-algebra to its degree $0$ part,
viewed as an abelian group.  Explicitly, $\Gamma$ is a direct sum of
the $E_0$-linear duals of the rings
$E^0B\Sigma_{p^k}/(\text{transfers})$.

There is a
ring homomorphism $\eta\colon E_0\ra \Gamma$ (the image of which is
not typically central).  The ring $\Gamma$ is very nearly a
Hopf algebra 
(more precisely, it is a ``twisted bialgebra'', see
\S\ref{sec:bialgebras}), so that the  category 
of $\Gamma$-modules admits a
symmetric monoidal structure $\otimes \colon \grMod{\Gamma}\times
\grMod{\Gamma}\ra \grMod{\Gamma}$, in such a way that the underlying
$E_*$-module of $M\otimes N$ is precisely $M\otimes_{E_*}N$.  


Let $\grAlg{\Gamma}$ denote the category of commutative monoid objects
in graded $\Gamma$-modules. 
There is a forgetful functor $U\colon \gralgcat\ra \grAlg{\Gamma}$.
The first main result in this 
paper describes 
the essential image of the restriction of $U$ to torsion free objects.
Before stating the result, we consider the motivating example.

\begin{exam}
Let $E$ be $p$-adic $K$-theory.  In this case,
$E_0\approx \Z_p$, and $\Gamma\approx \Z_p[\psi]$.  The element
$\psi\in \Gamma$ corresponds to the  $p$th
Adams operation in the $K$-theory of a space.   A $\Gamma$-algebra is
precisely a \dfn{$\psi$-ring}, i.e., a graded commutative $\Z_p$-algebra $B$
together with a ring 
homomorphism $\psi\colon B\ra B$.  In this case, the structure of
$\gralgcat$ can also be completely understood, using the work of McClure in
\cite{bmms-h-infinity-ring-spectra}.  In particular, an object in
$\gralgcat$ is essentially  what Bousfield
\cite{bousfield-on-lambda-rings-k-theory-inf-loop} calls a
\dfn{$\Z/2$-graded $\theta$-ring}.  Thus, an object $\gralgcat$ is
strictly commutative 
graded $\Z_p$-algebra $B_*$, together with a function $\theta\colon
B_*\ra B_*$ which satisfies certain axioms.
The operation $\psi$ is recovered from $\theta$ using the identity
\[
\psi(x)=x^p+p\theta(x)
\]
for all $x\in B_0$.
The ``Wilkerson criterion'' states that a torsion free $\psi$-ring
$B$ admits the structure of a $\theta$-ring (necessarily uniquely), if and
only if $\psi(x)\equiv x^p\mod pB$ for all $x\in B$.  (This result is
a ``$p$-typicalization'' of the original theorem of Wilkerson
\cite{wilkerson-lambda-rings}, which characterizes the torsion free
$\lambda$-rings in terms of congruences on the Adams operations at all
primes.)
\end{exam}

Our first result is a generalization of Wilkerson's criterion.  Its
statement involves a representative $\sigma\in \Gamma$ of a certain conjugacy
class in $\Gamma/p\Gamma$, which is described in
\S\ref{subsec:frobenius-class}.  We say that a graded 
$\Gamma$-algebra $B$ 
satisfies the 
\dfn{congruence condition} if for all $x\in B_0$,
\[
x\sigma\equiv x^p\mod pB.
\]
\begin{thma}
An object $B\in \grAlg{\Gamma}$ which is $p$-torsion
free  admits the structure of a
$\algapprox$-algebra (necessarily uniquely), if and only if 
it satisfies the congruence condition.
\end{thma}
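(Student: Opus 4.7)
The plan is to split the biconditional into three parts---necessity of the congruence, uniqueness of a lift, and existence of a lift---with only the last requiring the hypothesis in an essential way.

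For necessity, the element $\sigma\in\Gamma$ is by construction (\S\ref{subsec:frobenius-class}) a lift to $\Gamma$ of the Frobenius class in $\Gamma/p\Gamma$, which amounts to saying that on every $\algapprox$-algebra the operation $x\mapsto x\sigma$ reduces mod $p$ to the $p$th-power map on degree-zero elements. This is the height-$n$ analogue of the classical fact that the top Dyer--Lashof operation equals the $p$th power, and it yields the congruence for free.

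For uniqueness in the $p$-torsion-free case, I would pass to $B\otimes\Q$. Rationally, the monad $\algapprox_{\Q}$ should be determined by the $\Gamma_{\Q}$-action together with the commutative-algebra structure---every ``new'' operation beyond $\Gamma$ becoming a $\Q$-linear combination of iterated $\Gamma$-operations and products once divisions by $p$ are legitimate---so $B\otimes\Q$ admits a unique $\algapprox_{\Q}$-structure compatible with its $\Gamma_{\Q}$-algebra structure. Torsion-freeness embeds $B$ into $B\otimes\Q$, and any $\algapprox$-structure on $B$ must be the restriction of this canonical rational one, forcing uniqueness.

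Existence is the heart of the theorem. Given the $\Gamma$-algebra $B$ satisfying the congruence, I would endow $B\otimes\Q$ with the unique $\algapprox_{\Q}$-structure from the previous paragraph and then show that the resulting action preserves the integral subalgebra $B$. Each ``new'' operation $\theta$ beyond $\Gamma$ should have a defining formula of the schematic form $p\cdot\theta(x)=P(x)-Q(x)$, where $P$ lies in $\Gamma$ and $Q$ is a polynomial expression in $p$th powers and lower-order operations, so that integrality of $\theta$ reduces to divisibility of $P(x)-Q(x)$ by $p$; the congruence $x\sigma\equiv x^p\pmod{pB}$ is the base case of exactly this divisibility. The main obstacle, and the central technical task, will be to propagate this single mod-$p$ congruence on $B_0$ to the integrality of \emph{all} divided operations in \emph{all} degrees, and to verify the full system of axioms---Cartan-type compatibility with products, associativity over $\Gamma$, and the interplay with transfers encoded in $\algapprox$. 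I expect to handle this by induction along a natural weight filtration on $\algapprox$ reflecting the decomposition $\algapprox E_*\approx\bigoplus_{m\geq 0}\cE{*}B\Sigma_m$, with the multiplicative compatibilities controlled by the twisted-bialgebra structure on $\Gamma$.
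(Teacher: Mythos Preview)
Your proposal is essentially the paper's proof. The three-part split (necessity, uniqueness via rationalization, existence by restricting the rational structure to $B\subset B\otimes\Q$) and the idea of a weight induction are exactly what the paper does in \S\ref{sec:rational-algebras}--\S\ref{sec:congruence-proof}.

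The one place where the paper is sharper than your outline is the ``propagation'' step. You anticipate having to check integrality of every divided operation and then verify the Cartan formula, associativity, and transfer relations by hand. The paper avoids all of this via the notion of a \emph{critical weight} (\S\ref{sec:critical-weight}): a weight $m$ is \emph{regular} if the maps $\algapprox_i\otimes\algapprox_{m-i}\to\algapprox_m$ (products) and $\algapprox_d\algapprox_{m/d}\to\algapprox_m$ (composites) from strictly smaller weights form an epimorphic family. Proposition~\ref{prop:t-algebra-structure-critical-wt} says that a subobject $B\subset B\otimes\Q$ in $\grAlg{\Gamma}$ inherits the full $\algapprox$-structure automatically once $\psi(\algapprox_m(B))\subseteq B$ at every \emph{critical} $m$---no axioms need separate verification, since they are inherited from $B\otimes\Q$. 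A transfer/Sylow argument (Prop.~\ref{prop:only-critical-wt-is-p}) shows that the only critical weight is $p$, so there is exactly one integrality check. Finally, the pushout square of Prop.~\ref{prop:p-critical-basic} (and its consequence Prop.~\ref{prop:p-critical-main}) identifies the image of $\algapprox_p(M)\to\algapprox_p(M)\otimes\Q$ as generated over $u(\Gamma[1]\otimes M)$ and $v(\Sym^p M)$ by the single family $(\sigma x-x^p)/p$ for $x\in M_0$; the odd-degree case is even simpler (Prop.~\ref{prop:p-noncritical-odd}). Thus the congruence condition is literally the only obstruction.

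So your instinct about a weight filtration is right; what makes the argument go through cleanly is the observation that every weight except $p$ is regular, collapsing your anticipated induction to a single step.
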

The proof of Theorem A is completed in \S\ref{sec:critical-weight}.

\subsection{Interpretation in terms of formal groups}

The second result of this paper gives a reinterpretation of the above
theorem in terms of formal groups, and in doing so explains the
significance of the element $\sigma\in \Gamma$.  

Fix a perfect field $k$ of characteristic $p>0$, and a formal group
$G_0$ of finite height over $k$.  Let $E$ be the Morava $E$-theory
associated to the universal deformation of $G_0$ in the sense of
Lubin-Tate. 
 
Given a complete local ring $R$, there is a category $\DefFrob_R$,
whose objects are deformations of $G_0$ to $R$, and whose morphisms
are isogenies of formal groups which are ``deformations of a power of
Frobenius''; that is, morphism are isogenies which are identified with
some power of the Frobenius isogeny when we base change to to the
residue field of $R$.  The collection of categories $\DefFrob_R$ for
suitable rings $R$ and base change functors $f^*\colon \DefFrob_R\ra
\DefFrob_{R'}$ describe a kind of moduli problem.  We let $\grShAlg$
denote the category of quasi-coherent sheaves of graded commutative
$\O$-algebras 
over $\DefFrob$.  An object $A$ of this category is (modulo some
issues of grading) a pseudonatural
transformation of pseudofunctors $\DefFrob\ra \mathrm{Alg}^*$; more
concretely, an object $A$ consists of data $\{A_R,A_f\}$, where for
each ring $R$ there is a functor $A_R\colon \DefFrob_R\ra \Alg{R}$,
and for each local homomorphism $f\colon R\ra R'$ a natural
isomorphism $A_f\colon A_{R'}f^* \ra f^*A_R$, satisfying a collection
of coherence relations.  The precise definitions of $\DefFrob_R$ and
$\grShAlg$, including correct treatment of the grading,  are given in
\S\ref{sec:deformations-of-frobenius}.  

We say that a quasi-coherent sheaf $A$ satisfies the \dfn{Frobenius
congruence} if (roughly), for every $\F_p$-algebra $R$ and every
object $G\in \DefFrob(R)$, we have $A_R(G\xra{\Frob} \phi^*G) =
(A_R(G) \xra{\Frob} \phi^* A_R(G))$, where ``$\Frob$'' denotes the
relative Frobenius isogeny on formal groups (on the left-hand side)
and on $R$-algebras (on the right-hand side).  (See
\S\ref{subsec:frobenius-cong} for the precise condition.)
\begin{thmb}
There is an equivalence of categories
$\grShAlg \approx \grAlg{\Gamma}$.   Under this
equivalence, sheaves which satisfy the Frobenius congruence exactly
correspond to graded $\Gamma$-algebras which satisfy the congruence condition.
\end{thmb}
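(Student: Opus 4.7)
The strategy is to construct the equivalence directly, by dualizing Strickland's classification of subgroups of the universal deformation. By that theorem, $E^0 B\Sigma_{p^k}/(\text{transfers})$ represents the functor sending a complete local $E_0$-algebra $R$ to the set of Frobenius-type degree-$p^k$ isogenies out of a deformation of $G_0$ to $R$. Since $\Gamma$ is by construction the direct sum of the $E_0$-linear duals of these rings, specifying a $\Gamma$-action on a graded $E_0$-module $B$ is equivalent to specifying, for every Frobenius-type isogeny $G \to G'$ over every $R$, a compatible transition map of $R$-modules with source $B \,\widehat{\otimes}_{E_0} R$ — which is exactly the data of the transition maps of a quasi-coherent sheaf on $\DefFrob$.

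Concretely, I would define a functor $\Phi\colon \grShAlg \to \grAlg{\Gamma}$ sending $A = \{A_R, A_f\}$ to $B := A_{E_0}(G^{\univ})$, with $\Gamma$-action obtained from the transition maps of $A$ along the tautological isogenies over $\Spec(E^0 B\Sigma_{p^k}/(\text{transfers}))$ for each $k$; and an inverse $\Psi$ sending $B$ to the sheaf whose value at $(R,G)$ is $B \,\widehat{\otimes}_{E_0,\chi_G} R$, with the action along any given isogeny $G \to G'$ reconstructed via the map to Strickland's moduli classifying that isogeny and the $\Gamma$-action on $B$. That $\Phi$ and $\Psi$ are mutually inverse is essentially formal, by the Yoneda lemma and Strickland's representability theorem.

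The principal obstacle is to match the monoidal structures on the two sides: a sheaf of commutative $\O$-algebras must have transition maps that are ring homomorphisms, and this needs to correspond precisely to a commutative monoid object in graded $\Gamma$-modules under the twisted bialgebra structure from \S\ref{sec:bialgebras}. This requires identifying the comultiplication on $\Gamma$ with the map on Strickland's moduli induced by passing from a degree-$p^{k+\ell}$ subgroup to the pair consisting of a subgroup of order $p^k$ and its order-$p^\ell$ quotient, and doing the analogous identifications for unit and counit. One must also carefully track the $\Z/2$-grading and the graded signs, using that the ``twisted'' nature of the bialgebra structure on $\Gamma$ precisely compensates for the sign conventions inherent in the graded commutative $\O$-algebra structure on sheaves.

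Given the equivalence, the second statement is essentially tautological once $\sigma$ has been identified with (a representative of the conjugacy class of) the Frobenius isogeny in $\DefFrob$, as is carried out in \S\ref{subsec:frobenius-class}. Indeed, for $A = \Psi(B)$ and $x \in B_0$, the element $x\sigma$ corresponds under $\Phi$ to the image of $x$ under the transition map $A_R(G) \to A_R(\phi^*G)$ attached to the Frobenius isogeny $G \xra{\Frob} \phi^*G$ over any $\F_p$-algebra $R$, while $x^p$ corresponds to the value at $x$ of the algebraic Frobenius endomorphism of the $\F_p$-algebra $A_R(G)$. The Frobenius congruence asserts the equality of these two maps, which is precisely the congruence condition $x\sigma \equiv x^p \pmod{pB}$.
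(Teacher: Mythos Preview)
Your proposal follows essentially the same route as the paper: identify $\Gamma$-modules with comodules over the graded affine category scheme $\mathcal{A}$ with $A[r]=E^0B\Sigma_{p^r}/(\text{transfer})$, invoke Strickland's theorem to identify $\mathcal{A}$ with the scheme $\mathcal{L}$ representing $\Sub$, and conclude $\Mod{\Gamma}\approx\Comod{\mathcal{L}}\approx\ShMod$; the congruence statement then reduces to the comodule criterion of \eqref{prop:frobenius-congruence-comodule}.

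Two points where your sketch slips:

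\textbf{(1)} Your description of how the monoidal structures match is garbled. The map you describe, sending a degree-$p^{k+\ell}$ subgroup to the pair (degree-$p^k$ subgroup, its degree-$p^\ell$ quotient), is the composition map $c^*\colon L[k+\ell]\to L[k]{}_s\otimes_{E_0}{}_tL[\ell]$ of the affine category scheme; this is dual to the \emph{multiplication} $\mu$ on $\Gamma$, not to the comultiplication $\Delta$. The comultiplication $\Delta\colon\Gamma[k]\to\Gamma[k]_{E_0}\otimes\Gamma[k]_{E_0}$ is dual to the ring multiplication on each individual $A[k]$, and it is this that makes the tensor product of $\Gamma$-modules over $E_0$ again a $\Gamma$-module, matching the objectwise tensor product of sheaves. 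Getting this right is exactly the ``principal obstacle'' you identify, so it matters.

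\textbf{(2)} You invoke Strickland's theorem as though it directly asserts that $A[r]$ represents the functor of degree-$p^r$ Frobenius-type isogenies. It does not: Strickland's theorem says that a \emph{particular} map $L[r]\to A[r]$ is an isomorphism, and one must first construct that map. The paper does this by applying the power operation $\bP_{p^r}$ to $E^0\CP^\infty$ to produce an isogeny of formal groups over $A[r]$, checking (via \eqref{lemma:modulo-abar}) that it is a deformation of $\Frob^r$, and then computing its kernel via the projective-bundle lemma to see it matches Strickland's subgroup. Your functor $\Psi$ implicitly presupposes this identification; without it you have no map from $\Pow$ to $\DefFrob$ at all.
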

As described below, the equivalence of categories of Theorem B is well
known to experts in this area, and amounts to an interpretation of
some theorems of Strickland.  The proof of Theorem B is completed in
\S\ref{sec:bialgebra-and-sheaves}.

\subsection{The work of Ando, Hopkins, and Strickland}

The structure of power operations on Morava $E$-theory is largely
understood, thanks to work of Matt Ando, Mike Hopkins, and Neil
Strickland.   The hard  results that underlie the version of
the  theory I will describe are
theorems of Neil Strickland, and are proved in
\cite{strickland-finite-subgroups-of-formal-groups} and
\cite{strickland-morava-e-theory-of-symmetric} (the latter corrected
in \cite{strickland-morava-e-theory-of-symmetric-corr}); this work in
turn uses crucially some results of Kashiwabara
\cite{kashiwabara-bp-coh-of-oi-si-s2n}. 
  Unfortunately,
there is no complete statement yet in print of the picture of operations
on $E$-algebra spectra.   Strickland provides a very brief sketch in
\cite[\S14]{strickland-finite-subgroups-of-formal-groups}.  The three
authors describe a version of this story in
\cite{ando-hopkins-strickland-h-infinity}; see especially the material
on ``descent for level structures''
\cite[\S11]{ando-hopkins-strickland-h-infinity}, which 
has informed \S\ref{sec:deformations-of-frobenius} in this paper.
Unfortunately, the level structure approach of
\cite{ando-hopkins-strickland-h-infinity} is not convenient for
describing the congruence condition.

Their unpublished work has some overlap with what we discuss in this
paper.  In particular, they  constructed the algebra $\Gamma$ and
perceived the equivalence
of $\Alg{\Gamma}\approx \ShAlg$ of Theorem B (a large part of this
is accomplished in 
Strickland's papers).  They also understood that the difference
between the categories $\algcat$ and $\Alg{\Gamma}$ was precisely an
issue of understanding certain congruences, and 
that these congruences were generated by ones which are detected in
the $E$-homology of the classifying space $B\Sigma_p$.  The precise
economical statement of Theorem A is new, as is the treatment of
gradings.

\subsection{Treatment of gradings}

We should note that we deal with gradings in a somewhat novel way.
Since Morava $E$-theory is an even periodic theory, we can regard the
homotopy groups of a $K(n)$-local commutative $E$-algebra spectrum as
being a $\Z/2$-graded $\pi_0E$-module, rather than a $\Z$-graded
$\pi_*E$-module.  This 
would not be a 
viable procedure, except for the fact that we can modify the tensor
product structure on $\Z/2$-graded modules by ``twisting'' with the
module $\omega=\pi_2E$ (see \S\ref{sec:twisted-z2-cat} for the precise
formulation).  One can think of our twisted $\Z/2$-graded category
as obtained by adjoining an ``odd square-root'' of $\omega$ to a
tensor category.
This point of view turns out to be very convenient
for dealing with power operations, and we believe it is worthy of
attention.  

\subsection{Completion}

There is a piece of structure on the homotopy $\pi_*B$ of a $K(n)$-local
commutative $E$-algebra spectrum $B$ which is not encoded in our algebraic
model $\gralgcat$; namely, the fact that $\pi_*B$ is usually complete with
respect to the maximal ideal of $\pi_0E$ (or more precisely, that
$\pi_*B$ is  always $L_0$-complete, see \S\ref{subsec:completion-functor}).
It is fair to say that a defect of this paper is that we don't handle
completion issues very well; most of the time, we sidestep the question.
We hope to address these matters at some other time.

\subsection{Calculations}

As a companion piece to this paper, I've made available calculations
of the structure of $\Gamma$ and $\gralgcat$ in a particular case
for height $n=2$ at the prime $p=2$ \cite{rezk-dyer-lashof-example}.

\subsection{Acknowledgments}

I'd like to thank Matt Ando and Mike Hopkins for helpful discussions
relating to issues in this paper.  I would especially like to thank
Haynes Miller and MIT, for allowing me to teach a course on power
operations as a visiting instructor there in the spring of 2006; the
results of this paper 
were conjectured and proved while at MIT, and presented in the course I
gave there.

\section{Twisted $\Z/2$-graded categories}
\label{sec:twisted-z2-cat}

In this section, we describe a procedure for constructing
$\Z/2$-graded additive tensor 
categories out of an additive tensor category, by formally adjoining 
an ``odd square-root'' of an object $\omega$ of the original
category.    This procedure will be used in the rest of this paper to
handle issues related to the ``odd'' degree gradings of Morava
$E$-theory.  It gives a very concise way to explain the graded nature
of the objects in question, and we'll use it both for odd degrees in
Morava $E$-theory (see \S\ref{sec:gr-gamma}), and for the graded
version of the category of sheaves on the deformation category (as in
\S\ref{sec:deformations-of-frobenius}). 

\subsection{Symmetric objects}
\label{subsec:symmetric-objects}

Let $(\mathcal{C},\otimes, \Bbbk)$ be an additive tensor category; that
is, an additive category $\mathcal{C}$
equipped with a symmetric monoidal structure $\otimes$ with unit
object $\Bbbk$, such that $\otimes$ distributes over finite sums.

Let $\tau_{M,N}\colon M\otimes N\ra N\otimes M$ 
denote the interchange isomorphism of the symmetric monoidal structure on
$\mathcal{C}$.   
Say that an object
$\omega\in \mathcal{C}$ is \dfn{symmetric} if
$\tau_{\omega,\omega}=\id_{\omega\otimes \omega}$; this is equivalent
to requiring that the symmetric group act trivially on
$\omega^{\otimes m}$ for all $m\geq0$.

\subsection{Twisted tensor product}

Let $\mathcal{C}^*$ be the category of \dfn{$\Z/2$-graded objects} of
$\mathcal{C}$; an object of $\mathcal{C}^*$ is a pair
$M^*=\{M^0,M^1\}$ of objects of $\mathcal{C}$, and a morphism $f\colon
M^*\ra N^*$ is a pair $f^i\colon M^i\ra N^i$, $i=0,1$, of morphisms of
$\mathcal{C}$.  We define a functor $\otimes \colon
\mathcal{C}^*\times \mathcal{C}^*\ra \mathcal{C}^*$ as follows.
If $M^*$ and $N^*$ are objects in
$\mathcal{C}^*$, we define an object $M^*\otimes N^*\in \mathcal{C}^*$ by
\begin{align*}
(M^*\otimes N^*)^0 &\defeq (M^0\otimes N^0) \oplus (M^1 \otimes
N^1\otimes \omega),
\\
(M^*\otimes N^*)^1 &\defeq (M^0\otimes N^1)\oplus (M^1\otimes N^0),
\end{align*}
where the tensor products on the right-hand side are taken in
$\mathcal{C}$.  We refer to this as the \dfn{$\omega$-twisted
tensor product} on $\mathcal{C}^*$.

Let $\Bbbk=\{\Bbbk,0\}$ as an object of $\mathcal{C}^*$; it serves as the unit
object of the monoidal structure via the evident isomorphisms
$\Bbbk\otimes M^*\approx M^*\approx M^*\otimes \Bbbk$.  Define an
interchange map 
$\tau^*\colon M^*\otimes N^* \xra{\sim} 
N^*\otimes M^*$ by 
\begin{align*}
\tau^0 \colon (M^0\otimes N^0) \oplus (M^1\otimes N^1 \otimes \omega)
&\ra (N^0\otimes M^0) \oplus
(N^1\otimes  M^1\otimes \omega) 
\\
(m_0\otimes n_0, m_1\otimes n_1\otimes x) &\mapsto (\tau(m_0\otimes
n_0), -\tau(m_1\otimes n_1)\otimes x)
\\
\tau^1 \colon (M^0\otimes N^1)\oplus (M^1\otimes N^0)&\ra (N^0\otimes
M^1)\oplus (N^1\otimes M^0)
\\
(m_0\otimes n_1, m_1\otimes n_0) &\mapsto (\tau(m_1\otimes n_0),
\tau(m_0\otimes n_1)).
\end{align*}
(I've written these formulas using ``element'' notation, but they are
easily converted into ``arrow theoretic'' formulas, meaningful in any
additive tensor category $\mathcal{C}$.) 

Finally, there is an  associativity isomorphism $\alpha\colon
(M^*\otimes N^*) \otimes P^* \ra M^*\otimes (N^*\otimes P^*)$, which I
won't write out in detail.  It is
defined using the associativity and interchange isomorphisms for
$\mathcal{C}$, with the interchange map used when needed to move the
extra factor 
of $\omega$ into the ``correct'' position.  For instance, $((M^*\otimes
N^*)\otimes P^*)^0$ contains a summand of the form $(M^1\otimes
N^1\otimes \omega )\otimes P^0$, while the corresponding summand of
$(M^*\otimes (N^*\otimes P^*))^0$ is $M^1\otimes (N^1\otimes
P^0\otimes \omega)$; the map $\alpha$ maps one to the other by
switching $\omega$ and $P^0$ using the interchange map $\tau$.

\begin{prop}
If $\omega\in \mathcal{C}$ is a symmetric object, then the above
structure makes $\mathcal{C}^*$ into an additive tensor 
category.  The functor $\mathcal{C}\ra \mathcal{C}^*$ defined
by $M\mapsto  
\{M,0\}$ identifies $\mathcal{C}$ with a full monoidal subcategory of
$\mathcal{C}^*$. 
\end{prop}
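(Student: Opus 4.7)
The plan is to verify the symmetric monoidal axioms on $\mathcal{C}^*$ one at a time, then to check additivity and the monoidal embedding. A convenient organizing device is to pretend we have formally adjoined an ``odd square-root'' $\sqrt{\omega}$ of $\omega$, so that a $\Z/2$-graded object $M^* = \{M^0, M^1\}$ is shorthand for $M^0 \oplus (M^1 \otimes \sqrt{\omega})$. With this conceit, the twisted tensor product, the sign in $\tau^*$, and the shuffles built into $\alpha$ are all dictated by the Koszul sign rule applied to $\sqrt{\omega}$. The same rule forces $\tau_{\omega,\omega} = (-1)^4\id = \id$, which is exactly the hypothesis, so the construction has a chance of being coherent precisely when $\omega$ is symmetric.

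With this organizing principle in hand, the routine checks are straightforward. The unit isomorphisms $\Bbbk\otimes M^* \cong M^* \cong M^*\otimes \Bbbk$ follow directly from those of $\mathcal{C}$, since $\Bbbk^1=0$ kills one summand in each bigrade. Bifunctoriality of $\otimes$ on morphisms and distributivity over finite direct sums are immediate from the corresponding properties in $\mathcal{C}$, since every $(M^*\otimes N^*)^i$ is a finite biproduct of tensor products of objects of $\mathcal{C}$. The embedding $M\mapsto \{M,0\}$ is obviously fully faithful, and evaluating the twisted tensor product on its image yields $(\{M,0\}\otimes\{N,0\})^0 = M\otimes N$ with degree $1$ part zero, so the restricted monoidal structure agrees with the original one on $\mathcal{C}$.

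The substance is in the coherence axioms. For each axiom, expand both sides along the bigrading, so that each side becomes a finite biproduct of morphisms in $\mathcal{C}$. On each bigraded summand the structure maps of $\mathcal{C}^*$ factor as a coherence isomorphism of $\mathcal{C}$ (associator, symmetry, or a composite of these) followed by a sequence of interchange maps $\tau_{\omega,X}$ that move factors of $\omega$ into a standard position. The corresponding axiom in $\mathcal{C}$ disposes of the first factor, and one is left checking that the induced $\omega$-shuffles agree on the two sides. The sign in $\tau^0$ is precisely the Koszul sign $(-1)^{1\cdot 1}$ incurred when two $\sqrt{\omega}$ factors pass each other, so $\tau^*\circ\tau^* = \id$ on each summand reduces to $\tau_{X,Y}\circ\tau_{Y,X} = \id$ in $\mathcal{C}$ combined with $(-1)^2 = 1$.

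The main obstacle, and the only place where the hypothesis $\tau_{\omega,\omega} = \id$ actually enters, is the hexagon axiom on configurations in which two distinct factors of $\omega$ appear in a single bigraded summand and must be moved past each other. A representative instance is the hexagon for $(A,B,C) = (\omega^*, M^*, N^*)$ with $M^*, N^*$ concentrated in degree $1$: one route computes $\tau^*_{\omega^*, M^*\otimes N^*}$ on the even summand directly from the formula as $\tau_{\omega,\, M^1\otimes N^1\otimes \omega}$, while the other route decomposes it as a composite involving $\tau_{\omega,M^1}$ and $\tau_{\omega,N^1}$ (and identities) that never crosses $\omega$ past $\omega$; by the hexagon in $\mathcal{C}$ applied to $(\omega, M^1\otimes N^1, \omega)$, the two agree exactly when $\tau_{\omega,\omega} = \id$. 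The analogous checks for the pentagon on four-fold tensor products of odd inputs and for the other hexagon reduce to the same single fact. Once this is established, there is nothing more to verify.
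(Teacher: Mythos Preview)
Your overall strategy---decompose along the bigrading and reduce each coherence axiom to statements in $\mathcal{C}$---is the same as the paper's, and the heuristic of a formal $\sqrt{\omega}$ is exactly the right picture.  But your identification of \emph{where} the hypothesis $\tau_{\omega,\omega}=\id$ enters is backwards: it is the pentagon, not the hexagon, that requires it, and your worked hexagon example is in fact not an obstruction.

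In your example $(\omega^*,M^*,N^*)$ with $M^*,N^*$ odd, you correctly compute $\tau^*_{\omega^*,M^*\otimes N^*}$ in degree~$0$ as $\tau_{\omega,\,M^1\otimes N^1\otimes\omega}$, and you correctly observe that the route through $\tau^*_{\omega^*,M^*}$ and $\tau^*_{\omega^*,N^*}$ never crosses $\omega$ past $\omega$.  What you miss is that the hexagon in $\mathcal{C}^*$ also involves the associators $\alpha^*$, and one of these---namely $\alpha^*\colon (M^*\otimes N^*)\otimes\omega^* \to M^*\otimes(N^*\otimes\omega^*)$ on the summand of type $(1,1,0)$---itself contains a $\tau_{\omega,\omega}$ (the glue $\omega$ from $M^*\otimes N^*$ must be moved past the even factor $(\omega^*)^0=\omega$).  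This second swap cancels the one hidden in $\tau_{\omega,\,M^1\otimes N^1\otimes\omega}$, and the hexagon commutes for any $\omega$.

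The pentagon, by contrast, really does fail without the hypothesis.  Take $M^*,N^*,P^*,Q^*$ all odd and look at degree~$0$.  Along the short path $((MN)P)Q\to(MN)(PQ)\to M(N(PQ))$, the second associator (type $(1,1,0)$ with $C^0=P^1\otimes Q^1\otimes\omega$) moves the first glue $\omega$ past $P^1\otimes Q^1\otimes\omega$, hence past the second $\omega$.  Along the long path through $(M(NP))Q$ and $M((NP)Q)$, each associator only ever moves an $\omega$ past $P^1$ or past $Q^1$, never past another $\omega$.  The two paths therefore differ by precisely $\tau_{\omega,\omega}$, and this is the delicate point the paper singles out.
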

\begin{proof}
The only delicate point 
is to check the commutativity of the pentagon which compares the
associativity isomorphisms of four-fold tensor products; that this
commutes makes essential use of the fact that $\omega$ is symmetric. 
\end{proof}

We will typically identify $\mathcal{C}$ with its essential image in
$\mathcal{C}^*$ without comment.

\subsection{The odd square root of $\omega$}

Let $\omega^{1/2}$ denote the object of $\mathcal{C}^*$ defined by
$\omega^{1/2}=\{0,\Bbbk\}$.  Then $\omega^{1/2}\otimes
\omega^{1/2}\approx \{\omega,0\}\approx\omega$.  Furthermore, the
interchange map $\tau^*$ on $\omega^{1/2}\otimes \omega^{1/2}$ is
equal to $-\id$. 
Every object $M^*$ of
$\mathcal{C}^*$ is isomorphic to one of the form $M^0\oplus
(M^1\otimes \omega^{1/2})$, where $M^0,M^1\in \mathcal{C}\subset
\mathcal{C}^*$.  Thus we can think of $\mathcal{C}^*$ as the additive
tensor category obtained from $\mathcal{C}$ by ``adjoining an odd
square-root'' of $\omega$.

\subsection{Functors from a twisted $\Z/2$-graded category}

Given an additive tensor category $\mathcal{C}$, and a symmetric
object $\omega$ of $\mathcal{C}$, we define the groupoid
$\Sqrt(\omega)$ of \dfn{odd square-roots of $\omega$} as
follows.  The 
objects of $\Sqrt(\omega)$ are pairs $(\eta,f)$, where $\eta$ is an object
of $\mathcal{C}$ such that $\tau_{\eta,\eta}=-\id_{\eta\otimes\eta}$,
and $f\colon \eta\otimes \eta\ra \omega$ is an isomorphism, and the
morphisms $(\eta,f)\ra (\eta',f')$ of $\Sqrt(\omega)$ are
isomorphisms $g\colon \eta\ra 
\eta'$ such that $f'(g\otimes g) = f$.  

Now suppose that $\mathcal{C}$ is an additive tensor category with
symmetric object $\omega$, that $\mathcal{D}$ is an additive tensor
category, and that $F\colon \mathcal{C}\ra \mathcal{D}$ is a symmetric
monoidal functor; thus $F(\omega)$ is a symmetric object of
$\mathcal{D}$.  Let $\mathcal{C}^*$ denote the $\Z/2$-graded tensor
category obtained from $\mathcal{C}$ and $\omega$, and identify
$\mathcal{C}$ with its essential image in $\mathcal{C}^*$.  
Let $\mathcal{G}$ denote the groupoid whose objects are additive
symmetric monoidal functors $F^*\colon \mathcal{C}^*\ra \mathcal{D}$
such that $F^*|_\mathcal{C} = F$, and whose morphisms $F^*_1\ra F^*_2$
are monoidal natural isomorphisms which restrict to the identity map
over $\mathcal{C}$.  
\begin{prop}\label{prop:functor-from-twisted-z2-graded}
There is an equivalence of categories $\mathcal{G}\ra
\Sqrt(F(\omega))$,  defined by $F^*\mapsto
(F^*(\omega^{1/2}), g)$, where $g\colon F^*(\omega^{1/2})\otimes
F^*(\omega^{1/2}) \xra{\sim} F^*(\omega^{1/2}\otimes \omega^{1/2})
\xra{F^*(f)} F^*(\omega)$ is the composite of the coherence map of
$\mathcal{D}$ and the map $F^*(f)$, where $f\colon \omega^{1/2}\otimes
\omega^{1/2}\ra \omega$ is the tautological isomorphism in
$\mathcal{C}^*$. 
\end{prop}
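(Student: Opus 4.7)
The plan is to exhibit an inverse construction and then verify that the functor $\mathcal{G}\to\Sqrt(F(\omega))$ is essentially surjective and fully faithful. First, I check that the assignment is well-defined: for any $F^*\in\mathcal{G}$, setting $\eta := F^*(\omega^{1/2})$ one has
\[
\tau_{\eta,\eta} = F^*(\tau^*_{\omega^{1/2},\omega^{1/2}}) = F^*(-\id) = -\id_{\eta\otimes\eta},
\]
using the symmetric monoidality of $F^*$ together with the observation (established in the preceding subsection) that $\tau^*$ acts as $-\id$ on $\omega^{1/2}\otimes\omega^{1/2}$. The composite $g$ displayed in the statement is an isomorphism $\eta\otimes\eta\to F(\omega)$, so $(\eta,g)\in\Sqrt(F(\omega))$.

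For essential surjectivity, given $(\eta,f)\in\Sqrt(F(\omega))$, I define $F^*\colon\mathcal{C}^*\to\mathcal{D}$ on an object $M^*\approx M^0\oplus(M^1\otimes\omega^{1/2})$ by
\[
F^*(M^*) := F(M^0)\oplus\bigl(F(M^1)\otimes\eta\bigr),
\]
extending to morphisms by linearity and to the monoidal constraints summand by summand. The nontrivial summand is the contribution $F(M^1)\otimes\eta\otimes F(N^1)\otimes\eta$ inside $F^*(M^*)\otimes F^*(N^*)$, which is matched with the summand $F(M^1\otimes N^1\otimes\omega)$ of $F^*(M^*\otimes N^*)$ by using the interchange in $\mathcal{D}$ to bring the two copies of $\eta$ adjacent and then applying $f$. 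The unit isomorphism is inherited from $F$.

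Coherence axioms for this $F^*$ are verified summand by summand. The pentagon reduces, on summands where three factors of $\omega^{1/2}$ combine, to the statement that the two possible bracketings differ by a single application of $\tau_{\eta,\eta}=-\id$, exactly matching the minus sign built into $\alpha$ on $\mathcal{C}^*$; all other summands reduce to the pentagon for $F$. Compatibility with the symmetry $\tau^*$ is checked the same way: the minus signs in the formula for $\tau^*$ on summands containing $\omega$ are reproduced in $\mathcal{D}$ by $\tau_{\eta,\eta}=-\id$. For fully faithfulness, note that a monoidal natural isomorphism $\theta\colon F_1^*\to F_2^*$ in $\mathcal{G}$ is determined on all of $\mathcal{C}^*$ by its single component $\theta_{\omega^{1/2}}$, since the restriction condition forces $\theta$ to be the identity on $\mathcal{C}$ and monoidality then forces $\theta_{M^1\otimes\omega^{1/2}}=\id_{M^1}\otimes \theta_{\omega^{1/2}}$; the monoidality axiom applied to $\omega^{1/2}\otimes\omega^{1/2}$ translates precisely to $g_2\circ(\theta_{\omega^{1/2}}\otimes\theta_{\omega^{1/2}})=g_1$, which is exactly the condition defining a morphism in $\Sqrt(F(\omega))$.

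The main obstacle is the coherence verification for the constructed $F^*$: one must confirm that every sign appearing in the definitions of $\tau^*$ and $\alpha$ on $\mathcal{C}^*$ is accounted for on the $\mathcal{D}$ side by an application of $\tau_{\eta,\eta}=-\id$, and that no further inconsistency can arise from the fact that $\eta$ itself is merely odd-symmetric. The hypothesis that $\omega$ is a symmetric object of $\mathcal{C}$ (the very hypothesis that makes $\mathcal{C}^*$ a tensor category) is what guarantees that $F(\omega)$ is symmetric in $\mathcal{D}$, which is precisely the input needed to prevent spurious sign obstructions arising from the $\omega$-factors.
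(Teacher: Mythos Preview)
The paper states this proposition without proof; there is nothing after the statement except the next subsection heading. Your argument is the expected one and is correct in outline: construct the inverse by sending $(\eta,f)$ to the functor $M^*\mapsto F(M^0)\oplus(F(M^1)\otimes\eta)$, and check coherence summand by summand using $\tau_{\eta,\eta}=-\id$ to supply every sign that appears in the definitions of $\tau^*$ and $\alpha$ on $\mathcal{C}^*$. One small comment: your description of the pentagon check (``three factors of $\omega^{1/2}$ combine'') is a bit terse. The delicate case is actually a four-fold product with all four factors odd, where two separate contractions $\eta\otimes\eta\to F(\omega)$ occur and one must compare the order in which they are performed; this is where the symmetry of $F(\omega)$ (not merely the odd-symmetry of $\eta$) is genuinely used, mirroring the place in the paper's earlier proposition where the symmetry of $\omega$ is needed for the pentagon in $\mathcal{C}^*$. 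Your final paragraph gestures at this, but it would be cleaner to locate the use of symmetry of $F(\omega)$ precisely there rather than as a general remark about ``spurious sign obstructions.''
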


\subsection{Examples}

\begin{exam}
If $\omega=\Bbbk$, then $\mathcal{C}^*$ is just the ``usual''
$\Z/2$-graded category of objects of $\mathcal{C}$.
\end{exam}

\begin{exam}
Suppose the symmetric object $\omega$ is also $\otimes$-invertible in
$\mathcal{C}$, 
i.e., there exists an object 
$\omega^{-1}\in \mathcal{C}$ and an isomorphism $\omega\otimes
\omega^{-1}\approx \Bbbk$.  Then we can define a $\Z$-graded commutative
ring object $R_*$ of $\mathcal{C}$ by
$R_{2k}= \omega^{\otimes k}$ and $R_{2k+1}=0$ for all $k\in
\Z$.  It is straightforward to check that $\mathcal{C}^*$ is then
equivalent to the additive tensor category of $\Z$-graded modules over
$R_*$.   This equivalence associates a $\Z$-graded $R_*$ module $M_*$
with the object $M^*=\{M_0,M_{-1}\}\approx M_0\oplus
(M_{-1}\otimes\omega^{1/2})$ in $\mathcal{C}^*$.  
\end{exam}

\subsection{$E_*$-modules}
\label{subsec:graded-estar-modules}

Let $E_*=\pi_*E$, the coefficient ring of an even periodic ring
spectrum, and let $\omega=\pi_2E$ viewed as an $E_0=\pi_0E$-module.
We can, and will, identify the category $\Mod{E_*}$ of $\Z$-graded
$E_*$-modules with the $\Z/2$-graded category $\grMod{E_0}$, where
$\omega=\pi_2 E =E^0S^2=\pi_0 \Sigma^{-2}E$ is used as the symmetric object.
Observe that 
under the equivalence of $\Z$-graded and $\Z/2$-graded $E_*$-modules
described above, $\pi_*\Sigma^q E \approx E_*S^q$ is naturally identified with
$\omega^{-q/2}$.  The K\"unneth isomorphism
$E_*S^i\otimes_{E_*}E_*S^j\ra E_*(S^i\sm S^j)\approx E_*(S^{i+j})$
produces a canonical isomorphism $\kappa\colon \omega^{-i/2}\otimes
\omega^{-j/2} \ra \omega^{-(i+j)/2}$. 

If $M=\{M^0,M^1\}$ is an object of $\grMod{E_0}$, we
can recover the $\Z$-graded $E_*$-module $M_*$ associated to it by
$M_q = \Hom_{\grMod{E_0}}(\omega^{-q/2},M)$. 

In the examples we've given above, the symmetric object $\omega$ was
$\otimes$-invertible.  Later in this paper, in sections
\S\ref{sec:additive-bialgebra} and
\S\ref{sec:deformations-of-frobenius}, we will consider
$\omega$-twisted tensor categories in 
cases where the symmetric object $\omega$ is not
$\otimes$-invertible.  It is in these non-invertible cases that the
formalism of twisted $\Z/2$-graded tensor categories proves especially
convenient.

\section{Morava $E$-theory and extended powers} 
\label{sec:completed-e-theory}

\subsection{Morava $E$-theory}

We fix for the rest of the paper a perfect field $k$ of characteristic
$p$, and a formal group $G_0$ over $k$ of height $n$, with $1\leq
n<\infty$.  Let $E$ denote the Morava $E$-theory associated to the
universal deformation of the formal group $G_0$.

It is a theorem of Goerss, Hopkins, and Miller, that $E$ is a
commutative $S$-algebra in an essentially unique way
\cite{goerss-hopkins-moduli-spaces}.  

\subsection{$E$-modules and $E_*$-modules}

Let
$\Mod{E}$ denote the category of $E$-module spectra as in \cite{ekmm},
and let $h\Mod{E}$ denote its homotopy category.  We write $M\sm_E N$
and $\funcspec_E(M,N)$ for the smash product and function spectrum of
$E$-module, and also for their derived versions on $h\Mod{E}$.

We write $E_0=\pi_0 E$ and $\omega = \pi_{2}E$  as an $E_0$-module.

Taking homotopy groups defines a functor $\pi_*\colon \Mod{E}\ra
\Mod{E_*}$.  In what follows we are going to regard $\Mod{E_*}$ as the
$\Z/2$-graded 
category described in \S\ref{subsec:graded-estar-modules}, so that the
functor $\pi_*$ is explicitly 
described by 
\[
\pi_* M = \{ \pi_0M, \pi_{-1}M\}.
\]
Recall that according to our conventions, there are natural isomorphisms
$\pi_*\Sigma^q M 
\approx \omega^{-q/2}\otimes \pi_*M$ for  $E$-modules $M$, for all
$q\in \Z$. 
(The reader may prefer to regard $\Mod{E_*}$ as the usual
category of modules over a $\Z$-graded ring; doing so should not cause any
confusion until section \S\ref{sec:additive-bialgebra}.)

\subsection{The completion functor}
\label{subsec:completion-functor}

Let $K(n)$ denote the $n$th Morava $K$-theory spectrum.

Let $L\colon \Mod{E}\ra \Mod{E}$ denote the \dfn{Bousfield localization
  functor} with respect to the homology theory on $E$-modules defined
by smashing with the module  $E\sm K(n)$.  It
comes equipped with a natural coaugmentation map $j\colon M\ra LM$,
which is a $K(n)$-homology equivalence.  
The localization functor $L$ 
descends to a functor $h\Mod{E}\ra h\Mod{E}$ on the 
homotopy category of $E$-module spectra, which we also denote $L$.  We
have the following equivalent descriptions of $L$.

\begin{prop}\label{prop:completion-functor-equivalences}
We have the following equivalences of coaugmented functors 
$h\Mod{E} \ra h\Mod{S}$, where $h\Mod{S}$ denotes the homotopy
category of spectra..
\begin{enumerate}
\item $L\approx L_{K(n)}$, where $L_{K(n)}$ denotes Bousfield
  localization of spectra with respect to Morava $K$-theory.
\item $L\approx L_{F(n)}$, where $L_{F(n)}$ denotes Bousfield
  localization of spectra with respect to a type $n$-finite   spectrum $F(n)$.
\item $LM\approx \holim (E\sm M(i_0,\dots,i_{n-1}))\sm_E M$, where
  $\{M(i_0,\dots,i_{n-1})\}$ denotes a certain inverse system of finite
  spectra, constructed so that $\pi_* E\sm M(i_0,\dots,i_{n-1})\approx 
  E_*/(p^{i_0},u_1^{i_1},\dots, u_{n-1}^{i_{n-1}})$.  
\end{enumerate}
\end{prop}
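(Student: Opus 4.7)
The plan is to verify the three claimed equivalences in turn, each reducing to a standard result in $K(n)$-local stable homotopy theory. For (1), the key observation is that for any $E$-module $M$ the underlying spectrum of $M\sm_E(E\sm K(n))$ is naturally equivalent to $M\sm K(n)$. Consequently, a map of $E$-modules $f\colon M\to N$ is an $(E\sm K(n))$-equivalence in $\Mod{E}$ if and only if its underlying map of spectra is a $K(n)$-equivalence, and a $K(n)$-local spectrum which carries an $E$-module structure is automatically $(E\sm K(n))$-local in $\Mod{E}$. The universal properties of $L$ and $L_{K(n)}$ therefore match on $E$-modules, giving the equivalence of coaugmented functors.

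For (3), I would invoke Hovey and Strickland's analysis of the $K(n)$-local category of $E$-modules, which identifies $L_{K(n)} M$ with the derived completion $\holim_I M/IM$, the limit taken over a cofinal inverse system of powers of the maximal ideal $\mathfrak{m}=(p,u_1,\dots,u_{n-1})\subset E_0$. For $I=(p^{i_0},u_1^{i_1},\dots,u_{n-1}^{i_{n-1}})$, one can choose a generalized Moore spectrum $M(i_0,\dots,i_{n-1})$ whose $E$-homology realizes this quotient; then $M/IM\simeq (E\sm M(i_0,\dots,i_{n-1}))\sm_E M$, yielding the formula in (3). The fact that the inverse limit is actually a presentation of the $K(n)$-localization is the substantive ingredient here, and rests on finiteness properties of the $K(n)$-local category (compact generation by the Moore spectra $E/I$).

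For (2), since $F(n)$ is of type $n$, the spectrum $F(n)\sm K(n)$ is a nonzero finite wedge of suspensions of $K(n)$, so every $F(n)$-equivalence of spectra is a $K(n)$-equivalence, giving a canonical natural transformation $L_{F(n)}\to L_{K(n)}$. To see that this transformation is an equivalence on $E$-modules, I would compare both functors with the completion tower of (3): a cofinal system of type-$n$ finite spectra can be constructed from the generalized Moore spectra $M(i_0,\dots,i_{n-1})$ using their $v_i$-self maps, so smashing an $E$-module $M$ with this system produces (up to reindexing) precisely the tower $\{M/IM\}$. The main technical obstacle is (3), on which both (1) and (2) ultimately lean; once it is in hand, step (2) is a Bousfield-class comparison sanctioned by the thick subcategory theorem, and step (1) is essentially formal.
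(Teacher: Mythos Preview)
Your argument is correct and rests on the same external results as the paper's proof, which simply cites \cite[Prop.~VIII.1.7]{ekmm} for (1) and \cite[Prop.~7.10]{hovey-strickland-morava-k-theories} for (2) and (3). The one place where the paper's reasoning differs from yours is in step (2): rather than comparing $L_{F(n)}$ and $L_{K(n)}$ via the completion tower, the paper observes directly that $E$ is $L_n$-local, so the underlying spectrum of every $E$-module is $L_n$-local, and on $L_n$-local spectra the localizations $L_{F(n)}$ and $L_{K(n)}$ already coincide (this is part of what Hovey--Strickland establish). That route is shorter than going through (3); your approach is also valid and has the virtue of making all three statements visibly equivalent to the single completion formula.
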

\begin{proof}
Statement (1) is \cite[Prop.\ VIII.1.7]{ekmm}.

Statements (2) and
(3) follow from \cite[Prop.\
7.10]{hovey-strickland-morava-k-theories}; the proof of (2) uses the
fact that $E$ is an $L_n$-local spectrum, whence the underlying
spectrum of every $E$-module is $K(n)$-local.
\end{proof}

Because of the last equivalence in this list, we can think of $L$ as a
``completion'' functor.  

\subsection{Derived functors of $\mathfrak{m}$-adic completion}

Thus, let $L_s\colon \Mod{E_*}\ra
\Mod{E_*}$ denote the \dfn{$s$th left derived functor} of $M\mapsto
M^{\sm}_{\mathfrak{m}}$.  There is a map $L_0 M\ra
M^{\sm}_{\mathfrak{m}}$, which is not generally an isomorphism since
completion is not right exact.  

\begin{prop}\label{prop:der-func-completion-vanish}
The functors $L_s$ vanish identically if $s>n$.  
If $M_*$ is either a flat $E_*$-module, or a finitely generated
$E_*$-module, then $L_0(M_*)\approx 
(M_*)^{\sm}_{\mathfrak{m}}$ and $L_s(M_*)=0$ for $s>0$.  
\end{prop}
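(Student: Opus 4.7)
The ring $E_0 \cong W(k)\powser{u_1,\ldots,u_{n-1}}$ is a complete regular local Noetherian ring of Krull dimension $n$, with maximal ideal $\mathfrak{m} = (p, u_1, \ldots, u_{n-1})$ generated by a regular sequence of length exactly $n$. Since the periodicity class in $\pi_2 E$ is a unit, the identification of $\Mod{E_*}$ with $\Z/2$-graded $E_0$-modules in \S\ref{subsec:graded-estar-modules} reduces the derived functor computation on $\Mod{E_*}$ to the analogous problem on $\Mod{E_0}$, applied componentwise. I would carry out the whole argument for the $E_0$-module $\mathfrak{m}$, since nothing is lost by doing so.

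For the bound $L_s = 0$ when $s > n$, the plan is to appeal to the Koszul/pro-system description of the left derived functors of $\mathfrak{m}$-adic completion in the form developed by Greenlees--May and used by Hovey--Strickland (cf.\ \cite[App.\ A]{hovey-strickland-morava-k-theories}). A regular sequence of length $n$ yields a pro-Koszul resolution concentrated in homological degrees $0,1,\ldots,n$, from which the vanishing in degrees $s > n$ is immediate.

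For the finitely generated case, I would invoke the Artin--Rees lemma: over the Noetherian local ring $E_0$, $\mathfrak{m}$-adic completion is exact on finitely generated modules, so the higher $L_s$ vanish and $L_0$ coincides with ordinary completion. Equivalently, one resolves $M_*$ by finitely generated frees, on which the conclusion follows directly from the Koszul calculation. For the flat case, I would use Lazard's theorem to present a flat $E_0$-module as a filtered colimit of finitely generated frees; the pro-Koszul picture makes $L_s$ commute with filtered colimits of flat modules (tensoring the Koszul complex with a flat module preserves its concentration in degree $n$), reducing to the trivial free case.

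The main obstacle in each of the three reductions is the same: the completion functor is neither left- nor right-exact, so its left derived functors require a non-naive definition via pro-systems or hyperderived limits, and one must verify that this definition interacts correctly both with the Artin--Rees lemma and with filtered colimits of flats. Once the Hovey--Strickland Koszul framework is in place, however, all three assertions reduce to short calculations in commutative algebra; no new input beyond the regular sequence structure of $\mathfrak{m}$ is needed.
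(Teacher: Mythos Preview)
Your proposal is correct and follows essentially the same route as the paper: the paper's proof consists entirely of citations to \cite[Thm.\ A.2(b,d) and Thm.\ A.6(e)]{hovey-strickland-morava-k-theories}, and what you have written is a sketch of the arguments that actually appear in that appendix (the pro-Koszul description of $L_s$ for a regular sequence of length $n$, Artin--Rees for the finitely generated case, and the flat case via the observation that tensoring the Koszul tower with a flat module keeps its homology concentrated in a single degree so that the relevant $\lim^1$ terms vanish). One small wording issue: in your parenthetical, the homology of the Koszul complex on a regular sequence is concentrated in degree $0$ (homological indexing), not degree $n$; the ``degree $n$'' phenomenon is on the local cohomology side of Greenlees--May duality. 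This does not affect the argument.
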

\begin{proof}
The first statement is \cite[Thm.\
A.2(d)]{hovey-strickland-morava-k-theories}.  The statement about flat modules
follows from \cite[Thm.\ A.2(b)]{hovey-strickland-morava-k-theories},
and the statement about finitely generated modules follows from
\cite[Thm.\ A.6(e)]{hovey-strickland-morava-k-theories}.
\end{proof}

The functor $L_0\colon \Mod{E_*}\ra \Mod{E_*}$ is equipped with
natural transformations $M\xra{i} L_0M \xra{j} M^{\sm}_{\mathfrak{m}}$.
In general, $i\colon L_0M\ra M^{\sm}_{\mathfrak{m}}$ is a surjection, but not
an isomorphism, while $L_0(i)$ is an isomorphism.
The groups $L_sM$ can be identified with certain local cohomology groups of
the modules $M$, as described in
\cite[Appendix A]{hovey-strickland-morava-k-theories}.  

\begin{prop}\label{prop:completion-spectral-sequence}
\cite[Prop.\ 2.3]{hovey-morava-e-filtered-colim}.
There is a conditionally and strongly convergent spectral sequence  of
$E_*$-modules 
\[E_2^{s,t} = L_s\pi_t M\Longrightarrow \pi_{s+t}LM,\]
which vanishes 
for $s>n$.  
\end{prop}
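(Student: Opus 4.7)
The plan is to realize $LM$ as a homotopy limit over a tower of spectra whose homotopy groups realize successive quotients by a regular sequence, and then run the standard $\lim^s$ spectral sequence for the homotopy of an inverse limit. By Proposition~\ref{prop:completion-functor-equivalences}(3), I have
\[
LM \;\simeq\; \holim_I\,\bigl(E\sm M(I)\bigr)\sm_E M,
\]
where $I=(i_0,\ldots,i_{n-1})$ ranges over the directed system of multi-indices, and $\pi_*(E\sm M(I)) \cong E_*/(p^{i_0},u_1^{i_1},\ldots,u_{n-1}^{i_{n-1}})$ by construction.

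The second step is to apply the Bousfield--Kan / Boardman homotopy spectral sequence associated to this inverse system, producing a conditionally convergent spectral sequence
\[
E_2^{s,t} \;=\; \lim_I{}^s\,\pi_t\bigl((E\sm M(I))\sm_E M\bigr) \;\Longrightarrow\; \pi_{s+t}\,LM.
\]
The spectral sequence exists for formal reasons, and conditional convergence follows from the $\holim$ description above.

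The third step, and the heart of the matter, is to identify the $E_2$-term with $L_s\pi_t M$. For each fixed $t$, the inverse system $I \mapsto \pi_t\bigl((E\sm M(I))\sm_E M\bigr)$ is precisely the Koszul-type tower of quotients of $\pi_t M$ by the powers of the regular sequence $(p,u_1,\ldots,u_{n-1})$ generating $\mathfrak{m}$. The content of Hovey--Strickland's Appendix~A is exactly that the derived functors of $\mathfrak{m}$-adic completion on $\Mod{E_*}$ are computed as the higher inverse limits of this Koszul tower; applied to $N=\pi_t M$ this yields $\lim_I{}^s\,(N/(p^{i_0},u_1^{i_1},\ldots)) \cong L_s N$. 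This is the step I expect to be the main obstacle, since it requires unpacking the Greenlees--May style identification of derived completion with the local-cohomology / Koszul description that matches the tower coming from the spectra $M(I)$.

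Finally, strong convergence follows from Proposition~\ref{prop:der-func-completion-vanish}: since $L_s=0$ for $s>n$, the $E_2$-page (and hence every subsequent page) has a horizontal vanishing line at $s=n+1$, which upgrades conditional to strong convergence by Boardman's criterion. This simultaneously yields the vanishing claim stated in the proposition.
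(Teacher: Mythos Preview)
The paper does not give its own proof of this proposition; it simply records the statement and cites Hovey's paper. Your outline is indeed the standard route to this spectral sequence, and the use of \eqref{prop:completion-functor-equivalences}(3) together with the Bousfield--Kan tower spectral sequence, followed by the vanishing from \eqref{prop:der-func-completion-vanish}, is exactly right for the existence and convergence claims.

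There is, however, a genuine gap in your third step. Two separate assertions there are false in general. First, $\pi_t\bigl((E\sm M(I))\sm_E M\bigr)$ is \emph{not} the naive quotient $(\pi_t M)/(p^{i_0},u_1^{i_1},\ldots)$ unless $\pi_*M$ is flat; for arbitrary $M$ the long exact sequence of the iterated cofiber contributes torsion from $\pi_{t-1}M,\ldots,\pi_{t-n}M$. Second, even granting that description, the formula $\lim_I^{s}\bigl(N/(p^{i_0},u_1^{i_1},\ldots)\bigr)\cong L_sN$ is wrong: take $n=1$ and $N=\Q_p/\Z_p$, where every quotient $N/p^iN$ vanishes so all $\lim^s$ are zero, yet $L_1N\cong\Z_p$. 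The derived functors $L_s$ are \emph{not} the higher inverse limits of the naive quotient tower.

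What actually makes the identification work is that each $E\sm M(I)$ carries a finite cell filtration of length $n$ (coming from its construction as an $n$-fold iterated cofiber), whose associated graded realizes the Koszul complex on the regular sequence. It is the interaction of this Koszul filtration with the inverse system, together with the Greenlees--May identification of $L_s$ with the homology of the homotopy inverse limit of Koszul complexes (this is the content of \cite[Appendix~A]{hovey-strickland-morava-k-theories}, not merely $\lim^s$ of quotients), that yields $E_2^{s,t}=L_s\pi_tM$. You have correctly located where the difficulty lies, but the resolution requires keeping the full Koszul complex in play rather than passing prematurely to its $H_0$.
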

As a consequence, we have
\begin{cor}\label{cor:completion-iso-flat}
The map $\pi_*M \ra \pi_* LM$ factors through a natural transformation
$ L_0\pi_*M\ra \pi_*LM$ of functors 
$h\Mod{E}\ra \Mod{E_*}$, which is a natural isomorphism whenever
$\pi_*M$ is flat. 
\end{cor}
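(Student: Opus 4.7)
The plan is to construct the natural transformation abstractly via a universal property of $L_0$, and then to deduce invertibility in the flat case from the collapse of the spectral sequence of Proposition~\ref{prop:completion-spectral-sequence}.

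The crucial preliminary step is to establish that $\pi_*LM$ is always $L_0$-complete as an $E_*$-module; that is, the natural map $\pi_*LM\to L_0\pi_*LM$ is an isomorphism and $L_s\pi_*LM=0$ for $s>0$. Since $LM$ is $K(n)$-local as a spectrum by Proposition~\ref{prop:completion-functor-equivalences}(1), this follows from the results of \cite[Appendix A]{hovey-strickland-morava-k-theories}, where it is shown that the homotopy groups of any $K(n)$-local $E$-module spectrum are $L_0$-complete. Granted this, the full subcategory of $L_0$-complete modules is reflective in $\Mod{E_*}$, with reflector $L_0$: the natural map $X\to L_0X$ is initial among maps from $X$ to an $L_0$-complete $E_*$-module. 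Applying this universal property to $\pi_*j\colon\pi_*M\to\pi_*LM$ produces a unique natural transformation $L_0\pi_*M\to\pi_*LM$ factoring $\pi_*j$, with naturality in $M\in h\Mod{E}$ automatic.

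To verify invertibility when $\pi_*M$ is flat, I would invoke Proposition~\ref{prop:der-func-completion-vanish} to conclude $L_s\pi_*M=0$ for $s>0$, whence the spectral sequence of Proposition~\ref{prop:completion-spectral-sequence} collapses onto the row $s=0$. Strong convergence then identifies $\pi_tLM$ with $E_\infty^{0,t}=L_0\pi_tM$ through the (now bijective) edge homomorphism. A short diagram chase, using the universal property of the reflection $\pi_*M\to L_0\pi_*M$ together with the observation that both candidate maps factor the same $\pi_*j$, identifies this inverse edge map with the natural transformation constructed in the previous paragraph.

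I expect the principal technical obstacle to be the $L_0$-completeness of $\pi_*LM$; everything else is formal, resting only on the reflective nature of the category of $L_0$-complete modules together with the degeneration of the spectral sequence under the flatness hypothesis.
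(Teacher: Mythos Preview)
Your argument is correct. The paper gives no proof, presenting the corollary simply ``as a consequence'' of the spectral sequence of Proposition~\ref{prop:completion-spectral-sequence}; the intended reading is that the factorization is the edge homomorphism $E_2^{0,t}=L_0\pi_tM\to\pi_tLM$, and that degeneration under the flatness hypothesis (via Proposition~\ref{prop:der-func-completion-vanish}) makes it an isomorphism.

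Your route differs only in how you produce the factorization: rather than invoking the edge map directly, you first import the $L_0$-completeness of $\pi_*LM$ from \cite{hovey-strickland-morava-k-theories} and then appeal to the universal property of $L_0$ as a reflector onto $L_0$-complete modules. This is a perfectly good alternative, and in fact the $L_0$-completeness of $\pi_*LM$ can also be read off the spectral sequence itself (each $L_s\pi_tM$ is $L_0$-complete, the category of $L_0$-complete modules is closed under extensions, and there are only finitely many nonzero rows), so no genuinely external input is required. Since you go on to identify your map with the edge homomorphism anyway, the two arguments converge; yours is slightly more explicit about why the factorization exists, at the cost of one extra step.
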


For a spectrum $X$, the \dfn{completed $E$-homology} of $X$ is defined
by
$\cE{*}(X) \defeq \pi_* L(E\sm X)$. 

\subsection{Complete $E$-modules}

A \dfn{complete $E$-module} is an $E$-module $M$ which is
$K$-local, i.e., one such that $j\colon M\ra LM$ is an equivalence.   Note that
\begin{enumerate}
\item[(a)] If $M$ and $N$ are $E$-modules, and $N$ is complete, then
  $\funcspec_E(M,N)$ is a complete $E$-module.
\item[(b)] If $M$ and $N$ are complete $E$-modules, $M\sm_E N$ need
  not be complete.  However, 
  $L(M\sm_E N)$ is a complete $E$-module, called the \dfn{completed
  smash product}.
\end{enumerate}

\subsection{Finite and finite free modules}

We write $\funcspec_E(M,N)$ for the function spectrum in $\Mod{E}$.

\begin{prop}\label{prop:charac-ffree-module}
Let $M$ be an $E$-module spectrum.  The following are
equivalent. 
\begin{enumerate}
\item[(1)] $\pi_*M$ is a finitely generated (resp.\ finitely generated
  free) $E_*$-module.
\item[(2)] $\pi_*\funcspec_E(M,E)$ is a finitely generated (resp.\
  finitely generated free) $E_*$-module. 
\end{enumerate}
If either of (1) or (2) hold, then $M\approx LM$ and  $M\approx
\funcspec_E(\funcspec_E(M,E),E)$.

In particular, if $X$ is a spectrum, $E^*X$ is finitely generated (resp.\ 
free) if and only if $\cE{*}X$ is so.
\end{prop}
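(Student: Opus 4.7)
The plan is to identify the $E$-modules $M$ with $\pi_*M$ finitely generated as precisely the \emph{finite cell $E$-modules}---those in the thick subcategory of $\Mod{E}$ generated by $E$---and to verify the four claimed properties (finite generation of $\pi_*\funcspec_E(M,E)$, the ``free'' refinement, biduality, and $M\approx LM$) for this class. The implication (2)$\Rightarrow$(1) will follow by dualizing, since $\funcspec_E(-,E)$ preserves the finite cell class.

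The base case is $M\approx\bigvee_{i=1}^{r}\Sigma^{n_i}E$: here $\funcspec_E(M,E)\approx\bigvee_{i}\Sigma^{-n_i}E$, so all finiteness assertions are manifest, the biduality map is an equivalence of finite wedges, and $M\approx LM$ because $E$ is $K(n)$-local and finite wedges preserve $K(n)$-locality. This already handles the finitely-generated-free case of (1)$\Rightarrow$(2): I lift an $E_*$-basis of $\pi_*M$ to a $\pi_*$-isomorphism $F\ra M$ from a finite wedge of shifts of $E$, which is an equivalence by Whitehead in $\Mod{E}$.

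For general finitely generated $\pi_*M$, the key algebraic input is that $E_*\approx E_0[u^{\pm1}]$, where $E_0\approx W(k)\powser{u_1,\dots,u_{n-1}}$ is a regular complete local ring of Krull dimension $n$, so $E_*$ is graded Noetherian of finite global dimension. Hence $\pi_*M$ admits a finite resolution
\[0\ra F_s\ra F_{s-1}\ra\cdots\ra F_0\ra\pi_*M\ra0\]
by finitely generated free graded $E_*$-modules. I realize this in $\Mod{E}$ by inductively choosing a map $F_k\ra M_k$ from a finite wedge of shifts of $E$ surjecting on $\pi_*$ and setting $M_{k+1}$ to be the fiber; Noetherianness preserves finite generation of $\pi_*M_k$ at each stage, and finite global dimension forces $\pi_*M_s$ to be finitely generated free, at which point the free case above terminates the construction. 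This exhibits $M$ as a finite iterated cofiber of finite wedges of shifts of $E$, and the four properties propagate along these cofiber sequences via the long exact sequence of homotopy, the five lemma applied to the natural biduality transformation, and closure of $K(n)$-locality under cofibers.

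For (2)$\Rightarrow$(1), set $N\defeq\funcspec_E(M,E)$. The hypothesis places $N$ in the finite cell class just treated, so $N$ is dualizable in $\Mod{E}$, with $\pi_*\funcspec_E(N,E)$ finitely generated and biduality $N\approx\funcspec_E(\funcspec_E(N,E),E)$ an equivalence. Since $E$ is $K(n)$-local, $\funcspec_E(M,E)\approx\funcspec_E(LM,E)$, so the natural biduality map $M\ra\funcspec_E(\funcspec_E(M,E),E)$ factors through $LM$; applying $\funcspec_E(-,E)$ to the resulting map $LM\ra\funcspec_E(N,E)$ recovers the biduality equivalence for $N$, which by dualizability of $N$ forces $LM\approx\funcspec_E(N,E)$. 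Thus $\pi_*LM$ is finitely generated and $M\approx LM$, giving (1). The main obstacle is the realization step in the general (1)$\Rightarrow$(2): it depends essentially on the finite global dimension of $E_*$ to guarantee that the tower of fibers terminates; without this input $M$ would only be presented as an infinite pro-system rather than as a finite cell object, and the propagation of biduality and $K(n)$-locality would fail.
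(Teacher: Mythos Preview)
The paper does not prove this; it simply cites \cite[\S8]{hovey-strickland-morava-k-theories}. Your argument for (1)$\Rightarrow$(2), for biduality, and for $M\approx LM$ under hypothesis (1) is the standard one and is correct: realize a finite free resolution of $\pi_*M$ (available because $E_0$ is a complete regular local ring, so $E_*$ has finite global dimension) by a finite tower of cofiber sequences in $\Mod{E}$, thereby placing $M$ in the thick subcategory generated by $E$, and propagate the desired properties along it.

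Your argument for (2)$\Rightarrow$(1), however, has a genuine gap, and the implication is in fact false as literally stated. At the end you write ``$\pi_*LM$ is finitely generated and $M\approx LM$'', but everything you have argued concerns $LM$; the assertion $M\approx LM$ is never justified and cannot be. Take $M=E[1/p]=\hocolim(E\xra{p}E\xra{p}\cdots)$. Then $\pi_*M=E_*[1/p]$ is not finitely generated, while $\funcspec_E(M,E)\approx\holim(E\xla{p}E\xla{p}\cdots)\approx 0$ since $E_*$ is $p$-complete and $p$-torsion free; thus (2) holds, (1) fails, and $LM\approx 0\not\approx M$. What Hovey--Strickland actually prove, and what the ``in particular'' clause really uses (note that $\cE{*}X=\pi_*L(E\sm X)$ already contains an $L$), is the equivalence of (2) with finite generation of $\pi_*LM$; equivalently, the proposition is correct once one adds the standing hypothesis that $M$ be $K(n)$-local. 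A smaller point: your deduction of $LM\approx\funcspec_E(N,E)$ from the fact that its $E$-dual is an equivalence invokes only ``dualizability of $N$'', but what is actually needed is that $\funcspec_E({-},E)$ reflect equivalences between $K(n)$-local $E$-modules. This is true, but it is a separate Gorenstein-type input (via self-duality of $E/\mathfrak{m}$ up to suspension and the fact that $E/\mathfrak{m}\sm_E({-})$ detects $K(n)$-local triviality), not a formal consequence of dualizability of $N$.
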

\begin{proof}
See \cite[\S8]{hovey-strickland-morava-k-theories}.
\end{proof}

We say a module $M$ is \dfn{finite} if $\pi_*M$ is a finitely
generated $E_*$-module.  A module is finite if and only if it is
contained in the thick subcategory of $h\Mod{E}$ generated by $E$.
According to the above proposition, finite modules are complete.

Say that a $E$-module $M$ is \dfn{finitely generated and
free}, or \dfn{finite free} for short, if $\pi_*M$ is a finitely
generated free $\pi_*E$-module.  
All such modules are equivalent to ones of the form
$\bigvee_{i=1}^k \Sigma^{d_i}E$.  Note that 
if $M$ and $N$ are finite (resp.\ finite free), then so are $M\sm_E N$ and
$\funcspec_E(M,N)$.  Also, retracts of finite free modules are also finite
free.  

Let $\Modff{E}$ denote the full subcategory of $\Mod{E}$ consisting
finite free modules.  Let $h\Modff{E}$ denote the full subcategory of
$h\Mod{E}$ spanned by the finite free modules. 
\begin{prop}
The functor $\pi_*\colon h\Modff{E}\ra \Modff{E_*}$ which associates
to an $E$-module its homotopy groups is an equivalence of categories.
\end{prop}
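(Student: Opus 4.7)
The plan is to verify essential surjectivity and fully faithfulness separately, reducing in each case to the single-sphere calculation $[\Sigma^d E, E]_E \approx \pi_{-d}E$.

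For essential surjectivity, given a finitely generated free $E_*$-module $F_*$, I would pick a homogeneous basis giving an isomorphism $F_* \approx \bigoplus_{i=1}^k \Sigma^{d_i} E_*$, and then realize this as the homotopy of $M = \bigvee_{i=1}^k \Sigma^{d_i} E$ in $\Modff{E}$, whose homotopy groups compute out to $F_*$ via the standard suspension identifications used throughout the paper.

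For fully faithfulness, I would first observe that the functor $\pi_*$ is additive, so both $[-,N]_E$ and $\Hom_{E_*}(\pi_*(-), \pi_*N)$ convert finite wedges in the source into finite products. Since every finite free $M$ is equivalent to some $\bigvee_{i=1}^k \Sigma^{d_i} E$, the fully faithful claim reduces to checking that
\[
[\Sigma^d E, N]_E \;\lra\; \Hom_{E_*}(\Sigma^d E_*, \pi_* N)
\]
is a bijection for every finite free $N$ and every $d\in\Z$. The right-hand side is canonically $\pi_d N$ (evaluation at the generator of $\Sigma^d E_*$), and the left-hand side is also canonically $\pi_d N$ since $E$ is the unit of the symmetric monoidal category of $E$-modules; the map between them is the tautological identification, so it is an isomorphism.

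There is no real obstacle here; the content of the statement is entirely absorbed by the facts that $E$ generates $\Modff{E}$ under finite wedges and suspensions, that $\pi_*$ is additive, and that the unit axiom for $E$ identifies $[\Sigma^d E, N]_E$ with $\pi_d N$. The one mild subtlety is bookkeeping for the grading: with the $\Z/2$-graded convention of \S\ref{subsec:graded-estar-modules}, the identification $\pi_*\Sigma^d E \approx \omega^{-d/2}$ must be used when expressing the isomorphisms, but this is functorial and does not alter the argument. Once these verifications are assembled, the equivalence of categories $\pi_*\colon h\Modff{E}\ra \Modff{E_*}$ follows immediately.
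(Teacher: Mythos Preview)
Your proposal is correct and follows essentially the same approach as the paper: the paper's proof is simply the one-line observation that $h\Mod{E}(\Sigma^d E,M)\approx \pi_dM$, and you have spelled out in detail how this yields both essential surjectivity and fully faithfulness. The only difference is the level of detail, not the underlying idea.
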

\begin{proof}
A straightforward consequence of the observation that
$h\Mod{E}(\Sigma^d E,M) \approx \pi_dM$.
\end{proof}

\subsection{Flat modules}

Say that an $E$-module $M$ is \dfn{flat} if $\pi_*M$ is flat as a
graded $E_*$-module.  We will need the following analogue of
Lazard's theorem on flat modules over a ring, which is a variant of an
observation of Lurie \cite[\S4.6]{lurie-dag2}.

Let $C$ be a category enriched over spaces.  Say that $C$ is
\dfn{filtered} if the following hold.
\begin{enumerate}
\item For every finite set of objects $X_1,\dots, X_k$ in $C$, there
  exists an object $Y$ and morphism $X_j\ra Y$ in $C$ for all $j=1,\dots,k$.
\item For all $k\geq0$, all $X,Y$ objects of $C$, and all maps $S^k\ra
  C(X,Y)$, there exists a map $g\colon Y\ra Z$ and a dotted arrow
  making the diagram 
\[\xymatrix{
{S^k} \ar[r] \ar[d] & {C(X,Y)} \ar[d]^{C(X,g)}
\\
{D^{k+1}} \ar@{.>}[r] & {C(X,Z)}
}\]
commute.  
\end{enumerate}
For a category $C$ enriched over spaces, let $\pi_0C$ denote the
ordinary (enriched over sets) category with the same objects as $C$,
whose morphisms are the sets of path components of the mapping spaces
of $C$. 
If $C$ is filtered in the above sense, then the
ordinary category $\pi_0 C$ is filtered in the usual sense.  

\begin{prop}\label{prop:flat-e-module}
An object $M\in \Mod{E}$ is flat if and only if it is weakly
equivalent to the homotopy
colimit of some continuous functor $F\colon C\ra \Modff{E}$, where $C$
is filtered. 
\end{prop}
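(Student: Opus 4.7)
The plan is to prove both directions by reducing to classical Lazard's theorem for graded $E_*$-modules, using the space-enriched filteredness condition to translate between homotopical and algebraic settings. The central preliminary fact, which I would establish first, is that for $C$ filtered in the enriched sense and any continuous $F\colon C\ra\Mod{E}$ the natural map $\colim_{\pi_0 C}\pi_* F\ra\pi_*\hocolim_C F$ is an isomorphism: condition (1) in the definition of filtered makes $\pi_0 C$ filtered in the classical sense, so in the Bousfield--Kan spectral sequence $\colim^s_{\pi_0 C}\pi_t F\Rightarrow \pi_{s+t}\hocolim_C F$ one has $\colim^s_{\pi_0 C}=0$ for $s>0$, while condition (2) is what forces the remaining $E_\infty^{0,*}$ line to agree with the naive $\pi_*$-colimit.

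Granting this, the ``if'' direction is immediate: each $\pi_* F(c)$ is finitely generated free over $E_*$, and a filtered colimit of flat modules is flat by the classical Lazard theorem. For the ``only if'' direction, assume $\pi_* M$ is flat, and take $C\defeq \Modff{E}\downarrow M$ to be the space-enriched over-category whose objects are pairs $(N,f)$ with $N\in\Modff{E}$ and $f\colon N\ra M$ in $\Mod{E}$, and whose mapping spaces $C((N_1,f_1),(N_2,f_2))$ are the homotopy fibers of $\Modff{E}(N_1,N_2)\ra\Mod{E}(N_1,M)$ over the point $f_1$. The projection $(N,f)\mapsto N$ gives a tautological continuous functor $F\colon C\ra\Modff{E}$, and the structure maps $f$ assemble into a canonical comparison $\alpha\colon\hocolim_C F\ra M$. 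Condition (1) of filteredness for $C$ is handled by wedges $\bigvee N_i\ra M$, and once condition (2) is in hand, $\alpha$ becomes a weak equivalence by the identification in the first paragraph: surjectivity on $\pi_*$ follows by realizing each $x\in\pi_q M$ as an object $(\Sigma^q E\xra{x} M)\in C$, and injectivity follows from classical Lazard applied to $\pi_* M$.

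The main obstacle is verifying condition (2) for $C$: given a map $S^k\ra C((N_1,f_1),(N_2,f_2))$, i.e., an element of $\pi_k\funcspec_E(N_1,N_2)$ (a finitely generated free $E_*$-module up to shift) equipped with a coherent null-homotopy of its image in $\pi_k\funcspec_E(N_1,M)$, one must produce a morphism $(N_2,f_2)\ra(N_3,f_3)$ in $C$ whose postcomposition annihilates it. Since $N_1$ is compact in $\Mod{E}$, this reduces by standard manipulations to the following algebraic statement: if a finitely generated free submodule of $\pi_*\funcspec_E(N_1,N_2)$ vanishes under the map induced by $f_2$ to $\pi_*M$, then it already vanishes after composition with some map $N_2\ra N_3$ in $\Modff{E}$ lying over $M$. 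This is precisely where flatness of $\pi_* M$ is used, via the Lazard-style statement that every such vanishing is detected at a finite stage of a finite-free approximation. Once this point is settled, the rest of the argument is formal.
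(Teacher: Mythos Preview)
Your approach is essentially identical to the paper's: both use the topologically enriched over-category of finite free $E$-modules mapping to $M$ as the filtered $C$, and reduce the verification that $C$ is filtered (and that the canonical map is an equivalence) to Lazard's theorem applied to $\pi_*M$. The paper's proof is only a brief sketch deferring to Lurie, so your version is in fact more explicit; one small caution is that the Bousfield--Kan spectral sequence for a \emph{topologically enriched} $C$ does not literally have $E_2$-page $\colim^s_{\pi_0 C}$, and condition (1) alone does not make $\pi_0 C$ filtered (you also need condition (2) at $k=0$), though your conclusion $\pi_*\hocolim_C F\cong\colim_{\pi_0 C}\pi_*F$ for filtered $C$ is correct.
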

\begin{proof}
This is proved in much the same way as \cite[Theorem
4.6.19]{lurie-dag2}, though with changes of detail, since the notions
of ``flat'' and ``finite free'' we use differ than the ones Lurie
uses.  
We briefly sketch the ideas here.

For the if direction, it suffices to note that taking homotopy groups
commutes with taking homotopy colimit over a filtered diagram.

For the only if direction, consider the comma category $\Mod{E}/M$,
which admits the structure of a topological closed model category.
Choose a set $S$ of fibrant-and-cofibrant representatives of weak
equivalence classes of 
objects $(F,f\colon F\ra M)$ for which $F$ is finite free, and let $C$
be the full topological subcategory of $\Mod{E}/M$ spanned by $S$.
Now one shows that if $M$ is flat, then $C$ is a filtered topological
category in the sense described above.  Given this, it is clear that
$M$ is equivalent to the homotopy colimit of the canonical functor
$C\ra \Mod{E}$.
\end{proof}

\subsection{Completed extended powers}

If $M$ is an $E$-module, the \dfn{$m$th symmetric power} is the
quotient $(M^{\sm_E m})_{h\Sigma_m}$ of the $m$th smash power by the
evident symmetric group action.  The free commutative $E$-algebra on
$M$ is the coproduct $\bigvee_{m\geq0} \Pfree_m(M)$.  

In this paper, we will deal mainly with the \dfn{$m$th extended
  powers}; we write $\Pfree_m(M)\defeq 
(M^{\sm_E m})_{h\Sigma_m}$ for this; the extended power also  passes
to a functor on the 
homotopy category $h\Mod{E}$,
also denoted $\Pfree_m$.  We recall that, in the EKMM model for
$S$-modules, if we choose a $q$-cofibrant model for the commutative
$S$-algebra $E$, then the symmetric powers of cell cofibrant
$R$-modules are homotopy equivalent to extended powers
\cite[III.5]{ekmm}.  

We write $\Pfree(M)=\bigvee_{m\geq0}
\Pfree_m(M)$.  If $G\subseteq \Sigma_m$ is a subgroup, we write
$\Pfree_G(M)\defeq (M^{\sm_E m})_{hG}$.    The functor $\Pfree$
defines a monad on the homotopy category of $E$-modules.   
We will assume that the reader is familiar
with properties of these functors, for instance as described in
\cite[Ch.\ 1]{bmms-h-infinity-ring-spectra}.   In particular, we note
that $\Pfree$ defines a monad on the the homotopy category $h\Mod{E}$
of $E$-modules, and any commuative $E$-algebra results in an algebra
for this monad.

\begin{prop}\label{prop:pfree-completed-isos}
The functors $\Pfree_m$ preserve $K$-homology isomorphisms.  In
particular, $\Pfree_m(j)\colon \Pfree_m(M) \ra \Pfree_m(LM)$ is a
$K$-homology isomorphism, and thus there is a natural isomorphism
$L\Pfree_m(j)\colon L\Pfree_m\ra L\Pfree_m L$ of functors on
$h\Mod{E}$.  The functor $L\Pfree \colon h\Mod{E}\ra h\Mod{E}$ admits a
unique monad structure with the property that $j$ is a map of monads.
\end{prop}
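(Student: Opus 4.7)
The plan is to establish the three claims in sequence: first that $\Pfree_m$ preserves $K(n)$-homology equivalences, second that this yields the natural isomorphism $L\Pfree_m\xra{\sim} L\Pfree_m L$, and third that $L\Pfree$ inherits a unique monad structure compatible with $j$.

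For the first claim, I would begin by showing that $\sm_E$ preserves $K(n)$-equivalences in each variable: given $f\colon M\ra M'$ a $K(n)$-equivalence with cofiber $C$ (so $K(n)\sm_S C\simeq *$), and any $E$-module $N$, the commutation $K(n)\sm_S(C\sm_E N)\simeq (K(n)\sm_S C)\sm_E N$ holds because $\sm_E$ is presented as a coequalizer of iterated $\sm_S$ against the $E$-actions and $K(n)\sm_S -$ commutes with colimits; hence $C\sm_E N$ is $K(n)$-acyclic, so $f\sm_E N$ is a $K(n)$-equivalence. Iterating $m$ times, $f^{\sm_E m}\colon M^{\sm_E m}\ra (M')^{\sm_E m}$ is a $\Sigma_m$-equivariant $K(n)$-equivalence. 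Since $(-)_{h\Sigma_m}$ is a homotopy colimit and $K(n)\sm_S-$ commutes with homotopy colimits, $\Pfree_m(f)=(f^{\sm_E m})_{h\Sigma_m}$ is a $K(n)$-equivalence.

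The ``in particular'' statement follows directly: $j_M$ is a $K(n)$-equivalence by construction, so $\Pfree_m(j_M)$ is a $K(n)$-equivalence, and applying $L$ (which inverts $K(n)$-equivalences) yields the natural isomorphism $L\Pfree_m(j)\colon L\Pfree_m\xra{\sim}L\Pfree_m L$. The same reasoning applies to $\Pfree=\bigvee_{m\geq 0}\Pfree_m$, giving $L\Pfree\xra{\sim} L\Pfree L$.

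For the monad structure, I would define the unit $\id\ra L\Pfree$ as the composite $\id\ra\Pfree\xra{j}L\Pfree$. To construct the multiplication, observe that the natural map $j\ast j\colon \Pfree\Pfree\ra L\Pfree L\Pfree$, given as $\Pfree\Pfree\xra{j_\Pfree}L\Pfree\Pfree\xra{L\Pfree(j)}L\Pfree L\Pfree$, is a $K(n)$-equivalence (the first factor by definition of $L$, the second by the first claim applied at $j$). Since the target is already $K(n)$-local, applying $L$ produces an isomorphism $L(j\ast j)\colon L\Pfree\Pfree\xra{\sim} L\Pfree L\Pfree$. Define the multiplication $\tilde\mu\colon L\Pfree L\Pfree\ra L\Pfree$ as the composite of the inverse of this isomorphism with $L(\mu)\colon L\Pfree\Pfree\ra L\Pfree$, where $\mu$ is the multiplication of $\Pfree$. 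By construction $\tilde\mu\circ(j\ast j)=j\circ\mu$, so $j$ is a map of monads. For uniqueness, any multiplication $\tilde\mu'$ making $j$ a monad map satisfies the same equation, and since $j\ast j$ becomes an isomorphism after $L$ and $L\Pfree$ is $K(n)$-local, this equation determines $\tilde\mu'$ uniquely. The monad axioms for $\tilde\mu$ then reduce to those for $\mu$ via the same universal property.

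The main obstacle is the commutation $K(n)\sm_S(M\sm_E N)\simeq (K(n)\sm_S M)\sm_E N$ at the derived level, which requires the EKMM model of $\sm_E$ to preserve the relevant colimit structure; this is standard for cofibrant modules but should be noted carefully. The other delicate point is the verification of the monad axioms for $\tilde\mu$, which are straightforward but require invoking the universal property of localization in the right places.
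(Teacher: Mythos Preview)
Your proposal is correct and follows essentially the same approach as the paper. The paper's own proof is a single sentence---``The functors $\Pfree$ are homology isomorphisms for any homology theory; the remaining statements are straightforward''---so your argument simply unpacks what the paper takes for granted: that smash products over $E$ and homotopy orbits preserve homology equivalences, and that the monad structure on $L\Pfree$ is forced by the universal property of localization.
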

\begin{proof}
The functors $\Pfree$ are homology isomorphisms for any homology
theory; the remaining statements are straightforward.
\end{proof}

The goal of this section 
is to prove
\begin{prop}\label{prop:free-ring-preserves-finite-frees}
If $M$ is an $E$-module which is finite free, then
$L\Pfree_m(M)$ is also finite free.
\end{prop}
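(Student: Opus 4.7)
The plan is to combine an orbit decomposition of the $\Sigma_m$-action on $M^{\sm_E m}$ with Strickland's computation of $\cE{*}B\Sigma_m$.

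First, I would reduce to the case that $M$ is a single suspension of $E$. Writing $M\simeq\bigvee_{i=1}^{k}\Sigma^{d_i}E$, the $m$-fold smash $M^{\sm_E m}$ decomposes $\Sigma_m$-equivariantly as a wedge indexed by functions $\phi\colon\{1,\dots,m\}\to\{1,\dots,k\}$. The $\Sigma_m$-orbits on these functions are indexed by compositions $\mathbf{m}=(m_1,\dots,m_k)$ of $m$, with stabilizer $\Sigma_{\mathbf{m}}\defeq\prod_i\Sigma_{m_i}$, and passing to homotopy orbits yields a finite wedge decomposition
\[
\Pfree_m(M)\simeq \bigvee_{m_1+\cdots+m_k=m}\Pfree_{m_1}(\Sigma^{d_1}E)\sm_E\cdots\sm_E\Pfree_{m_k}(\Sigma^{d_k}E).
\]
Since $L$ commutes with finite wedges, and since smash products of finite free $E$-modules are again finite free (hence $K(n)$-local by Proposition~\ref{prop:charac-ffree-module}, so that $L$ acts trivially on them), the problem reduces to showing that $L\Pfree_j(\Sigma^d E)$ is finite free for every $j\geq 0$ and $d\in\Z$.

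Next, for the single-factor case, $(\Sigma^d E)^{\sm_E j}\simeq\Sigma^{dj}E$ with $\Sigma_j$ acting trivially when $d$ is even and via the sign character when $d$ is odd. Hence $\Pfree_j(\Sigma^d E)$ is equivalent (up to even suspension) either to $E\sm B\Sigma_{j+}$ or to the Thom spectrum $E\sm(B\Sigma_j)^{\sigma}$ of the sign representation. Strickland's theorem \cite{strickland-morava-e-theory-of-symmetric} asserts that $\cE{*}B\Sigma_j$ is a finitely generated free $E_*$-module; the sign-twisted case is handled similarly, exploiting that two copies of the sign representation combine into a $2$-plane bundle with trivial determinant, hence a complex line bundle (which is $E$-orientable via the complex orientation of $E$), together with Strickland's computation in the residual single-copy case. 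Proposition~\ref{prop:charac-ffree-module} then upgrades finite-freeness of $\cE{*}$ to finite-freeness of the $E$-module $L\Pfree_j(\Sigma^d E)$, completing the proof. The main obstacle is the Thom-spectrum bookkeeping in the odd-$d$ case, which requires invoking appropriate orientation results; otherwise, the argument is essentially formal once Strickland's input is in hand.
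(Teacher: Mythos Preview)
Your reduction to single suspensions via the orbit decomposition is fine and matches the paper's ``binomial formula''. The gap is in your identification of the $\Sigma_j$-action on $(\Sigma^d E)^{\sm_E j}$. The action is \emph{not} trivial or by the sign character: permuting the $j$ smash factors of $S^d$ gives the representation sphere $S^{d\rho_j}$, where $\rho_j$ is the real permutation representation of $\Sigma_j$, so $\Pfree_j(\Sigma^d E)\simeq E\sm B\Sigma_j^{d\rho_j}$. For even $d$ this is harmless, since $d\rho_j$ is complex and the Thom isomorphism reduces you to $E\sm \Sip B\Sigma_j$, whose finite-freeness is indeed known. But for odd $d$ you land on (a suspension of) $E\sm B\Sigma_j^{\bar\rho_j}$, the Thom spectrum of the \emph{reduced} permutation representation, which is $(j-1)$-dimensional. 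Only for $j=2$ does $\bar\rho_j$ coincide with the sign representation; for $j>2$ your ``two copies of the sign combine to a complex line bundle'' argument is simply about the wrong bundle, and there is no direct computation of $\cE{*}B\Sigma_j^{\bar\rho_j}$ in Strickland's paper to cite.

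The paper avoids this obstacle entirely. It first handles $\Pfree_p(\Sigma^c E)$ by passing via transfer to the cyclic subgroup $C_p\subset\Sigma_p$, where the Thom spectrum $BC_p^{c\rho_{C_p}}$ can be computed by hand (Lemma~\ref{lemma:prime-cyclic-free}); the awkward odd case at $p=2$ is dealt with by an explicit splitting argument. It then bootstraps to all $m$ using the wreath product identity $\Pfree_{\Sigma_p^{\wr r}}\approx \Pfree_p^{\circ r}$ together with the fact that $\Sigma_m$ contains a product of iterated wreath powers of $\Sigma_p$ with index prime to $p$, so transfer gives a retraction. Your direct attack on $B\Sigma_j^{\bar\rho_j}$ could perhaps be made to work with more input, but as written the odd-degree case is not established.
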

This is well-known in the case that $\pi_*M$ is concentrated in
even degree (see \cite[Thm.\ D]{hopkins-kuhn-ravenel}).

\begin{prop}\label{prop:transfer}
If $G$ contains a $p$-Sylow subgroup of $\Sigma_m$, then $\Pfree_G(M)
\ra \Pfree_m(M)$ admits a section for any 
$E$-module $M$.
\end{prop}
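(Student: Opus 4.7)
The plan is to construct a retraction using the Becker--Gottlieb transfer together with the hypothesis that the index $N := [\Sigma_m : G]$ is coprime to $p$. Since $G$ contains a $p$-Sylow subgroup of $\Sigma_m$, $N$ is prime to $p$; because $\pi_0 E$ is a complete local ring with residue characteristic $p$, the integer $N$ is a unit in $\pi_0 E$, and so multiplication by $N$ acts as a self-equivalence on every $E$-module spectrum.

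The $\Sigma_m$-set $\Sigma_m/G$ has an equivariant stable transfer $S^0 \to (\Sigma_m/G)_+$ in the category of $\Sigma_m$-equivariant spectra, whose composite with the fold map $(\Sigma_m/G)_+ \to S^0$ is multiplication by $N$. Smashing this with the $\Sigma_m$-equivariant $E$-module $M^{\sm_E m}$ and then forming homotopy $\Sigma_m$-orbits produces a natural transfer map of $E$-modules
\[
\tau \colon \Pfree_m(M) = (M^{\sm_E m})_{h\Sigma_m} \lra \bigl((\Sigma_m/G)_+ \sm M^{\sm_E m}\bigr)_{h\Sigma_m} \simeq (M^{\sm_E m})_{hG} = \Pfree_G(M),
\]
together with the property that the composite $\pi \circ \tau$, where $\pi \colon \Pfree_G(M) \to \Pfree_m(M)$ denotes the canonical quotient, is multiplication by $N$ on $\Pfree_m(M)$. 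Consequently, the map $s := N^{-1} \tau$ satisfies $\pi \circ s = \id_{\Pfree_m(M)}$, giving the desired section.

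The only delicate point is verifying that the transfer can be produced as a map of $E$-modules rather than merely of underlying spectra; this is handled by performing the transfer construction $\Sigma_m$-equivariantly on the smash power $M^{\sm_E m}$ before passing to homotopy orbits, so that it inherits the $E$-module structure automatically. Beyond that, the argument is a formal manipulation whose substance lies entirely in the invertibility of $N$ in $\pi_0 E$.
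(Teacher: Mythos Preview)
Your proof is correct and follows the same approach as the paper: the paper's one-line argument simply asserts that the fold map $\Sip(\Sigma_m/G)_{(p)} \to \Sip(\Sigma_m/\Sigma_m)_{(p)}$ admits an $H$-equivariant retraction when $[\Sigma_m:G]$ is prime to $p$, which is precisely what you construct explicitly as $N^{-1}$ times the equivariant transfer. Your added care about producing the section as a map of $E$-modules (by working equivariantly before taking homotopy orbits) and about why $N$ is a unit in $\pi_0 E$ makes explicit what the paper leaves implicit.
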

\begin{proof}
If $G$ is a subgroup of a group $H$, with index prime to $p$, the map
of spectra $\Sip(H/G)_{(p)} \ra \Sip(H/H)_{(p)}$ admits a retraction in the
homotopy category of spectra equipped with a $H$ action.
\end{proof}

Recall that $p$ is the characteristic of the residue field of $E_*$.  
Let $\rho_{C_p}$ denote the real regular representation of the cyclic
group $C_p$, and let $BC_p^{c\rho_{C_p}}$ denote the Thom spectrum of
the virtual representation $c\rho_{C_p}$, where $c\in \Z$.

\begin{lemma}\label{lemma:prime-cyclic-free}
If  $c\in \Z$, then $\cE{*}BC_p^{c\rho_{C_p}}$ is a
finitely generated 
free $\pi_*E$-module.
\end{lemma}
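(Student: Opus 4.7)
The plan is to reduce to the well-known computation of $E^*BC_p$ and then apply the complex Thom isomorphism. By Proposition \ref{prop:charac-ffree-module}, it suffices to show that $E^*BC_p^{c\rho_{C_p}}$ is a finitely generated free $E^*$-module. The starting point is the classical fact that $E^*BC_p \cong E^*\powser{x}/[p](x)$ is a finitely generated free $E^*$-module of rank $p^n$, where $[p](x)$ denotes the $p$-series of the formal group law of $E$.

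Decompose the regular representation as $\rho_{C_p} = \R \oplus \bar\rho_{C_p}$, where $\bar\rho_{C_p}$ is the reduced regular representation of real dimension $p-1$. When $p$ is odd, $\bar\rho_{C_p}$ carries a canonical complex structure, identifying it with the underlying real representation of $\bigoplus_{k=1}^{(p-1)/2} L^k$, where $L$ is the one-dimensional complex representation on which a chosen generator of $C_p$ acts by $e^{2\pi i/p}$. Hence for every $c \in \Z$, the virtual representation $c\bar\rho_{C_p}$ is a virtual complex representation, and the complex orientation of $E$ yields a Thom isomorphism
\[
E^*BC_p^{c\rho_{C_p}} \cong E^{*-cp}BC_p,
\]
which is finitely generated free over $E^*$.

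For $p=2$, $\bar\rho_{C_2} = \sigma$ is the sign representation, which is not complex; however, $2\sigma = \sigma \otimes_\R \C$ does carry a complex structure, so the previous argument handles $c\sigma$ for $c$ even. For odd $c$, I would write $c\sigma = \sigma \oplus (c-1)\sigma$ and apply the Thom isomorphism only to the complex summand $(c-1)\sigma$, reducing to the claim that $E^*BC_2^\sigma$ is finitely generated free. The latter follows from the classical identification $BC_2^\sigma \simeq \Sigma^\infty\R P^\infty$ (the Thom space of the tautological real line bundle over $\R P^n$ is $\R P^{n+1}$), which gives $E^*BC_2^\sigma \cong \tilde E^*\R P^\infty$, a free $E^*$-module of rank $2^n - 1$.

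The main subtlety is the case $p=2$ with $c$ odd: since $c\sigma$ is not complex orientable, the Thom isomorphism cannot be applied directly, and one instead relies on the special geometric identification of $BC_2^\sigma$ with a suspension spectrum of $\R P^\infty$. Some care is also needed to interpret $BC_p^{c\rho_{C_p}}$ as a genuine spectrum when $c<0$, using that $\rho_{C_p}$ admits a stable inverse in the orthogonal representation ring of $C_p$; once this is done, the Thom isomorphism extends to the virtual setting without further difficulty.
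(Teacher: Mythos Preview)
Your argument is correct and follows essentially the same route as the paper: compute $E^*BC_p$ as $E^*\powser{x}/[p](x)$, split off the trivial summand of $\rho_{C_p}$, apply the complex Thom isomorphism to $\bar\rho_{C_p}$ for $p$ odd (and to $2\sigma$ for $p=2$), and handle the residual $p=2$ odd case via the identification $BC_2^{\sigma}\simeq \R P^\infty$. The only cosmetic difference is the $p=2$ endgame: the paper observes that $\Sigma BC_2^{\bar\rho_{C_2}}\approx \Sigma BC_2$ is stably a retract of $\Sigma(\Sigma^\infty_+ BC_2)$ and inherits finite freeness from there, whereas you compute $\tilde E^*\R P^\infty$ directly as the augmentation ideal, free of rank $2^n-1$; these are equivalent.
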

\begin{proof}
First suppose $c=0$.  Then the cofiber
sequence
$$BC_p\approx S(\lambda^{\otimes p}) \ra BS^1\ra
(BS^1)^{\lambda^{\otimes p}}$$
associated to the universal line bundle $\lambda$ over $BS^1$,
together with 
and the Thom isomorphism for $E$-theory, give
$$0\la E^*BC_p \la E\powser{x} \xla{[p](x)} E\powser{x}\la 0,$$
and in particular since $[p](x)\equiv x^{p^n}\mod \mathfrak{m}$, $E^*BC_p$ is
free over $E_*$ on $1,\dots,x^{p^n-1}$.
Thus $\hom(\Sip BC_p, E)$ is a finitely
generated free $E$-module, and therefore so is $L(E\sm \Sip BC_p)$ by
\eqref{prop:charac-ffree-module}. 

The Thom isomorphism for $E$-theory immediately gives the result for
even $c$, by 
identifying $2d\rho_{C_p}$ with the complex bundle $d\rho_{C_p}\otimes \C$.  It
remains to check the case of odd $c$, and the Thom isomorphism allows
us to reduce to the case $c=1$.

If $p$ is odd, there is a splitting $\rho_{C_p}\approx \R\oplus
\bar{\rho}_{C_p}$ of real $C_p$-representations, where the 
real representation $\bar{\rho}_{C_p}$ admits a complex structure.  The
result follows using the Thom isomorphism, since $E$ is complex
orientable. 

If $p=2$, then $\rho_{C_2}\approx \R\oplus \bar{\rho}_{C_2}$, and
$\bar{\rho}_{C_2}$ is the 
sign representation, so that as spaces,
$$BC_2^{\rho_{C_2}} \approx \Sigma BC_2^{\bar{\rho}_{C_2}} \approx \Sigma BC_2.$$
Stably, the latter  is a retract of $\Sigma(\Sip BC_2)$, whose completed
$E$-homology is finite free as noted above.
\end{proof}

\begin{proof}[Proof of \eqref{prop:free-ring-preserves-finite-frees}]
Since $(S^c)^{\sm p}_{hC_p} \approx BC_p^{c\rho_{C_p}}$,
\eqref{lemma:prime-cyclic-free} 
implies that $L\Pfree_{C_p}(\Sigma^c E)$ is finitely generated free.
Thus \eqref{prop:transfer} shows that $L\Pfree_p(M)$ is a retract
of $L\Pfree_{C_p}(M)$, so that $L\Pfree_p(\Sigma^c E)$ is finitely
generated free.  

The ``binomial formula'' for $\Pfree_p$ says  that
$$\Pfree_p(M\vee N)\approx \bigvee_{i+j=p} \Pfree_i(M)\sm_E
\Pfree_j(N),$$
and since $\Pfree_i(M)$ is a retract of $M^{\sm_E i}$ by
\eqref{prop:transfer}, we conclude that $L\Pfree_p(M)$ takes finite
frees to finite frees.

We have that 
$$\Pfree_G\Pfree_H(M)\approx \Pfree_{H\wr G}(M).$$
If $\Sigma_p^{\wr r}$ denotes the $r$-fold wreath power, we have shown
that $L\Pfree_{\Sigma_p^{\wr r}}=L\Pfree_p\cdots\Pfree_p$
preserves finite frees.  

Finally, for $m\geq0$ with $m=\sum a_i p^i$, with
$a_i\in\{0,\dots,p-1\}$, the group $\Sigma_m$ contains a subgroup 
$G=\prod_i (\Sigma_p^{\wr i})^{\times a_i}$, which acts on
$\underline{m}$ in the evident way, and which contains a $p$-Sylow
subgroup of $G$.  By \eqref{prop:transfer}, $\Pfree_m(M)$ is a retract of
the smash product of finitely many $\Pfree_{\Sigma_p^{\wr i}}(M)$,
and therefore we are done.
\end{proof}

\begin{rem}
The above proof shows a little bit more.  Namely, if $M$ is a finite
free module with $\pi_*M$ concentrated in \emph{even} degree, then
$\pi_*L\Pfree_m(M)$ is also concentrated in even degrees.  There is
no corresponding result when $\pi_*M$ is concentrated in odd degree,
although the proof of \eqref{prop:free-ring-preserves-finite-frees}
implies the following result.
\end{rem}

\begin{cor}\label{cor:pth-power-odd-degree}
The graded module $\pi_* L\Pfree_p(\Sigma^cE)$ is concentrated in odd
degree if $c$ is odd.
\end{cor}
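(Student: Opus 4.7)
The plan is to specialize the argument in the proof of \eqref{prop:free-ring-preserves-finite-frees} to the case $m=p$, while keeping track of parity. Since $C_p$ is a $p$-Sylow subgroup of $\Sigma_p$, \eqref{prop:transfer} exhibits $L\Pfree_p(\Sigma^c E)$ as a retract of $L\Pfree_{C_p}(\Sigma^c E)$ in $h\Mod{E}$, and concentration in odd degree is inherited by retracts. Under the identification $(S^c)^{\sm p}_{hC_p}\approx BC_p^{c\rho_{C_p}}$ used in the proof of \eqref{lemma:prime-cyclic-free}, the problem reduces to showing that $\cE{*}BC_p^{c\rho_{C_p}}$ is concentrated in odd degree when $c$ is odd.

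The key fact is that $\cE{*}BC_p$ is a finitely generated free $E_*$-module concentrated in \emph{even} degree, since $E^*BC_p\approx E_*\powser{x}/[p](x)$ is. For $p$ odd I would use the decomposition $\rho_{C_p}\approx \R\oplus \bar{\rho}_{C_p}$ from the proof of \eqref{lemma:prime-cyclic-free}, in which $\bar{\rho}_{C_p}$ admits a complex structure of complex dimension $(p-1)/2$. Thus $c\rho_{C_p}$ is the sum of a trivial real representation of rank $c$ and a complex representation of complex rank $c(p-1)/2$; complex orientability of $E$ and the Thom isomorphism then give $\cE{*}BC_p^{c\rho_{C_p}}\approx \cE{*-cp}BC_p$, which is concentrated in odd degree whenever $c$ and $p$ are both odd.

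For $p=2$ I would use the identification $BC_2^{\bar{\rho}_{C_2}}\approx BC_2$ from the proof of \eqref{lemma:prime-cyclic-free}, which gives $BC_2^{c\rho_{C_2}}\approx \Sigma^c BC_2^{c\bar{\rho}_{C_2}}$. For odd $c$, split off a single copy of $\bar{\rho}_{C_2}$ and bundle the remaining $c-1$ copies into a complex representation of complex dimension $(c-1)/2$ (since $\bar{\rho}_{C_2}\oplus \bar{\rho}_{C_2}$ is the underlying real representation of the nontrivial complex character of $C_2$); the Thom isomorphism then yields $\cE{*}BC_2^{c\rho_{C_2}}\approx \cE{*-(2c-1)}BC_2$, again concentrated in odd degree. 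The only real subtlety is this $p=2$ bookkeeping, needed because $\bar{\rho}_{C_2}$ by itself has no complex structure; otherwise the argument is a direct specialization of the parity analysis already present in the proof of \eqref{prop:free-ring-preserves-finite-frees}.
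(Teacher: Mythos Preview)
Your argument is correct and is exactly the parity bookkeeping that the paper intends: the paper gives no separate proof of this corollary, merely noting that it is read off from the proof of \eqref{prop:free-ring-preserves-finite-frees}, and you have correctly unwound that proof (via \eqref{prop:transfer} and the Thom-space analysis of \eqref{lemma:prime-cyclic-free}) while tracking degrees. Your handling of the $p=2$ case, pairing $c-1$ copies of $\bar{\rho}_{C_2}$ into a complex bundle and using $BC_2^{\bar{\rho}_{C_2}}\approx BC_2$ for the last copy, is precisely the reduction the paper makes to the case $c=1$ together with the even Thom isomorphism.
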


We also note the following interesting consequence (a generalization
of an observation of McClure).
\begin{prop}\label{prop:strictly-graded-commutative}
  If $A$ is a $K(n)$-local commutative $E$-algebra, then the multiplication on
  $\pi_*A$ is \emph{strictly} graded commutative, in the sense that if
  $x\in \pi_q A$ with $q$ odd, then $x^2=0$.
\end{prop}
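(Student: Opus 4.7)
The plan is to express $x^2$ as the image of a universal class in the homotopy of a completed extended power, and then apply Corollary~\ref{cor:pth-power-odd-degree} to show that this universal class vanishes.

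First I would package the commutative structure on $A$ monadically. By Proposition~\ref{prop:pfree-completed-isos}, the functor $L\Pfree$ is a monad on $h\Mod{E}$, and any $K(n)$-local commutative $E$-algebra $A$ inherits the structure of an $L\Pfree$-algebra there; in particular there is a structure map $\mu\colon L\Pfree_2(A)\ra A$. Given $x\in \pi_qA$ represented by $\tilde x\colon \Sigma^qE\ra A$, functoriality yields $L\Pfree_2(\tilde x)\colon L\Pfree_2(\Sigma^qE)\ra L\Pfree_2(A)$, and I claim that the composite $\mu\circ L\Pfree_2(\tilde x)$ sends a canonical ``squaring'' class $\xi\in \pi_{2q}L\Pfree_2(\Sigma^qE)$ — namely the image of the evident map $S^{2q}\approx \Sigma^qE\sm_E \Sigma^qE \ra \Pfree_2(\Sigma^qE)\xra{j} L\Pfree_2(\Sigma^qE)$ — to $x^2\in \pi_{2q}A$. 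This identification follows from the compatibility of $\mu$ with the binary multiplication on $A$ (both being induced from the ambient $E_\infty$-structure).

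Next I would split on the parity of $p$. If $p$ is odd, then $2$ is invertible in $E_0$, and the ordinary graded commutativity relation $x^2=-x^2$ (valid in the homotopy of any homotopy commutative ring spectrum) immediately gives $x^2=0$. The interesting case is $p=2$: here I invoke Corollary~\ref{cor:pth-power-odd-degree} with $c=q$ odd, which tells us that $\pi_*L\Pfree_2(\Sigma^qE)$ is concentrated in odd degrees. Since $2q$ is even, $\pi_{2q}L\Pfree_2(\Sigma^qE)=0$, so the universal class $\xi$ already vanishes, and therefore $x^2=0$.

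The main point requiring care is the identification $\mu\circ L\Pfree_2(\tilde x)(\xi) = x^2$ in the first paragraph; the $K(n)$-localization means one cannot work naively with $\Pfree_2$ but must use the monadic lift from Proposition~\ref{prop:pfree-completed-isos}. Once that identification is in hand, the proof is really a formal consequence of the $p=2$ half of Corollary~\ref{cor:pth-power-odd-degree}, which is where the geometric content (the computation of $\cE{*}BC_2^{c\rho_{C_2}}$ in Lemma~\ref{lemma:prime-cyclic-free}) enters.
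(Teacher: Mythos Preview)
Your proposal is correct and follows essentially the same route as the paper: represent $x$ by an $E$-module map $\Sigma^qE\ra A$, factor $x^2$ through $\pi_{2q}L\Pfree_2(\Sigma^qE)$ via the $L\Pfree$-algebra structure on $A$, and then kill that group using Corollary~\ref{cor:pth-power-odd-degree} in the $p=2$ case (the odd-$p$ case being trivial). Your write-up is somewhat more explicit about the monadic packaging and the construction of the universal class $\xi$, but the argument is the same.
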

\begin{proof}
It is clear that we only need to prove something in the $2$-local
case.  Let $f\colon \Sigma^q E \ra A$ be the $E$-module map which
represents $x$.  Then $x^2\in \pi_{2q}A$ is the image of an element in
$\pi_{2q} L\Pfree_2(\Sigma^q E)$ under the map $L\Pfree_2(\Sigma^qE)
\ra L\Pfree_2(\Sigma^q A)\ra A$.  But $\pi_*L\Pfree_2(\Sigma^qE)$ is
concentrated in odd degree by \eqref{cor:pth-power-odd-degree}.
\end{proof}

\subsection{Power operations}
\label{subsec:power-ops}

Let $R$ be a commutative $E$-algebra spectrum.  For any space $X$ and
any $m\geq0$, we obtain an operation
\[
P_m\colon R^0X \ra R^0(X\times B\Sigma_m),
\]
defined so that an $E$-module map $x\colon E\sm \Sip X\ra R$ is sent
to the composite 
\[
E\sm \Sip (X\times B\Sigma_m) \ra E\sm \Sip X^m_{h\Sigma_m} \approx
\Pfree_m (E\sm \Sip X) \xra{\Pfree_m(x)} \Pfree_m R \ra R.
\]
The operation $P_m$ is called the \dfn{$m$th power operation}.  It has
the property that $P_m(xy)=P_m(x)P_m(y)$.  Since $E^0B\Sigma_m$ is a
finite free $E_0$-module, there is an isomorphism $R^0(X\times
B\Sigma_m)\approx R^0X\otimes_{E_0}E^0B\Sigma_m$.  

If $R^X$ denotes the spectrum of functions from $\Sip X$ to $R$, which
is a commutative $E$-algebra, then the power operation $P_m\colon
(R^X)^0(*) \ra (R^X)^0(B\Sigma_m)$ coincides with power operation on
$R^0X$.

Let 
\[
J=\sum_{0<i<m} \im[ R^0(X\times B(\Sigma_i\times\Sigma_{m-i}))
\xra{transfer} R^0(X\times 
B\Sigma_m)], 
\]
where these are transfer maps associated to the subgroups
$\Sigma_i\times \Sigma_{m-i}\subset \Sigma_m$.  Thus $J\subseteq
R^0(X\times B\Sigma_m)$ is an ideal.
We write $\bP_m$ for the
composite map
\[
R^0(X)\xra{P_m} R^0(X\times B\Sigma_m) \ra R^0(X\times B\Sigma_m)/J.
\]
\begin{prop}\label{prop:bP-is-ring-hom-spectra}
The map $\bP_m$ is a ring homomorphism.
\end{prop}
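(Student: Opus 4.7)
The plan is to check multiplicativity, unitality, and additivity of $\bP_m$ separately. The multiplicative axiom $\bP_m(xy) = \bP_m(x)\bP_m(y)$ descends directly from the identity $P_m(xy) = P_m(x)P_m(y)$ recorded in the paragraph immediately preceding the proposition. Unitality $\bP_m(1) = 1$ follows because the power operation applied to the unit $E \to R$ yields the unit element of $R^0(X \times B\Sigma_m)$. The substance of the proposition is therefore to show that
\[
P_m(x+y) - P_m(x) - P_m(y) \;\in\; J.
\]

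To establish this, set $A = E \sm \Sip X$ and factor $x + y \colon A \to R$ as $\nabla \circ (x \vee y) \circ \delta$, where $\delta \colon A \to A \vee A$ is the stable cogroup comultiplication and $\nabla \colon R \vee R \to R$ is the fold. Applying $\Pfree_m$ and using the binomial splitting
\[
\Pfree_m(A \vee A) \;\cong\; \bigvee_{i+j=m} \Pfree_i(A) \sm_E \Pfree_j(A)
\]
(already invoked in the proof of \eqref{prop:free-ring-preserves-finite-frees}) decomposes $\Pfree_m(x+y)$ into $m+1$ summands indexed by pairs $(i,j)$ with $i+j=m$. The extreme summands $(m,0)$ and $(0,m)$ reproduce $P_m(x)$ and $P_m(y)$, since the counit axiom for $\delta$ makes the corresponding component the identity and the same holds for $\nabla$. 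For a mixed pair $0 < i < m$, the $(i, j)$-component of $\Pfree_m(\delta)$ is the stable transfer for the covering $X^m_{h(\Sigma_i \times \Sigma_j)} \to X^m_{h\Sigma_m}$ smashed with $E$. Precomposing with the diagonal inclusion $X \times B\Sigma_m \to X^m_{h\Sigma_m}$ and forming the pullback square with these covering maps shows that the mixed contribution to $R^0(X \times B\Sigma_m)$ factors through the transfer from $R^0(X \times B(\Sigma_i \times \Sigma_{m-i}))$, hence lies in $J$ by definition.

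The main obstacle is the transfer identification in the last sentence. The naive inclusion $B(\Sigma_i \times \Sigma_{m-i}) \hookrightarrow B\Sigma_m$ induces restriction in cohomology, not transfer; the point is that the wedge splitting of $\Pfree_m(A \vee A)$ is a purely stable phenomenon, and its components realize the covering transfer rather than the restriction, via the duality between inclusion of orbits and transfer in the stable category. This recovers the classical addition formula
\[
P_m(x+y) \;=\; \sum_{i+j=m} T_{i,j}\bigl(P_i(x) \cdot P_j(y)\bigr)
\]
of Bruner--May--McClure--Steinberger \cite[Ch.\ VIII]{bmms-h-infinity-ring-spectra}, from which the proposition is immediate.
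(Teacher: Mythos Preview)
Your argument is correct. The paper does not supply a proof of this proposition; it is recorded as a known fact, the additivity formula for $P_m$ modulo transfers being classical (as you note, from \cite{bmms-h-infinity-ring-spectra}). Your reconstruction---identifying the $(i,j)$-component of $\Pfree_m(\delta)$ with the transfer for $\Sigma_i\times\Sigma_j\subset\Sigma_m$ via the stable diagonal, and then invoking base-change compatibility of transfer along the pullback square over $X^m_{h\Sigma_m}$ to see that the mixed terms land in $J$---is exactly the standard argument and is sound.
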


Let $i\colon X\ra X\times B\Sigma_m$ denote the map induced by
inclusion of a basepoint in $B\Sigma_m$.  
\begin{prop}\label{prop:lift-of-mth-power-spectra}
The composite map
\[
R^0(X) \xra{P_m} R^0(X\times B\Sigma_m) \xra{i^*} R^0(X)
\]
sends $x\mapsto x^m$.
\end{prop}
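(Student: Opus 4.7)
The plan is to prove this by a direct trace through the definition of the power operation, checking that basepoint restriction collapses the extended-power construction down to ordinary $m$-fold multiplication. The two key ingredients are (a) a factorization of the composite $X \xrightarrow{i} X \times B\Sigma_m \to X^m_{h\Sigma_m}$ through the diagonal $X \to X^m$, and (b) the fact that the structure map $\mathbb{P}_m R \to R$ of the commutative $E$-algebra $R$ restricted along $R^{\sm_E m} \to \mathbb{P}_m R$ is just the $m$-fold multiplication.

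First, I would observe that the comparison map $X \times B\Sigma_m \to X^m_{h\Sigma_m}$ used in the definition of $P_m$ arises from the $\Sigma_m$-equivariant diagonal $\Delta\colon X \to X^m$ (trivial action on the source), yielding $X_{h\Sigma_m} = X \times B\Sigma_m \to X^m_{h\Sigma_m}$. Restricting along the inclusion of any basepoint $* \to B\Sigma_m$ therefore factors as
\[
X \xra{\Delta} X^m \xra{q} X^m_{h\Sigma_m},
\]
where $q$ is the canonical quotient map from the orbit space to the homotopy orbit space. Smashing with $E$ and using the equivalence $E \sm \Sip X^m \approx (E \sm \Sip X)^{\sm_E m}$, this says that $i^* P_m(x)$ is represented by the composite
\[
E \sm \Sip X \xra{\Delta} (E \sm \Sip X)^{\sm_E m} \xra{q} \mathbb{P}_m(E \sm \Sip X) \xra{\mathbb{P}_m(x)} \mathbb{P}_m R \xra{\mu} R,
\]
where $\mu\colon \mathbb{P}_m R \to R$ is the structure map of $R$ as a commutative $E$-algebra and $\Delta$ is now the $m$-fold diagonal of the coalgebra $\Sip X$ smashed with $E$.

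Second, I would note that naturality of $q$ gives $\mathbb{P}_m(x) \circ q = q \circ x^{\sm_E m}$, and that the composite $R^{\sm_E m} \xra{q} \mathbb{P}_m R \xra{\mu} R$ is, by definition of a commutative $E$-algebra, the iterated multiplication $\mu^{(m)}\colon R^{\sm_E m} \to R$. Splicing these identifications, $i^*P_m(x)$ equals
\[
E \sm \Sip X \xra{\Delta} (E \sm \Sip X)^{\sm_E m} \xra{x^{\sm_E m}} R^{\sm_E m} \xra{\mu^{(m)}} R,
\]
which is precisely the definition of $x^m$ under the ring structure on $R^0(X)$. No serious obstacle is anticipated: the entire proof is a diagram chase, and the one place demanding care is the identification of the map $X \times B\Sigma_m \to X^m_{h\Sigma_m}$ used implicitly in \S\ref{subsec:power-ops}, so that its basepoint restriction really is the diagonal followed by $q$.
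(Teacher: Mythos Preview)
Your argument is correct and is exactly the standard diagram chase one would give. The paper itself states this proposition without proof, treating it as a standard fact about power operations (cf.\ the reference to \cite{bmms-h-infinity-ring-spectra} earlier in the section), so there is nothing to compare against; your write-up would serve perfectly well as the omitted justification.
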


\subsection{Relative power operations}

There is a ``relative'' version of the power operation, which we will
need in \S\ref{sec:bialgebra-and-sheaves}.  Let $(X,A)$ be a CW-pair
of spaces,
let $D_m(X,A)\subseteq X^{\times m}$ denote the space which is the
union of the subspaces 
of the form $X^i\times A\times X^{m-i-1}$, and consider the diagram 
\[\xymatrix{
{\tilde{D}_m(X,A)}  \ar[r]^-{g} \ar[d]
& {X} \ar[r] \ar[d]^{\text{diag}}
& {C(g)} \ar[d]
\\
{D_m(X,A)} \ar[r]_-{f}
& {X^{\times m}} \ar[r]
& {C(f)}
}\]
where $\tilde{D}_m(X,A)$ is the homotopy pullback of $f$ along the
diagonal inclusion, and $C(f)$ and $C(g)$ are homotopy cofibers.  The
group $\Sigma_m$ acts on every space in this diagram. We define a
pointed space
\[
B_m(X,A) \defeq C(g)^{\sm m}_{h\Sigma_m};
\]
it comes with a map $B_m(X,A)\ra C(f)^{\sm m}_{h\Sigma_m} \approx
(X/A)^{\sm m}_{h\Sigma_m}$.  Note that $B_m(X,\varnothing)\approx
(X\times B\Sigma_m)_+$.
Given such a pair $(X,A)$, we define
\[
P_m\colon \tilde{R}^0(X/A) \ra \tilde{R}^0B_m(X,A)
\]
so that an $E$-module map $x\colon E\sm \Si X/A \ra R$ is sent to the
composite
\[
E\sm \Si B_m(X,A) \ra E\sm \Si (X/A)^{\sm m}_{h\Sigma_m} \approx
\Pfree_m(E\sm \Si(X/A)) \xra{\Pfree_m(x)} \Pfree_m R\ra R.
\]
The diagram
\[
\xymatrix{
{\tilde{R}^0(X/A)} \ar[r]^-{P_m} \ar[d]
& {\tilde{R}^0B_m(X,A)} \ar[d]
\\
{R^0X} \ar[r]_-{P_m} 
& {R^0X\times B\Sigma_m}
}\]
commutes.

We are mainly interested in pairs of the form $(D(V),S(V))$,
where $V\ra X$ is a real vector bundle.  In this case, we see that
the relative power operation amounts to a map
\[
P_m\colon \tilde{R}^0 X^V \ra \tilde{R}^0 (X\times B\Sigma_m)^{V\boxtimes \rho_m},
\]
where $\rho_m\ra B\Sigma_m$ is the real vector bundle associated to
the real permutation representation, $V\boxtimes \rho_m$ is the external
tensor product bundle, and the spaces are Thom spaces.

\subsection{Power operations in non-zero degree}

The definition of \S\ref{subsec:power-ops} extends to arbitrary degree,
as follows.  Given $q\in \Z$, there is a function
\[
P_m\colon R^q X \ra R^q(X_+\sm B\Sigma_m^{-q\bar\rho_m}),
\]
where $\bar\rho_m$ denotes the real permutation representation; if
$q>0$, then $-q\bar\rho_m$ is a virtual bundle, and thus the target of
$P_m$ is really $R^q (\Sip X\sm B\Sigma_m^{-q\bar\rho_m})$.
The function $P_m$ is defined so that an $E$-module map $x\colon E\sm
\Sip X \sm S^{-q}\ra R$ is sent to the composite
\begin{multline*}
E\sm \Sip X \sm B\Sigma_m^{-q\bar\rho_m}\sm S^{-q}\approx 
E\sm \Sip X \sm B\Sigma_m^{-q\rho_m} \ra 
\\
E\sm (\Sip X\sm
S^{-q})_{h\Sigma_m} \approx \Pfree_m(E\sm \Sip X\sm S^{-q}) \ra \Pfree_m R
\ra R.
\end{multline*}

Let $X$ be a pointed space.  If $m>0$, then  the operator $P_m$
defined 
above restricts to a function
\[
P_m'\colon \tilde{R}^qX \ra \tilde{R}^q(X\sm
B\Sigma_m^{-q\bar\rho_m}).
\]
\begin{prop}\label{prop:susp-of-power-op}
Let $X$ be a pointed space, and $q\in \Z$.
The diagram
\[
\xymatrix{
{\tilde{R}^qX}  \ar[r]^-{P_m'} \ar[d]^{\sim}_{\text{susp.}}
& {\tilde{R}^q(X\sm B\Sigma_m^{-q\bar\rho_m})}
\ar[r]^-{\tilde{R}^q(\id\sm e)}
& {\tilde{R}^q(X\sm B\Sigma_m^{-(q+1)\bar\rho_m})} \ar[d]^{\text{susp.}}_{\sim}
\\
{\tilde{R}^{q+1}(S^1\sm X)}  \ar[rr]_-{P_m'}
&& {\tilde{R}^{q+1}(S^1\sm X\sm B\Sigma_m^{-(q+1)\bar\rho_m})}
}\]
commutes, where the vertical maps are the suspension isomorphisms, and
$e\colon B\Sigma_m^{-(q+1)\bar\rho_m}\ra B\Sigma_m^{-q\bar\rho_m}$ is
the map of Thom spectra induced by the inclusion $0\subset \rho_m$ of
vector bundles.
\end{prop}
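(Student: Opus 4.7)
The plan is to unwind both composites in the diagram at the level of $E$-module representatives. Fix a representative $x\colon E\sm X\sm S^{-q}\ra R$ of the given class in $\tilde{R}^qX$. Via the identification $S^1\sm S^{-q-1}\approx S^{-q}$, both paths in the diagram produce an $E$-module map with source $E\sm X\sm B\Sigma_m^{-(q+1)\bar\rho_m}\sm S^{-q}$, so it suffices to verify that these two $E$-module maps into $R$ agree.

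Reading off the definition of $P_m'$ from the construction above, the power operation at degree $q$ factors as $Y\sm B\Sigma_m^{-q\bar\rho_m}\sm S^{-q}\approx Y\sm B\Sigma_m^{-q\rho_m}\xra{\alpha_q} (Y\sm S^{-q})^{\sm_E m}_{h\Sigma_m} \approx \Pfree_m(Y\sm S^{-q})$, where $\alpha_q$ is the natural structural map obtained by rewriting $Y\sm B\Sigma_m^{-q\rho_m}$ as $(Y\sm S^{-q\rho_m})_{h\Sigma_m}$ (with $\Sigma_m$ acting trivially on $Y$ and by permutation on $S^{-q\rho_m}$) and applying the $\Sigma_m$-equivariant diagonal $\Delta_Y\colon Y\ra Y^{\sm m}$ to the $Y$-factor. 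Comparison of the two paths then reduces to the claim that $\alpha_{q+1}$ applied to the input $S^1\sm X$ equals $\alpha_q$ applied to the input $X$, precomposed with the map $e$.

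The key input for this claim is the factorization $\Delta_{S^1\sm X}=\Delta_{S^1}\sm \Delta_X$, valid as $\Sigma_m$-equivariant maps under the canonical shuffle identification $(S^1\sm X)^{\sm m}\approx (S^1)^{\sm m}\sm X^{\sm m}$; this lets us split $\alpha_{q+1}$ into a ``$\Delta_{S^1}$-step'' followed by an ``$\alpha_q$-step''. The geometric observation completing the proof is that $\Delta_{S^1}\colon S^1\ra (S^1)^{\sm m}\approx S^{\rho_m}$ is the Thomification of the inclusion $\R\subset \rho_m$ of the trivial subrepresentation; Thomifying this $\Sigma_m$-equivariantly by $-(q+1)\rho_m$, taking $\Sigma_m$-orbits, and converting between $\rho_m$ and $\bar\rho_m$ via $\rho_m=\R\oplus \bar\rho_m$, the ``$\Delta_{S^1}$-step'' becomes precisely the map $e$. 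The remaining identification of $\Pfree_m(\Sigma x)$ with $\Pfree_m(x)$ under the canonical isomorphism $\Pfree_m(E\sm S^1\sm X\sm S^{-q-1})\approx \Pfree_m(E\sm X\sm S^{-q})$ is automatic.

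The main obstacle is the equivariant and Thom-spectrum bookkeeping: one must verify carefully that the shuffle identification is $\Sigma_m$-equivariant for the permutation action, that the Thom spectrum isomorphisms $B\Sigma_m^{-k\rho_m}\approx \Sigma^{-k}B\Sigma_m^{-k\bar\rho_m}$ are natural under the relevant maps, and that sign and orientation conventions for the equivariant Thom isomorphism are consistent across the $\R\oplus\bar\rho_m$ splitting. Once a uniform convention is fixed, the equality of the two composites follows formally from the naturality of $\alpha_q$ in $Y$.
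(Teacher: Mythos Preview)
Your proposal is correct and is precisely the ``straightforward calculation, using the definitions'' that the paper invokes as its entire proof; you have simply spelled out the unwinding that the paper leaves to the reader. The key geometric observation you isolate---that the equivariant diagonal $\Delta_{S^1}\colon S^1\ra (S^1)^{\sm m}\approx S^{\rho_m}$ is the Thomification of the inclusion of the trivial summand $\R\subset\rho_m$, and hence after twisting and passing to homotopy orbits becomes the map $e$---is exactly the content of the calculation.
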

\begin{proof}
A straightforward calculation, using the definitions.
\end{proof}

The following corollary will be crucial for our treatment of gradings
in \S\ref{sec:additive-bialgebra}.  It relates the action of power
operations on $R^qS^q$ with the action of power operations on $R^0(*)$.
\begin{cor}\label{cor:power-op-and-s2}
For all $q\geq0$, the diagram
\[
\xymatrix{
{\tilde{R}^0S^0} \ar[r]^-{P_m'} \ar[d]_{\sim}
& {\tilde{R}^0(S^0\sm B\Sigma_m^0)} \ar[r]^{\tilde{R}^0(\id\sm e)}
& {\tilde{R}^0B\Sigma_m^{-q\bar\rho_m}}  \ar[d]^{\sim}
\\
{\tilde{R}^qS^q} \ar[rr]_{P_m'}
&& {\tilde{R}^q(S^q\sm B\Sigma_m^{-q\bar\rho_m})}
}\]
commutes, where $e\colon B\Sigma_m^{-q\bar\rho_m}\ra B\Sigma_m^0$.
\end{cor}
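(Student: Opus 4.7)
The plan is a straightforward induction on $q$, with Proposition \ref{prop:susp-of-power-op} providing the inductive step.

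The base case $q=0$ is immediate: the map $e\colon B\Sigma_m^{0} \to B\Sigma_m^{0}$ is the identity, and the two vertical suspension isomorphisms are trivial, so the diagram commutes tautologically.

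For the inductive step from $q$ to $q+1$, I stack two commuting rectangles vertically. The top one is the diagram of the corollary at level $q$, known by induction; write $e_q\colon B\Sigma_m^{-q\bar\rho_m}\to B\Sigma_m^0$ for the Thom map appearing there. The bottom one is the instance of Proposition \ref{prop:susp-of-power-op} obtained by setting $X = S^q$ at degree $q$: it compares $P_m'$ in degree $q$ on $S^q$ with $P_m'$ in degree $q+1$ on $S^{q+1}$, at the cost of post-composing with one additional Thom map $\tilde{R}^q(\id \sm e')$, where $e'\colon B\Sigma_m^{-(q+1)\bar\rho_m}\to B\Sigma_m^{-q\bar\rho_m}$. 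Pasting the two rectangles along their common middle row, the left column composes to the full suspension isomorphism $\tilde{R}^0 S^0 \xrightarrow{\sim} \tilde{R}^{q+1} S^{q+1}$, and the top horizontal edge becomes $\tilde{R}^0 S^0 \xrightarrow{P_m'} \tilde{R}^0 B\Sigma_m^0 \xrightarrow{\tilde{R}^0(e_q)} \tilde{R}^0 B\Sigma_m^{-q\bar\rho_m} \xrightarrow{\tilde{R}^0(e')} \tilde{R}^0 B\Sigma_m^{-(q+1)\bar\rho_m}$.

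By the transitivity of Thom maps for the nested bundle inclusions $0 \subset q\bar\rho_m \subset (q+1)\bar\rho_m$, the composite $e_q \circ e'$ is the single Thom map $e_{q+1}\colon B\Sigma_m^{-(q+1)\bar\rho_m}\to B\Sigma_m^0$ appearing in the corollary's statement at level $q+1$, so the pasted diagram is exactly the claimed one. The only substantive content is Proposition \ref{prop:susp-of-power-op}; the main (minor) obstacle is purely clerical, namely checking that the iterated suspension isomorphisms and composable Thom maps reassemble into the single suspension isomorphism and single Thom map $\id\sm e$ that appear in the target diagram.
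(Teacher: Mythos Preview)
Your proof is correct and matches the paper's intent: the corollary is stated without proof immediately after Proposition~\ref{prop:susp-of-power-op}, and the implicit argument is exactly the induction on $q$ you carry out. The one point you gloss over as ``clerical'' deserves a word: when pasting, the bottom row of the inductive rectangle is shorter than the top row of the proposition's rectangle by one Thom map, so you need an auxiliary square expressing that the suspension isomorphism $\tilde{R}^0 B\Sigma_m^{-q\bar\rho_m}\xra{\sim}\tilde{R}^q(S^q\sm B\Sigma_m^{-q\bar\rho_m})$ commutes with applying $e'$ on both sides; this is naturality of suspension and is indeed trivial, but it is the piece that lets you lift $\tilde{R}^q(\id\sm e')$ to $\tilde{R}^0(e')$ on the top edge.
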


\section{Approximation functors}
\label{sec:algebraic-approximation}

In this section, we are going to produce a monad $\algapprox$ on the
category $\Mod{E_*}$, called the \dfn{algebraic approximation
  functor}, and thus a category $\gralgcat$ of algebras for 
the monad $\algapprox$.  There will be dotted arrow
\[\xymatrix{
& {\gralgcat} \ar[d]
\\
{\Alg{E}} \ar[r]_{\pi_*} \ar@{.>}[ur]^{\pi_*}
& {\Mod{E_*}}
}\]
making the diagram commute up to natural isomorphism.  Furthermore,
the lift 
is be ``nearly'' optimal, in the sense that for a flat $E$-module
spectrum $M$, the object $\pi_*L\Pfree M$ is be ``nearly'' isomorphic to the
free $\algapprox$-algebra on  $\pi_*M$.  In the above
sentence, ``nearly'' indicates that the isomorphism holds only after
a suitable completion.

The category $\gralgcat$ has a number of nice properties.  Most
notably, the forgetful functor to commutative $E_*$-algebras 
\[
U\colon \gralgcat\ra \Alg{E_*}
\]
admits both a left and a right adjoint, so that $U$ preserves both
limits and colimits.

\subsection{Left Kan extension}
\label{subsec:left-kan-def}

Recall that given functors $F\colon \cat{I}\ra \cat{D}$ and $U\colon
\cat{I}\ra \cat{C}$, 
a \dfn{left Kan extension} of $F$ along $U$ is the initial example of
a pair $(E,\delta)$, where $E\colon \cat{C}\ra \cat{D}$ is a functor and
$\delta\colon F\ra EU$ is natural transformation of functors $\cat{I}\ra
\cat{D}$.  We denote the left Kan extension (if it exists) by $\colim^U F$,
and we write $\beta\colon F\ra (\colim^U F)U$ for the canonical
natural transformation.  The universal property of the left Kan
extension is equivalent to the following: if $G\colon \cat{C}\ra
\cat{D}$ is any 
functor, then there is a one-to-one correspondence between 
\[
(\text{natural transformations $F\ra GU$}) \Longleftrightarrow
(\text{natural transformations $\colim^U F\ra G$}). \]
Note that if $U$ is fully faithful, then $\beta\colon F\ra 
(\colim^U F)U$ is a natural isomorphism. 


Given essentially small $\cat{I}$, choose a set of objects $S$ of
$\cat{I}$ which spans its isomorphism classes. 
For $q\in \Z$, define a functor $B_q=B_q^{F,U}\colon \cat{C}\ra
\cat{D}$ by 
\[
B_q(X) \defeq \coprod_{\substack{I_0\ra\cdots\ra I_q \in \cat{I} \\
    UI_q\ra X\in \cat{C}}} F(I_0),
\]
where the direct sum is taken over all diagrams $I_0\ra \cdots\ra I_q$
in $\cat{I}$ where the objects are in $S$, together with all morphisms
$UI_q\ra X$ in $\cat{C}$.  Then the left Kan extension is the
coequalizer of the evident pair of arrows
\begin{equation}\label{eq:left-kan-def}
B_1(X)\rightrightarrows B_0(X) \ra (\colim^U F)(X).
\end{equation}

Now suppose that we are additionally given a functor $V\colon
\cat{C}\ra \cat{C}'$.  Then we can form the left Kan extension of $F$
along $VU\colon \cat{I}\ra \cat{C}'$, and the canonical transformation
$F\ra (\colim^{VU}F)VU$ corresponds (according to the universal
property of $\colim^U$) to a natural transformation $\gamma\colon
\colim^UF \ra (\colim^{VU}F)V$.
\begin{lemma}\label{lemma:finality}
Given a diagram of categories and functors
\[\xymatrix{
{\cat{I}} \ar[rr]^{U}  \ar[d]_{F}
&& {\cat{C}} \ar[rr]^{V} \ar[dll]|{\colim^U F}
&& {\cat{C}'} \ar[dllll]^{\colim^{VU}F}
\\
{\cat{D}}
}\]
with $\cat{I}$ essentially small, if for all objects 
$I$ in $\cat{I}$ and $X$ in $\cat{C}$ the map $\cat{C}(UI,X)\ra \cat{C}'(VUI,VX)$ is a
bijection, then the transformation $\gamma\colon \colim^UF\ra
(\colim^{VU}F)V$ is an isomorphism.
\end{lemma}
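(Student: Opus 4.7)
The plan is to exploit the explicit bar-type coequalizer formula for the left Kan extension given in equation \eqref{eq:left-kan-def}, and observe that the hypothesis forces the bar constructions for $\colim^U F$ and $\colim^{VU} F$ to agree levelwise after restriction along $V$.

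More precisely, I would fix an object $X$ of $\cat{C}$ and compare the two coequalizer diagrams
\[
B_1^{F,U}(X) \rightrightarrows B_0^{F,U}(X) \to (\colim^U F)(X)
\]
and
\[
B_1^{F,VU}(VX) \rightrightarrows B_0^{F,VU}(VX) \to (\colim^{VU} F)(VX).
\]
Using the same spanning set of objects $S\subset \cat{I}$ for both constructions, the coproducts defining $B_q^{F,U}(X)$ and $B_q^{F,VU}(VX)$ are indexed, respectively, by pairs consisting of a chain $I_0\to\cdots\to I_q$ of objects of $S$ together with either a morphism $UI_q\to X$ in $\cat{C}$ or a morphism $VUI_q\to VX$ in $\cat{C}'$. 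The hypothesis of the lemma says precisely that the map $\cat{C}(UI_q,X)\to \cat{C}'(VUI_q,VX)$ induced by $V$ is a bijection, so the two indexing sets are in natural bijection and the summands $F(I_0)$ are the same on both sides. Hence $V$ induces a canonical isomorphism $B_q^{F,U}(X)\xrightarrow{\sim} B_q^{F,VU}(VX)$ for each $q\in\{0,1\}$, and these isomorphisms are compatible with the two parallel arrows in the coequalizer diagrams (since those arrows are determined by composition in $\cat{I}$ and by composition of the morphisms $UI_q\to X$ with maps in $\cat{I}$, and $V$ is functorial).

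Passing to coequalizers, I get an induced isomorphism $(\colim^U F)(X)\xrightarrow{\sim} (\colim^{VU} F)(VX)$, and I would check that this is the map $\gamma_X$ of the statement by tracing through the universal property: the canonical map $F\to (\colim^{VU}F)V\!U$ supplied by the bar construction factors through $(\colim^U F)U$ and induces, by the universal property that defines $\gamma$, exactly the map we have just built. No step here is especially delicate; the only thing to verify carefully is the compatibility of the bijection of indexing sets with the two face maps of the bar construction, which is immediate from functoriality of $V$ and from the fact that $V$ acts as the identity on the discrete data (chains in $\cat{I}$ and the functor $F$). There is no real obstacle, and the argument works uniformly whether $\cat{D}$ admits all small colimits or just the coequalizers and coproducts needed to define the Kan extension.
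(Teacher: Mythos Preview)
Your proposal is correct and follows essentially the same approach as the paper: reduce to the coequalizer formula \eqref{eq:left-kan-def} and observe that the hypothesis makes the evident maps $B_q^{F,U}(X)\to B_q^{F,VU}(VX)$ isomorphisms for $q=0,1$. The paper's proof is just a one-line version of your argument.
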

\begin{proof}
Using the coequalizer \eqref{eq:left-kan-def}, it suffices to check
that the evident maps $B_q^{F,U}(X)\ra B_q^{F,VU}(VX)$ are
isomorphims, which is immediate from the hypothesis.
\end{proof}


\subsection{Construction of $\algapprox_m$ and $\algapprox$}

In this section, we construct functors $\algapprox_m\colon
\Mod{E_*}\ra \Mod{E_*}$, called \dfn{algebraic approximation} functors,
We will define $\algapprox (M)
\defeq \bigoplus_{m\geq0} \algapprox_m(M)$.

The idea is to define $\algapprox_m$ first on  finitely generated
free $E_*$-modules using the equivalence of categories $\pi_*\colon
h\Modff{E}\ra \Modff{E_*}$.  Thus, for a finite free $E_*$-module
$M_*$, we should set $\algapprox_m(M_*)= \pi_* L\Pfree_m(M)$,
where $M$ is an $E$-module such that $\pi_*M\approx M_*$.  Then we
extend $\algapprox_m$ to all $E_*$-modules via left Kan extension.

We will refer to the following diagram of functors
$$\xymatrix{
{h\Modff{E}} \ar[r]_i \ar[d]_{\pi_*}^{\sim} \ar@/^1pc/[rr]^{\pi_*L\Pfree_mi}
& {h\Mod{E}} \ar[d]_{\pi_*} \ar@{.>}[r]_{\widetilde{\algapprox}_m}
& {\Mod{E_*}}
\\
{\Modff{E_*}} \ar[r]_j 
& {\Mod{E_*}} \ar@{.>}[ur]_{\algapprox_m}
}$$
in which the left-hand square commutes (on the nose), the functors
$i$ and $j$ are  
inclusions of full subcategories, and the vertical arrow on the left
is an equivalence of categories.

We define $\algapprox_m\colon
\Mod{E_*}\ra \Mod{E_*}$ be the left Kan extension of the functor
$\pi_*L\Pfree_m i\colon h\Modff{E}\ra \Mod{E_*}$ along the functor
$\pi_*i=j\pi_*\colon h\Modff{E}\ra \Mod{E_*}$; this exists
because $h\Modff{E}$ is essentially small.

The functor $\pi_* i$ is fully faithful, and so the tautological
transformation $\beta\colon
\pi_*L\Pfree_m i\ra \algapprox_m\pi_* i$ is an isomorphism.  
\begin{lemma}\label{lemma:algapprox-is-self-kan-extension}
The natural map $\kappa\colon \algapprox_m \ra \colim^j \algapprox_m
j$ adjoint to the identity $\id\colon \algapprox_mj\ra \algapprox_m j$
is an isomorphism.
\end{lemma}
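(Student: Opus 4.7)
The plan is to reduce this to two facts about left Kan extensions: the general composition law, and the finality lemma \eqref{lemma:finality} already established in this section. Throughout write $F = \pi_*L\Pfree_m i \colon h\Modff{E}\to \Mod{E_*}$, so that by definition $\algapprox_m = \colim^{j\pi_*} F$.

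First I would apply \eqref{lemma:finality} with $U=\pi_*\colon h\Modff{E}\to \Modff{E_*}$ and $V=j\colon \Modff{E_*}\to \Mod{E_*}$. The hypothesis of the lemma -- that $\Modff{E_*}(\pi_*I,X)\to \Mod{E_*}(j\pi_*I,jX)$ is a bijection for $I\in h\Modff{E}$ and $X\in \Modff{E_*}$ -- is immediate from the fact that $j$ is fully faithful. The lemma then produces a natural isomorphism $\colim^{\pi_*}F \xra{\sim} (\colim^{j\pi_*}F)\circ j = \algapprox_m\circ j$. In other words, the left Kan extension of $F$ along $\pi_*$ into $\Modff{E_*}$ agrees with the restriction of $\algapprox_m$ to finite free $E_*$-modules. (One can also derive this more directly from the fact that $\pi_*$ is an equivalence of categories, which trivializes the left Kan extension along it.)

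Next I would invoke the general composition property of left Kan extensions: given composable functors $\cat{I}\xra{U}\cat{C}\xra{V}\cat{C}'$ and a functor $F\colon \cat{I}\to\cat{D}$ for which the relevant extensions exist, there is a canonical isomorphism $\colim^{VU}F \approx \colim^V(\colim^U F)$. This follows formally from the universal property: natural transformations $\colim^{VU}F\to G$ correspond to transformations $F\to G(VU) = (GV)U$, which correspond to transformations $\colim^U F \to GV$, which correspond to transformations $\colim^V(\colim^U F)\to G$. Applying this with $U=\pi_*$ and $V=j$ yields
\[
\algapprox_m = \colim^{j\pi_*}F \approx \colim^j(\colim^{\pi_*}F) \approx \colim^j(\algapprox_m j),
\]
where the second isomorphism uses the previous paragraph.

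Finally, I would unwind the definitions to check that the composite isomorphism $\algapprox_m \ra \colim^j(\algapprox_m j)$ produced above is precisely the map $\kappa$ adjoint to the identity on $\algapprox_m j$; this is a routine chase through the universal properties used. The only non-formal ingredient is the finality lemma, which in turn only uses that $j$ is fully faithful, so I don't anticipate any real obstacle beyond bookkeeping -- the statement is essentially a compatibility between the two stages of the Kan extension used to build $\algapprox_m$.
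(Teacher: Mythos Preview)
Your argument is correct and follows essentially the same route as the paper. The paper replaces $F$ by $\algapprox_m j\pi_*$ using the isomorphism $\beta$ (which holds because $j\pi_*$ is fully faithful) and then uses that $\pi_*$ is an equivalence to pass from $\colim^{j\pi_*}(\algapprox_m j\pi_*)$ to $\colim^j(\algapprox_m j)$; you package the same two facts as an application of the finality lemma \eqref{lemma:finality} followed by the composition law for Kan extensions, which is an equivalent formulation.
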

\begin{proof}
Since $\beta$ is an isomorphism and $\pi_* i= j\pi_*$, we have natural isomorphisms
\[
\algapprox_m = \colim^{\pi_* i} \pi_* L\Pfree i \approx
\colim^{j\pi_*} \algapprox_m j \pi_*.
\]
Since $\pi_*\colon h\Modff{E}\ra \Modff{E_*}$ is an equivalence of
categories, we see that $\colim^{j\pi_*} \algapprox_m j\pi_*  \approx
\colim^j \algapprox_m j$.
\end{proof}


\subsection{Construction of the approximation map}

Let $\widetilde{\algapprox}_m\colon h\Mod{E}\ra
\Mod{E_*}$ be the left Kan extension of the functor $\pi_* L\Pfree_m
i\colon h\Modff{E}\ra 
\Mod{E_*}$ along the inclusion $i\colon h\Modff{E}\ra h\Mod{E}$.
There are natural isomorphisms
\[
\algapprox_m \pi_* i \xla{\beta} \pi_* L\Pfree_m i \xra{\id}
\pi_*L\Pfree_m i,
\]
where $\beta$ is the tautological natural transformation for
$\algapprox_m$, which are associated (since $\widetilde{\algapprox}_m$
is a left Kan extension along $i$), to natural transformations
\[
\algapprox_m\pi_* \xla{\gamma} \widetilde{\algapprox}_m
\xra{\tilde{\alpha}} \pi_*L\Pfree_m
\]
of functors $h\Mod{E}\ra \Mod{E_*}$.
\begin{lemma}
The map $\gamma$ is a natural isomorphism.
\end{lemma}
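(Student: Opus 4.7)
The plan is to recognize $\gamma$ as a direct instance of the comparison map produced by Lemma~\ref{lemma:finality}. Concretely, I would apply that lemma with $\cat{I} = h\Modff{E}$, $\cat{C} = h\Mod{E}$, $\cat{C}' = \Mod{E_*}$, the functor $U=i\colon h\Modff{E}\to h\Mod{E}$, the functor $V = \pi_*\colon h\Mod{E}\to \Mod{E_*}$, and $F = \pi_* L\Pfree_m i$. With this data, $\colim^U F$ is $\widetilde{\algapprox}_m$ directly by definition, while $\colim^{VU}F = \colim^{\pi_* i}F = \algapprox_m$, also by definition. A short check of the universal properties shows that the map $\gamma$ given in the text agrees with the canonical transformation $\colim^UF \to (\colim^{VU}F)V$ produced in Lemma~\ref{lemma:finality}.

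The only substantive thing to verify, then, is the hypothesis of that lemma: namely, that for every finite free $E$-module $I$ and every $E$-module $X$, the functor $\pi_*$ induces a bijection
\[
h\Mod{E}(iI, X) \longrightarrow \Mod{E_*}(\pi_* iI, \pi_* X).
\]
I would establish this by reducing to the case of a single shifted copy of $E$. Any finite free $I$ is equivalent in $h\Mod{E}$ to a finite wedge $\bigvee_k \Sigma^{d_k}E$, and both Hom-functors convert finite wedges in their first argument into finite products. The claim therefore reduces to the natural identifications
\[
h\Mod{E}(\Sigma^{d} E, X) \approx \pi_d X \approx \Hom_{\Mod{E_*}}(\pi_* \Sigma^{d} E, \pi_* X),
\]
the second of which holds because $\pi_* \Sigma^{d} E$ is free on one generator in degree $d$ (under the conventions of \S\ref{subsec:graded-estar-modules}, it corresponds to $\omega^{-d/2}$).

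The main obstacle, such as it is, amounts to bookkeeping: one must be careful to check that the map produced by Lemma~\ref{lemma:finality} really is the same $\gamma$ described via the adjoint of $\beta\colon \pi_*L\Pfree_m i \xrightarrow{\sim} \algapprox_m \pi_* i$. This is immediate once one spells out both definitions in terms of the coequalizer presentation \eqref{eq:left-kan-def} of the Kan extension. There is no analytic or homotopy-theoretic difficulty here; the content is purely the formal interplay between left Kan extensions and the equivalence $h\Modff{E}\approx \Modff{E_*}$, upgraded to the Hurewicz-type bijection above for maps from finite free modules into arbitrary $E$-modules.
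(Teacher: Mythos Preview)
Your proposal is correct and is essentially the same as the paper's own proof: both invoke Lemma~\ref{lemma:finality} with $U=i$, $V=\pi_*$, and $F=\pi_*L\Pfree_m i$, and both verify its hypothesis via the standard bijection $h\Mod{E}(\Sigma^d E, X)\approx \pi_d X$. The paper simply states this in one sentence, whereas you have spelled out the reduction to a single shifted copy of $E$ and the bookkeeping identifying $\gamma$ with the comparison map of Lemma~\ref{lemma:finality}.
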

\begin{proof}
We are precisely in the setup of \eqref{lemma:finality}, where we use
the fact that $\pi_*\colon h\Mod{E}(F,M)\ra \Mod{E_*}(\pi_* F,
\pi_*M)$ is an isomorphism for all $F$ in $h\Modff{E}$
\end{proof}



The natural transformation 
\[
\alpha_m\colon \algapprox_m(\pi_* M) \ra \pi_*(L\Pfree_m(M)),
\]
is defined by $\alpha_m = \tilde{\alpha}\gamma^{-1}$.  
\begin{prop}\label{prop:algapprox-finite-free}
When $\pi_*M$ is a finite free $E_*$-module, the map $\alpha_m\colon 
\algapprox_m(\pi_*M) \ra \pi_*
L\Pfree_m(M)$ is an isomorphism.
\end{prop}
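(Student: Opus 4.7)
The plan is to unpack the construction of $\alpha_m$ and observe that on finite free modules the result follows purely from the universal property of left Kan extensions, together with the full faithfulness of the inclusion $i\colon h\Modff{E}\to h\Mod{E}$.

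Recall that $\alpha_m$ is defined as the composite $\tilde\alpha \circ \gamma^{-1}$, where $\gamma\colon \widetilde{\algapprox}_m \to \algapprox_m\pi_*$ has already been shown to be a natural isomorphism of functors $h\Mod{E}\to \Mod{E_*}$. Thus it suffices to verify that the natural transformation $\tilde\alpha\colon \widetilde{\algapprox}_m \to \pi_*L\Pfree_m$ is an isomorphism when evaluated on a finite free $E$-module $M$; equivalently, that $\tilde\alpha i\colon \widetilde{\algapprox}_m i \to \pi_*L\Pfree_m i$ is a natural isomorphism of functors $h\Modff{E}\to \Mod{E_*}$.

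By construction, $\widetilde{\algapprox}_m = \colim^{i} \pi_*L\Pfree_m i$, and $\tilde\alpha$ is the natural transformation that corresponds, under the universal property of this left Kan extension, to the identity on $\pi_*L\Pfree_m i$. Explicitly, $\tilde\alpha$ is characterized by the equation $\tilde\alpha i \circ \beta = \id_{\pi_*L\Pfree_m i}$, where $\beta\colon \pi_*L\Pfree_m i \to \widetilde{\algapprox}_m i$ is the tautological natural transformation for the Kan extension. As remarked in \S\ref{subsec:left-kan-def}, whenever the functor along which one Kan-extends is fully faithful the tautological transformation is an isomorphism. Since $i\colon h\Modff{E}\to h\Mod{E}$ is the inclusion of a full subcategory, this applies, so $\beta$ is an isomorphism, and hence $\tilde\alpha i$ is an isomorphism as well.

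Combining these two facts, $\alpha_m = \tilde\alpha\gamma^{-1}$ is an isomorphism on any $M$ whose homotopy is a finite free $E_*$-module, which is the desired conclusion. The main (and only) conceptual ingredient is the standard fact about Kan extensions along fully faithful functors; no calculation with extended powers is required at this point, because the content of the extended-power computation has already been absorbed into the construction via \eqref{prop:free-ring-preserves-finite-frees}.
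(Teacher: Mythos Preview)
Your argument is correct and is exactly the unwinding the paper has in mind: the proposition is stated without explicit proof because it follows directly from the construction, and your use of the full faithfulness of $i$ to identify $\tilde\alpha i$ with the inverse of the tautological transformation is precisely the intended justification. In fact, tracing the definitions one step further shows that $\alpha_m$ restricted along $i$ equals $\beta^{-1}$ (for the $\beta$ of $\algapprox_m$ noted just before \eqref{lemma:algapprox-is-self-kan-extension}), which is another way to say the same thing.
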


The natural transformation
\[
\alpha \colon \algapprox(\pi_* M) \ra \pi_*L\Pfree(M)
\]
is defined by 
\[
\bigoplus_m \algapprox_m(\pi_*M) \xra{\alpha_m} \bigoplus_m
\pi_*L\Pfree_m M \ra \pi_*L\left(\bigvee_m L\Pfree_m M\right) \approx
\pi_* L\Pfree M. 
\]
Note that the analogue to \eqref{prop:algapprox-finite-free} does not
hold for $\alpha$.

\subsection{$\algapprox$ is a monad}

\begin{prop}\label{prop:algapprox-is-monad}
The functor $\algapprox\colon \Mod{E_*}\ra \Mod{E_*}$ admits the
structure of a monad, compatibly with the monad structure of $L\Pfree$,
in the sense that the
diagrams
\[
\xymatrix{
{\pi_*M} \ar[r] \ar[rd]
& {\algapprox(\pi_*M)} \ar[d]^{\alpha}
& {\algapprox\algapprox(\pi_*M)} \ar[r] \ar[d]_{\alpha\circ \algapprox\alpha}
& {\algapprox(\pi_*M)} \ar[d] \ar[d]^{\alpha}
\\
& {\pi_*L\Pfree(M)}
& {\pi_*L\Pfree L\Pfree(M)} \ar[r]
& {\pi_*L\Pfree(M)}
}\]
commute, the unlabeled maps being the ones describing the monad
structure.
\end{prop}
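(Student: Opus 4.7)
The strategy is to transport the monad structure of $L\Pfree$ onto $\algapprox$ via the natural transformation $\alpha$. The essential technical inputs are Proposition \ref{prop:algapprox-finite-free}, which gives that $\alpha_m$ is an isomorphism on modules coming from finite free $E$-modules, and Proposition \ref{prop:free-ring-preserves-finite-frees}, which says $L\Pfree_m$ preserves the subcategory of finite free modules. Together these imply that when the input is finite free, all the iterates $\pi_* L\Pfree_m L\Pfree_n F$ remain finite free, so $\alpha$ is an isomorphism wherever we apply it.

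I would work componentwise, using that the multiplication of $L\Pfree$ respects the grading via maps $L\Pfree_m L\Pfree_n \to L\Pfree_{mn}$ coming from composition of extended powers. For the unit, note that $L\Pfree_1 \simeq L$, and $L$ is the identity on finite free modules by Proposition \ref{prop:charac-ffree-module}. Consequently the Kan extension defining $\algapprox_1$ is the left Kan extension of $j\pi_*$ along itself, which by direct inspection of the pointwise formula (every $E_*$-module is tautologically the colimit of the comma category $(j\pi_* \downarrow M)$) is the identity on $\Mod{E_*}$; so $\eta^{\algapprox}$ is the inclusion $\mathrm{id} = \algapprox_1 \hookrightarrow \algapprox$. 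For the multiplication, on a finite free $N = \pi_* F$ I define $\mu^{\algapprox}_{m,n}\colon \algapprox_m \algapprox_n \pi_* F \to \algapprox_{mn} \pi_* F$ as the composite
\[
\algapprox_m \algapprox_n \pi_* F \xrightarrow{\algapprox_m(\alpha_n)} \algapprox_m \pi_* L\Pfree_n F \xrightarrow{\alpha_m} \pi_* L\Pfree_m L\Pfree_n F \xrightarrow{\pi_* \mu^L_{m,n}} \pi_* L\Pfree_{mn} F \xrightarrow{\alpha_{mn}^{-1}} \algapprox_{mn} \pi_* F,
\]
where $\alpha_m$ and $\alpha_{mn}$ are isomorphisms because $L\Pfree_n F$ and $F$ are finite free. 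Summing over $m,n$ and extending to all $E_*$-modules gives $\mu^{\algapprox}$.

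The main obstacle is the extension step for $\mu^{\algapprox}$: while Lemma \ref{lemma:algapprox-is-self-kan-extension} identifies the target $\algapprox_{mn}$ with the left Kan extension of its restriction to $\Modff{E_*}$, the source $\algapprox_m \algapprox_n$ is a composition of two Kan extensions, and is not obviously a Kan extension from $\Modff{E_*}$. I would resolve this by appealing directly to the pointwise formula: since $\algapprox_n M \cong \colim_{(F,f) \in (j\pi_* \downarrow M)} \pi_* L\Pfree_n F$ exhibits $\algapprox_n M$ as a colimit of finite free modules, and since $\algapprox_m$ (as a left Kan extension from a small category into a cocomplete target) commutes with this colimit, one obtains $\algapprox_m \algapprox_n M \cong \colim_{(F,f)} \pi_* L\Pfree_m L\Pfree_n F$, and the multiplication defined on finite free inputs extends by naturality to the colimit. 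Once $\eta^{\algapprox}$ and $\mu^{\algapprox}$ are constructed in this way, the unit and associativity axioms of the monad, as well as the two compatibility squares stated in the proposition, reduce by restriction to finite free modules (where $\alpha$ is an isomorphism) to the corresponding axioms and diagrams for $L\Pfree$, which hold since $L\Pfree$ is already a monad.
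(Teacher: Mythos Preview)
Your strategy is exactly the paper's: define the monad structure maps on finite free modules by transporting those of $L\Pfree$ through the isomorphism $\alpha$ (using Propositions~\ref{prop:free-ring-preserves-finite-frees} and~\ref{prop:algapprox-finite-free}), then extend. The paper's own proof is a single sentence and does not spell out the extension, so you have actually gone further in isolating the real issue---that $\algapprox_m\algapprox_n$ is a composite of two Kan extensions and not obviously a Kan extension from $\Modff{E_*}$.

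Your resolution of that issue, however, has a gap. The parenthetical ``(as a left Kan extension from a small category into a cocomplete target) commutes with this colimit'' is not a valid general principle: pointwise left Kan extensions do not preserve colimits in the source category. Indeed $\algapprox_m$ cannot preserve all colimits, since it does not preserve finite coproducts (compare Proposition~\ref{prop:algapprox-sum-isomorphism}), and the comma category $(j\pi_*\downarrow M)$ is not filtered in general. The correct fix is to invoke Proposition~\ref{prop:algapprox-colimits}, whose proof is logically independent of the present one: each $\algapprox_m$ preserves filtered colimits and reflexive coequalizers because finite frees are small and projective. Composites of such functors again preserve these two classes (any functor sends a reflexive pair to a reflexive pair), so both $\algapprox_{mn}$ and $\algapprox_m\algapprox_n$ do. Now the coequalizer~\eqref{eq:left-kan-def} is reflexive (insert identities for the common section) with terms $B_0,B_1$ that are direct sums of finite frees, hence filtered colimits of finite frees; so any functor preserving both classes agrees with the left Kan extension of its restriction to $\Modff{E_*}$. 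Your definition of $\mu^{\algapprox}$ on finite frees then extends uniquely, and the remaining verifications reduce to finite frees exactly as you say.
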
 
\begin{proof}
The structure maps $I\ra \algapprox$ and $\algapprox\algapprox\ra
\algapprox$ of the monad
are defined on finite free modules using the maps
$$\pi_* X\ra \pi_*L\Pfree X \qquad\text{and}\qquad \pi_*L\Pfree L\Pfree X\ra
\pi_*L\Pfree X$$
for $X\in \Modff{E}$, together with the equivalence $L\Pfree \Pfree\ra
L\Pfree L\Pfree$ as in \eqref{prop:pfree-completed-isos}. 
\end{proof}

\subsection{Colimits}

\begin{prop}\label{prop:algapprox-colimits}
The functors $\algapprox_m$ commute with filtered colimits and
reflexive coequalizers.  
\end{prop}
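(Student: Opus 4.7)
The plan is to exploit Lemma~\ref{lemma:algapprox-is-self-kan-extension}, which presents $\algapprox_m$ as the left Kan extension along the fully faithful inclusion $j\colon \Modff{E_*}\ra \Mod{E_*}$ of its own restriction $\algapprox_m|_{\Modff{E_*}}$. Since $j$ is fully faithful, this left Kan extension admits the pointwise expression
\[\algapprox_m(M) \;\approx\; \coliml_{(N,\phi)\in (j\downarrow M)} \algapprox_m(N).\]
The proof will reduce to checking that the diagram of comma categories $(j\downarrow M)$ behaves well under the two classes of colimits in question, which in turn reduces to properties of the representables $\Mod{E_*}(N,-)$ for $N$ finite free.

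The key technical step I would carry out first is to verify that, for each finite free $N\in \Modff{E_*}$, the representable functor $\Mod{E_*}(N,-)$ preserves filtered colimits and preserves reflexive coequalizers. The first is standard: $N$ is finitely generated, hence compact in $\Mod{E_*}$. For the second, $N$ is projective, so $\Hom(N,-)$ is exact in the abelian category $\Mod{E_*}$; in particular it preserves cokernels, and a (reflexive) coequalizer of $d_0,d_1\colon A\rightrightarrows B$ is just the cokernel of $d_0-d_1$. Reflexivity plays essentially no role beyond serving as the standard algebraic-theory notion of a sifted colimit.

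With this in hand, the proposition follows from the colimits-commute-with-colimits principle applied to the pointwise formula. For $M = \colim_\alpha M_\alpha$ a filtered colimit (respectively a reflexive coequalizer), the previous step implies that each map $N\to M$ with $N$ finite free factors through some $M_\alpha$ in the coherent manner needed to identify $(j\downarrow M)$ with the corresponding colimit of the $(j\downarrow M_\alpha)$. Substituting into the pointwise formula and interchanging the two colimits gives the required natural isomorphism $\algapprox_m(M)\approx \colim_\alpha \algapprox_m(M_\alpha)$.

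I do not expect a genuine obstacle: once Lemma~\ref{lemma:algapprox-is-self-kan-extension} is in place, the argument is purely formal. The only minor bookkeeping concerns working in the $\Z/2$-graded presentation of $\Mod{E_*}$ from \S\ref{subsec:graded-estar-modules}, where one should confirm that finite free graded modules are compact and projective in the expected sense; this is immediate from the definition of $\Modff{E_*}$ and causes no trouble.
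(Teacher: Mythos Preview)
Your approach is correct and matches the paper's: both reduce to the Kan-extension presentation of $\algapprox_m$ together with the facts that finite free $E_*$-modules are compact (``small'') and projective. The only difference is packaging---the paper works with the explicit coequalizer $B_1(M)\rightrightarrows B_0(M)\to \algapprox_m(M)$ of \eqref{eq:left-kan-def} rather than the comma-category formula you invoke via Lemma~\ref{lemma:algapprox-is-self-kan-extension}.

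One small point worth tightening: your remark that ``reflexivity plays essentially no role'' is not quite right in the form you have written the argument. The observation that $N$ projective $\Rightarrow$ $\Hom(N,-)$ exact $\Rightarrow$ preserves cokernels shows that the \emph{abelian-group valued} $\Hom(N,-)$ preserves all coequalizers, but the pointwise Kan extension you wrote is indexed by the underlying \emph{set} $(j\downarrow M)$, so the interchange step needs the Set-valued $\Hom(N,-)$ to preserve the coequalizer. Reflexivity is exactly what closes this gap: given $h-h'=(d_0-d_1)(a)$ and a common section $s$, the element $a'=a-s d_1(a)+s(h')$ satisfies $d_0(a')=h$ and $d_1(a')=h'$, so $h$ and $h'$ are identified by a single step of the Set-theoretic equivalence relation. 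The paper's one-line appeal to projectivity hides the same manoeuvre.
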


\begin{proof}
Observe that for $q=0,1$, the functor
\[
M\mapsto B_q(M)=\bigoplus_{\substack{F_0\ra\cdots\ra F_q \in
    h\Modff{E} \\ \pi_* F_q \ra M \in \Mod{E_*}}} \pi_* L\Pfree_mF_0
\]
from $\Mod{E_*}\ra \Mod{E_*}$ preserves filtered colimits, since
the objects $\pi_*F_q$ of $\Mod{E_*}$ are \emph{small}, in the sense that
$\hom_{\Mod{E_*}}(\pi_*F_q,{-})$ preserves filtered colimits.  The
filtered colimit part of the result follows using \eqref{eq:left-kan-def}.  

The functors $M\mapsto B_q(M)$ also  reflexive coequalizers, since the
objects $\pi_*F_q$ of 
$\Mod{E_*}$ are \emph{projective}, in the sense that
$\hom_{\Mod{E_*}}(\pi_*F_q,{-})$ carries epimorphisms to surjections.
Thus the reflexive coequalizer part of the result 
follows using \eqref{eq:left-kan-def}. 
\end{proof}

\subsection{Tensor products}

Let $k\geq0$, and $M_1,\dots,M_k\in \Mod{E_*}$.  We define a natural
map
$$\gamma_k\colon \algapprox(M_1)\otimes \cdots \otimes
\algapprox(M_k)\ra \algapprox(M_1\oplus\cdots \oplus M_k)$$
as follows.  
As in the proof of \eqref{prop:algapprox-colimits}, let
\[
B_q(M) =
\bigoplus_{\substack{F_0\ra\cdots\ra F_q\in \Modff{E_*} \\ F_q\ra M\in
    \Mod{E_*}}} \bigoplus_m \pi_*L\Pfree_m F_0,
\]
so that $\algapprox(M) \approx H_0 B(M)$.  We have maps 
\[
B_{q_1}(M_1)\otimes \cdots \otimes B_{q_k}(M_k) \xra{s}
B_q(M_1)\otimes 
\cdots \otimes B_q(M_k) \xra{t}
B_q(M_1\oplus
\cdots \oplus M_k)
\]
for $q=\sum q_i$, where $s$ is the Eilenberg-Mac\ Lane
shuffle map, and $t$ is the map constructed in the evident way from
the ``exponential 
isomorphism'' maps
\[
\pi_* L\Pfree_{m_1} F_1 \otimes \cdots \otimes \pi_* L\Pfree_{m_k} F_k
\ra \pi_* L\Pfree_{m_1+\cdots+m_k} (F_1\vee \cdots \vee F_k).
\]
These are maps of chain complexes, and taking  the $0$th homology group gives the desired
map $\gamma_k$.

For the following, we will need to make use of comma categories.
Given a category $\mathcal{C}$ and an object $X$ of $\mathcal{C}$, the
comma category $\mathcal{C}/X$ is the category whose objects are pairs
$(Y,f\colon Y\ra X)$ where $Y$ is an objects of $\mathcal{C}$, and
morphisms $(Y,f)\ra (Y',f')$ are maps $g\colon Y\ra Y'$ such that $f'g=f$.

\begin{prop}\label{prop:algapprox-sum-isomorphism}
The map $\gamma_k$ is an isomorphism.
\end{prop}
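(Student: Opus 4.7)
The plan is to reduce the claim to the case where each $M_i$ is the homotopy of a finite free $E$-module, and in that case to identify $\gamma_k$ with the topological ``exponential'' isomorphism induced by the binomial decomposition of $\Pfree$.

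First I would observe that both the domain and codomain of $\gamma_k$ are functors of $(M_1,\ldots,M_k)$ which preserve filtered colimits and reflexive coequalizers in each variable separately. For the domain, $\otimes_{E_*}$ preserves all colimits in each variable since it is a left adjoint. For the codomain, $\algapprox=\bigoplus_m \algapprox_m$ commutes with these colimits by \eqref{prop:algapprox-colimits}, together with the fact that an infinite direct sum commutes with filtered colimits and reflexive coequalizers. Since every object of $\Mod{E_*}$ admits a presentation as a reflexive coequalizer of direct sums of finite free $E_*$-modules, and every direct sum of finite frees is a filtered colimit of finite direct sums (which are again finite free), it suffices to verify that $\gamma_k$ is an isomorphism when each $M_i=\pi_*F_i$ with $F_i\in h\Modff{E}$.

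In the finite free case, \eqref{prop:algapprox-finite-free} identifies $\algapprox_m(\pi_*F_i)$ with $\pi_*L\Pfree_m(F_i)$ via $\alpha_m$. Unwinding the bar-complex definition of $\gamma_k$---constructed from the Eilenberg--Mac\ Lane shuffle $s$ and the exponential maps $\pi_*L\Pfree_{m_1}F_1\otimes_{E_*}\cdots\otimes_{E_*}\pi_*L\Pfree_{m_k}F_k \to \pi_*L\Pfree_m(F_1\vee\cdots\vee F_k)$ that appear in $t$---one identifies $\gamma_k$, after taking $H_0$, with the map induced on homotopy groups by the topological binomial splitting
\[
\Pfree_m(F_1\vee\cdots\vee F_k)\simeq \bigvee_{m_1+\cdots+m_k=m}\Pfree_{m_1}F_1\sm_E\cdots\sm_E\Pfree_{m_k}F_k.
\]
By \eqref{prop:free-ring-preserves-finite-frees} each $L\Pfree_{m_i}F_i$ is finite free, hence $K(n)$-local, so the iterated smash product $L\Pfree_{m_1}F_1\sm_E\cdots\sm_E L\Pfree_{m_k}F_k$ is again finite free. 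The ordinary K\"unneth isomorphism for finite free $E$-modules then supplies the desired isomorphism on $\pi_*$ summand by summand, and summing over $(m_1,\ldots,m_k)$ finishes the finite free case.

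The main obstacle is the explicit identification, in the finite free case, of $\gamma_k$ as constructed via the chain-level shuffle map on the bar complexes with the topologically defined exponential map. This is not conceptually deep, but it requires a careful diagram chase, tracking the compatibility of the natural transformations $\beta$ and $\alpha_m$ with the shuffle, and verifying that the evident deformation retraction of the bar complex $B_\bullet(\pi_*F_i)$ onto $\bigoplus_m\pi_*L\Pfree_m F_i$ (coming from the identity map $\pi_*F_i\to\pi_*F_i$) intertwines the algebraic and topological exponential maps. Once this is checked, the reduction step completes the proof formally.
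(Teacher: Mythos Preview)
Your argument is correct, but it proceeds differently from the paper's proof.  You first reduce to the case where each $M_i$ is finite free (using that both source and target of $\gamma_k$ commute with filtered colimits and reflexive coequalizers in each variable), and then in that case you identify $\gamma_k$ directly with the K\"unneth map associated to the topological binomial splitting of $\Pfree_m$, which is an isomorphism since each $L\Pfree_{m_i}F_i$ is finite free by \eqref{prop:free-ring-preserves-finite-frees}.

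The paper instead avoids the reduction step entirely.  It works with arbitrary $M_i$ and rewrites $H_0(t)$ as the canonical map $\colim^{\mathcal{C}}S\rho\ra\colim^{\mathcal{D}}S$, where $\mathcal{C}$ and $\mathcal{D}$ are the comma categories of finite frees over $(M_1,\dots,M_k)$ and over $M_1\oplus\cdots\oplus M_k$ respectively, and $\rho\colon\mathcal{C}\ra\mathcal{D}$ takes the direct sum.  Because $\rho$ has a left adjoint, this comparison map is an isomorphism by cofinality.  Both arguments ultimately rest on the same ``exponential isomorphism'' $\pi_*L\Pfree_{m_1}F_1\otimes\cdots\otimes\pi_*L\Pfree_{m_k}F_k\xra{\sim}\pi_*L\Pfree_m(F_1\vee\cdots\vee F_k)$ for finite free $F_i$; the paper packages it as the natural isomorphism $h\colon R\xra{\sim}S\rho$, while you invoke it only after specializing to finite frees.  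Your route is more hands-on and requires the extra reduction; the paper's cofinality argument is slicker and dispatches all $M_i$ at once, at the cost of a slightly more abstract setup.
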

\begin{proof}
It is standard that $H_0(s)$ is an isomorphism, so it suffices to show
that $H_0(t)$ is an isomorphism.  Consider the  comma categories
\[
\mathcal{C} = (\prod \pi_* \colon (h\Modff{E})^k\ra
(\Mod{E_*}^k))/(M_1,\dots,M_k)
\]
and 
\[
\mathcal{D} = (\pi_*\colon h\Modff{E}\ra
\Mod{E_*})/(M_1\oplus\cdots\oplus M_k),
\]
and let $\rho\colon \mathcal{C}\ra \mathcal{D}$ be the functor sending
a tuple $(F_i,f_i\colon \pi_*F_i\ra M_i)_{i=1,\dots,k}$ to $(\vee
F_i, (f_i)\colon \pi_*(\vee F_i) \ra \oplus M_i)$.  Let $R\colon
\mathcal{C}\ra \Mod{E_*}$ be the functor sending $(F_i,f_i)$ to
$\bigoplus \pi_* L\Pfree_{m_1}F_1\otimes \cdots \pi_*
L\Pfree_{m_k}F_k$, let $S\colon \mathcal{D}\ra \Mod{E_*}$ be the
functor sending $(F,f)$ to $\bigoplus \pi_* L\Pfree_m F$.  Let
$h\colon R\ra S\rho$ be the evident natural isomorphism.
It is clear
that $H_0(t)$ is isomorphic to the map
\[
\colim^{\mathcal{C}} R \approx \colim^{\mathcal{C}} S\rho \xra{\eta}
\colim^{\mathcal{D}} S,
\]
and the result follows from the observation that $\rho$ admits a left
adjoint and therefore $\eta$ is an isomorphism.
\end{proof}
As a consequence, $\algapprox (M)$ has a natural structure of a
commutative ring, with product defined by $\delta_2\colon
\algapprox(M)\otimes 
\algapprox(M) \xra{\gamma_2} \algapprox (M\oplus M)
\xra{\algapprox((\id_M,\id_M))} \algapprox(M)$.

The naturality of the construction of $\gamma_k$ shows the following. 
\begin{cor}
The natural isomorphisms $\gamma_k$ give  $\algapprox$ the structure
of a symmetric
monoidal functor $(\Mod{E_*},0,\oplus)\ra (\Mod{E_*},E_*,\otimes)$.
Furthermore, the monad structure maps $\eta\colon I\ra \algapprox$ and
$\mu\colon \algapprox\algapprox\ra \algapprox$ are maps of monoidal
functors.  
\end{cor}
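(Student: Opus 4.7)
The plan is to reduce the verification of the coherence axioms for a symmetric monoidal functor structure on $\algapprox$ to statements about the topological functor $L\Pfree$ on finite free $E$-modules, where $\algapprox\approx \pi_*L\Pfree$, and then transport the result to all of $\Mod{E_*}$ via naturality of the left Kan extension.

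Concretely, I would first observe that on finite free modules $F_1,\ldots,F_k\in h\Modff{E}$, the map $\gamma_k$ coincides with the composite
$$\pi_* L\Pfree F_1\otimes \cdots \otimes \pi_* L\Pfree F_k \xra{\kappa} \pi_*(L\Pfree F_1 \sm_E \cdots \sm_E L\Pfree F_k) \xra{\sim} \pi_* L\Pfree(F_1\vee \cdots \vee F_k),$$
where $\kappa$ is the K\"unneth map (an isomorphism, since each $L\Pfree F_i$ is finite free by \eqref{prop:free-ring-preserves-finite-frees}) and the second arrow is the standard exponential isomorphism $L\Pfree(M\vee N)\approx L\Pfree M\sm_E L\Pfree N$, which makes $L\Pfree\colon (h\Mod{E},\vee,0)\to (h\Mod{E},\sm_E,E)$ into a symmetric monoidal functor on the topological side. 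Combining this with the fact that $\pi_*$ restricted to finite frees is symmetric monoidal via K\"unneth, each coherence diagram (associativity, unit, symmetry) for $\gamma_k$ on finite frees reduces to the corresponding diagram for $L\Pfree$ on spectra, which commutes by the classical theory of extended powers. Since $\algapprox$, its tensor powers, and the maps $\gamma_k$ are all determined by left Kan extension from $h\Modff{E}$ (via \eqref{lemma:algapprox-is-self-kan-extension} and \eqref{prop:algapprox-sum-isomorphism}), these coherence diagrams extend uniquely and functorially to all of $\Mod{E_*}$.

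For the monad structure, the unit $\eta\colon M\to \algapprox M$ and multiplication $\mu\colon \algapprox\algapprox M\to \algapprox M$ are induced on finite frees by the corresponding topological maps $M\hookrightarrow L\Pfree_1 M\hookrightarrow L\Pfree M$ and $L\Pfree L\Pfree M\to L\Pfree M$ (cf.\ \eqref{prop:algapprox-is-monad}). Both of these topological maps respect the exponential isomorphisms, and this monoidality passes through $\pi_*$ on finite frees and is inherited on all of $\Mod{E_*}$ by naturality of the left Kan extension. The main obstacle is the bookkeeping required to track the shuffle maps, exponential isomorphisms, and K\"unneth maps through the construction; but given the reduction to finite free $E$-modules and the isomorphism statement already established in \eqref{prop:algapprox-sum-isomorphism}, the argument is essentially formal.
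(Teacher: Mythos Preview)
Your proposal is correct and takes essentially the same approach as the paper: the paper's entire proof is the single sentence ``Reduce to the case of free modules,'' and what you have written is exactly a fleshed-out version of that reduction, using the identification $\algapprox\approx \pi_*L\Pfree$ on finite frees together with the exponential isomorphism for $L\Pfree$ and the left Kan extension description of $\algapprox$.
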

\begin{proof}
Reduce to the case of free modules.
\end{proof}


\subsection{Completed approximation functor}

We construct \dfn{completed approximation} functors, which are
better approximations to the homotopy of the $K(n)$-localization of a
free $E$-algebra, but which are less convenient to deal with
algebraically.  Thus we define $\calgapprox(M) \defeq
L_0\algapprox(M)$ where $L_0$ is the functor of
\eqref{prop:completion-spectral-sequence}, and we let
$\widehat{\alpha}\colon 
\calgapprox(\pi_*M)\ra \pi_*L\Pfree(M)$ be the unique factorization of
$\alpha$ through $\calgapprox(M)$.  
\begin{prop}\label{prop:algapprox-flat-module}
If $M$ is a flat $E$-module, then $\calgapprox(\pi_*M)\ra
[\algapprox(\pi_*M)]^{\sm}_{\mathfrak{m}}$ and $\widehat{\alpha}\colon
\calgapprox(\pi_*M)\ra \pi_* L\Pfree(M)$ are isomorphisms.
\end{prop}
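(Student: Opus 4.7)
The plan is to bootstrap from the finite free case. By Proposition \ref{prop:flat-e-module}, I would write $M \approx \hocolim_{i \in I} F_i$ as a filtered homotopy colimit of finite free $E$-modules $F_i$, indexed by a filtered category $I$.

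First I would establish that $\algapprox(\pi_*M)$ is a flat $E_*$-module. Since each $\algapprox_m$ preserves filtered colimits by Proposition \ref{prop:algapprox-colimits}, and $\alpha_m$ is an isomorphism on finite frees by Proposition \ref{prop:algapprox-finite-free}, we have $\algapprox(\pi_*M) \approx \colim_i \bigoplus_m \pi_*L\Pfree_m F_i$. By Proposition \ref{prop:free-ring-preserves-finite-frees}, each summand $\pi_*L\Pfree_m F_i$ is finitely generated free, so this is a filtered colimit of free $E_*$-modules, hence flat. Proposition \ref{prop:der-func-completion-vanish} now yields the first claimed isomorphism $\calgapprox(\pi_*M) = L_0\algapprox(\pi_*M) \xra{\sim} [\algapprox(\pi_*M)]^{\sm}_{\mathfrak{m}}$.

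For the second isomorphism, I would introduce the auxiliary $E$-module $N \defeq \bigvee_m \hocolim_i L\Pfree_m F_i$. Since each $\Pfree_m$ preserves filtered homotopy colimits and wedges commute with filtered colimits, we have $\Pfree M \approx \bigvee_m \hocolim_i \Pfree_m F_i$, and the evident map $\Pfree M \to N$ is a wedge of filtered colimits of the $K(n)$-equivalences $\Pfree_m F_i \to L\Pfree_m F_i$, hence is itself a $K(n)$-equivalence. Thus $L\Pfree M \approx LN$. Now $\pi_*N = \bigoplus_m \colim_i \pi_*L\Pfree_m F_i$ is naturally identified with $\algapprox(\pi_*M)$ via the $\alpha_m$ together with Proposition \ref{prop:algapprox-colimits}, so $\pi_*N$ is flat by the previous paragraph. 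Corollary \ref{cor:completion-iso-flat} then gives $\pi_*LN \approx L_0\pi_*N = \calgapprox(\pi_*M)$, and composing with $L\Pfree M \approx LN$ produces an isomorphism $\calgapprox(\pi_*M) \xra{\sim} \pi_*L\Pfree M$.

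The main obstacle will be verifying that this constructed isomorphism agrees with $\widehat{\alpha}$. My approach would be naturality: first handle the case of finite free $M = F$, where the construction above (specialized to $I$ a one-object category) shows $\pi_*L\Pfree F \approx \calgapprox(\pi_*F)$, and a routine unwinding of the definition of $\alpha$ as a left Kan extension identifies this with $\widehat{\alpha}_F$. Then apply $\colim_i$ to the system of naturality squares for the structure maps $F_i \to M$, and compare with the construction of $LN$, using that both $\algapprox$ and the formation of $LN$ interact predictably with $\colim_i$, to conclude that the constructed isomorphism equals $\widehat{\alpha}_M$.
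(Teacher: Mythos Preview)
Your proposal is correct and follows essentially the same route as the paper: introduce the auxiliary module $N$ built from the $L\Pfree_m F_i$ over the filtered diagram, observe that $N$ is flat so that $L_0\pi_*N \xra{\sim} \pi_*LN$, and identify this with the approximation map $\alpha$. The only point you omit that the paper flags is that the filtered diagram from Proposition~\ref{prop:flat-e-module} is a \emph{topological} filtered category, so one must know that $L$ can be taken to be a continuous functor (which follows from the description in Proposition~\ref{prop:completion-functor-equivalences}(3)); otherwise your argument matches the paper's.
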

\begin{proof}
Since $M$ is a flat $E$-module, $\algapprox \pi_*M$ is a flat $E_*$
module (since $\algapprox$ commutes with filtered colimits
\eqref{prop:algapprox-colimits}), and the first isomorphism follows
using \eqref{prop:der-func-completion-vanish}.

Since $M$ is a flat module, then $M\approx \hocolim_J M_j$ for some
filtered topological category $J$, where the $M_\alpha$ are finite
free \eqref{prop:flat-e-module}.  Let $\N$ denote the category whose
objects are natural numbers, and which has no non-identity maps.
Consider the $E$-module
\[
N = \hocolim_{(i,j)\in \N\times J} L\Pfree_i M_j.
\]
Since each $L\Pfree_i M_j$ is finite free, the $E$-module $N$ is flat,
and thus the map $\beta\colon \pi_*N \ra \pi_* LN$ factors through an
isomorphism 
$L_0\pi_* N \ra \pi_* LN$ by \eqref{cor:completion-iso-flat}.  It is
then straightforward to check that 
$\beta$ is isomorphic to the approximation map
\[
\colim \algapprox_i \pi_* M_j   \approx \algapprox \pi_*M
\xra{\alpha} \pi_* L \hocolim L\Pfree_i M_j\approx \pi_* L\Pfree M.
\]

Note that in the above, we have assumed that $L$ is a continuous
functor.  We may in fact do this, for instance using the description
of $L$ given in \eqref{prop:completion-functor-equivalences}(3).
\end{proof}

\subsection{$\algapprox$-algebras}

As we have observed above, the functor $\algapprox\colon \Mod{E_*}\ra
\Mod{E_*}$ is a monad. 
Let $\gralgcat$ denote the category of $\algapprox$-algebras.  
Every object of $\gralgcat$ is a graded commutative $E_*$-algebra, and so there
is a forgetful functor $U\colon \gralgcat\ra \Alg{E_*}$.  (Note that
the image of $U$ is contained inside the \emph{strictly} graded
commutative $E_*$-algebras, by
\eqref{prop:strictly-graded-commutative}.  

\begin{cor}\label{cor:forget-to-modules-commutes-colims}
The forgetful functor $U\colon \gralgcat\ra \Alg{E_*}$ commutes with
colimits.  
\end{cor}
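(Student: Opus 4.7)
The plan is to verify that $U$ preserves three classes of colimits that together generate all others: filtered colimits, reflexive coequalizers, and finite coproducts. Any coequalizer can be rewritten as a reflexive one by adjoining the identity as a second leg through a finite coproduct, and any coproduct is a filtered colimit of finite ones, so these three cases do suffice.

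For filtered colimits and reflexive coequalizers, I will factor $U$ through the classical forgetful functor $W\colon \Alg{E_*}\to \Mod{E_*}$. By standard monadicity together with Proposition \ref{prop:algapprox-colimits}, the composite $WU\colon \gralgcat \to \Mod{E_*}$ creates both classes of colimits; the functor $W$ itself creates them classically, and is conservative. A short diagram chase then forces the canonical comparison map $U(\colim D)\to \colim(UD)$ in $\Alg{E_*}$ to be an isomorphism for filtered $D$ and for reflexive coequalizer $D$.

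The substantive step is finite coproducts. For $A, B \in \gralgcat$, take the standard bar presentations, expressing $A$ and $B$ as reflexive coequalizers $\algapprox\algapprox A \rightrightarrows \algapprox A \to A$ (where $\algapprox$ is applied to the underlying $E_*$-module) and similarly for $B$. Since free $\algapprox$-algebras satisfy $\algapprox(X)\sqcup \algapprox(Y) \approx \algapprox(X\oplus Y)$, and coproducts commute with colimits, one obtains a reflexive coequalizer presentation
\[
\algapprox(\algapprox A \oplus \algapprox B) \rightrightarrows \algapprox(A \oplus B) \to A\sqcup B
\]
in $\gralgcat$. Apply $U$ (which preserves reflexive coequalizers by the previous paragraph) and invoke the isomorphism $\gamma_2$ of Proposition \ref{prop:algapprox-sum-isomorphism} to identify the result with the analogous reflexive coequalizer in $\Alg{E_*}$ computing $UA\otimes_{E_*}UB$, which is exactly the coproduct of $UA$ and $UB$ there.

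The main obstacle will be verifying that $\gamma_2$ is an isomorphism in $\Alg{E_*}$ (not merely in $\Mod{E_*}$), and that the two reflexive coequalizer diagrams --- one in $\gralgcat$ and one in $\Alg{E_*}$ --- agree on the nose after applying $U$, including compatibility with the bar differentials. Both points follow from the corollary immediately after Proposition \ref{prop:algapprox-sum-isomorphism}, which asserts that $\gamma$ makes $\algapprox$ into a symmetric monoidal functor $(\Mod{E_*},\oplus)\to (\Mod{E_*},\otimes)$ with $\eta$ and $\mu$ monoidal transformations; this is precisely the coherence data required to make the reflexive-coequalizer comparison compatible with ring structures.
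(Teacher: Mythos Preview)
Your proof is correct and follows the same strategy as the paper: reduce to filtered colimits, reflexive coequalizers, and finite coproducts, then invoke Proposition~\ref{prop:algapprox-colimits} for the first two and the exponential isomorphism $\gamma_2$ of Proposition~\ref{prop:algapprox-sum-isomorphism} for the third. In fact the paper's proof is terser than yours---it states the three-case reduction, dispatches the first two via \eqref{prop:algapprox-colimits}, and then simply stops, leaving the finite-coproduct case entirely to the reader; your bar-resolution argument supplies exactly the detail the paper omits.
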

\begin{proof}
It suffices to show that $U$ commutes with filtered colimits,
reflexive coequalizers, and finite coproducts.  That it commutes with
the first two types of colimit is immediate from
\eqref{prop:algapprox-colimits}.
\end{proof}

\subsection{Power operations, revisited}

Let $B$ be a $\algapprox$-algebra.  We obtain functions
\[
P_m\colon B_0\ra \hom_{\Mod{E_*}}(\algapprox_m E_*, B)
\] 
which are defined by sending an $E_*$-module homomorphism $b\colon
E_*\ra B$ to the composite
\[
\algapprox_m E_*\ra \algapprox_m B\ra B.
\]
We call the map $P_m$ a \dfn{power operation}.  If $R$ is a
commutative $E$-algebra spectrum, we see that this operation
is identified with the operation $P_m\colon R^0(*)\ra R^0B\Sigma_m$
defined earlier, via the natural isomorphism $R^0B\Sigma_m\approx
\hom_{\Mod{E_*}}(\cE{0}B\Sigma_m,R_0)$.  

Let $f\colon E_*[x]\ra \algapprox E_*$ be the map from the free
commutative $E_*$-algebra on one generator which sends $x$ to the
tautological generator of $\algapprox E_*$ as a $\algapprox$-algebra.
Let $f_m\colon E_*\ra \algapprox_m E_*$ be the restriction of $f$ to
the degree $m$ part of $E_*[x]$.  In terms of topology, $f_m$
is the map $i_*\colon E_*\approx \cE{*}(*) \ra \cE{*}B\Sigma_m$
induced by basepoint inclusion.
\begin{lemma}\label{lemma:power-op-is-power-mod-aug}
The composite map
\[
B_0 \xra{P_m} \hom_{\Mod{E_*}}(\algapprox_m E_*, B) \ra
\hom_{\Mod{E_*}}(E_*, B_0)\approx B_0
\]
is the map which sends $x\mapsto x^m$
\end{lemma}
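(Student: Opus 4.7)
The plan is to trace $1 \in E_*$ through the composite, exploiting the compatibility of the ring structure on $\algapprox(M)$ arising from the maps $\gamma_k$ with the monad structure on $\algapprox$. Let $\bar{b}\colon E_* \to B$ be the degree-$0$ module map corresponding to $b \in B_0$; then the displayed composite sends $b$ to the image of $1 \in E_*$ under
\[
E_* \xra{f_m} \algapprox_m E_* \xra{\algapprox_m(\bar{b})} \algapprox_m B \xra{\mu} B,
\]
where $\mu$ is the degree-$m$ component of the $\algapprox$-algebra structure map $\algapprox B \to B$.

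By the very definition of $f\colon E_*[x] \to \algapprox E_*$, the element $f_m(1)$ is the $m$-fold power $x^m$ computed in the ring $\algapprox E_*$, where $x = \eta_{E_*}(1) \in \algapprox_1 E_*$ is the tautological generator. Naturality of the monad unit $\eta$ identifies $\algapprox_1(\bar{b})(x)$ with $\eta_B(b) \in \algapprox_1 B$. The corollary immediately following \eqref{prop:algapprox-sum-isomorphism} records that $\algapprox$ is a symmetric monoidal functor from $(\Mod{E_*},\oplus)$ to $(\Mod{E_*},\otimes)$, so in particular $\algapprox(\bar{b})$ is a ring map for the induced multiplicative structures; hence $\algapprox_m(\bar{b})(x^m) = \eta_B(b)^m$ in $\algapprox B$.

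The same corollary makes $\mu\colon \algapprox B \to B$ a ring homomorphism, so $\mu(\eta_B(b)^m) = \mu(\eta_B(b))^m$, and the monad unit law $\mu \circ \eta_B = \id_B$ finishes the calculation by giving $b^m$, as required. The only point demanding care is to keep the two gradings straight---the internal $E_*$-grading and the polynomial grading $m$ decomposing $\algapprox = \bigoplus_m \algapprox_m$---and to ensure that the multiplicative structures on $\algapprox E_*$ and $\algapprox B$ used along the way are indeed the ones induced by $\gamma_k$. Since every identification above is a direct consequence of the monad axioms together with the monoidality of $\eta$ and $\mu$, I do not anticipate a substantive obstacle.
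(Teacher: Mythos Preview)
Your argument is correct and gives a self-contained algebraic proof, whereas the paper's proof is a one-line citation of \eqref{prop:lift-of-mth-power-spectra}, the topological statement that $i^*\circ P_m$ is the $m$th power map on $R^0(X)$ for a commutative $E$-algebra spectrum $R$.  The paper is implicitly using that the composite in the lemma is a natural operation on $\algapprox$-algebras, hence determined by its effect on the universal example $\algapprox(E_*)$; since $\algapprox(E_*)$ is built topologically from $\cE{*}B\Sigma_m$ and $f_m$ is the basepoint inclusion $i_*$, the topological result applies directly.  Your route instead unwinds the monad structure: $f_m(1)=\iota^m$ by definition of $f$, then push forward by $\algapprox(\bar b)$ and the structure map, both of which are ring maps.

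One point deserves a word more of justification.  You invoke the corollary after \eqref{prop:algapprox-sum-isomorphism} to conclude that the structure map $\psi\colon \algapprox B\to B$ (your ``$\mu$'') is a ring homomorphism, but that corollary literally concerns the monad multiplication $\algapprox\algapprox\to\algapprox$, not the algebra structure map of an arbitrary $B$.  The missing step is short: $\psi$ is the counit of the free--forgetful adjunction, hence a morphism in $\gralgcat$, and the forgetful functor $U\colon\gralgcat\to\Alg{E_*}$ of \S4.9 (whose existence is what the corollary ultimately supports) makes every such morphism a ring map.  With that said, your approach has the advantage of staying entirely inside the algebraic framework and not appealing back to spectra.
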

\begin{proof}
Use
\eqref{prop:lift-of-mth-power-spectra}. 
\end{proof}

\subsection{Plethories and plethyistic functors}
\label{subsec:plethyistic}

Let $\mathcal{C}$ be an abelian tensor category with tensor product
$\otimes$ and unit object $\Bbbk$, and
let $\mathcal{A}$ 
denote the category of commutative monoid objects in $\mathcal{C}$
with respect to the tensor product.  Say that a functor $U\colon
\mathcal{D}\ra \mathcal{A}$ is \dfn{plethyistic} if
\begin{enumerate}
\item [(1)] $U$ reflects isomorphisms (i.e., $U(f)$ iso implies $f$
  iso), and
\item [(2)] $U$ admits both a left adjoint $F$ and a right adjoint $G$.
\end{enumerate}
It is a consequence 
of this definition (using Beck's theorem \cite{maclane}) that if $M=UF$ and 
$C=UG$ are the monad and comonad associated to these adjoint pairs,
then $\mathcal{D}$ is equivalent to the categories of $M$-algebras and
$C$-coalgebras. 

The basic example of a plethyistic functor occurs when
$\mathcal{C}=\Mod{R}$ for 
some commutative ring $R$.  Then $U\colon \mathcal{D}\ra \Alg{R}$
amounts to what Borger and Wieland call a \dfn{plethory}
\cite{borger-wieland-plethystic-algebra}.  (More precisely, Borger and
Wieland define a plethory to be a commutative ring $P$ equipped
with some additional structure; they extract a plethyistic functor from
this data, in such a way that $P=M(R)$.  Furthermore, they show
\cite[Thm.\ 4.9]{borger-wieland-plethystic-algebra} that a
plethyistic functor $U\colon \mathcal{D}\ra \Alg{R}$ determines a
plethory in their sense.) 

\begin{prop}\label{prop:algcat-to-E-alg-plethyistic}
The functor $U\colon \gralgcat\ra \Alg{E_*}$ is plethyistic.
\end{prop}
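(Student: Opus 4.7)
The plan is to verify the two clauses of plethyisticity from \S\ref{subsec:plethyistic}: that $U$ reflects isomorphisms, and that $U$ admits both a left and a right adjoint.

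For the first clause, I will factor the forgetful functor as $\gralgcat \xra{U} \Alg{E_*} \ra \Mod{E_*}$. Both this composite and the second arrow are monadic (for the monads $\algapprox$ and $\Sym$ respectively), and so reflect isomorphisms. Hence if $U(f)$ is an isomorphism in $\Alg{E_*}$, its underlying map of $E_*$-modules is an isomorphism, which forces $f$ itself to be an isomorphism in $\gralgcat$.

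For the second clause, the strategy is to invoke the adjoint functor theorem for locally presentable categories. First, $U$ preserves both limits and colimits: preservation of colimits is Corollary \ref{cor:forget-to-modules-commutes-colims}, while preservation of limits follows from the standard principle that the forgetful functor from algebras over a monad to the base creates limits, so limits in both $\gralgcat$ and $\Alg{E_*}$ are computed at the level of underlying $E_*$-modules. Second, both categories are locally presentable: $\Mod{E_*}$ is locally presentable, $\algapprox$ is finitary by Proposition \ref{prop:algapprox-colimits}, and $\Sym$ is finitary, so $\gralgcat$ and $\Alg{E_*}$ are locally presentable as categories of algebras for a finitary monad on a locally presentable category. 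Given these ingredients, the adjoint functor theorem produces both the left adjoint $F$ (from preservation of limits) and the right adjoint $G$ (from preservation of colimits).

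There is no serious obstacle here: the substantive content has already been installed in Proposition \ref{prop:algapprox-colimits} (finitariness of $\algapprox$) and Corollary \ref{cor:forget-to-modules-commutes-colims} (colimit preservation), and the rest is assembly of standard machinery. If one preferred an explicit description of the left adjoint rather than a black-box appeal, $F(R)$ for $R\in \Alg{E_*}$ can be built as a reflexive coequalizer in $\gralgcat$ of the form $\algapprox(R\otimes_{E_*} R) \rightrightarrows \algapprox(R)$ (augmented with a parallel pair enforcing the unit) in which the two arrows encode the multiplication of $R$ against the multiplication induced on $\algapprox R$; existence of such a coequalizer is again guaranteed by Proposition \ref{prop:algapprox-colimits}, and checking the universal property is a routine diagram chase.
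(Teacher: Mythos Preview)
Your proof is correct and close in spirit to the paper's, with the main difference in how the left adjoint is obtained. The paper builds $F$ semi-explicitly: it observes that the full subcategory of $\Alg{E_*}$ on which $F$ is already defined is closed under colimits (since $U$ preserves limits), and then checks directly that $F(\Sym^*(E_0))\approx\algapprox(E_0)$ and $F(\Sym^*(\omega^{1/2}))\approx\algapprox(\omega^{1/2})$, which suffices since these two algebras generate $\Alg{E_*}$ under colimits. For the right adjoint the paper does essentially what you do, invoking local presentability of $\gralgcat$ to get representability of $B\mapsto\Alg{E_*}(UB,A)$. Your uniform appeal to the adjoint functor theorem for locally presentable categories is cleaner; the paper's approach has the mild advantage of telling you concretely what $F$ does on free algebras, which is used later (e.g.\ in \S\ref{sec:rational-algebras}). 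One small remark: in the locally presentable setting, ``preserves limits'' alone does not give a left adjoint---you also need accessibility---but since you have already established that $U$ preserves all colimits, in particular filtered ones, this is automatic. Your optional explicit coequalizer presentation of $F(R)$ at the end is not quite right as written (the standard bar presentation would involve $\algapprox(\Sym R)\rightrightarrows\algapprox(R)$ rather than $\algapprox(R\otimes_{E_*}R)$), but since you flag it as an aside and your main argument does not depend on it, this does not affect the validity of the proof.
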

\begin{proof}
It is clear from the definitions that $U$ reflects isomorphisms, and
preserves limits.  It clear from \eqref{prop:algapprox-colimits} and
\eqref{prop:algapprox-sum-isomorphism} that 
$U$ preserves colimits.

Next, we construct a left adjoint $F$ to $U$.  Let $\mathcal{C}$ denote the
full subcategory of $\Alg{E_*}$ consisting of $E_*$-algebras $A_*$ for
which there exists an object $F'(A_*)\in \gralgcat$ and a natural
isomorphism $\gralgcat(F'(A_*),B_*)\approx \Alg{E_*}(A_*,U(B_*))$; the
left adjoint $F$ exists if $\mathcal{C}=\Alg{E_*}$.    
Since
$U$ preserves limits, $\mathcal{C}$ is closed under colimits in
$\Alg{E_*}$, and thus it suffices to show that $\Sym^*(E_0)$ and
$\Sym^*(\omega^{1/2})$ are in $\mathcal{C}$.  But it is
straightforward to check that we can take $F'(\Sym^*(E_0))\approx
\algapprox(E_0)$ and $F'(\Sym^*(\omega^{1/2}))\approx
\algapprox(\omega^{1/2})$.   

It follows by Beck's theorem that $U$ is monadic, since $U$ preserves
colimits \eqref{cor:forget-to-modules-commutes-colims}. 

Next, we construct a right adjoint $G\colon \Alg{E_*}\ra \gralgcat$ to
$U$.  For a $\algapprox$-algebra $A$, consider the functor $X\colon
(\gralgcat)^\op\ra \Set$ defined by 
\[
X(B) = \Hom_{\Alg{E_*}}(UB, A).
\]
Since $U$ preserves colimits, $X$ carries colimits to limits.  The
category $\gralgcat$ is locally presentable, and so $X$ must be
representable by an object $GA$ in $\gralgcat$.  
\end{proof}


\subsection{Weight decomposition}
\label{subsec:weight-decomp}

Suppose that $U\colon \mathcal{D}\ra \mathcal{C}$ is a plethyistic
functor, and that the forgetful functor  $\mathcal{A}\ra \mathcal{C}$
is monadic.  Then 
the composite forgetful functor $\mathcal{D}\ra \mathcal{A}\ra
\mathcal{C}$ is also monadic.  We write $T\colon \mathcal{C}\ra
\mathcal{C}$ for this monad.  For future reference, we note the
following structure carried by $T$, namely natural maps of functors
$\mathcal{C}\ra \mathcal{C}$
\begin{align*}
\eta &\colon M\ra T(M), & \mu &\colon TT(M) \ra T(M),
\\
\iota & \colon \Bbbk \ra T(M), & \delta & \colon T(M)\otimes T(M)\ra
T(M),
\end{align*}
which define the monad and commutative ring structures on the functor $T$.

A \dfn{weight
decomposition} of $T$ is a collection of functors $T_k\colon
\mathcal{C}\ra \mathcal{C}$ and a natural isomorphism $T\approx
\bigoplus_{k\geq0}T_k$ such that the dotted arrows exist in each of
the following diagrams.
$$\xymatrix{
{\Bbbk} \ar[dr]_{\iota} \ar@{.>}[r] 
& {T_0(M)} \ar@{>->}[d]
& {T_k(M)\otimes T_\ell(M)} \ar@{>->}[d] \ar@{.>}[r]
& {T_{k+\ell}(M)} \ar@{>->}[d]
\\
& {T(M)}
& {T(M)\otimes T(M)} \ar[r]_-{\delta}
& {T(M)}
}$$
$$\xymatrix{
{M} \ar[dr]_{\eta} \ar@{.>}[r]
& {T_1(M)} \ar@{>->}[d]
& {T_kT_\ell(M)} \ar@{>->}[d] \ar@{.>}[r]
& {T_{k\ell}(M)} \ar@{>->}[d]
\\
& {T(M)}
& {TT(M)} \ar[r]_-{\mu}
& {T(M)}
}$$
In each case, the dotted arrow is unique if it exists.  

In the case of the plethyistic functor $\gralgcat\ra \Alg{E_*}$, $T$
is precisely the algebraic approximation functor $\algapprox$.  It is
clear that the standard splitting $\algapprox \approx
\bigoplus_{k\geq0} \algapprox_k$  is a weight decomposition for $\algapprox$.

\subsection{Suspension map}
\label{subsec:suspension-map}

There are natural ``suspension maps''
$$E_q\colon \Sigma^qL\Pfree_m(X) \ra L\Pfree_m(\Sigma^q X)$$
for $q\geq0$.
This gives rise to a map of $E_*$-modules
$$E_q\colon \omega^{-q/2}\otimes \algapprox_m(M) \ra
\algapprox_m(\omega^{-q/2}\otimes M)$$  
which we will also call a \dfn{suspension map}.  

We record some properties of the suspension map below.  They all
reduce to corresponding properties of the extended power functors.

\begin{prop}\label{prop:susp-composition}
The diagram
\[\xymatrix{
{\omega^{-i/2}\otimes \omega^{-j/2}\otimes \algapprox(M)}
\ar[r]^-{\id\otimes E_j} \ar[d]_{\kappa\otimes \id}
& {\omega^{-i/2}\otimes \algapprox(\omega^{-j/2}\otimes M)} \ar[r]^-{E_i}
& {\algapprox(\omega^{-i/2}\otimes \omega^{-j/2}\otimes M)}
\ar[d]^{\algapprox(\kappa\otimes \id)}
\\
{\omega^{-(i+j)/2} \otimes \algapprox(M)} \ar[rr]_{E_{i+j}}
&& {\algapprox(\omega^{-(i+j)/2}\otimes M)}
}\]
commutes in $\Mod{E_*}$ for all $i,j\geq0$ and all $M\in \Mod{E_*}$.
\end{prop}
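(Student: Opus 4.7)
The plan is to reduce the statement to a topological coherence fact about suspension maps, then invoke the construction of $\algapprox$ as a left Kan extension.

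First I would reduce to the case $M=\pi_*X$ with $X\in h\Modff{E}$. Both the domain and codomain of the diagram are, as functors of $M$, left Kan extensions of their restrictions to $\Modff{E_*}$: the functor $\algapprox(\omega^{-q/2}\otimes(-))$ is a left Kan extension because tensoring with the invertible object $\omega^{-q/2}$ is an auto-equivalence of $\Mod{E_*}$ preserving $\Modff{E_*}$, while $\algapprox$ itself is a self-Kan extension by Lemma~\ref{lemma:algapprox-is-self-kan-extension}. By the universal property of left Kan extensions, it therefore suffices to verify the diagram on finite free modules $M=\pi_*X$, where $\algapprox_m(\pi_*X)\cong \pi_*L\Pfree_m(X)$ via $\alpha_m$ (Proposition~\ref{prop:algapprox-finite-free}), and similarly for $\algapprox_m(\omega^{-j/2}\otimes\pi_*X)\cong \pi_*L\Pfree_m(\Sigma^j X)$, etc. Under these identifications the algebraic suspension map $E_q$ is, by its very definition in \S\ref{subsec:suspension-map}, induced by the topological suspension map $E_q\colon \Sigma^q L\Pfree_m(X)\to L\Pfree_m(\Sigma^q X)$.

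Next I would establish the topological analogue: for $X\in h\Mod{E}$ and $i,j\geq 0$, the diagram
\[
\xymatrix{
\Sigma^i\Sigma^j L\Pfree_m(X)\ar[r]^-{\Sigma^i E_j}\ar[d]_{\cong}
& \Sigma^i L\Pfree_m(\Sigma^j X)\ar[r]^-{E_i}
& L\Pfree_m(\Sigma^i\Sigma^j X)\ar[d]^{\cong}
\\
\Sigma^{i+j}L\Pfree_m(X)\ar[rr]_{E_{i+j}}
&& L\Pfree_m(\Sigma^{i+j}X)
}
\]
commutes in $h\Mod{E}$, the vertical maps being the canonical identifications coming from the associativity isomorphism $S^i\sm_E S^j\simeq S^{i+j}$. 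This is just the statement that the suspension transformation $\Sigma^q L\Pfree_m\to L\Pfree_m\Sigma^q$ is compatible with iteration, which in turn follows from the fact that this transformation is itself assembled from the natural ``diagonal shuffle'' map $S^q\sm_E(X^{\sm_E m})_{h\Sigma_m}\to ((S^q\sm_E X)^{\sm_E m})_{h\Sigma_m}$ and the coherence of the symmetric monoidal structure on spectra.

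Finally, translating this topological identity to $\pi_*$ and identifying $\pi_*\Sigma^q({-})$ with $\omega^{-q/2}\otimes \pi_*({-})$ via the K\"unneth isomorphism of \S\ref{subsec:graded-estar-modules}, the vertical identifications on the topological side become precisely $\kappa\otimes\id$ and $\algapprox(\kappa\otimes\id)$, yielding the required commutative square. The only step requiring real care is the last one, namely checking that the canonical isomorphism $\pi_*\Sigma^{i+j}Y\cong \omega^{-(i+j)/2}\otimes \pi_*Y$ factors through $\pi_*\Sigma^i\Sigma^j Y\cong \omega^{-i/2}\otimes\omega^{-j/2}\otimes \pi_*Y$ via exactly the K\"unneth map $\kappa$; this is immediate from the definition of $\kappa$ as the K\"unneth isomorphism $E_*S^i\otimes_{E_*}E_*S^j\to E_*S^{i+j}$. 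I expect this bookkeeping with the twisted $\Z/2$-graded identifications to be the main (though entirely routine) obstacle.
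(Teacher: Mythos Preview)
Your proposal is correct and follows essentially the same approach as the paper: the paper states only that these properties ``all reduce to corresponding properties of the extended power functors,'' and your argument spells out exactly this reduction (via the left Kan extension defining $\algapprox$) together with the routine topological coherence for iterated suspensions of $L\Pfree_m$.
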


\begin{prop}\label{prop:susp-compat-with-algapprox}
The diagrams
\[\xymatrix{
{\omega^{-q/2}\otimes M} \ar[d]_{\id\otimes \eta} \ar[dr]_{\eta}
\\
{\omega^{-q/2}\otimes \algapprox(M)} \ar[r]_{E_q}
& {\algapprox(\omega^{-q/2}\otimes M)}
}\]
and
\[\xymatrix{
{\omega^{-q/2}\otimes \algapprox\algapprox(M)} \ar[r]^-{E_q}
\ar[d]_{\id\otimes \mu}
& {\algapprox(\omega^{-q/2}\otimes \algapprox(M))} \ar[r]^-{\algapprox(E_q)}
& {\algapprox\algapprox(\omega^{-q/2}\otimes M)} \ar[d]^{\mu}
\\
{\omega^{-q/2}\otimes \algapprox(M)} \ar[rr]_{E_q}
&& {\algapprox(\omega^{-q/2}\otimes M)}
}\]
commute in $\Mod{E_*}$ for all $q\geq0$ and all $M\in \Mod{E_*}$.
\end{prop}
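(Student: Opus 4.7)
The plan is to reduce both assertions to standard compatibility properties of the topological free-algebra monad $L\Pfree$ and its suspension maps, and to transport the resulting statements back across the algebraic approximation isomorphism on finite free modules.

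Both sides of each diagram are natural transformations of functors $\Mod{E_*}\to\Mod{E_*}$. Since $\algapprox$ (and each $\algapprox_m$) is the left Kan extension, along the inclusion $j\colon\Modff{E_*}\hookrightarrow\Mod{E_*}$, of its restriction to $\Modff{E_*}$ (Lemma \ref{lemma:algapprox-is-self-kan-extension}), and since the suspension maps $E_q$, the unit $\eta$, and the multiplication $\mu$ are all constructed compatibly with this Kan extension, two parallel transformations of the form in question agree as soon as they agree on inputs $\pi_*X$ with $X\in\Modff{E}$. This reduces the verification to the finite free case.

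On a finite free input $\pi_*X$, the approximation map $\alpha_m\colon\algapprox_m(\pi_*X)\xrightarrow{\sim}\pi_*L\Pfree_m(X)$ of Proposition \ref{prop:algapprox-finite-free} is an isomorphism, and by construction $E_q$ on $\algapprox_m$ is equal, under this identification and the K\"unneth identification $\omega^{-q/2}\otimes\pi_*({-})\cong\pi_*\Sigma^q({-})$, to $\pi_*$ of the topological suspension $\Sigma^q L\Pfree_m(X)\to L\Pfree_m(\Sigma^qX)$. The first diagram then translates into the naturality of the unit $X\to L\Pfree(X)$ with respect to suspension, which is immediate from the construction of $L\Pfree$ as a monad on $h\Mod{E}$. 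The second diagram translates into the corresponding statement for $\mu\colon L\Pfree L\Pfree\to L\Pfree$, namely that the topological suspension maps $E_q$ assemble into a natural transformation of monads (up to shift); this is a routine consequence of the fact that $\Pfree_m$ is a topologically enriched functor on $\Mod{E}$, whose associated suspension maps are natural in both the input and in $q$.

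I expect the main obstacle to be the bookkeeping in the multiplication diagram, because the inner application of $\algapprox$ lands on $\algapprox(\pi_*X)$, which is not itself finite free. Here one must invoke the defining property of $\mu$ from Proposition \ref{prop:algapprox-is-monad}, namely that on finite free inputs $\mu$ coincides, via $\alpha$, with $\pi_*$ of the topological multiplication $L\Pfree L\Pfree X\to L\Pfree X$. Combined with the observation that $L\Pfree X$ is a filtered colimit of finite free modules (each $L\Pfree_m X$ is finite free by Proposition \ref{prop:free-ring-preserves-finite-frees}), one can then extend the topological identity across $\algapprox$ by naturality of the Kan extension, and the algebraic diagram follows by applying $\pi_*$ to its topological counterpart.
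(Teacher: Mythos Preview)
Your approach is correct and is essentially what the paper does, only with more detail: the paper offers no proof beyond the blanket remark preceding the proposition that these properties ``all reduce to corresponding properties of the extended power functors.'' Your reduction to finite free inputs via the Kan-extension description of $\algapprox$, followed by transport across $\alpha$ to the topological suspension compatibilities of $L\Pfree$, is exactly the intended argument, and your handling of the $\mu$ diagram (noting that $\algapprox(\pi_*X)$ is a filtered colimit of finite frees, so that $\algapprox$ applied to it is again controlled by the finite free case) correctly addresses the only nontrivial bookkeeping point.
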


Define $\nu\colon \algapprox (M\otimes N) \ra \algapprox (M)\otimes
\algapprox(N)\approx \algapprox(M\oplus N)$ to be the unique
$\algapprox$-algebra map which extends $\eta\otimes \eta\colon
M\otimes N\ra \algapprox(M)\otimes \algapprox(N)$. 

\begin{prop}\label{prop:susp-compat-multiplicativity}
The diagram 
\[\xymatrix{
{\omega^{-i/2}\otimes \omega^{-j/2} \otimes \algapprox(M\otimes N)}
\ar[d]_{\text{``$E_{i+j}$''}} \ar[r]^-{\id\otimes \id\otimes \nu}
& {\omega^{-i/2}\otimes \omega^{-j/2}\otimes \algapprox (M)\otimes
  \algapprox (N)} \ar[d]^{\id\otimes \tau\otimes \id}
\\
{\algapprox(\omega^{-i/2}\otimes \omega^{-j/2}\otimes M\otimes N)}
\ar[d]_{\algapprox(\id\otimes\tau\otimes \id)}
& {\omega^{-i/2}\otimes \algapprox(M)\otimes \omega^{-j/2}\otimes
  \algapprox(N)} \ar[d]^{E_i\otimes E_j}
\\
{\algapprox(\omega^{-i/2}\otimes M\otimes \omega^{-j/2}\otimes N)} 
\ar[r]_-{\nu}
& {\algapprox(\omega^{-i/2}\otimes M)\otimes
  \algapprox(\omega^{-j/2}\otimes N)} 
}\]
commutes in $\Mod{E_*}$ for all $i,j\geq0$ and all $M,N\in\Mod{E_*}$,
where ``$E_{i+j}$'' means the map defined by composition along the top
of the diagram in  \eqref{prop:susp-composition}.   
\end{prop}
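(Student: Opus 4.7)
The plan is to reduce the diagram to a corresponding statement about the extended power functors $L\Pfree_m$ on $E$-module spectra, and then verify that topological statement directly.

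First, I would argue that it suffices to check the diagram when $M$ and $N$ are (images under $\pi_*$ of) finite free $E$-modules. All four functors appearing in the diagram --- $\omega^{-i/2}\otimes \omega^{-j/2}\otimes \algapprox({-}\otimes {-})$ on one hand, and $\algapprox(\omega^{-i/2}\otimes {-})\otimes \algapprox(\omega^{-j/2}\otimes {-})$ on the other --- commute with filtered colimits and reflexive coequalizers in each variable separately, by \eqref{prop:algapprox-colimits} and \eqref{prop:algapprox-sum-isomorphism}, and the maps $E_q$, $\nu$, $\tau$, $\kappa$ are natural. The left Kan extension presentation \eqref{eq:left-kan-def} then reduces commutativity to the case $M=\pi_*M'$, $N=\pi_*N'$ with $M',N'\in h\Modff{E}$, since both sides of the diagram carry such coequalizers to coequalizers.

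Second, for such finite free inputs I would apply the isomorphism $\alpha_m\colon \algapprox_m(\pi_*M')\xra{\sim}\pi_*L\Pfree_m(M')$ of \eqref{prop:algapprox-finite-free}. Under $\alpha$, the suspension map $E_q$ is identified with the topological suspension $\Sigma^q L\Pfree_m(X)\to L\Pfree_m(\Sigma^q X)$, while the map $\nu$ is identified with the composite $L\Pfree(M'\sm_E N')\to L\Pfree(L\Pfree(M')\sm_E L\Pfree(N'))\to L\Pfree(M')\sm_E L\Pfree(N')$ arising from the multiplication on the free commutative algebras (equivalently, from the binomial/exponential decomposition $\Pfree(X\vee Y)\approx \bigvee_{i+j}\Pfree_i(X)\sm_E\Pfree_j(Y)$). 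The interchange map $\tau$ in the middle of the diagram corresponds to the topological swap of suspension coordinates with a smash factor.

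Third, the resulting topological diagram commutes because the suspension maps $\Sigma^{i+j}L\Pfree(X\sm_EY)\to L\Pfree(\Sigma^{i+j}(X\sm_EY))$ are multiplicative in the evident sense: they arise from the natural inclusions of fixed-point sets and thus commute with the symmetric monoidal structure, subject only to the usual Koszul signs when one permutes a suspension coordinate past a factor of odd degree. These signs are precisely accounted for by the interchange $\tau$ in the middle of the diagram and by the sign conventions of the $\omega$-twisted tensor product of \S\ref{sec:twisted-z2-cat}. The main obstacle is the bookkeeping: one must verify that the twisted interchange isomorphism $\tau^*$ on $\mathcal{C}^*$ produces exactly the sign one gets by interchanging a suspension coordinate with $L\Pfree(X)$ at the topological level. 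Once this identification of signs is made, the diagram commutes by naturality of the suspension map and the definition of $\nu$.
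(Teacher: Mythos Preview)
Your approach is correct and is exactly what the paper does, only spelled out in more detail: the paper gives no separate proof for this proposition, merely remarking before the block of suspension-map propositions that ``they all reduce to corresponding properties of the extended power functors.'' Your reduction to finite free modules via \eqref{prop:algapprox-colimits} and the identification through $\alpha$ of \eqref{prop:algapprox-finite-free} is precisely the intended elaboration of that sentence.
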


\section{Twisted bialgebras}
\label{sec:bialgebras}

Let $R$ be a commutative ring.  An \dfn{algebra under $R$} is an
associative ring $\Gamma$ together with a ring homomorphism
$\eta\colon R\ra \Gamma$.  Note that the image of $\eta$ is not
assumed to be central (in which case $\Gamma$ is an
\dfn{$R$-algebra}).  

An \dfn{twisted cocommutative $R$-bialgebra} (or \dfn{bialgebra}) is an
algebra $\Gamma$  under $R$ which is 
equipped with certain additional structures which make the category of
right $\Gamma$-modules into a symmetric monoidal category, in such a way
that this symmetric monoidal product coincides with the tensor product
over $R$ of the underlying $R$-modules.   In the case when $R$ is
central in $\Gamma$, then we have the conventional notion of a cocommutative
$R$-bialgebra (like a cocommutative Hopf algebra, but without an
antipode).  

The original definition of bialgebra is due to Sweedler
\cite{sweedler-groups-simple-algebras}, as modified by
\cite{takeuchi-groups-algebras}, under the name of
``$\times_R$-bialgebra''.  (Sweedler described a somewhat more general
situation, in which the monoidal structure is not required to be
symmetric.)  This notion as well as various generalizations have been
much studied in the literature on Hopf algebras and allied notions.
It has entered 
topology in various guises.  I first learned about the notion from
\cite{voevodsky-reduced-power-operations}; the motivic Steenrod
algebra is naturally a twisted bialgebra over the motivic homology of
the base scheme.   Bialgebras can be
thought of as a kind of ``dual'' version of the notion of Hopf
algebroids (or more precisely, of ``affine category schemes''), an
observation we will make explicit below
(\S\ref{subsec:gr-bialg-affine-cat-scheme}).  

I've taken the terminology ``twisted bialgebra'' and some notation
from \cite{borger-wieland-plethystic-algebra}.

\subsection{Bimodules, multimorphisms, and functors}

Let $R$ be a commutative ring, and suppose that $P$ and $Q$ be
$R$-bimodules.  We write 
\begin{align*}
  P_R\otimes {}_R Q  & \defeq P\otimes Q/(pr\otimes q
  \sim p\otimes rq),
\\
  P_R\otimes Q_R & \defeq P\otimes Q/(pr\otimes q\sim
  p\otimes qr).
\end{align*}
Because $R$ is commutative, each of these admit \emph{three} possible
$R$-modules structures.  For instance, $P_R\otimes Q_R$ admits the
following three $R$-module structures (two on the left, one on the right):
\[
r\,{\cdot}\strut_1 (p\otimes q) = rp\otimes q;\quad 
r\,{\cdot}\strut_2 (p\otimes q) = p\otimes rq; \quad
(p\otimes q)\cdot r = pr\otimes q= p\otimes qr.
\]

If $P$ and $Q_1,\dots, Q_K$  are $R$-bimodules, then a
\dfn{$k$-multimorphism} is a function $f\colon P\ra (Q_1)_R\otimes \cdots
\otimes (Q_k)_R$ which is a map of right $R$-modules, and which
is a map of left $R$-modules for each of the $k$ left $R$-module
structures on the target.  Note that a $0$-multimorphism is just a map
$P\ra R$ of right $R$-modules.

Given an $R$-bimodule $P$, let
$H_M\colon \Mod{R}\ra \Mod{R}$ denote the functor defined by
$H_P(M)= \hom_R(P,M)$, where this means the set of
homomorphisms with respect to the \emph{right} $R$-module structure on
$P$.  Thus, $f\in H_P(M)$ is a additive map $f\colon
P\ra M$ such that $f(p r)=f(p)r$ for all $p \in
P$ and $r\in R$.   A bimodule map $b \colon P\ra Q$ induces a natural
transformation $\tilde{b}\colon H_Q\ra H_P$, defined by
\[
\tilde{b}\colon H_Q(M)\ra H_P(M),\qquad 
f  \mapsto (p\mapsto f(b(p)).
\]
More generally, a $k$-multimorphism $b\colon P\ra (Q_1)_R\otimes\cdots
\otimes (Q_k)_R$ induces a natural transformations 
\begin{align*}
\tilde{b}\colon  H_{Q_1}(M_1)_R\otimes \cdots \otimes H_{Q_k}(M_k)_R & \ra
H_P(M_1\otimes_R\cdots \otimes_R M_k)
\\
(f_1\otimes \cdots \otimes f_k) & \mapsto 
(p\mapsto \sum_\alpha f(q_1^\alpha)\otimes \cdots \otimes f(q_k^\alpha)),
\end{align*}
where $b(p)=\sum_\alpha q_1^\alpha\otimes\cdots \otimes q_k^\alpha$.
In particular, a $0$-multimorphism $b\colon P\ra R$ induces
\[
\tilde{b}\colon R\ra H_P(R),\qquad r \mapsto (p \mapsto b(rp)).
\]

\begin{rem}
The assignments $b\mapsto \tilde{b}$ described above satisfy a number
of useful properties, relating to composition of functors and to the
tensor product of $R$-modules; these can be safely left to
the reader.  The language of ``multicategories''
\cite[\S2.1]{leinster-higher-operads-higher-cats} can be used to
encapsulate these properties: bimodules and multimorphisms form a
multicategory $\mathcal{B}$, additive endofunctors $\Mod{R}\ra
\Mod{R}$ and multilinear natural transformations form a multicategory
$\mathcal{F}$, and there is a  
morphism
$H\colon \mathcal{B}\ra \mathcal{F}$ of multicategories.
Furthermore, $\mathcal{B}$ and $\mathcal{F}$ admit an additional
``monoidal'' structure (corresponding to bimodule tensor product and
functor composition respectively), and $H$ is compatible with this
monoidal structure (in the weak sense).
\end{rem}

\subsection{Twisted commutative $R$-bialgebra}

Let $R$ be a commutative ring.  An \dfn{$R$-bialgebra} (more
precisely, a \dfn{twisted cocommutative $R$-bialgebra} is 
data $(\Gamma,\epsilon,\Delta,\eta,\mu)$, consisting of
\begin{enumerate}
\item [(a)] an associative ring $\Gamma$;
\item [(b)] a map $\eta\colon R\ra \Gamma$ of rings;
\item [(c)] a map $\epsilon\colon \Gamma\ra R$ of right $R$-modules
  such that $\epsilon(\eta(r))=r$ for all $r\in R$ and
  $\epsilon(xy)=\epsilon(\eta(\epsilon(x))y)$ for all $x,y\in \Gamma$;
\item [(d)]
  a map $\Delta\colon \Gamma\ra \Gamma_R\otimes \Gamma_R$ which is a
  $2$-multimorphism of $R$-bimodules, which is is coassociative and
  cocommutative with counit $\epsilon$, and such that
\[
\Delta(xy) = \sum x_i'y_j'\otimes x_i''y_j'',
\]
where $\Delta(x)=\sum x_i'\otimes x_i''$ and $\Delta(y)=\sum
y_j'\otimes y_j''$.
\end{enumerate}
Note that the multiplication on $\Gamma$ descends to a map $\mu\colon
\Gamma_R \otimes {}_R\Gamma\ra \Gamma$.

The functor $H_\Gamma$ is a comonad, with
comonadic structure $\tilde{\eta}\colon H_\Gamma\ra I$ and
$\tilde{\mu}\colon H_\Gamma\ra H_\Gamma H_\Gamma$ induced by $\eta$
and $\mu$.  The maps $\epsilon$ and $\Delta$ induce natural
transformations 
\[
\tilde{\epsilon}\colon R\ra H_\Gamma(R),\qquad \tilde{\Delta}\colon
H_\Gamma(M)\otimes_R H_\Gamma(N)\ra  
H_\Gamma(M\otimes_R N) 
\]
which give $H_\Gamma$ the structure of a symmetric monoidal functor; this
structure is compatible the monad structure, so that $\tilde{\eta}$
and $\tilde{\mu}$ are transformations of monoidal functors.

By a \dfn{module} over the $R$-bialgebra $\Gamma$, we mean a pair
$(M,\psi_M)$ consisting of an $R$-module $M$, and a map $\psi_M\colon
M\ra H_\Gamma(M)$ of $R$-modules such that $\tilde{\eta}\psi_M=\id_M$
and $H_{\Gamma}(\psi_M)\psi_M=\tilde{\mu}\psi_M$.  That is, a module
is defined to be a coalgebra for the comonad
$(H_\Gamma,\tilde{\eta},\tilde{\mu})$.  
Let $\Mod{\Gamma}$ denote the category of modules over $\Gamma$.  

Note
that what we are  calling a module over $\Gamma$ really does coincide
with the usual notion of a right $\Gamma$-module: the map
$\psi_M\colon M \ra H_\Gamma(M)$ is adjoint to a map
$M_R\otimes {}_R\Gamma\ra M$ defining a right $\Gamma$-module
structure on $M$.

\subsection{Symmetric monoidal structure on $\Mod{\Gamma}$}

There is a canonical $\Gamma$-module structure on $R$, defined by
taking $\psi_R=\tilde{\epsilon}\colon R\ra H_\Gamma(R)$.

Given $M,N\in \Mod{\Gamma}$ we define their tensor product to be
$(M\otimes_R N,\psi_{M\otimes_R N})$, where
$$\psi_{M\otimes_R N}\defeq \tilde{\Delta}(\psi_M\otimes \psi_N)\colon
M\otimes_R N\ra H_\Gamma(M\otimes_R N).$$
That is is indeed a $\Gamma$-module follows from the fact that the
comonad structure maps $\tilde{\eta}$ and $\tilde{\mu}$ are compatible
with monoidal structures.
\begin{prop}
The above defines a symmetric monoidal structure on the category
$\Mod{\Gamma}$, such that the forgetful functor $U\colon
\Mod{\Gamma}\ra \Mod{R}$ is a symmetric monoidal functor.
\end{prop}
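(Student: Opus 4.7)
The approach is to recognize that the forgetful functor $U\colon \Mod{\Gamma}\ra \Mod{R}$ identifies $\Mod{\Gamma}$ with the category of coalgebras for the comonad $H_\Gamma$, and that this comonad is symmetric monoidal in the sense that $\tilde{\epsilon}\colon R\ra H_\Gamma(R)$ and $\tilde{\Delta}\colon H_\Gamma(M)\otimes_R H_\Gamma(N)\ra H_\Gamma(M\otimes_R N)$ equip $H_\Gamma$ with the structure of a lax symmetric monoidal endofunctor, and the comonad structure maps $\tilde{\eta}$ and $\tilde{\mu}$ are monoidal transformations. It is then a formal categorical fact that the category of coalgebras for a symmetric monoidal comonad inherits a symmetric monoidal structure for which the forgetful functor is strict monoidal. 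The plan is to verify each of the required compatibilities, invoking the properties of the $b\mapsto\tilde{b}$ correspondence noted in the preceding remark.

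First I would verify that $(M\otimes_R N, \psi_{M\otimes_R N})$ really is a $\Gamma$-module, i.e., a coalgebra for $H_\Gamma$. The counit axiom $\tilde{\eta}\psi_{M\otimes_R N}=\id$ reduces, via the compatibility of $\tilde{\eta}$ with the monoidal structure, to the counit axioms $\tilde{\eta}\psi_M=\id_M$ and $\tilde{\eta}\psi_N=\id_N$. The coassociativity axiom $H_\Gamma(\psi_{M\otimes_R N})\psi_{M\otimes_R N}=\tilde{\mu}\psi_{M\otimes_R N}$ reduces to coassociativity of $\psi_M$ and $\psi_N$ together with the fact that $\tilde{\mu}$ is a monoidal transformation, which in turn follows from the coassociativity of $\Delta$ (axiom (d)) as encoded in the multicategorical compatibility of $H$ with composition. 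The $\Gamma$-module structure on $R$ via $\psi_R=\tilde{\epsilon}$ is a coalgebra structure for analogous reasons, using that $\epsilon$ is a counit for $\Delta$.

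Next I would lift the associator, unitor, and symmetry isomorphisms of $(\Mod{R},\otimes_R,R)$ to $\Mod{\Gamma}$. For the associator, the point is to show that the $R$-module isomorphism $(M\otimes_R N)\otimes_R P\ra M\otimes_R(N\otimes_R P)$ intertwines the two coalgebra structures; this follows from the coassociativity of $\tilde{\Delta}$ as a monoidal structure map, which itself descends from the coassociativity of $\Delta$. The unit isomorphisms follow from the counit property of $\epsilon$ with respect to $\Delta$. The symmetry isomorphism is $\Gamma$-linear precisely because $\Delta$ is cocommutative, i.e., $\tau\tilde{\Delta}=\tilde{\Delta}\tau$ where $\tau$ is the $R$-module interchange. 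Once each of these structural isomorphisms is promoted to a morphism in $\Mod{\Gamma}$, the coherence diagrams (pentagon, triangle, hexagon) in $\Mod{\Gamma}$ are automatic: the forgetful functor $U$ is faithful and sends these diagrams to the corresponding diagrams in $\Mod{R}$, which commute.

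Finally, the assertion that $U$ is symmetric monoidal is tautological by construction: $U(M\otimes N)=M\otimes_R N=U(M)\otimes_R U(N)$ and $U(R)=R$ on the nose, and the associator, unitor, and symmetry in $\Mod{\Gamma}$ are by definition lifts of those in $\Mod{R}$. The main obstacle is purely bookkeeping—keeping track of the three possible $R$-module structures on tensor products of bimodules, and checking that each compatibility of $(\eta,\epsilon,\Delta,\mu)$ as a twisted bialgebra translates, via the multicategorical functor $H$, into exactly the compatibility needed on the $H_\Gamma$-coalgebra side. No conceptual difficulty arises once one commits to working systematically inside the multicategory of bimodules and multimorphisms.
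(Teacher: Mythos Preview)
Your proposal is correct and is exactly the routine verification one would expect; the paper in fact states this proposition without proof, treating it as a formal consequence of the bialgebra axioms and the remark on the multicategory $\mathcal{B}\ra\mathcal{F}$. Your outline fills in precisely the details the paper omits.
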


\subsection{$\Gamma$-algebras}

Let $\Gamma$ be an $R$-bialgebra.  By a \dfn{$\Gamma$-algebra}, we
mean a commutative monoid object $B$ in the symmetric monoidal
category of $\Gamma$-modules.  That is, $B$ is equipped with maps
$i\colon R\ra B$ and $m\colon B\otimes B\ra B  $ of $\Gamma$-modules which
also provide it with the structure of a commutative ring.  We write
$\Alg{\Gamma}$ for the category of $\Gamma$-algebras.

The \dfn{free $\Gamma$-algebra} on an $R$-module $M$ is seen to have
the form $\Sym_R^*(M\otimes{}_R\Gamma)$, where the symmetric powers
are taken with respect to the right $R$-module structure on $\Gamma$.

The forgetful functor $\Alg{\Gamma}\ra \Alg{R}$ is plethyistic, in the
sense of \S\ref{subsec:plethyistic}.  

\subsection{Graded bialgebras and graded affine category schemes} 
\label{subsec:gr-bialg-affine-cat-scheme}

A \dfn{grading} for an $R$-bialgebra $\Gamma$ is a decomposition
$\Gamma\approx \bigoplus_{k\geq0}\Gamma[k]$, such that
\[
\mu(\Gamma[k]\otimes \Gamma[\ell])\subseteq \Gamma[k+\ell],\qquad 
\eta(R)\subseteq \Gamma[0],\qquad \Delta(\Gamma[k])\subseteq
\Gamma[k]_R\otimes \Gamma[k]_R.
\]
Thus in particular $\Gamma$ is a graded ring, and each graded piece
$\Gamma[k]$ is a cocommutative coalgebra.

Suppose that each $\Gamma[k]$ is projective and finitely generated as
a right $R$-module.  Let $A[k]=H_{\Gamma[k]}(R)$.  The $A[k]$'s admit
two different $R$-module structures, which we call the ``source'' and
``target'' module structures; the ``source'' module
structure is defined by $(r\cdot_s f)(x)=f(xr)$, while the ``target''
module structure is defined by $(r\cdot_t f)(x)=f(rx)$, where $f\in
A_k$, $r\in R$, $x\in \Gamma[k]$.  

We define
\begin{align*}
  \mu &\colon A[k] \otimes_R A[k]\ra A[k], &
  \mu(f\otimes 
  g)(x) &= \sum f(x_i')g(x_i''),
\\
  s^* & \colon R\ra A[k], & s^*(r)(x) &=
  \epsilon(xr)=\epsilon(x)r,
\\
  t^* & \colon R\ra A[k], & t^*(r)(x) &= \epsilon(rx),
\\
  i^* & \colon A[0]\ra R, & i^*(f) &= f(\eta(1)),
\\
  c^* & \colon A[k+\ell]\ra  A[k]_s \otimes_R {}_tA[\ell],
& \sum f_i''(f_i'(x)y) &= f(xy),
\end{align*}
where $\Delta(x)=\sum x_i'\otimes x_i''$ and we write
$c^*(f)=\sum f_j'\otimes f_j''$.   The map $\mu$ makes $A[k]$ into
a commutative ring, and the ``source'' and ``target'' $R$-module
structures on $A[k]$ are are the same as those induced by the ring
homomorphisms $s^*$
and $t^*$:
\[
r\cdot_s f = f(xr)= \mu(s^*(r)\otimes f), \quad
r\cdot_t f= f(rx)= \mu(t^*(r)\otimes f).
\]

By a \dfn{graded category}, we mean a small category $C$
equipped with a degree function $\deg\colon \mor C\ra \N$, such that
(i) $\deg(\alpha\circ \beta)=\deg(\alpha)+\deg(\beta)$, and
$\deg(\id)=0$.
The data $\mathcal{A}=(R,A[k],s^*,t^*,i^*,c^*)$ described above determine a
\dfn{graded affine 
category scheme}; that is, they corepresent a functor from commutative
rings to graded categories.  In particular, $R$ represents the set of
objects of $C$, and $A[r]$ represents the set of morphisms of $C$ of
degree $r$.  The maps $s^*$, $t^*$, $i^*$, $c^*$ correspond to source,
target, identity, and composition in $C$.

If $\mathcal{A}= (R,A[r],s^*,t^*,i^*,c^*)$ is the data representing an
affine graded category scheme over $R$, 
let $V\colon
\Mod{R}\ra \Mod{R}$ be the functor defined by 
\[V(M) \defeq \prod_{r\geq0} A[r]_s\otimes_R M ,\]
 where $V(M)$  obtains 
its $R$-module structure from the ``target'' module structure on
the $A[r]$'s.
The functor $V$ admits the structure of a comonad, in an apparent way;
the counit map $VM\ra M$ is the projection $VM\ra 
A[0]_s\otimes_RM\xra{i^*\otimes \id} R\otimes_R M\approx M$, and  the
comultiplication $VM\ra VVM$ is the map
\[ 
\prod_r A[r]_s \otimes_RM \ra \prod_\ell A[k]_s\otimes_R
{}_t\left(\prod_k A[\ell]_s\otimes  M 
\right) \approx 
\prod_{k,\ell} A[k]{}_s \otimes_R {}_t A[\ell]_s \otimes_R M
\]
induced by the maps $c^*$.  
A coalgebra for the comonad
$V$ is called an \dfn{$\mathcal{A}$-comodule}, and  the category of
$\mathcal{A}$-comodules is denoted $\Comod{\mathcal{A}}$.

Under our hypothesis on $\Gamma$, the 
natural transformation $\chi\colon V(M) \ra
H_\Gamma(M)$ of functors $\Mod{R}\ra \Mod{R}$ defined by
\[
\chi(f\otimes m) \mapsto (x \mapsto f(x)m)
\]
is an isomorphism for all $M$, and the comonad structure on $V$
corresponds exactly to the comonad structure on $H_\Gamma$.  Thus, we
get the following. 
\begin{prop}
If $\Gamma$ is a graded bialgebra over $R$ such that each graded piece
$\Gamma[k]$ is projective and finitely generated as a right
$R$-module, and if $\mathcal{A}$ is the corresponding graded affine
category scheme, then there is an isomorphism of categories
$\Mod{\Gamma}\approx \Comod{\mathcal{A}}$.  
\end{prop}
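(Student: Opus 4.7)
The plan is to upgrade the natural transformation $\chi\colon V\ra H_\Gamma$ of functors $\Mod{R}\ra \Mod{R}$ to an isomorphism of comonads; once this is done, the claimed isomorphism of categories is immediate, since $\Comod{\mathcal{A}}$ and $\Mod{\Gamma}$ are by definition the categories of coalgebras for the comonads $V$ and $H_\Gamma$ respectively.

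First I would verify that each component $\chi_M$ is an isomorphism of $R$-modules. This is where the finiteness hypothesis is essential: because each $\Gamma[k]$ is projective and finitely generated as a right $R$-module, the canonical evaluation map
\[
A[k]_s\otimes_R M\ra \hom_R(\Gamma[k],M),\qquad f\otimes m\mapsto (x\mapsto f(x)m),
\]
is an isomorphism (it is obvious when $\Gamma[k]$ is a finitely generated free right $R$-module, and then passes to retracts). Since $\Gamma\approx\bigoplus_k\Gamma[k]$, the functor $H_\Gamma(M)$ decomposes as $\prod_k\hom_R(\Gamma[k],M)$, and $\chi_M$ is exactly the product of these isomorphisms over $k\geq0$.

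Next I would check that $\chi$ carries the counit and comultiplication of $V$ to those of $H_\Gamma$. The counit of $H_\Gamma$ is $\tilde{\eta}$ induced by $\eta\colon R\ra\Gamma$, while the counit of $V$ is induced by $i^*$; these match under $\chi$ because $i^*(f)=f(\eta(1))$ by definition. Similarly, the comultiplication on $V$ is assembled from the maps $c^*$, and the defining identity $\sum f_i''(f_i'(x)y)=f(xy)$ is precisely the relation required so that $\chi$ intertwines $c^*$ with the comultiplication $\tilde{\mu}$ on $H_\Gamma$ coming from the multiplication $\mu\colon \Gamma_R\otimes{}_R\Gamma\ra\Gamma$.

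The main obstacle is bookkeeping. One must carefully distinguish the ``source'' and ``target'' $R$-actions on $A[k]$, the two sides of the $R$-bimodule structure on $\Gamma[k]$ (which are genuinely different, since $\eta$ need not have central image), and the various $R$-actions on tensor products of bimodules; and one must check that $\chi$ matches them in the correct pattern. Given the explicit definitions of $s^*$, $t^*$, $i^*$, $c^*$ recorded above, this matching is tautological but tedious. The conceptual point is simply that $H_\Gamma$ and $V$ are the ``hom'' and ``tensor'' incarnations of the same comonad, and they coincide exactly because the graded pieces $\Gamma[k]$ are finitely generated projective right $R$-modules.
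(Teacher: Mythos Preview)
Your proposal is correct and follows exactly the approach the paper takes: the paper observes (in the paragraph immediately preceding the proposition) that under the finiteness hypothesis $\chi\colon V\ra H_\Gamma$ is an isomorphism and that the comonad structures on $V$ and $H_\Gamma$ correspond under $\chi$, then states the proposition as an immediate consequence. You have in fact supplied more detail than the paper does, spelling out why finite projectivity of $\Gamma[k]$ makes $\chi$ an isomorphism and why the definitions of $i^*$ and $c^*$ are rigged so that $\chi$ intertwines the counits and comultiplications.
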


\section{The bialgebra of power operations}
\label{sec:additive-bialgebra}

In this section, we are going to construct a twisted commutative
$E_0$-bialgebra $\Gamma$, together with a symmetric object
$\omega$ in $\Mod{\Gamma}$; the bialgebra $\Gamma$ will come with a
grading in the sense of \eqref{subsec:gr-bialg-affine-cat-scheme}.  

From the point of view of topology, $\Gamma$ is the ``algebra of power
operations in Morava $E$-theory''; more precisely, it is (up to issues of
completion), the algebra of \emph{additive} operations on $\pi_0$ of a
$K(n)$-local  $E$-algebra spectrum.  The difficult part of the
construction was given in
\cite{strickland-morava-e-theory-of-symmetric}.  From the point of
view of the theory of plethories, $\Gamma$ is the ``additive
bialgebra'' of the plethory associated to $\algapprox$, as described in
\cite[\S10]{borger-wieland-plethystic-algebra}.

\subsection{Additive operations on $\algapprox$-algebras}
\label{subsec:additive-ops-degree-0}

Let $P=\Hom_{\gralgcat}(\algapprox(E_*),\algapprox(E_*))$.  The set $P$
has the structure of a monoid, under composition of morphisms.
Because there is a natural bijection
$\Hom_{\gralgcat}(\algapprox(E_*),A)\approx A_0$, defined by evaluation at
the canonical generator $\iota\in \algapprox(E_0)$, we see that $P$ is
naturally identified with the monoid of endomorphisms of the forgetful
functor $\algcat\ra \Set$ which sends to $A$ to the underlying set
$A_0$ of the degree $0$ part of $A$.
In particular, evaluation at $\iota\in \algapprox(E_0)$ defines a
natural bijection 
\[
\pi\colon P\ra [\algapprox(E_*)]_0
\]
to the degree $0$ part of $\algapprox(E_*)$.  We will use this
bijection implicitly in what follows.


Write $\circ \colon P\times P\ra P$ for the monoid product on $P$,
defined in terms of composition of endomorphisms of the forgetful
functor. 
If $f,g\in P$ correspond to maps
$\tilde{f},\tilde{g}\colon E_0\ra \algapprox E_0$ of $E_0$-modules,
then $f\circ g$ corresponds to
\[
E_0 \xra{\tilde{g}} \algapprox E_0 \xra{\algapprox \tilde{f}}
\algapprox \algapprox E_0 \xra{\mu} \algapprox E_0.
\]
Observe that if $f\in \algapprox_m E_0$ and $g\in \algapprox_n E_0$,
then $f\circ g \in \algapprox_{mn}E_0$; this is a consequence of the
fact that $\mu\colon \algapprox \algapprox\ra \algapprox$ carries
$\algapprox_m \algapprox_n $ into $\algapprox_{mn}$
(\S\ref{subsec:weight-decomp}).  

The set $P$  has the structure of an abelian group, by addition of
natural endomorphisms of the forgetful functor, or equivalently by
addition in $\algapprox(E_*)$.  We observe that
$\circ$ is ``left additive'', in the sense that 
\[
(f+g)\circ h = (f\circ h) + (g\circ h)
\]
for $f,g,h\in P$.

\subsection{The additive bialgebra associated to $\algapprox$}
\label{subsec:additive-bialgebra}

Let $\Gamma\subset P$ denote the subset of elements $f\in P$ which
induce \emph{additive} natural endomorphisms of the forgetful functor;
that is, $f\in \Gamma$ if it induces an endormorphism of the forgetful
functor $\gralgcat\ra
\Ab$.  Thus, $\Gamma$ is naturally an associative ring, isomorphic to
the ring of endomorphisms of the forgetful functor $\gralgcat\ra \Ab$
to abelian groups.

Define 
\[ 
\Delta^{+}\colon \algapprox(E_*)\ra \algapprox(E_*)\otimes \algapprox(E_*)
\]
to be the composite
\[
\algapprox(E_0) \xra{\algapprox(\nabla)} \algapprox(E_0\oplus E_0)
\xra[\sim]{\gamma^{-1}}  \algapprox(E_0)\otimes \algapprox (E_0),
\]
where $\Delta\colon E_*\ra E_*\oplus E_*$ denotes the diagonal.  
Say an element $f\in [\algapprox(E_*)]_0$ is \dfn{primitive} with
respect to $\circ$ if  
$\Delta^+(f)=f\otimes 1+1\otimes f$.
It is a straightforward exercise to check that under the bijection
$P\xra{\sim} [\algapprox(E_*)]_0$, elements of $\Gamma$ correspond
precisely to primitive elements.


Translating this into topology by means of the isomorphism
$\algapprox_m(E_*)\approx \cE{*}B\Sigma_m$, we see that
$\Gamma\approx \bigoplus_{k\geq0} \Gamma[k]$, where $\Gamma[k]\subset
\algapprox_{p^k} (E_0)$ defined by
\[
0\ra \Gamma[k] \ra \cE{0}B\Sigma_{p^k} \ra
\bigoplus_{0<j<p^k}\cE{0} B(\Sigma_j\times \Sigma_{p^k-j}),
\]
where the maps are induced by transfer map associated to the evident
inclusions $\Sigma_j\times \Sigma_{p^k-j}\ra \Sigma_{p^k}$.  (For a
symmetric group of order not a $p$th power, the kernel of these
transfer maps is $0$.)

\begin{prop}\label{prop:gamma-summand-even}
The submodule $\Gamma[k]\subset \algapprox_{p^k}(E_0)$ is a direct 
summand. In particular, $\Gamma[k]$ is a finitely generated free
$E_0$-module. 
\end{prop}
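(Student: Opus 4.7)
The plan is to realize $\Gamma[k]$ as the $E_0$-linear dual of a finitely generated free $E_0$-module, so that both conclusions follow at once from splitting a short exact sequence and dualizing.

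First, I would observe that $E^0 B\Sigma_{p^k}$ and $\cE{0}B\Sigma_{p^k}$ are both finitely generated free $E_0$-modules (this is part of Strickland's computation, together with \eqref{prop:charac-ffree-module}), and are naturally $E_0$-dual to one another. Under this duality, the transfer maps
\[
\cE{0}B\Sigma_{p^k} \;\to\; \cE{0}B(\Sigma_j \times \Sigma_{p^k-j})
\]
appearing in the defining sequence for $\Gamma[k]$ are precisely the $E_0$-duals of the cohomological transfer maps $E^0 B(\Sigma_j \times \Sigma_{p^k-j}) \to E^0 B\Sigma_{p^k}$. Dualizing the defining left-exact sequence therefore identifies $\Gamma[k]$ with the $E_0$-linear dual of the quotient
\[
E^0 B\Sigma_{p^k}/I_{\mathrm{tr}},
\]
where $I_{\mathrm{tr}}\subseteq E^0 B\Sigma_{p^k}$ denotes the ideal generated by the images of the transfers from the partition subgroups $\Sigma_j\times\Sigma_{p^k-j}$ with $0<j<p^k$.

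Next I would invoke Strickland's main theorem from \cite{strickland-morava-e-theory-of-symmetric} (with the correction \cite{strickland-morava-e-theory-of-symmetric-corr}), which shows that $E^0 B\Sigma_{p^k}/I_{\mathrm{tr}}$ is a finitely generated free $E_0$-module; indeed, it corepresents the moduli of order-$p^k$ subgroup schemes in the universal deformation of $G_0$. Given this, the short exact sequence
\[
0 \to I_{\mathrm{tr}} \to E^0 B\Sigma_{p^k} \to E^0 B\Sigma_{p^k}/I_{\mathrm{tr}} \to 0
\]
splits in $\Mod{E_0}$, and taking $E_0$-duals of a chosen splitting exhibits $\Gamma[k]$ as a finitely generated free $E_0$-module that is a direct summand of $\cE{0}B\Sigma_{p^k} = \algapprox_{p^k}(E_0)$.

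The main obstacle is external, namely Strickland's deep identification of $E^0 B\Sigma_{p^k}/I_{\mathrm{tr}}$ with the ring of functions on a scheme of subgroups (and the resulting freeness); granted that input, everything else is a routine duality argument for finite free $E_0$-modules.
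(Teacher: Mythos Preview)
Your proposal is correct and is essentially the paper's own argument: invoke Strickland's theorem that $E^0B\Sigma_{p^k}/(\text{transfers})$ is finite free over $E_0$, split the resulting short exact sequence, and take $E_0$-linear duals to exhibit $\Gamma[k]$ as a finite free summand of $\cE{0}B\Sigma_{p^k}$. The only cosmetic difference is that the paper first reduces the full transfer ideal to the single transfer from $(\Sigma_{p^{k-1}})^p\subset\Sigma_{p^k}$ via a Sylow argument before applying Strickland's result, but this is not needed for the proof to go through.
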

\begin{proof}
By standard considerations involving Sylow subgroups of
$\Sigma_{p^k}$, it is straightforward to show that for $k\geq1$,
$\Gamma[k]$ is the 
kernel of the single transfer map $\cE{0} B\Sigma_{p^k} \ra \cE{0}
B(\Sigma_{p^{k-1}})^p$.    Strickland
  \cite{strickland-morava-e-theory-of-symmetric} proves that cokernel
  of the cohomology transfer is a finite free $E_0$-module, so that
  there is an exact sequence
\[
E^0B(\Sigma_{p^{k-1}})^p \xra{\tau} E^0B\Sigma_{p^k} \xra{\pi} R_{p^k} \ra 0
\]
where $R_{p^k}$ is a finite free $E_0$-module.  Taking $E_0$-linear
duals gives the desired result.
\end{proof}

Observe that
$\Gamma[k]\circ\Gamma[\ell]\subseteq \Gamma[k+\ell]$, whence $\Gamma$
is a graded ring.  The unit $1\in \Gamma$ corresponds to the canonical
generator $\iota\in
\algapprox_1(E_*)$.   From now on we omit the ``$\circ$'' notation
when discussing the product on $\Gamma$.

The map $E_0\ra P$ defined by $r\mapsto r\cdot 1$ factors through a map
$\eta\colon E_0\ra \Gamma$, so that $\eta(r) = 
\pi(\algapprox(\lambda_r))$, where $\lambda_r\colon E_0\ra E_0$
denotes multiplication by $r\in E_0$.  The map $\eta \colon E_0\ra
\Gamma$ is a ring homomorphism.  Observe that 
the image of $\eta$ need not be central in $\Gamma$ (and in fact it
isn't, except when $k=\F_p$ and $n=1$.)

The ring $\Gamma$ becomes a $E_0$-bimodule by $\eta$, and
multiplication descends to a map $\mu\colon \Gamma_{E_0}\otimes
{}_{E_0}\Gamma\ra \Gamma$.  

Let $\epsilon^\times \colon P\ra E_0$ and $\Delta^\times \colon P\ra
P\otimes P$ be the maps isomorphic the morphisms of $\algapprox$-algebras
\[
\algapprox(E_0)\ra E_0, \quad \algapprox (E_0)\ra \algapprox(E_0\oplus
E_0)
\]
which correspond to the maps
\[
E_0\xra{\id} E_0,\quad E_0\xra{\sim} \algapprox_1(E_0)\otimes
\algapprox_1(E_0)\subset \algapprox (E_0)\otimes \algapprox(E_0) 
\]
of $E_0$-modules.
(In terms of topology, $\Delta^\times$ and $\epsilon^\times$ are
induced by the space-level diagonal and projection maps $B\Sigma_m\ra
B\Sigma_m\times B\Sigma_m$ and $B\Sigma_m\ra *$.)

\begin{prop}
The maps $\epsilon^\times$ and $\Delta^\times$ restrict to functions
\[ 
\epsilon \colon \Gamma\ra E_0,\qquad \Delta\colon \Gamma\ra
\Gamma_{E_0}\otimes \Gamma_{E_0},
\]
and the data $(\Gamma,\epsilon,\Delta,\eta,\mu)$ constitutes a graded twisted
commutative $E_0$-bialgebra.
\end{prop}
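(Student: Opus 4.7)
The plan is to realize each piece of the bialgebra structure on $\Gamma[k]$ topologically, as the $E_0$-linear dual of a natural operation on $R_{p^k} \defeq E^0 B\Sigma_{p^k}/(\text{transfers})$, and then verify the bialgebra axioms by reducing them to elementary geometric facts. Throughout I would use that $R_{p^k}$ is finitely generated and free over $E_0$ (Strickland's theorem, invoked in Proposition~\ref{prop:gamma-summand-even}), so that $\Gamma[k] = \Hom_{E_0}(R_{p^k}, E_0)$ and $E_0$-linear duality is exact on these finite free modules.

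For the restriction of $\epsilon^\times$: its target is already $E_0$, so the restriction $\epsilon \colon \Gamma \to E_0$ is tautological. The identity $\epsilon \circ \eta = \id_{E_0}$ is immediate, and the axiom $\epsilon(xy) = \epsilon(\eta(\epsilon(x))y)$ reduces to the fact that $\epsilon$ vanishes on $\Gamma[k]$ for $k > 0$ (the basepoint inclusion $\ast \to B\Sigma_{p^k}$ lies in the transfer ideal for $k > 0$) while restricting to the identity on $\Gamma[0] = E_0$.

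For the restriction of $\Delta^\times$: the cup product $E^0 B\Sigma_{p^k} \otimes_{E_0} E^0 B\Sigma_{p^k} \to E^0 B\Sigma_{p^k}$ induced by the space-level diagonal descends to a product $R_{p^k} \otimes_{E_0} R_{p^k} \to R_{p^k}$, via the projection formula: a cup product of a transferred class with any class is itself transferred. Dualizing yields the desired $\Delta \colon \Gamma[k] \to \Gamma[k]_{E_0} \otimes \Gamma[k]_{E_0}$, and the $2$-multimorphism property follows from $E_0$-bilinearity of cup product. Coassociativity, cocommutativity, and the counit identities reduce directly to the analogous properties of the threefold diagonal and interchange map on products of $B\Sigma_{p^k}$. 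The grading axioms are immediate from the weight decomposition of $\algapprox$ (Section~\ref{subsec:weight-decomp}): $\algapprox_m\algapprox_n \subseteq \algapprox_{mn}$ controls the grading on $\mu$, while $\Delta^\times$ visibly preserves the index $p^k$.

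The main obstacle is the bialgebra compatibility
\[
\Delta(xy) = \sum x_i' y_j' \otimes x_i'' y_j'',
\]
asserting that $\Delta$ is multiplicative with respect to the composition product on $\Gamma$. Composition corresponds topologically to the wreath-product structure $B(\Sigma_{p^\ell} \wr \Sigma_{p^k}) \to B\Sigma_{p^{k+\ell}}$ encoding $\mu \colon \algapprox\algapprox \to \algapprox$. The identity then reduces to the geometric fact that the diagonal of a wreath product factors through the external product of the diagonals on its factors (since the diagonal of any group is a group homomorphism), combined with projection-formula bookkeeping to pass to the transfer quotients. This step is where the bulk of the work lies; the remainder is formal.
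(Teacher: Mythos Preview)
Your overall strategy --- dualize the ring structure on $R_{p^k}=E^0B\Sigma_{p^k}/(\text{transfers})$ --- is legitimate and is in fact the picture the paper exploits later (see \S\ref{subsec:gr-bialg-affine-cat-scheme} and \S\ref{sec:bialgebra-and-sheaves}); it is genuinely different from the proof given here, which instead invokes the Borger--Wieland characterization of $\Gamma_{E_0}\otimes\Gamma_{E_0}\subset P\otimes P$ as the set of ``interlinear'' operations and then defers the remaining bialgebra axioms to the general theory of the additive bialgebra of a plethory.

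However, there is a concrete error.  Your claim that $\epsilon$ vanishes on $\Gamma[k]$ for $k>0$ is false.  The paper identifies $\epsilon^\times$ with the map induced by $B\Sigma_{p^k}\to *$, so $\epsilon|_{\Gamma[k]}$ is dual to the \emph{unit} map $E_0\to R_{p^k}$, which is nonzero for every $k$; equivalently, the text records that $(\epsilon,\Delta)$ makes each $\Gamma[k]$ into a cocommutative coalgebra dual to the ring $R_{p^k}$, so the counit axiom $(\epsilon\otimes\id)\Delta=\id$ on $\Gamma[k]$ already forbids $\epsilon|_{\Gamma[k]}=0$.  The height-$1$ example makes this explicit: $\epsilon(\psi^p)=1$.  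Your justification (``the basepoint inclusion $\ast\to B\Sigma_{p^k}$ lies in the transfer ideal'') does not parse: $1\in E^0B\Sigma_{p^k}$ is never in the transfer ideal (else $R_{p^k}=0$), and dually the homology point class is not primitive.  Consequently your verification of axiom~(c), $\epsilon(xy)=\epsilon(\eta(\epsilon(x))y)$, collapses.  In the dual picture you are aiming for, axiom~(c) corresponds instead to the statement that the ``composition'' maps $c^*\colon R_{p^{k+\ell}}\to R_{p^k}\,{}_s\!\otimes_{E_0}{}_t R_{p^\ell}$ are unital ring homomorphisms --- you should argue that, not vanish-on-positive-degrees.
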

\begin{proof}
The map $\Gamma_{E_0}^{\otimes 2}\ra P^{\otimes 2}$ is an inclusion,
by \eqref{prop:gamma-summand-even}.  Its image
is the set of ``interlinear elements''; an element $f\in P\otimes P$
is interlinear if the induced operation $\bar{f}\colon A\times A\ra A$
on an $\algapprox$-algebra $A$ is additive in each variable, and if
$\bar{f}(xc,y)=\bar{f}(x,yc)$ for all $c\in E_0$; see \cite[Prop.\
10.2]{borger-wieland-plethystic-algebra} for a proof.  It is clear
that $\Delta^\times$ carries $\Gamma$ into the set of interlinear
elements, and thus we obtain the desired map $\Delta$.  The rest of
the argument is as in \cite{borger-wieland-plethystic-algebra}.
\end{proof}

Note that the diagonal map sends each graded piece to a single
grading; that is, $\Delta(\Gamma[k])\subseteq \Gamma[k]_{E_0}\otimes
\Gamma[k]_{E_0}$.  Thus, $\epsilon$ and $\Delta$ give $\Gamma[k]$ a
cocommutative coalgebra structure, dual to the ring structure on
$E^0B\Sigma_{p^k}/(\text{transfer})$.  Thus, $\Gamma\approx \bigoplus
\Gamma[k]$ is a \emph{graded} bialgebra, in the sense of
\S\ref{subsec:gr-bialg-affine-cat-scheme}. 

\begin{exam}
Let $G_0$ be the multiplicative group over a perfect field $k$ of
characteristic $p$, and
let $E$ be the corresponding Morava $E$-theory.  In this case,
$E$ is the extension of $p$-complete $K$-theory to the Witt ring
$\mathbb{W}_k$.  Let $a\mapsto a^\sigma$ be the lift to
$\mathbb{W}_k$ of the Frobenius automorphism of $k$.
In this case, $\Gamma$ is isomorphic to the ring $\mathbb{W}_k\langle
\psi^p \rangle$, where $\psi^p$  is the $p$th Adams operation (viewed
as a power operation on $E$-algebras), which satisfies the commutation
relation
\[
\psi^p\cdot a = a^\sigma \cdot \psi^p\qquad \text{for $a\in
  \mathbb{W}_k$}.
\]  
Observe that $\mathbb{W}_k$ is not central in $\Gamma$ (unless
$k\approx \F_p$.)  The grading on $\Gamma$ is such that
$\Gamma[0]=\mathbb{W}_k$, and $\psi^p\in \Gamma[1]$.
Since $\psi^p$ is a multiplicative operation, we have
\[
\epsilon(\psi^p) = 1,\qquad \Delta(\psi^p)=\psi^p\otimes \psi^p.
\]
\end{exam}

Let $\Alg{\Gamma}$ denote the category of $\Gamma$-algebras.  
We define a functor $U\colon \algcat\ra \Alg{\Gamma}$ by
\[
U(B) = \Hom_{\algcat}(P,B),
\]
equipped with the $\Gamma$-algebra structure induced by the action of
$\Gamma$ 
on $P$.
The
underlying $E_0$-module of $UB$ is just the underlying $E_0$-module of
$B$.  

\subsection{The $\Gamma$-module $\omega$}

Let $\omega$ be the object in $\Mod{\Gamma}$ defined by the reduced
cohomology 
of $\mathbb{CP}^1\approx S^2$:
\[
\omega =\Ker[ U(E^0(\mathbb{CP}^1))\ra U(E^0) ].
\]
It is free of rank $1$ as an $E_0$-module, and thus is a symmetric
object in $\Mod{\Gamma}$.  This is a slight abuse of notation; we
already use $\omega$ to represent the kernel of $E^*(\mathbb{CP}^1)\ra
E^*$ in the category of $E_*$-modules.

\section{Operation algebras for non-zero gradings}
\label{sec:gr-gamma}

In the previous section we constructed a bialgebra $\Gamma$, which
acts naturally on the even degree part of a $\algapprox$-algebra.  In
this section we describe what happens when we take odd gradings into
account.   
In the end, we will show that underlying an
$\algapprox$-algebra is an object of $\grMod{\Gamma}$, the 
$\Z/2$-graded $\omega$-twisted tensor category of modules over
$\Gamma$, in terms of the formalism of  \S\ref{sec:twisted-z2-cat}. 
We will define  
$\grAlg{\Gamma}$ to be the category of commutative ring objects in
$\grMod{\Gamma}$, and will obtain a ``forgetful'' functor
\[
U\colon \gralgcat\ra \grAlg{\Gamma}.
\]
The functor $U$ is plethyistic
\eqref{prop:algcat-to-gamma-alg-is-plethyistic}, in the sense of
\S\ref{subsec:plethyistic}. 

To obtain this result, we will first set up a $\Z$-graded theory.  In
particular, for each $q\in \Z$, we will
construct $\Gamma^q$ as a set of additive operations sitting inside
$P^q \approx \Hom_{\gralgcat}(\algapprox(\omega^{q/2}),
\algapprox(\omega^{q/2})) \approx \bigoplus \pi_{-q}L\Pfree_m
(\Sigma^{-q}E)$, analogously to the construction of 
$\Gamma$ in \S\ref{sec:additive-bialgebra}.   The ring $\Gamma^q$
naturally acts on 
$\pi_{-q}$ of a $K(n)$-local $E$-algebra.
The
collection of $\{\Gamma^q\}$ forms a kind of ``$\Z$-graded
bialgebra''.  In
particular, there will be coproduct maps 
\[
\Delta_{i,j}\colon \Gamma^{i+j}_{E_0}\ra \Gamma^i_{E_0}\otimes
\Gamma^j_{E_0}
\]
which determine tensor functors
\[
\otimes \colon \Mod{\Gamma^i}\times \Mod{\Gamma^j}\ra
\Mod{\Gamma^{i+j}}.
\]
Furthermore, there are  ``forgetful'' functors $\tilde{U}^i\colon \gralgcat
\ra \Mod{\Gamma^i}$.  These will fit together into a tensor functor
\[
\tilde{U}^*\colon \gralgcat \ra \zgrMod{\tilde{U}^*E_*}
\]
where the target is a category of ``$\Z$-graded
$\tilde{U}^*E_*$-modules'' built from the categories $\Mod{\Gamma^i}$
(see \S\ref{subsec:z-graded-gamma-modules}).  

Two features allow us to reinterpret this structure in terms of the
$\Z/2$-graded formalism of \S\ref{sec:twisted-z2-cat}.  
First, the
periodicity of the theory $E_*$ produces natural equivalences
$\Mod{\Gamma^i}\approx \Mod{\Gamma^{i+2}}$
\eqref{prop:periodicity-equivalence}.  Second, the ``suspension map''
defines an isomorphism $\Gamma^1\ra 
\Gamma^0$ of rings \eqref{prop:suspension-odd-to-even-iso}.
Using these observations, we reformulate the category
$\zgrMod{\tilde{U}^*E_*}$ in terms of the bialgebra $\Gamma$, by
producing (in 
\S\ref{subsec:z2-graded-gamma-modules}) an equivalence of 
tensor categories 
between $\zgrMod{\tilde{U}^*E_*}$ and the $\Z/2$-graded
$\omega$-twisted category $\grMod{\Gamma}$ of
$\Gamma$-modules.

\subsection{The rings $\Gamma^q$}

Let $P^q = \Hom_{\Mod{E_*}}(\omega^{q/2},
\algapprox(\omega^{q/2})) \approx
\Hom_{\gralgcat}(\algapprox(\omega^{q/2}), \algapprox(\omega^{q/2}))$.
As in \S\ref{subsec:additive-ops-degree-0}, there is a natural
identification of $P^q$ with 
the monoid of 
endomorphisms of the functor 
\[
A\mapsto \Hom_{\Mod{E_*}}(\omega^{q/2},A) \colon \gralgcat\ra \Set.
\]
We let $\Gamma^q\subset P^q$ denote the subset of elements which
correspond to additive endomorphisms.  Thus $\Gamma^q$ is isomorphic
to the ring of endomorphisms of $A\mapsto
\Hom_{\Mod{E_*}}(\omega^{q/2},A)\colon \gralgcat\ra \Ab$.

Define $\Delta^+\colon P^q\ra  
P^q\otimes P^q$ by postcomposition with $\gamma^{-1}\circ
\algapprox(\nabla)\colon \algapprox(\omega^{q/2})\ra
\algapprox(\omega^{q/2})\otimes \algapprox(\omega^{q/2})$.  As in
\S\ref{subsec:additive-bialgebra}, we see that $\Gamma^q$ corresponds
to the set of 
primitives in $P^q$ with respect to $\Delta^+$.
Observe that $\Gamma^0$ is just the ring $\Gamma$ of
\S\ref{sec:additive-bialgebra}.  

Furthermore, we have the following.
\begin{prop}
There are non-cannonical isomorphisms of rings $\Gamma^q\approx
\Gamma^{q+2k}$ for all $q,k\in \Z$.
\end{prop}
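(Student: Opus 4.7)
The plan is to exploit the invertibility of $\omega$ in $\Mod{E_0}$ (equivalently, in $\Mod{E_*}$) to produce, for each choice of trivialization of $\omega^k$, a natural isomorphism between the forgetful functors whose endomorphism rings are $\Gamma^q$ and $\Gamma^{q+2k}$.

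First, I would recall that because $E$ is an even periodic ring spectrum, $\omega = \pi_2 E$ is a rank-one free $E_0$-module, and in particular $\omega^{\otimes k}$ is $\otimes$-invertible in $\Mod{E_0}$ for every $k \in \Z$; any two such powers are non-canonically isomorphic via a choice of $E_0$-module generator. Fix one such isomorphism $u \colon \omega^{\otimes k} \xra{\sim} E_0$. Tensoring with $\omega^{q/2}$ and using the K\"unneth-type isomorphism $\kappa$ from \S\ref{subsec:graded-estar-modules} gives an isomorphism
\[
u_* \colon \omega^{(q+2k)/2} \;\approx\; \omega^{q/2} \otimes \omega^{\otimes k} \;\xra{\id\otimes u}\; \omega^{q/2} \otimes E_0 \;\approx\; \omega^{q/2}
\]
in $\Mod{E_*}$.

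Next, for any $\algapprox$-algebra $A$, precomposition with $u_*$ defines a bijection
\[
(u_*)^* \colon \Hom_{\Mod{E_*}}(\omega^{q/2}, A) \;\xra{\sim}\; \Hom_{\Mod{E_*}}(\omega^{(q+2k)/2}, A),
\]
and this bijection is manifestly natural in $A$, so it constitutes an isomorphism between the two forgetful functors $\gralgcat \ra \Set$ (and indeed $\gralgcat \ra \Ab$, since $(u_*)^*$ is additive). Any natural isomorphism between functors induces a canonical ring isomorphism of their endomorphism rings (and carries additive endomorphisms to additive ones). Applying this to the two functors in question gives a ring isomorphism $P^{q+2k} \approx P^q$ restricting to an isomorphism $\Gamma^{q+2k} \approx \Gamma^q$.

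Since the construction depends only on the choice of trivialization $u \colon \omega^{\otimes k} \xra{\sim} E_0$, which is unique up to multiplication by a unit of $E_0$ but not canonical, the resulting isomorphism is non-canonical, as asserted. There is essentially no obstacle here; the only subtlety is the bookkeeping that $(u_*)^*$ respects addition (immediate since $u_*$ is $E_0$-linear) and composition of natural endomorphisms (immediate from the fact that $(u_*)^*$ is conjugation by a natural isomorphism), so that one really gets a ring isomorphism and not merely an additive one.
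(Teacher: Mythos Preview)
Your proof is correct and follows essentially the same approach as the paper: the paper's proof is the one-line observation that $\omega^{q/2}\approx \omega^{(q+2k)/2}$ as $E_*$-modules, which is exactly the input you use, and you have simply unpacked how this isomorphism, via conjugation on endomorphism rings of the associated forgetful functors, yields the desired ring isomorphism $\Gamma^{q+2k}\approx \Gamma^q$.
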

\begin{proof}
This is immediate from the definitions, and the fact that
$\omega^{q/2}\approx \omega^{(q+2k)/2}$ as $E_*$-modules.
\end{proof}

In terms of topology, $\Gamma^q=\bigoplus_{k\geq0} \Gamma^q[k]$, where
$\Gamma^q[k]\subseteq
\Hom_{\Mod{E_*}}(\omega^{q/2},\algapprox_{p^k}(\omega^{q/2}))$ can be
identified with $\Hom_{\Mod{E_*}}(\cE{0}S^{-q}, K^q[k])$, where
$K^q[k]$ fits in the exact
sequence
\[
0 \ra K^q[k] \ra \cE{*} B\Sigma_{p^k}^{-q\rho_{p^k}} \ra
\bigoplus_{0<j<p^k} \cE{*}B(\Sigma_j\times \Sigma_{p^k-j})^{-q\rho_{p^k}}
\]
defined using transfer, where $\rho_m$ denotes the real
($m$-dimensional) permutation representation of $\Sigma_m$.  In
particular, it is natural to  regard $\Gamma^q[k]$ as a submodule of
$\cE{-q}(B\Sigma_{p^k}^{-q\rho_{p^k}}) \approx
\cE{0}(B\Sigma_{p^k}^{-q\bar{\rho}_{p^k}})$, where $\bar{\rho}_{p^k}
\subset \rho_{p^k}$
denotes the reduced real permutation representation.

\begin{prop}\label{prop:odd-degree-summand}
For all $q\in \Z$, the sub-$E_0$-module $\Gamma^q[k]\subseteq
\Hom_{\Mod{E_*}}(\omega^{q/2},\algapprox_{p^k}(\omega^{q/2}))$ is a
direct summand, and thus in particular is a finite free $E_0$-module.
\end{prop}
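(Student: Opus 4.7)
The plan is to follow the proof of Proposition~\ref{prop:gamma-summand-even} almost verbatim, carrying a Thom twist by $-q\bar{\rho}_{p^k}$ through every step of the argument.

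First, one can reduce to $q\in\{0,1\}$. The non-canonical $E_*$-module isomorphism $\omega^{q/2}\approx \omega^{(q+2)/2}$, obtained by choosing a trivialization of the invertible module $\omega$, is compatible with the coproduct $\Delta^+$ and with the formation of primitives; it therefore induces an isomorphism $\Gamma^q[k]\approx \Gamma^{q+2}[k]$ of $E_0$-modules respecting the ambient $\Hom$ modules. The case $q=0$ is Proposition~\ref{prop:gamma-summand-even}, so only $q=1$ requires independent attention.

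Next, using the identification
\[
\Hom_{\Mod{E_*}}\bigl(\omega^{q/2},\algapprox_{p^k}(\omega^{q/2})\bigr) \;\cong\; \cE{0}\bigl(B\Sigma_{p^k}^{-q\bar{\rho}_{p^k}}\bigr)
\]
noted in the paragraph preceding the statement, the Sylow-type manipulation from the proof of Proposition~\ref{prop:gamma-summand-even} identifies $\Gamma^q[k]$ (for $k\geq 1$) with the kernel of the single transfer map
\[
\cE{0}\bigl(B\Sigma_{p^k}^{-q\bar{\rho}_{p^k}}\bigr) \;\longrightarrow\; \cE{0}\bigl(B(\Sigma_{p^{k-1}})^{p,\,-q\bar{\rho}_{p^k}|}\bigr).
\]
That reduction is purely formal: it depends only on transfer and multiplicative structure, and so persists under any equivariant Thom twist. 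It then suffices to show that the dual cohomological transfer has finite free $E_0$-module cokernel, and $E_0$-linear dualization exhibits $\Gamma^q[k]$ as a direct summand, finite and free over $E_0$, as desired.

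The main obstacle is precisely this last freeness assertion in the odd case $q=1$. For even $q$, the representation $-q\bar{\rho}_{p^k}$ is complex-orientable after stabilizing by a trivial bundle, so the Thom isomorphism reduces the question directly to the untwisted freeness statement of~\cite{strickland-morava-e-theory-of-symmetric}. For $q=1$, however, $\bar{\rho}_{p^k}$ does not generally admit a complex structure, so one must verify that Strickland's arguments, which ultimately rest on Kashiwabara's~\cite{kashiwabara-bp-coh-of-oi-si-s2n} calculations, extend to the corresponding Thom-twisted setting. I expect this to succeed because Strickland's proofs are phrased in terms of explicit generators whose behavior is compatible with the Thom construction, but this is the only non-routine point to verify in extending the even-$q$ argument to odd $q$.
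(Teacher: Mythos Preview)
Your reduction to $q\in\{0,1\}$ is correct and matches the paper. The gap is in the $q=1$ case: you propose to push Strickland's freeness-of-cokernel result through the Thom twist by $-\bar{\rho}_{p^k}$, but as you yourself note, this representation has no complex structure when $p=2$, and you do not actually carry out the extension. Saying ``I expect this to succeed'' is not a proof; Strickland's argument in \cite{strickland-morava-e-theory-of-symmetric} relies on specific computations with the cohomology of $B\Sigma_{p^k}$ (via Kashiwabara), and it is not clear that these go through unchanged for the odd Thom spectrum. This is a genuine hole, not a routine verification.

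The paper avoids this difficulty entirely by taking a different route for $q=1$. Rather than analyzing a single transfer map, it considers the whole Hopf algebra $B_*=\bigoplus_m \pi_*L\Pfree_m\Sigma E\approx \algapprox(\omega^{1/2})$ and shows it is a \emph{primitively generated exterior algebra}. This is proved via the bar spectral sequence
\[
E_2=\Tor_*^{E_*}(A_*,A_*)\Longrightarrow B_*,
\]
where $A_*=\bigoplus_m\pi_*L\Pfree_m E$ is known (by Strickland, following Kashiwabara) to be polynomial; hence $E_2$ is exterior on the suspensions of the polynomial generators, and these survive because they are detected by the suspension maps $\pi_*L\Pfree_m E\to \pi_{*+1}L\Pfree_m\Sigma E$. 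Once $B_*$ is known to be a primitively generated exterior algebra, the primitives $\Gamma^1[k]$ are visibly a direct summand. The advantage of this approach is that it leverages the even-degree result of Strickland only in its original, untwisted form, and deduces the odd case from Hopf-algebraic structure rather than from a Thom-twisted variant.
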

\begin{proof}
(Compare \cite[Prop.\ 5.6]{strickland-morava-e-theory-of-symmetric}.)
We have already addressed the case of $q=0$
\eqref{prop:gamma-summand-even}, and thus need only consider $q=1$.
Let  $B_*\defeq\bigoplus_{m\geq0} \pi_* L\Pfree_m \Sigma E$; this is a
commutative 
Hopf algebra, with product given by the ring structure of $\Pfree
\Sigma E$,
and coproduct given by $\Pfree(\nabla)\colon \Pfree \Sigma E \ra
\Pfree(\Sigma E\vee \Sigma E)$. 
The result follows from the observation that $B_*$
 is a primitively
generated exterior algebra, since $B_*\approx
\algapprox(\omega^{1/2})$.  We prove this using the bar spectral 
sequence
\[
E_2=\Tor_*^{E_*}(A_*,A_*) \Longrightarrow B_*
\]
where $A_*\approx \bigoplus_{m\geq0}\pi_*L\Pfree_mE$.  By a
result of Strickland, following Kashiwabara \cite[Prop.\
5.1]{strickland-morava-e-theory-of-symmetric}, $A_*$ is a polynomial 
algebra, finite type with respect to the 
grading determined by $m$.  Thus $\Tor_*^{E_*}(A_*,A_*)$ is an
exterior algebra, with generators represented by the homology
suspensions of the generators of the polynomial ring $A_*$.  These
classes survive, since they detect the image of the suspension maps
$\pi_*L\Pfree_mE \ra \pi_{*+1}L\Pfree_m\Sigma E$, and thus the
spectral sequence collapses at $E_2$.  
\end{proof}

We will need the following result in \S\ref{sec:rational-algebras}.
\begin{prop}\label{prop:rational-prim-gen}
The inclusion of $\Gamma^q\subseteq \Hom_{\Mod{E_*}} (\omega^{q/2},
\algapprox(\omega^{q/2}))$ induces an isomorphism
\[
\Sym_{E_*}(\Gamma^q\otimes \omega^{q/2})\otimes \Q \approx
\algapprox(\omega^{q/2})\otimes \Q
\]
of graded $E_*$-modules.
\end{prop}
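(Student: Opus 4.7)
The plan is to view $A_* := \algapprox(\omega^{q/2})$ as a connected graded commutative cocommutative Hopf algebra over $E_*$ (with grading from the weight decomposition $A_* = \bigoplus_{m\geq 0} \algapprox_m(\omega^{q/2})$), identify its primitives with $\Gamma^q \otimes \omega^{q/2}$, and then invoke the rational Cartier-Milnor-Moore structure theorem.

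First, I would equip $A_*$ with its Hopf algebra structure. The product is the commutative multiplication $A_*\otimes_{E_*} A_* \to A_*$ carried by $A_*$ as the free $\algapprox$-algebra on $\omega^{q/2}$. The coproduct is the composite $\Delta^+ := \gamma^{-1}\circ \algapprox(\mathrm{diag})\colon A_* \to A_* \otimes_{E_*} A_*$, where $\mathrm{diag}\colon \omega^{q/2} \to \omega^{q/2}\oplus \omega^{q/2}$ is the diagonal and $\gamma$ is the symmetric monoidal isomorphism of \eqref{prop:algapprox-sum-isomorphism}. Commutativity, cocommutativity, and the bialgebra identity follow because $\algapprox$ is symmetric monoidal from $(\Mod{E_*}, \oplus)$ to $(\Mod{E_*}, \otimes)$ and the diagonal makes $\omega^{q/2}$ into a cocommutative comonoid. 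Connectedness is the equality $\algapprox_0(\omega^{q/2}) = E_*$, and each weight piece is finite free over $E_*$ by \eqref{prop:free-ring-preserves-finite-frees} (via $\algapprox_m(\omega^{q/2}) \approx \pi_* L\Pfree_m(\Sigma^{-q}E)$).

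Second, I would identify the primitives. By the definition of $\Gamma^q$ given in this section, $\Gamma^q \subseteq \Hom_{\Mod{E_*}}(\omega^{q/2}, A_*)$ is exactly the submodule of maps whose image lies in the primitive submodule $P(A_*) \subseteq A_*$, i.e., $\Gamma^q = \Hom_{\Mod{E_*}}(\omega^{q/2}, P(A_*))$. Since $\omega^{q/2}$ is $\otimes$-invertible in $\Mod{E_*}$, evaluation yields an isomorphism $\Gamma^q \otimes \omega^{q/2} \xra{\sim} P(A_*)$ of $E_*$-modules. The map in the statement is then the multiplicative extension $\Sym_{E_*}(\Gamma^q\otimes \omega^{q/2}) \approx \Sym_{E_*}(P(A_*)) \to A_*$ coming from the inclusion of primitives into the commutative ring $A_*$.

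Finally, I would invoke the rational structure theorem: after tensoring with $\Q$, the above map becomes an isomorphism by the Cartier-Milnor-Moore theorem applied to the connected graded commutative cocommutative Hopf algebra $A_*\otimes \Q$, which is of finite type over the $\Q$-algebra $E_*\otimes \Q$. The standard proof (constructing an inverse via the $\log$ formal power series, whose coefficients only involve denominators from $\Q$) carries over from fields to arbitrary commutative $\Q$-algebras, and the graded-commutative conventions turn $\Sym$ into an exterior algebra on odd-degree generators when $q$ is odd, consistent with the computation already carried out in the proof of \eqref{prop:odd-degree-summand}. The main obstacle is this last step: one must produce or cite the appropriate bigraded, sign-sensitive version of Cartier-Milnor-Moore over a general commutative $\Q$-algebra rather than just a field, with the correct conventions for odd-degree primitives; everything else is a formal consequence of the symmetric monoidal properties of $\algapprox$ and the invertibility of $\omega^{q/2}$.
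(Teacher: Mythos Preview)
Your proposal is correct and follows essentially the same route as the paper: identify $\Gamma^q\otimes\omega^{q/2}$ with the primitives of the Hopf algebra $\algapprox(\omega^{q/2})$, then invoke the structure theory of graded Hopf algebras rationally. The paper's version is terser, citing \eqref{prop:odd-degree-summand} for the direct-summand property (which guarantees that primitives commute with $\otimes\,\Q$) and then simply appealing to ``the structure theory of graded Hopf algebras''; the concern you flag about Milnor--Moore over a non-field $\Q$-algebra is one the paper also leaves implicit.
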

\begin{proof}
By \eqref{prop:odd-degree-summand}, $\Gamma^q\otimes \omega^{q/2}$ is
the primitives of the Hopf algebra $\algapprox(\omega^{q/2})$, and is
a direct summand.  Thus, $(\Gamma^q\otimes \omega^{q/2})\otimes \Q$ is
the primitives of $\algapprox(\omega^{q/2})\otimes \Q$.  The result
follows from the structure theory of graded Hopf algebras.
\end{proof}

\begin{rem}\label{rem:prim-ind-isos}
It may be helpful to consider the following picture.  Fix $k\geq1$,
and consider $\Prim_q\rightarrowtail
\cE{q}B\Sigma_{p^k}^{q\rho_{p^k}} \approx \pi_q L\Pfree_{p^k}\Sigma^q E
\twoheadrightarrow \Ind_q$, where $\Prim_q$ and
$\Ind_q$ denote the part of the primitives and indecomposables
of $\bigoplus_m \pi_*L\Pfree_m \Sigma^q E$ associated the $m=p^k$ summand in
dimension $q$; the ring $\Gamma^q$ is the direct sum of the
$E_0$-modules $\Prim_{-q}$ as $k$ varies.
  The suspension map $\pi_q L\Pfree_{p^k}\Sigma^q E \ra
\pi_{q+1}L\Pfree_{p^k} \Sigma^{q+1}E$ factors through $\Prim_q\ra
\Ind_{q+1}$; from these maps we obtain the following diagram, which is
$2$-periodic. 
\[
\xymatrix{
{\qquad}
& {\Prim_{-1}} \ar[d]_{\sim}
& {\Prim_0}  \ar[d]_f
& {\Prim_1} \ar[d]_{\sim}
& {\Prim_2} \ar[d]_f
& {\qquad}
\\
{\qquad} \ar[ur]^{\sim}
& {\Ind_{-1}} \ar[ur]^{\sim}
& {\Ind_0} \ar[ur]^{\sim}
& {\Ind_1} \ar[ur]^{\sim}
& {\Ind_2} \ar[ur]^{\sim}
}\]
The vertical maps $\Prim_q\ra \Ind_q$ are isomorphisms for odd $q$
using \eqref{prop:odd-degree-summand}.  The map $\Ind_{0}\ra
\Prim_{2}$ is an isomorphism by Theorems 8.5 and 8.6 of
\cite{strickland-morava-e-theory-of-symmetric}, where the result is
stated in ``dual'' form; specifically, in that
paper it is proved that the image of $\Prim E^0B\Sigma_{p^k} \ra \Ind
E^0B\Sigma_{p^k}$ is generated by the Euler class of
$\bar{\rho}_{p^k}^\C$.  The map $f$ is not an isomorphism, but is
a monomorphism with torsion cokernel; the present paper is a
essentially a meditation 
on $\cok f$.
\end{rem}

The bijection
$\Hom_{\gralgcat}(\algapprox(\omega^{q/2}),\algapprox(\omega^{q/2}))\ra
P^q$ induces an associative monoid structure on $P^q$, defined by
composition, and thus descends to a multiplication on $\Gamma^q$.
There is a ring homomorphism $\eta_q \colon E_0\ra \Gamma^q$, defined as
for $\Gamma$.

Finally, let $\Delta^\times_{i,j} \colon P^{i+j}\ra P^i\otimes P^j$ be the
map
induced by the ``multiplicativity map''
$\nu\colon \algapprox(\omega^{i/2}\otimes \omega^{j/2}) \ra 
\algapprox(\omega^{i/2}\oplus \omega^{j/2})$.  
As before, we have $\epsilon^\times \colon P^0\ra E_0$.

\begin{prop}\label{prop:gammastar-tensor}
The maps $\Delta^\times_{i,j}$ restrict to maps 
\[
\Delta_{i,j}\colon
\Gamma^{i+j}_{E_0} \ra \Gamma^i_{E_0}\otimes \Gamma^j_{E_0},
\]
which in turn induce functors
\[
\otimes \colon \Mod{\Gamma^i}\times \Mod{\Gamma^j} \ra
\Mod{\Gamma^{i+j}},
\]
which are unital, associative, and commutative in the sense that there
are natural isomorphisms
\[
E_0\otimes M \approx M \approx M\otimes E_0 \quad \text{of
  functors $\Mod{\Gamma^i}\ra \Mod{\Gamma^i}$,}
\]
\[
M_1\otimes (M_2\otimes M_3) \approx (M_1\otimes M_2)\otimes M_3,
\]
and 
\[M_1\otimes M_2\approx M_2\otimes M_1,
\]
where $E_0$ is regarded as a $\Gamma^0$-module.
(Note that the interchange map introduces a sign when applied to
elements of odd degree.)
\end{prop}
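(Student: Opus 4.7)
The plan is to follow the approach of Borger--Wieland, adapted to the graded setting, proceeding in three steps: (i) showing that $\Delta^{\times}_{i,j}$ restricts as asserted; (ii) constructing the induced tensor functors; (iii) verifying unitality, associativity, and commutativity.

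For (i), I would begin by noting that by Proposition \ref{prop:odd-degree-summand}, $\Gamma^q$ is a direct summand of $P^q$ for every $q$, so $\Gamma^i_{E_0}\otimes \Gamma^j_{E_0}$ sits inside $P^i\otimes P^j$ as a submodule. The concrete description of $\Delta^{\times}_{i,j}(f)$ as an operation is: given $a\colon \omega^{i/2}\to A$ and $b\colon \omega^{j/2}\to A$ representing elements of a $\algapprox$-algebra $A$, one has $\Delta^{\times}_{i,j}(f)(a,b) = f(ab)$, where $ab\colon \omega^{i/2}\otimes\omega^{j/2}\approx \omega^{(i+j)/2}\to A$ denotes the product. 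I then aim to characterize $\Gamma^i_{E_0}\otimes \Gamma^j_{E_0}\subset P^i\otimes P^j$ as the submodule of \emph{interlinear} elements (biadditive operations $g(a,b)$ satisfying $g(ac,b)=g(a,cb)$ for $c\in E_0$), by the graded analogue of \cite[Prop.\ 10.2]{borger-wieland-plethystic-algebra}. Given this characterization, the fact that $f\in \Gamma^{i+j}$ is additive forces $\Delta^{\times}_{i,j}(f)$ to be biadditive, and interlinearity follows from associativity $f((ac)b)=f(a(cb))$. Hence the restricted map $\Delta_{i,j}$ exists.

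For (ii), given $M\in \Mod{\Gamma^i}$ and $N\in \Mod{\Gamma^j}$, I would equip $M\otimes_{E_0}N$ with an action of $\Gamma^{i+j}$ by $f\cdot(m\otimes n)=\sum f_k'(m)\otimes f_k''(n)$, where $\Delta_{i,j}(f)=\sum f_k'\otimes f_k''$. Well-definedness over the $E_0$-tensor relation uses precisely the interlinearity established in (i), while the compatibility with multiplication and unit in $\Gamma^{i+j}$ follows from the (co)associativity and (co)unitality of the family $\{\Delta_{i,j}\}$, both of which translate directly from the compatibilities of the multiplicativity map $\nu$ of Proposition \ref{prop:susp-compat-multiplicativity} with monad structure.

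For (iii), the unit isomorphism $E_0\otimes M\approx M$ comes from the existence of the counit $\epsilon\colon \Gamma=\Gamma^0\to E_0$ and the identity $(\epsilon\otimes\id)\Delta_{0,j}=\id=(\id\otimes\epsilon)\Delta_{i,0}$. Associativity reduces to the agreement of the two maps $\Gamma^{i+j+k}\to \Gamma^i_{E_0}\otimes \Gamma^j_{E_0}\otimes \Gamma^k_{E_0}$ built from the $\Delta_{i,j}$'s, which in turn follows from the associativity clause of Proposition \ref{prop:susp-compat-multiplicativity}. For commutativity, the cocommutativity of $\Delta^{\times}_{i,j}$ is inherited from the interchange of summands $\omega^{i/2}\oplus \omega^{j/2}\approx \omega^{j/2}\oplus \omega^{i/2}$; crucially, when comparing it through $\omega^{i/2}\otimes \omega^{j/2}\to \omega^{j/2}\otimes \omega^{i/2}$ the Koszul sign $(-1)^{ij}$ enters, since $\omega^{1/2}$ is an odd square-root object in the sense of Section \ref{sec:twisted-z2-cat}. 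This sign is precisely the one required to make the tensor functors commutative in the twisted $\Z/2$-graded sense, and it matches the strict graded commutativity of $\algapprox$-algebras recorded in Proposition \ref{prop:strictly-graded-commutative}.

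The main obstacle I anticipate is step (i): verifying the interlinear characterization of $\Gamma^i_{E_0}\otimes \Gamma^j_{E_0}\subset P^i\otimes P^j$ in the mixed-parity case. The even-even case is covered by Borger--Wieland, but the odd-graded situation requires one to track the twisting by $\omega^{1/2}$ carefully and to verify that the identification $\algapprox(\omega^{i/2}\oplus \omega^{j/2})\approx \algapprox(\omega^{i/2})\otimes \algapprox(\omega^{j/2})$ from Proposition \ref{prop:algapprox-sum-isomorphism} carries the respective submodules of primitives onto the interlinear elements.
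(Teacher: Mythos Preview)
The paper gives no proof for this proposition; it is stated and then the text moves on immediately to defining the functors $\tilde{U}^q$. Your approach is exactly the natural graded extension of the argument the paper \emph{does} give in \S\ref{subsec:additive-bialgebra} for the degree-$0$ case (where the interlinear characterization from \cite[Prop.~10.2]{borger-wieland-plethystic-algebra} is invoked to show $\Delta^\times$ restricts to $\Gamma$), so your proposal is in the intended spirit and essentially correct.

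One small correction: in steps (ii) and (iii) you cite Proposition~\ref{prop:susp-compat-multiplicativity} for the coassociativity and compatibility of the family $\{\Delta_{i,j}\}$, but that proposition is about compatibility of the \emph{suspension} maps with $\nu$, not about coassociativity of $\nu$ itself. The coassociativity and cocommutativity you need come instead from the fact that the isomorphisms $\gamma_k$ make $\algapprox$ a symmetric monoidal functor $(\Mod{E_*},\oplus)\to(\Mod{E_*},\otimes)$, as recorded in the corollary following Proposition~\ref{prop:algapprox-sum-isomorphism}; the map $\nu$ is built from the inverse of $\gamma_2$, and its coherence properties are those of the monoidal structure. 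With that reference corrected, your outline goes through.
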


Define functors $\tilde{U}^q\colon \gralgcat \ra \Mod{\Gamma^q}$ by sending
$B\in \gralgcat$ to the right $\Gamma^q$-module
$\Hom_{\gralgcat}(\algapprox(\omega^{q/2}), B) \approx
\Hom_{\Mod{E_*}}(\omega^{q/2}, B)$.  

\begin{rem}
If $A$ is a $K(n)$-local commutative $E$-algebra, so that $\pi_*A$ is a
$\algapprox$-algebra, then tracing through the definitions reveals
that the underlying $E_0$-module of $\tilde{U}^q(\pi_*A)$ is canonically
identified with $\pi_{-q}A$.  Thus, we see that $\pi_{-q}A$ naturally
carries the structure of a $\Gamma^q$-module.
\end{rem}

\begin{prop}\label{prop:monoidal-tilde-u}
There are natural transformations $\tilde{U}^i(M)\otimes
\tilde{U}^j(N)\ra \tilde{U}^{i+j}(M\otimes N)$ of functors
$\gralgcat\times \gralgcat\ra 
\Mod{\Gamma^{i+j}}$.  
\end{prop}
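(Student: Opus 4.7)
The plan is to define, for $M,N\in \gralgcat$ and elements $f\in \tilde{U}^i(M)\approx \Hom_{\Mod{E_*}}(\omega^{i/2},M)$ and $g\in \tilde{U}^j(N)$, the image $\mu_{M,N}(f\otimes g)\in \tilde{U}^{i+j}(M\otimes N)$ as the composite
\[
\omega^{(i+j)/2} \xra{\sim} \omega^{i/2}\otimes \omega^{j/2} \xra{f\otimes g} M\otimes N,
\]
where the first arrow is the canonical associativity isomorphism for tensor powers of $\omega$ in $\Mod{E_*}$ (the analogue of the K\"unneth map of \S\ref{subsec:graded-estar-modules}). The formula is manifestly $E_0$-balanced, so it descends to the tensor product in the twisted $E_0$-bialgebra sense; naturality in $(M,N)$ is immediate from functoriality of $\otimes$.

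The substantive content is to verify $\Gamma^{i+j}$-equivariance, where the module structure on the source is defined via the comultiplication $\Delta_{i,j}\colon \Gamma^{i+j}_{E_0}\ra \Gamma^i_{E_0}\otimes \Gamma^j_{E_0}$ of Proposition~\ref{prop:gammastar-tensor}. Writing $\tilde{f}\colon \algapprox(\omega^{i/2})\ra M$ and $\tilde{g}\colon \algapprox(\omega^{j/2})\ra N$ for the $\algapprox$-algebra extensions, a direct computation --- using that $\nu$ is the unique algebra map extending $\eta\otimes \eta$ and that $\tilde{f}\circ \eta = f$ --- identifies the algebra extension of $\mu_{M,N}(f\otimes g)$ with
\[
\algapprox(\omega^{(i+j)/2}) \xra{\sim} \algapprox(\omega^{i/2}\otimes \omega^{j/2}) \xra{\nu} \algapprox(\omega^{i/2})\otimes \algapprox(\omega^{j/2}) \xra{\tilde{f}\otimes \tilde{g}} M\otimes N.
\]
For $\gamma\in \Gamma^{i+j}$ with $\Delta_{i,j}(\gamma)=\sum_k \gamma'_k\otimes \gamma''_k$, unwinding the right-module actions reduces the desired equality
\[
\mu_{M,N}(f\otimes g)\cdot \gamma = \sum_k \mu_{M,N}((f\cdot \gamma'_k)\otimes (g\cdot \gamma''_k)),
\]
specialized to the universal case $\tilde{f}=\id_{\algapprox(\omega^{i/2})}$, $\tilde{g}=\id_{\algapprox(\omega^{j/2})}$, to the identity
\[
\nu\circ \tilde\gamma = \Bigl(\sum_k \tilde\gamma'_k\otimes \tilde\gamma''_k\Bigr)\circ \nu
\]
of algebra maps $\algapprox(\omega^{i/2}\otimes \omega^{j/2})\ra \algapprox(\omega^{i/2})\otimes \algapprox(\omega^{j/2})$ (suppressing the evident associativity isomorphism for $\omega$). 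But this is tautological: both sides are $\algapprox$-algebra maps, so they agree provided they agree on the generating submodule $\omega^{i/2}\otimes \omega^{j/2}$, and the agreement there is precisely the definition of $\Delta^\times_{i,j}(\gamma)$ from which $\Delta_{i,j}(\gamma)$ is extracted in Proposition~\ref{prop:gammastar-tensor}.

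The main obstacle is organizational rather than conceptual: carefully aligning the right $\Gamma^q$-module structure on $\tilde{U}^q$ (defined by composition in $\End_{\gralgcat}\algapprox(\omega^{q/2})$) with the left/right $E_0$-module conventions appearing in the twisted bialgebra tensor product, particularly in view of the fact that the unit map $\eta\colon E_0\ra \Gamma$ need not be central. The geometric content of the proposition is simply that $\mu_{M,N}$ is dual to $\nu$, and $\Delta_{i,j}$ was constructed to encode precisely this duality.
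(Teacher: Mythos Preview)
Your proof is correct and supplies exactly the details the paper omits: the paper's own proof reads simply ``Straightforward.'' Your unwinding---defining the map on elements via the canonical isomorphism $\omega^{(i+j)/2}\approx \omega^{i/2}\otimes\omega^{j/2}$ and then reducing $\Gamma^{i+j}$-equivariance to the defining property of $\Delta^\times_{i,j}$ in terms of $\nu$---is the intended argument.
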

\begin{proof}
Straightforward.
\end{proof}

\subsection{$\Z$-graded $\Gamma^*$-modules and
  $\tilde{U}^*E_*$-modules }
\label{subsec:z-graded-gamma-modules}

By a \dfn{$\Z$-graded $\Gamma^*$-module}, we mean a tuple $M=(M^i)_{i\in
  \Z}$, where $M^i$ is a right $\Gamma^i$-module.  We write
$\zgrMod{\Gamma^*}$ for the category of $\Z$-graded
$\Gamma^*$-modules.  This category admits the structure of an additive
symmetric monoidal category, by the tensor product functors of
\eqref{prop:gammastar-tensor}, so that $M\otimes N = (P^i)_{i\in \Z}$,
where $P^i=\bigoplus_q M^q\otimes N^{i-q}$.

Let $\tilde{U}^*\colon \gralgcat\ra \zgrMod{\Gamma^*}$ be the functor
which sends a $\algapprox$-algebra $B$ to the $\Z$-graded
$\Gamma^*$-module $(\tilde{U}^iB)$.  By \eqref{prop:monoidal-tilde-u},
the functor $\tilde{U}^*$ is a lax symmetric monoidal functor; i.e.,
there is a coherent natural transformation $\tilde{U}^*B\otimes
\tilde{U}^*C \ra \tilde{U}^*(B\otimes C)$.

By these remarks, it follows that $\tilde{U}^*E_*$ is a commutative
monoid object in $\zgrMod{\Gamma^*}$, and that $\tilde{U}^*B$ is
tautologically a $\tilde{U}^*E_*$-module object for all $B$ in
$\gralgcat$.  Let 
$\zgrMod{\tilde{U}^*E_*}$ denote the category of
$\tilde{U}^*E_*$-modules; it is a symmetric monoidal category, with
tensor product defined by
\[
M\otimes_{\tilde{U}^*E_*} N \defeq \cok\bigl[ M\otimes \tilde{U}^*E_* \otimes N
\rightrightarrows M\otimes N\bigr].
\]
There is an evident forgetful functor $\zgrMod{\tilde{U}^*E_*} \ra
\zgrMod{E_*}$ to the $\Z$-graded category of $E_*$-modules, and this
forgetful functor is strongly symmetric monoidal: the underlying  $\Z$-graded
$E_*$-module of 
$M\otimes_{\tilde{U}^*E_*} N$ is just the usual $\Z$-graded tensor
product of the underlying $E_*$-modules of $M$ and $N$.

Thus, the functor
$\tilde{U}^*\colon \gralgcat \ra \zgrMod{\Gamma^*}$ lifts
tautologically to a functor
\[
\tilde{U}^*\colon \gralgcat\ra \zgrMod{\tilde{U}^* E_*}.
\]
Furthermore, it is straightforward to check (by looking at what
happens on the underlying $E_*$-modules)
that the lifted
$\tilde{U}^*$ is a strong symmetric monoidal functor; i.e.,
$\tilde{U}^*B\otimes_{\tilde{U}^*E_*} \tilde{U}^*C\ra
\tilde{U}^*(B\otimes C)$ is an isomorphism.

The rest of this section is devoted to giving a more elementary
description of $\zgrMod{\tilde{U}^*E_*}$.  

\subsection{Periodicity}

Observe that the underlying
$E_0$-module of $\tilde{U}^{2i}E_*$ is canonically isomorphic to
$\pi_{-2i}E$.
\begin{prop}\label{prop:periodicity-equivalence}
The functor $\Mod{\Gamma^i}\ra \Mod{\Gamma^{i+2}}$ defined by
$M\mapsto M\otimes \tilde{U}^2E_*$ is an equivalence of categories, and there
are natural isomorphisms $\tilde{U}^i(M)\otimes \tilde{U}^2E_* \approx
\tilde{U}^{i+2}(M)$ of 
functors $\gralgcat\ra \Mod{\Gamma^{i+2}}$.
\end{prop}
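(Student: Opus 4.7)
The strategy is to exhibit an explicit quasi-inverse to the tensor functor, namely $N\mapsto N\otimes \tilde{U}^{-2}E_*$, and to establish the natural isomorphism of the second claim by reducing to an elementary statement about $E_0$-modules. The crucial observation is that the underlying $E_0$-module of $\tilde{U}^2E_*$ is $\pi_{-2}E\approx \omega^{-1}$, which is $\otimes$-invertible in $\Mod{E_0}$ with inverse $\omega\approx \pi_2E$ --- and the latter is exactly the underlying $E_0$-module of $\tilde{U}^{-2}E_*$. Once we know this at the level of $\Gamma^*$-modules, everything else follows formally from the monoidal structure of \eqref{prop:gammastar-tensor}.

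First, I will establish the natural isomorphism. Applying the lax monoidal transformation of \eqref{prop:monoidal-tilde-u} to $B\in \gralgcat$ and the initial $\algapprox$-algebra $E_*$, I get a natural map
\[
\tilde{U}^i(B) \otimes \tilde{U}^2E_* \to \tilde{U}^{i+2}(B\otimes E_*) = \tilde{U}^{i+2}(B)
\]
in $\Mod{\Gamma^{i+2}}$. To verify this is an isomorphism, I will pass to underlying $E_0$-modules, where by unwinding the definitions this becomes the pairing
\[
\Hom_{\Mod{E_*}}(\omega^{i/2},B) \otimes_{E_0} \Hom_{\Mod{E_*}}(\omega,E_*) \to \Hom_{\Mod{E_*}}(\omega^{i/2}\otimes\omega, B)\approx \Hom_{\Mod{E_*}}(\omega^{(i+2)/2}, B)
\]
given by $f\otimes g\mapsto (f\otimes g)\circ \kappa^{-1}$, where $\kappa$ is the Künneth isomorphism from \S\ref{subsec:graded-estar-modules}. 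Invertibility of $\omega$ in $\Mod{E_0}$ forces this to be an isomorphism.

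Second, I will deduce the equivalence. Specializing the natural isomorphism to $B=E_*$ and $i=-2$ (respectively $i=2$ combined with the commutativity isomorphism of \eqref{prop:gammastar-tensor}) gives canonical isomorphisms $\tilde{U}^{-2}E_*\otimes \tilde{U}^2E_*\approx E_0$ and $\tilde{U}^2E_*\otimes \tilde{U}^{-2}E_*\approx E_0$ in $\Mod{\Gamma^0}$. Combined with the associativity and unit isomorphisms of \eqref{prop:gammastar-tensor}, this shows that the composites
\[
M\mapsto (M\otimes \tilde{U}^2E_*)\otimes \tilde{U}^{-2}E_* \approx M\otimes(\tilde{U}^2E_*\otimes \tilde{U}^{-2}E_*)\approx M\otimes E_0 \approx M
\]
and similarly in the other direction are naturally isomorphic to the identity, so that $N\mapsto N\otimes \tilde{U}^{-2}E_*$ is a quasi-inverse.

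The main obstacle is purely bookkeeping: writing out the lax monoidal transformation of \eqref{prop:monoidal-tilde-u} explicitly on underlying $E_0$-modules and confirming that it really is the composition pairing described above. This ultimately rests on the construction of $\nu$ in \S\ref{sec:algebraic-approximation} and its compatibility with suspension as recorded in \eqref{prop:susp-compat-multiplicativity}; once this identification is in hand, invertibility of $\omega$ delivers the result without further work.
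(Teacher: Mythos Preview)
Your argument is correct and follows essentially the same route as the paper. The paper's proof is a single sentence recording that the isomorphism $\tilde{U}^2E_*\otimes \tilde{U}^{-2}E_*\approx \tilde{U}^0E_*\approx E_0$ furnishes a quasi-inverse; you have written out the details the paper leaves implicit, and in particular you have supplied an explicit construction and verification of the natural isomorphism $\tilde{U}^i(B)\otimes \tilde{U}^2E_*\approx \tilde{U}^{i+2}(B)$, which the paper does not spell out.
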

\begin{proof}
The isomorphism $\tilde{U}^2(E_0)\otimes \tilde{U}^{-2}(E_0)\approx
\tilde{U}^0(E_0)\approx
E_0$ produces an inverse up to isomorphism for this functor.
\end{proof}


As a consequence, we note the
following.
\begin{lemma}\label{lemma:ue-mod-is-gamma-zero-and-one}
Let
\[
V'\colon \Mod{\Gamma^0}\times \Mod{\Gamma^1}\ra
\zgrMod{\tilde{U}^*E_*}
\]
be the functor which on objects sends $(M^0,M^1)$ to $N^i$, where
$N^{2i}= M^0\otimes \tilde{U}^{2i}E_*$ and $N^{2i+1}= M^1\otimes
\tilde{U}^{2i}E_*$, and $N$ is given a $\tilde{U}^*E_*$-module
structure in the evident way.  Then $V'$ is an equivalence of categories.
\end{lemma}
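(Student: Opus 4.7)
The plan is to construct an explicit quasi-inverse $W\colon \zgrMod{\tilde{U}^*E_*}\ra \Mod{\Gamma^0}\times\Mod{\Gamma^1}$, namely $W(N) \defeq (N^0, N^1)$, and verify both round trips are natural isomorphisms.

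First I would check the easier direction $W\circ V'\approx \id$. For $M=(M^0,M^1)\in \Mod{\Gamma^0}\times \Mod{\Gamma^1}$, the $\Z$-graded $\tilde{U}^*E_*$-module $V'(M)$ has $V'(M)^0 = M^0\otimes \tilde{U}^0E_* = M^0\otimes E_0 \approx M^0$ as $\Gamma^0$-modules, and similarly $V'(M)^1\approx M^1$ as $\Gamma^1$-modules, using that $\tilde{U}^0E_* = E_0$ is the unit for the tensor product on $\Mod{\Gamma^0}$.

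The real content is to produce a natural isomorphism $V'(W(N))\approx N$. Given a $\tilde{U}^*E_*$-module $N$, the module action supplies maps
\[
m_{i,j}\colon N^i\otimes \tilde{U}^{j}E_* \ra N^{i+j}
\]
of $\Gamma^{i+j}$-modules; restricting to $i\in\{0,1\}$ and $j$ even yields the candidate comparison maps $M^0\otimes \tilde{U}^{2k}E_*\ra N^{2k}$ and $M^1\otimes \tilde{U}^{2k}E_*\ra N^{2k+1}$ that assemble the natural transformation $V'W\ra \id$. I would then argue these are isomorphisms by descending to the underlying $\Z$-graded $E_*$-module: the forgetful functors $\Mod{\Gamma^q}\ra \Mod{E_0}$ are conservative, and by \eqref{prop:periodicity-equivalence} (which is precisely the statement that tensoring with $\tilde{U}^2E_*$ is an equivalence $\Mod{\Gamma^i}\approx \Mod{\Gamma^{i+2}}$) it suffices to check the maps $m_{i,j}$ are iso on underlying $E_0$-modules. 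But the underlying $\Z$-graded $E_*$-module of a $\tilde{U}^*E_*$-module is just a $\Z$-graded $E_*$-module, and since $E_*$ is $2$-periodic with $\omega=\pi_2E$ invertible, the maps $N^i\otimes_{E_0}\omega^{-j/2}\ra N^{i+j}$ are isomorphisms for $i\in\{0,1\}$ and arbitrary even $j$; this is the classical equivalence of $\Z$-graded and $\Z/2$-graded $E_*$-modules recalled in \S\ref{subsec:graded-estar-modules}.

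The main obstacle I anticipate is bookkeeping rather than substantive: one must verify that the comparison map $M^0\otimes \tilde{U}^{2k}E_*\ra N^{2k}$ (a priori only a map of $E_0$-modules coming from the $E_*$-periodicity) actually agrees with the $\Gamma^{2k}$-linear map $m_{0,2k}$ built from the $\tilde{U}^*E_*$-action, and that the whole package is natural in $N$. This follows by unwinding the definitions: the $\tilde{U}^*E_*$-module structure on $N$ is by construction a structure of module over a commutative monoid in $\zgrMod{\Gamma^*}$, so the action maps $m_{i,j}$ are automatically $\Gamma^{i+j}$-linear, and the coincidence with the $E_*$-periodicity map follows from the fact that the forgetful functor $\zgrMod{\tilde{U}^*E_*}\ra \zgrMod{E_*}$ is strong symmetric monoidal, as noted in \S\ref{subsec:z-graded-gamma-modules}. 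Combining the two round-trip checks completes the proof.
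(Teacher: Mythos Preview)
Your proposal is correct and follows the approach the paper intends. The paper gives no explicit proof, merely presenting the lemma with the phrase ``As a consequence, we note the following'' immediately after \eqref{prop:periodicity-equivalence}; your argument is exactly the natural way to unpack that one-line justification, building the quasi-inverse $W(N)=(N^0,N^1)$ and using invertibility of $\tilde{U}^2E_*$ to see that the module action maps $N^i\otimes \tilde{U}^{2k}E_*\ra N^{i+2k}$ are isomorphisms. One minor stylistic remark: your reduction to underlying $E_0$-modules via conservativity is fine, but you could argue more directly from \eqref{prop:periodicity-equivalence} itself, since the proof of that proposition shows $\tilde{U}^2E_*\otimes \tilde{U}^{-2}E_*\approx E_0$, which immediately gives an inverse to each action map without ever leaving the $\Gamma^q$-module categories.
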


\subsection{Suspension}

Define functions $e_q\colon \Gamma^{q+i}\ra \Gamma^i$ for all $q\geq0$
and all $i\in \Z$ so that $e_q(f)$ is the composite
\[
\omega^{i/2} \approx \omega^{-q/2}\otimes \omega^{(q+i)/2}
\xra{\id\otimes f} \omega^{-q/2}\otimes \algapprox(\omega^{(q+i)/2})
\xra{E_q} \algapprox(\omega^{-q/2}\otimes \omega^{(q+i)/2})\approx
\algapprox (\omega^{i/2}),
\]
where $E_q$ is was defined in \S\ref{subsec:suspension-map}.

\begin{prop}
The maps $e_q\colon \Gamma^{i+q}\ra
\Gamma^{i}$ are homomorphisms of associative rings under $E_0$.
\end{prop}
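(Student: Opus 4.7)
The plan is to exploit the compatibility properties of the suspension map $E_q$ with the monad structure and unit of $\algapprox$ recorded in \eqref{prop:susp-compat-with-algapprox}.  First I would identify an element $f\in\Gamma^{i+q}$ with its underlying $E_*$-module map $\tilde f\colon\omega^{(i+q)/2}\to\algapprox(\omega^{(i+q)/2})$, and recall that multiplication in $\Gamma^{i+q}$ is composition of the induced natural endomorphisms, so that the product $fg$ corresponds to $\widetilde{fg}=\mu\circ\algapprox(\tilde f)\circ\tilde g$, the unit to $\eta$, and $\eta_{i+q}(r)$ to $\eta\circ\lambda_r$.  With $\kappa\colon\omega^{-q/2}\otimes\omega^{(i+q)/2}\xra{\sim}\omega^{i/2}$ the canonical isomorphism, the definition gives $\widetilde{e_q(f)}=E_q\circ(\id\otimes\tilde f)\circ\kappa^{-1}$.

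Unit preservation and $E_0$-compatibility then follow directly from the first diagram of \eqref{prop:susp-compat-with-algapprox}, which asserts $E_q\circ(\id\otimes\eta)=\eta$.  Indeed, $\widetilde{e_q(\eta_{i+q}(r))}=E_q\circ(\id\otimes\eta)\circ(\id\otimes\lambda_r)\circ\kappa^{-1}=\eta\circ\lambda_r=\widetilde{\eta_i(r)}$, using the $E_0$-linearity of $\kappa$; taking $r=1$ covers the multiplicative unit.

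The main step is multiplicativity.  Starting from $\widetilde{e_q(fg)}=E_q\circ(\id\otimes\mu)\circ(\id\otimes\algapprox\tilde f)\circ(\id\otimes\tilde g)\circ\kappa^{-1}$, I would apply the second diagram of \eqref{prop:susp-compat-with-algapprox} to replace $E_q\circ(\id\otimes\mu)$ by $\mu\circ\algapprox(E_q)\circ E_q$, and then use naturality of $E_q$ (applied to $\tilde f\colon \omega^{(i+q)/2}\to \algapprox(\omega^{(i+q)/2})$) to rewrite $E_q\circ(\id\otimes\algapprox\tilde f)=\algapprox(\id\otimes\tilde f)\circ E_q$.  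Reassembling and inserting a cancelling pair $\kappa^{-1}\circ\kappa$ identifies the result with $\mu\circ\algapprox\bigl(E_q\circ(\id\otimes\tilde f)\circ\kappa^{-1}\bigr)\circ E_q\circ(\id\otimes\tilde g)\circ\kappa^{-1}=\mu\circ\algapprox(\widetilde{e_q(f)})\circ\widetilde{e_q(g)}=\widetilde{e_q(f)\cdot e_q(g)}$.

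The content-bearing step is multiplicativity, and its crux is recognising the monad-compatibility square \eqref{prop:susp-compat-with-algapprox}(second diagram) as the precise algebraic input required; without this coherence, composition of natural endomorphisms would not be preserved under suspension.  One should also verify that $e_q$ lands in $\Gamma^i$ rather than merely $P^i$; this is a parallel diagram chase using naturality of $E_q$ with respect to the diagonal $\nabla\colon\omega^{(i+q)/2}\to\omega^{(i+q)/2}\oplus\omega^{(i+q)/2}$ together with the first diagram of \eqref{prop:susp-compat-with-algapprox}, which shows $\Delta^+(e_q(f))=e_q(f)\otimes 1+1\otimes e_q(f)$ whenever $\Delta^+(f)=f\otimes 1+1\otimes f$.
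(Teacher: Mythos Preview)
Your proof is correct and is exactly the diagram chase the paper has in mind: the paper's own argument is simply the sentence ``This is a lengthy but straightforward diagram chase, which depends essentially on \eqref{prop:susp-compat-with-algapprox},'' and you have carried out precisely that chase, using the unit square for $E_0$-compatibility and the $\mu$-square for multiplicativity, together with naturality of $E_q$. One cosmetic point: your formula $\widetilde{e_q(f)}=E_q\circ(\id\otimes\tilde f)\circ\kappa^{-1}$ omits the final $\algapprox(\kappa)$ appearing in the paper's definition of $e_q$; this is harmless (the $\kappa$'s are natural and cancel as you indicate), but worth making explicit so that the ``insert a cancelling pair $\kappa^{-1}\circ\kappa$'' step is transparent.
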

\begin{proof}
This is a lengthy but straightforward diagram chase, which depends
essentially on \eqref{prop:susp-compat-with-algapprox}.
\end{proof}

\begin{prop}
The diagram
\[\xymatrix{
{\Gamma^{i+j+a+b}}  \ar[r]^-{\Delta_{i+a,j+b}} \ar[d]_{e_{i+j}}
& {\Gamma^{i+a}\otimes \Gamma^{j+b}} \ar[d]^{e_i\otimes e_j}
\\
{\Gamma^{a+b}} \ar[r]_-{\Delta_{a,b}}
& {\Gamma^a\otimes \Gamma^b}
}\]
commutes.
\end{prop}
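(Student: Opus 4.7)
The plan is to verify the diagram by unwinding both composites into the ambient maps on $P^{\bullet}$ and then reducing the resulting comparison to two previously established naturality statements: Proposition \ref{prop:susp-composition} (compatibility of iterated suspensions) and Proposition \ref{prop:susp-compat-multiplicativity} (compatibility of the suspension with the multiplicativity map $\nu$). Since the maps $\Delta_{i,j}$ and $e_q$ are defined on all of $P^{\bullet}$ and merely restrict to $\Gamma^{\bullet}$, I will work throughout with the ambient $\Delta^{\times}_{i,j}$ and the unrestricted analogue of $e_q$; no new verification is needed to see that the final composite lands in $\Gamma^a \otimes \Gamma^b$.

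First I would fix an element $f \in \Gamma^{i+j+a+b} \subseteq \Hom_{\Mod{E_*}}(\omega^{(i+j+a+b)/2}, \algapprox(\omega^{(i+j+a+b)/2}))$ and write out $(e_i \otimes e_j)(\Delta_{i+a,j+b}(f))$ as an explicit composite, starting from $\omega^{a/2}\otimes \omega^{b/2}$, inserting two inverse K\"unneth isomorphisms $\omega^{a/2}\approx \omega^{-i/2}\otimes \omega^{(i+a)/2}$ and $\omega^{b/2}\approx \omega^{-j/2}\otimes \omega^{(j+b)/2}$, applying $f$ in the middle tensor factor (after further re-bracketing through $\omega^{(i+j+a+b)/2}$), and finishing with $E_i\otimes E_j$ followed by $\nu$. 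Similarly I would unwind $\Delta_{a,b}(e_{i+j}(f))$ as a composite involving the single K\"unneth isomorphism $\omega^{(a+b)/2}\approx \omega^{-(i+j)/2}\otimes \omega^{(i+j+a+b)/2}$, the map $f$, the suspension $E_{i+j}$, and then $\nu$.

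Next I would assemble these two composites into one large commutative diagram, and identify two independent pieces. The first piece, concerning the factorization $E_{i+j}$ as (up to the canonical $\kappa$ isomorphism) the composite of $E_i$ and $E_j$, is precisely Proposition \ref{prop:susp-composition}; this lets me replace the single suspension appearing in $e_{i+j}$ by the pair used on the other side. The second piece, concerning the interchange of $E_i\otimes E_j$ with $\nu$ after applying the appropriate symmetry $\id\otimes \tau\otimes \id$ on the $\omega$-factors, is exactly the content of Proposition \ref{prop:susp-compat-multiplicativity} (applied with $M=\omega^{(i+a)/2}$ and $N=\omega^{(j+b)/2}$). Both pieces fit together because the associativity and symmetry isomorphisms among the $\omega^{\bullet/2}$ are coherent, a fact that already underlies the consistency of the twisted tensor product of \S\ref{sec:twisted-z2-cat}.

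The only real work will be bookkeeping: tracking the canonical isomorphisms among tensor powers of $\omega^{1/2}$ so that the K\"unneth factors line up on both sides. The main obstacle, such as it is, lies in confirming that the symmetry $\tau$ inserted by Proposition \ref{prop:susp-compat-multiplicativity} agrees with the symmetry implicitly used in defining $\Delta^{\times}_{a,b}$; this follows from the fact that the interchange on $\omega^{-i/2}\otimes \omega^{-j/2}$ is compatible with $\kappa$ by the symmetry of $\omega$ (\S\ref{subsec:symmetric-objects}). Once these two compatibility statements are applied in sequence, the large diagram commutes and restricts, on the $\Gamma$-subobjects, to the asserted square.
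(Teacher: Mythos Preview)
Your proposal is correct and matches the paper's approach: the paper's proof is the one-liner ``This is a lengthy but straightforward diagram chase, which depends essentially on \eqref{prop:susp-compat-multiplicativity},'' and you have supplied exactly that diagram chase. Your explicit invocation of \eqref{prop:susp-composition} is appropriate, since the ``$E_{i+j}$'' appearing in \eqref{prop:susp-compat-multiplicativity} is by definition the iterated suspension along the top of the diagram in \eqref{prop:susp-composition}, and identifying it with the genuine $E_{i+j}$ used in $e_{i+j}$ is precisely the content of that proposition.
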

\begin{proof}
This is a lengthy but straightforward diagram chase, which depends
essentially on \eqref{prop:susp-compat-multiplicativity}.
\end{proof}

Let $e_q^*\colon \Mod{\Gamma^i}\ra \Mod{\Gamma^{i+q}}$ denote the
functor obtained by restricting along the ring homomorphism $e_q$.

\begin{prop}\label{prop:coherence-e-susp}
There are natural and coherent isomorphisms 
\[
e_a^*(M)\otimes e_b^*(N) \ra e_{a+b}^*(M\otimes N)
\]
of functors $\Mod{\Gamma^{i}}\times \Mod{\Gamma^{j}}\ra
\Mod{\Gamma^{i+j+a+b}}$. 
\end{prop}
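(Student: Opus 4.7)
The plan is to construct the natural isomorphism as the identity on underlying $E_0$-modules; the real content is the assertion that two a priori distinct $\Gamma^{i+j+a+b}$-module structures on the common underlying $E_0$-module $M \otimes_{E_0} N$ coincide.

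First I would unpack the two sides. For $M \in \Mod{\Gamma^i}$ and $N \in \Mod{\Gamma^j}$, the module $e_a^*(M) \otimes e_b^*(N)$ has underlying $E_0$-module $M \otimes_{E_0} N$, with $\Gamma^{i+j+a+b}$-action factoring as
\[
\Gamma^{i+j+a+b} \xra{\Delta_{i+a,j+b}} \Gamma^{i+a}_{E_0}\otimes \Gamma^{j+b}_{E_0} \xra{e_a \otimes e_b} \Gamma^{i}_{E_0}\otimes \Gamma^{j}_{E_0}
\]
followed by the $\Gamma^i \otimes \Gamma^j$-action on $M \otimes_{E_0} N$ built from the module structures of $M$ and $N$. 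Dually, $e_{a+b}^*(M \otimes N)$ has the same underlying $E_0$-module, with $\Gamma^{i+j+a+b}$-action factoring as
\[
\Gamma^{i+j+a+b} \xra{e_{a+b}} \Gamma^{i+j} \xra{\Delta_{i,j}} \Gamma^{i}_{E_0}\otimes \Gamma^{j}_{E_0}
\]
followed by the same $\Gamma^i \otimes \Gamma^j$-action.

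Second, the commuting square of the preceding proposition, read after the relabeling of indices $(i,j,a,b) \mapsto (a,b,i,j)$, asserts exactly that
\[
(e_a \otimes e_b) \circ \Delta_{i+a,\,j+b} \;=\; \Delta_{i,j} \circ e_{a+b}.
\]
Consequently the identity map on $M \otimes_{E_0} N$ is $\Gamma^{i+j+a+b}$-linear, and provides the desired isomorphism $e_a^*(M) \otimes e_b^*(N) \to e_{a+b}^*(M \otimes N)$. Naturality in $M$ and $N$ is automatic because everything is an identity on underlying modules.

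Finally, coherence (the associativity pentagon, the unit triangles, and compatibility with the symmetry isomorphism of \eqref{prop:gammastar-tensor}) reduces to equalities between iterated composites of $\Delta$-maps and $e_\bullet$-maps that land in $\Gamma^\bullet \otimes \cdots \otimes \Gamma^\bullet$; these hold by coassociativity, cocommutativity, and counitality of the $\Delta_{i,j}$ together with the fact that $e_q$ is a ring homomorphism and with the naturality already used above. I expect the main obstacle to be purely notational: keeping track of the indices and of the signs introduced by the twisted commutativity isomorphism on $\otimes$. Nothing in this bookkeeping is conceptually subtle, since the isomorphism is forced once the underlying $E_0$-module identification is fixed.
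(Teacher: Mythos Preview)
Your proposal is correct and is precisely the argument the paper has in mind: the proposition is stated without proof in the paper, immediately after the commutative square relating $\Delta$ and $e_\bullet$, and your proof makes explicit the intended deduction (identity on underlying $E_0$-modules, with the equality of $\Gamma^{i+j+a+b}$-actions coming from that square after the index swap you describe).
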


\begin{prop}\label{prop:suspension-odd-to-even-iso}
The suspension map $e_1\colon \Gamma^{1+2i}\ra \Gamma^{2i}$ is an
isomorphism for all $i\in \Z$.  Thus, the functor $e_1^*\colon
\Mod{\Gamma^{2i}}\ra \Mod{\Gamma^{1+2i}}$ is an equivalence of
categories. 
\end{prop}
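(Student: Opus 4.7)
My plan is to exploit the grading $\Gamma^q = \bigoplus_{k\geq 0} \Gamma^q[k]$ and reduce the question to showing the degreewise map $e_1\colon \Gamma^{1+2i}[k] \to \Gamma^{2i}[k]$ is an isomorphism of $E_0$-modules for each $k\geq 0$; since $e_1$ is already a ring homomorphism under $E_0$ by the preceding proposition, bijectivity in each graded piece will suffice to give a ring isomorphism. The case $k=0$ is immediate, as $\Gamma^q[0]\approx E_0$ for all $q$ and $e_1$ restricts to the identity.

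For $k\geq 1$, I will use the identification set up in Remark~\ref{rem:prim-ind-isos}: the $E_0$-module $\Gamma^q[k]$ is canonically identified with $\Prim_{-q}$, the primitive part of the $p^k$-summand of $\bigoplus_m \pi_* L\Pfree_m \Sigma^{-q} E$ in dimension $-q$. Unwinding the definition of $e_1$ via the suspension map $E_1$ of \S\ref{subsec:suspension-map}, one finds that $e_1\colon \Gamma^{1+2i}[k] \to \Gamma^{2i}[k]$ is precisely the map on primitives induced by the topological suspension
\[
\pi_{-(1+2i)} L\Pfree_{p^k}\Sigma^{-(1+2i)} E \to \pi_{-2i} L\Pfree_{p^k}\Sigma^{-2i} E.
\]
Since $-(1+2i)$ is odd, this suspension factors through indecomposables as $\Prim_{-(1+2i)} \to \Ind_{-(1+2i)} \to \Prim_{-2i}$ in the notation of that remark.

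The first arrow is an isomorphism by \eqref{prop:odd-degree-summand}, whose proof shows that the relevant graded Hopf algebra is a primitively generated exterior algebra in odd degree, so that $\Prim = \Ind$ there. The second arrow is an isomorphism by the Strickland--Kashiwabara theorems cited in Remark~\ref{rem:prim-ind-isos} (stated there for $i=0$ via Theorems 8.5 and 8.6 of \cite{strickland-morava-e-theory-of-symmetric}, and extending to general $i$ by the $2$-periodicity of $E$-theory, as the whole construction depends on $i$ only through the invertible $E_0$-module $\omega^{i/2}$). Composing, $e_1$ is a ring isomorphism, and the claimed equivalence of module categories then follows by restriction of scalars. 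The main obstacle internal to this paper is the careful identification of $e_1$ with the suspension map on primitives, which is a diagram chase from the definition of $E_q$; the substantive content---that odd-to-even suspension of primitives is bijective---is essentially encapsulated in Remark~\ref{rem:prim-ind-isos} and ultimately rests on Strickland's computation of the image of $\Prim E^0 B\Sigma_{p^k} \to \Ind E^0 B\Sigma_{p^k}$.
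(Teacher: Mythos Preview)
Your proposal is correct and follows essentially the same route as the paper: both reduce (via periodicity) to identifying $e_1$ with the suspension-induced map on primitives and then invoke Strickland's computation as recorded in Remark~\ref{rem:prim-ind-isos}, with your version simply being more explicit about the factorization $\Prim_{\text{odd}}\to\Ind_{\text{odd}}\to\Prim_{\text{even}}$ and the role of \eqref{prop:odd-degree-summand} in the first step. The paper's proof compresses all of this into a single citation of the remark and Strickland, so the two arguments differ only in level of detail.
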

\begin{proof}
By periodicity, this amounts to the observation that $e_1\colon \Gamma^{-1}\ra
\Gamma^{-2}$ is an isomorphism.  We read this off of the fact that the
suspension map $E^1B\Sigma^\rho_{p^k} \ra E^2B\Sigma^{2\rho}_{p^k}$ is
an isomorphism on primitives, as proved in Strickland and discussed in
\eqref{rem:prim-ind-isos}.
\end{proof}

\subsection{$\Z$-graded $\Gamma^*$-modules from spheres}

For $q\geq 0$, define a $\Z$-graded $\tilde{U}^*E_*$-module $\eta(q)$ by
\[
\eta(q) \defeq \Ker\bigl[ \tilde{U}^* (E^*S^{q}) \ra \tilde{U}^*
(E^*(\point)) \bigr].
\]
\begin{lemma}
There are natural and coherent isomorphisms
$\eta(q)\otimes_{\tilde{U}^*E_*} \eta(q')\approx \eta(q+q')$.
\end{lemma}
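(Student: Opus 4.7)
The plan is to produce the isomorphism by unpacking the augmented commutative $E$-algebra structure on $E^{S^q}$ and applying the strong symmetric monoidality of $\tilde{U}^*$ established just above.

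First I would observe that for each $q\ge 0$, the function spectrum $E^{S^q}$ is a commutative $E$-algebra, and the basepoint inclusion $\point\to S^q$ induces an augmentation $E^{S^q}\to E$ that is split (as an $E$-algebra map) by the unit $E\to E^{S^q}$ coming from $S^q\to\point$. On homotopy this produces a natural splitting $\pi_*E^{S^q}\approx E_*\oplus \omega^{q/2}$ of augmented $\algapprox$-algebras, where the ideal $\omega^{q/2}\approx\tilde{E}^*S^q$ has square zero. Applying $\tilde{U}^*$ yields an augmented commutative monoid object in $\zgrMod{\tilde{U}^*E_*}$ whose augmentation ideal is exactly $\eta(q)$, so that the geometric splitting gives a direct-sum decomposition $\tilde{U}^*\pi_*E^{S^q}\approx \tilde{U}^*E_*\oplus \eta(q)$ in $\zgrMod{\tilde{U}^*E_*}$.

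Next I would use the K\"unneth equivalence $E^{S^q}\sm_E E^{S^{q'}}\xra{\sim} E^{S^q\times S^{q'}}$ of commutative $E$-algebras (valid since $S^q$, $S^{q'}$ are finite and $\pi_*E^{S^q}$ is finite free), together with the stable splitting $\Sip(S^q\times S^{q'})\simeq S\vee S^q\vee S^{q'}\vee S^{q+q'}$. These combine to present $\pi_*(E^{S^q}\sm_E E^{S^{q'}})$ as $E_*\oplus \omega^{q/2}\oplus \omega^{q'/2}\oplus \omega^{(q+q')/2}$ on the underlying $E_*$-module, and to identify it with $\pi_*E^{S^q}\otimes^{\gralgcat}\pi_*E^{S^{q'}}$ in $\gralgcat$. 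The quotient map $S^q\times S^{q'}\twoheadrightarrow S^{q+q'}$ then produces a map of augmented $\algapprox$-algebras $\pi_*E^{S^{q+q'}}\to \pi_*E^{S^q}\otimes^{\gralgcat}\pi_*E^{S^{q'}}$ landing on the first and top-cell summands.

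Applying the strong symmetric monoidal functor $\tilde{U}^*$ and restricting to augmentation ideals then produces a natural map $\eta(q+q')\to \eta(q)\otimes_{\tilde{U}^*E_*}\eta(q')$ in $\zgrMod{\tilde{U}^*E_*}$ (picking out the correct summand of the tensor product of augmentation decompositions). On underlying $\Z$-graded $E_*$-modules it is the K\"unneth isomorphism $\omega^{(q+q')/2}\approx \omega^{q/2}\otimes_{E_*}\omega^{q'/2}$; since the forgetful functor $\zgrMod{\tilde{U}^*E_*}\to\zgrMod{E_*}$ is conservative, the map is an isomorphism. Coherence of the resulting family (associativity and unit constraints) reduces to the corresponding coherences for cartesian products of spheres together with the symmetric monoidal coherence of $\tilde{U}^*$. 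The main obstacle I foresee is the careful verification that $\pi_*$ sends the $K(n)$-local coproduct $E^{S^q}\sm_E E^{S^{q'}}$ to the $\gralgcat$-tensor product of $\pi_*E^{S^q}$ and $\pi_*E^{S^{q'}}$: agreement of underlying $E_*$-modules is immediate from K\"unneth, but matching the full $\algapprox$-structure relies on the finite-freeness of $\cE{*}B\Sigma_m$ and the approximation results for flat modules from \S\ref{sec:algebraic-approximation}.
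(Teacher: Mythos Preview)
Your approach is correct and is essentially the paper's own argument, which reads in full: ``Read this off using the K\"unneth isomorphism $E^*(S^q\times S^{q'})\approx E^*S^q\otimes E^*S^{q'}$.''  The concern you flag at the end is easily resolved: the projection maps $S^q\times S^{q'}\to S^q$, $S^{q'}$ induce maps of commutative $E$-algebra spectra and hence of $\algapprox$-algebras, so the induced map from the coproduct in $\gralgcat$ to $E^*(S^q\times S^{q'})$ is a map of $\algapprox$-algebras which, on underlying $E_*$-algebras, is the K\"unneth isomorphism---and since $U\colon \gralgcat\to\Alg{E_*}$ preserves coproducts \eqref{cor:forget-to-modules-commutes-colims} and reflects isomorphisms, it is an isomorphism in $\gralgcat$.
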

\begin{proof}
Read this off using the K\"unneth isomorphism $E^*(S^q\times
S^{q'})\approx E^*S^q\otimes E^*S^{q'}$.
\end{proof}
Clearly, $\eta(0)\approx \tilde{U}^*E_*$.  Observe that $\eta(2)$ is a
symmetric object of $\zgrMod{\tilde{U}^*E_*}$ (in the sense of
  \S\ref{subsec:symmetric-objects}), and that $\eta(1)$ is an odd
  square-root of $\eta(2)$. 

\begin{prop}\label{prop:suspension-op-and-eta}
There is an isomorphism $\eta(q)\approx e_q^*(\tilde{U}^*E_*)$ of
$\Z$-graded $\Gamma^*$-modules.
\end{prop}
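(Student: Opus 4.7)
The plan is to build the isomorphism from the fundamental class $\iota_q\in \tilde{E}^qS^q \subseteq \eta(q)^q$. First I would check that both modules have the same underlying $\Z$-graded $E_0$-module: in degree $i$, using the splitting $S^q_+\simeq S^q\vee S^0$ and the even-periodicity of $E$, one finds $\eta(q)^i \cong \tilde{E}^iS^q \cong \pi_{q-i}E$, which equals $\omega^{(q-i)/2}$ when $i\equiv q\pmod 2$ and vanishes otherwise. Similarly $(e_q^*\tilde{U}^*E_*)^i = e_q^*(\tilde{U}^{i-q}E_*)$ has underlying $E_0$-module $\pi_{-(i-q)}E = \omega^{(q-i)/2}$ under the same parity constraint, so the underlying gradings match.

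Next I would use $\iota_q$ to produce a canonical map $\phi\colon e_q^*(\tilde{U}^*E_*)\to \eta(q)$ of $\Z$-graded $\tilde{U}^*E_*$-modules, determined by sending the distinguished generator $1\in e_q^*(E_0) = (e_q^*\tilde{U}^*E_*)^q$ to $\iota_q\in \eta(q)^q$. For this assignment to respect the $\Gamma^*$-module structure, the essential check is that the $\Gamma^q$-action on $\iota_q$ factors through the suspension homomorphism $e_q\colon \Gamma^q\to \Gamma^0$; concretely, for each $\gamma\in\Gamma^q$, the operation $\gamma\cdot\iota_q\in \eta(q)^q \cong E_0$ must equal the action of $e_q(\gamma)\in\Gamma^0$ on $1\in E_0 = \tilde{U}^0(E_*)$. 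Topologically, this is exactly the assertion that the power operation applied to $\iota_q\in\tilde{R}^qS^q$ is obtained by suspending the power operation on $1\in \tilde{R}^0S^0$ and then composing with the map $e\colon B\Sigma_m^{-q\bar{\rho}_m}\to B\Sigma_m^0$ of Thom spectra---which is precisely the content of Corollary~\ref{cor:power-op-and-s2}. The algebraic map $e_q$ was constructed in \S\ref{sec:gr-gamma} from the suspension $E_q$ of extended powers to mirror this topological $e$ on primitives. Once $\phi$ is confirmed to be a $\Gamma^*$-module map, it is automatically an isomorphism, since in each degree it is an $E_0$-module map between two copies of $\omega^{(q-i)/2}$ sending generator to generator.

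The main obstacle will be the careful identification of the algebraic $e_q$, defined via the suspension $E_q$ of extended powers, with the dual on primitives of the topological Thom-spectrum map $e$; this requires a somewhat finicky comparison of the two constructions through their defining diagrams. A cleaner alternative is available via induction on $q$: by Proposition~\ref{prop:coherence-e-susp} there are coherent isomorphisms $e_a^*(M)\otimes e_b^*(N)\approx e_{a+b}^*(M\otimes N)$, and by the preceding lemma $\eta(a)\otimes \eta(b)\approx \eta(a+b)$, so combined with the tautological $\eta(0)\approx \tilde{U}^*E_*$ the general statement reduces to the single case $q=1$. That base case is particularly tractable because Proposition~\ref{prop:suspension-odd-to-even-iso} shows $e_1$ is itself a ring isomorphism on the relevant graded pieces, so that $e_1^*$ is an equivalence of categories and $\eta(1)\approx e_1^*(\tilde{U}^*E_*)$ can be exhibited by direct transport of the $\Gamma^1$-structure along this equivalence together with the generator $\iota_1$.
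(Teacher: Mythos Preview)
Your main approach is essentially the same as the paper's: the paper's proof consists of the single sentence ``This is a straightforward consequence of \eqref{cor:power-op-and-s2},'' and your unpacking of what that means---checking that the $\Gamma^q$-action on the fundamental class $\iota_q$ agrees with the $e_q$-restricted $\Gamma^0$-action on $1$, which is precisely the content of that corollary---is exactly the intended argument.

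Your alternative inductive reduction to $q=1$ via \eqref{prop:coherence-e-susp} and the lemma $\eta(a)\otimes\eta(b)\approx\eta(a+b)$ is a genuinely different route. It trades the direct invocation of \eqref{cor:power-op-and-s2} for all $q$ against a single base case plus multiplicativity. This is perfectly sound, but note that even the base case $q=1$ still requires the $q=1$ instance of \eqref{cor:power-op-and-s2} (your appeal to \eqref{prop:suspension-odd-to-even-iso} tells you $e_1^*$ is an equivalence, but you still need to know that $\eta(1)$ lands on the correct object under it, which is again the compatibility of power operations with suspension). So the inductive route does not really avoid the key input; it just organizes the argument differently. The paper's direct approach is shorter because \eqref{cor:power-op-and-s2} already handles all $q$ at once.
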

\begin{proof}
This is a straightforward consequence of \eqref{cor:power-op-and-s2}.
\end{proof}

\begin{prop}\label{prop:suspension-op-is-tensor-with-eta}
There are natural isomorphisms $e_q^*M\approx
\eta(q)\otimes_{\tilde{U}^*E_*} M$ of $\Z$-graded $\tilde{U}^*E_*$-modules.  
\end{prop}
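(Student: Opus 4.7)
The plan is to extract the isomorphism from the coherence data of Proposition \ref{prop:coherence-e-susp} and then descend it to the relative tensor product. First, I would apply the coherence isomorphism $e_a^*(X)\otimes e_b^*(Y)\xrightarrow{\sim} e_{a+b}^*(X\otimes Y)$ with $a=q$, $b=0$, $X=\tilde{U}^*E_*$, and $Y=M$. Since $e_0=\id$ is immediate from the definition of $e_q$, and $\eta(q)\approx e_q^*(\tilde{U}^*E_*)$ by Proposition \ref{prop:suspension-op-and-eta}, this yields a natural isomorphism
\[
\eta(q)\otimes M \xrightarrow{\sim} e_q^*(\tilde{U}^*E_*\otimes M)
\]
in $\zgrMod{\Gamma^*}$. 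Postcomposing with $e_q^*$ applied to the action $\tilde{U}^*E_*\otimes M\to M$ produces a natural map $\tilde\alpha_M\colon \eta(q)\otimes M\to e_q^*M$.

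Next, I would verify that $\tilde\alpha_M$ coequalizes the pair of arrows $\eta(q)\otimes \tilde{U}^*E_*\otimes M\rightrightarrows \eta(q)\otimes M$ whose coequalizer defines $\eta(q)\otimes_{\tilde{U}^*E_*}M$. One arrow uses the right $\tilde{U}^*E_*$-action on $\eta(q)\approx e_q^*(\tilde{U}^*E_*)$, which via coherence corresponds to the multiplication on $\tilde{U}^*E_*$; the other uses the $\tilde{U}^*E_*$-action on $M$. Both composites with $\tilde\alpha_M$ collapse to $e_q^*$ of the common iterated action $\tilde{U}^*E_*\otimes\tilde{U}^*E_*\otimes M\to M$, by naturality of the coherence isomorphism combined with associativity of the action. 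This produces the natural morphism
\[
\alpha_M\colon \eta(q)\otimes_{\tilde{U}^*E_*} M\to e_q^*M
\]
in $\zgrMod{\tilde{U}^*E_*}$.

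Finally, I would argue $\alpha_M$ is an isomorphism. Both functors $M\mapsto \eta(q)\otimes_{\tilde{U}^*E_*}M$ and $M\mapsto e_q^*M$ preserve all colimits in $\zgrMod{\tilde{U}^*E_*}$: the first by standard properties of the relative tensor product, and the second because restriction of scalars along each $e_q\colon\Gamma^{q+i}\to\Gamma^i$ is computed on underlying abelian groups. By Proposition \ref{prop:suspension-op-and-eta} and the unit axiom, $\alpha_{\tilde{U}^*E_*}$ is an isomorphism; a parallel check (using Lemma \ref{lemma:ue-mod-is-gamma-zero-and-one} to supply the odd-degree generator) handles the shift $\tilde{U}^*E_*[1]$. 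Since the $\Z$-graded shifts of $\tilde{U}^*E_*$ generate $\zgrMod{\tilde{U}^*E_*}$ under colimits, naturality forces $\alpha_M$ to be an isomorphism for every $M$.

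The main obstacle is the bookkeeping in the second step: one must verify the coequalizer relation and that the resulting $\alpha_M$ really lives in $\zgrMod{\tilde{U}^*E_*}$ rather than only in $\zgrMod{\Gamma^*}$. This amounts to unwinding the coherence diagrams for the monoidal structures of Propositions \ref{prop:gammastar-tensor} and \ref{prop:coherence-e-susp}, keeping careful track of the signs introduced by the twisted $\Z/2$-graded interchange on odd-degree elements.
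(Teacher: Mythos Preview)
Your approach is correct and matches the paper's intended argument: the paper's proof is a one-line ``Clear from \eqref{prop:suspension-op-and-eta} and \eqref{prop:coherence-e-susp}'' (the printed text actually contains a self-reference typo here), and you have simply unpacked that line by invoking the coherence isomorphism $e_q^*(\tilde{U}^*E_*)\otimes e_0^*(M)\xrightarrow{\sim} e_q^*(\tilde{U}^*E_*\otimes M)$ together with the identification $\eta(q)\approx e_q^*(\tilde{U}^*E_*)$. Your extra care with the coequalizer and the generation argument is more than the paper provides, but it is the right way to make the one-liner honest.
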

\begin{proof}
Clear from \eqref{prop:suspension-op-and-eta} and
\eqref{prop:suspension-op-is-tensor-with-eta}. 
\end{proof}

\subsection{$\Z/2$-graded $\Gamma$-modules}
\label{subsec:z2-graded-gamma-modules}

Let $\grMod{\Gamma}$ denote the category of $\Z/2$-graded
$\Gamma$-modules.  We give this category a tensor structure using the
procedure of \S\ref{sec:twisted-z2-cat}, where we twist by the
symmetric object $\omega\in \Alg{\Gamma}$ described above.

Let
\[
V\colon \Mod{\Gamma} \ra \zgrMod{\tilde{U}^*E_*}
\]
denote the functor which on objects sends $M$ to $N$, where
$N^{2i}=M\otimes \tilde{U}^{2i}E_*$ and $N^{2i+1}=0$.  It is clear
that $V$ is a strong symmetric monoidal functor.
Furthermore, it is clear from the definitions that $V(\omega)\approx \eta(2)$.

Using the method of \eqref{prop:functor-from-twisted-z2-graded}, we
obtain a symmetric monoidal 
functor
\[
V^*\colon \grMod{\Gamma}\ra \zgrMod{\tilde{U}^*E_*}
\]
which extends $V$ and sends $\omega^{1/2}$ to $\eta(1)$.  
\begin{prop}
The symmetric monoidal functor $V^*\colon \grMod{\Gamma} \ra
\zgrMod{\tilde{U}^*E_*}$ is an equivalence of categories.
\end{prop}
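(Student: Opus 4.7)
The plan is to reduce the claim, via the equivalence of Lemma \ref{lemma:ue-mod-is-gamma-zero-and-one}, to a straightforward comparison of elementary equivalences. Define a functor $W\colon \grMod{\Gamma}\ra \Mod{\Gamma^0}\times \Mod{\Gamma^1}$ on objects by $W(\{M^0,M^1\}) \defeq (M^0, e_1^*M^1)$, using the identification $\Gamma = \Gamma^0$ and the ring homomorphism $e_1\colon \Gamma^1\ra \Gamma^0$. Since the underlying additive category of $\grMod{\Gamma}$ is canonically $\Mod{\Gamma}\times \Mod{\Gamma}$, and since $e_1^*\colon \Mod{\Gamma^0}\ra \Mod{\Gamma^1}$ is an equivalence by Proposition \ref{prop:suspension-odd-to-even-iso}, $W$ is manifestly an equivalence of categories. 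The task then reduces to producing a natural isomorphism $V^*\approx V'\circ W$.

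Here I would invoke Proposition \ref{prop:functor-from-twisted-z2-graded}, which classifies monoidal extensions of $V$ to $\grMod{\Gamma}$ in terms of odd square-roots of $V(\omega)\approx \eta(2)$ in $\zgrMod{\tilde{U}^*E_*}$. So it suffices to check that $V^*$ and $V'\circ W$ agree on $\Mod{\Gamma}\subset \grMod{\Gamma}$ and send $\omega^{1/2}$ to the same odd square-root of $\eta(2)$. For the first: if $M\in \Mod{\Gamma}$, viewed as $\{M,0\}\in \grMod{\Gamma}$, one checks from the defining formulas that $V^*(M) = V(M) = V'(M, 0) = V'(W(\{M,0\}))$. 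For the second: $V^*(\omega^{1/2}) = \eta(1)$ by the construction of $V^*$, while $V'(W(\omega^{1/2})) = V'(0, e_1^*E_0)$, and by Proposition \ref{prop:suspension-op-and-eta} together with the definition of $V'$ this is canonically isomorphic to $e_1^*\tilde{U}^*E_* \approx \eta(1)$.

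The main obstacle is verifying that the square-root isomorphism $\eta(1)\otimes \eta(1)\approx \eta(2)$ used in constructing $V^*$ agrees, under the identifications above, with the one produced by tensoring through $V'\circ W$. This compatibility relies on the monoidality coherences of Proposition \ref{prop:coherence-e-susp} together with the natural isomorphism $\eta(1)\otimes_{\tilde{U}^*E_*}\eta(1)\approx \eta(2)\approx V(\omega)$. Once this is established, the universal property of Proposition \ref{prop:functor-from-twisted-z2-graded} forces a unique monoidal natural isomorphism $V^*\approx V'\circ W$, and hence $V^*$ is an equivalence of categories.
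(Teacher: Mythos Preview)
Your strategy---factoring $V^*$ through $V'$ and an equivalence built from $e_1^*$---is exactly the paper's approach. The difference is in execution. The paper simply observes that $V^*$ sends $\{M^0,M^1\}$ to $V(M^0)\oplus (V(M^1)\otimes\eta(1))$, then invokes Proposition~\ref{prop:suspension-op-is-tensor-with-eta} to rewrite this as $V(M^0)\oplus e_1^*V(M^1)$; since $e_1^*$ is an equivalence (Proposition~\ref{prop:suspension-odd-to-even-iso}) and $V'$ is an equivalence (Lemma~\ref{lemma:ue-mod-is-gamma-zero-and-one}), that is already enough. No monoidal comparison is needed: the statement only asserts that $V^*$ is an equivalence of \emph{categories}, and the symmetric monoidal structure on $V^*$ was fixed by its construction.

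Your detour through Proposition~\ref{prop:functor-from-twisted-z2-graded} introduces a genuine gap. That proposition classifies \emph{symmetric monoidal} extensions of $V$, so to place $V'\circ W$ in its groupoid $\mathcal{G}$ you must first exhibit a symmetric monoidal structure on $V'\circ W$ restricting to that of $V$. You gesture at Proposition~\ref{prop:coherence-e-susp} for this, but neither $V'$ nor the product category $\Mod{\Gamma^0}\times\Mod{\Gamma^1}$ is equipped with a monoidal structure in the paper, so making $V'\circ W$ monoidal is real work you have not done. The direct route---producing a natural isomorphism $V^*\approx V'\circ W$ of plain functors via Proposition~\ref{prop:suspension-op-is-tensor-with-eta}, as the paper does---avoids this entirely and is all that the statement requires.
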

\begin{proof}
Explicitly, the functor $V^*$ sends an object $M=\{M^0,M^1\}$ of
$\grMod{\Gamma}$ to the object $V(M^0)\oplus V(M^1)\otimes \eta(1)$.
By \eqref{prop:suspension-op-is-tensor-with-eta} this functor is
naturally isomorphic to the $M\mapsto V(M^0)\oplus e_1^*V(M_1)$.
Since $e_1^*\colon \Mod{\Gamma^0}\ra \Mod{\Gamma^1}$ is an equivalence
of categories by \eqref{prop:suspension-odd-to-even-iso}, the result
follows using \eqref{lemma:ue-mod-is-gamma-zero-and-one}.
\end{proof}

\subsection{The forgetful functor $U$}

Since $V^*$ is an equivalence of tensor categories, there is a functor
$\tilde{U}\colon \gralgcat\ra \grMod{\Gamma}$ such that there is a
monoidal isomorphism
$V^*\tilde{U}\approx \tilde{U}^*$ of monoidal functors. 

\begin{prop}\label{prop:algcat-to-gamma-alg-is-plethyistic}
The  functor $\tilde{U}\colon \gralgcat\ra \grMod{\Gamma}$ lifts to a
plethyistic functor $U\colon 
\gralgcat\ra \grAlg{\Gamma}$.  
\end{prop}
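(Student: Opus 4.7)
The plan is to follow closely the proof of \eqref{prop:algcat-to-E-alg-plethyistic}, with the additional ingredient being the symmetric monoidal structures set up in the previous subsection. The first step is to construct the lift $U\colon \gralgcat\ra \grAlg{\Gamma}$. The discussion preceding the statement shows that $\tilde{U}^*\colon \gralgcat\ra \zgrMod{\tilde{U}^*E_*}$ is strong symmetric monoidal, and the equivalence $V^*\colon \grMod{\Gamma}\xra{\sim} \zgrMod{\tilde{U}^*E_*}$ is likewise strong symmetric monoidal. Consequently, the functor $\tilde{U}\approx (V^*)^{-1}\tilde{U}^*$ is strong symmetric monoidal, and therefore carries commutative monoid objects to commutative monoid objects. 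Since every object of $\gralgcat$ is canonically a commutative monoid in $\Mod{E_*}$ (and indeed in $\gralgcat$ itself), $\tilde{U}(B)$ inherits a natural structure of commutative monoid in $\grMod{\Gamma}$; this defines $U$.

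Next I would verify the two defining properties of a plethyistic functor. For the first, note that the underlying $E_*$-module of $U(B)$ is $\pi_*B$, so the composite $\gralgcat\xra{U}\grAlg{\Gamma}\ra \Mod{E_*}$ agrees up to natural isomorphism with the usual forgetful functor. The latter is monadic, hence reflects isomorphisms, and so $U$ does too.

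For the second property, I would produce the left and right adjoints by repeating the proof of \eqref{prop:algcat-to-E-alg-plethyistic} verbatim in this new setting. The key point is that $U$ preserves both limits and colimits: limit-preservation is immediate since limits in $\grAlg{\Gamma}$ are computed on underlying $E_*$-modules and likewise for $\gralgcat$; colimit-preservation follows from \eqref{cor:forget-to-modules-commutes-colims} for filtered colimits and reflexive coequalizers, while coproduct-preservation reduces to the strong monoidality of $U$ (coproducts of commutative monoids being computed as tensor products in both categories). Given preservation of colimits, the left adjoint $F$ is constructed as in the earlier proof, by first exhibiting free $\Gamma$-algebras of the form $\Sym^*(M\otimes_{E_0}\Gamma)$ (for $M$ either $E_0$ or $\omega^{1/2}$) as the images of $\algapprox(E_0)$ and $\algapprox(\omega^{1/2})$ respectively, and extending by colimits. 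Given preservation of limits, the right adjoint $G$ exists by the adjoint functor theorem, since $\gralgcat$ is locally presentable.

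The main obstacle, such as it is, is a bookkeeping one: checking that the lifted $U$ really does preserve finite coproducts at the level of $\Gamma$-algebras. This reduces to the observation that for $\algapprox$-algebras $B,C$, the tensor product $B\otimes_{E_*}C$ (which is the coproduct in $\gralgcat$) carries the $\Gamma$-algebra structure predicted by tensoring $U(B)$ and $U(C)$ in $\grAlg{\Gamma}$; this in turn is a direct consequence of the fact that the isomorphism $V^*\tilde{U}\approx \tilde{U}^*$ is an isomorphism of strong symmetric monoidal functors, together with the evident fact that tensor product of commutative monoids models coproduct in either category.
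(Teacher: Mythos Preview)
Your argument is correct, but it is considerably more laborious than the paper's.  The paper observes that in the factorization
\[
\gralgcat \xra{U} \grAlg{\Gamma} \xra{U'} \Alg{E_*},
\]
the functor $U'$ is clearly plethyistic, and the composite $U'U$ is plethyistic by \eqref{prop:algcat-to-E-alg-plethyistic}; the result then follows formally.  The implicit reasoning is that a plethyistic functor is both monadic and comonadic (Beck's theorem), so $U'$ creates both limits and colimits; since $U'U$ preserves both, so does $U$, and adjoint functor theorems (via local presentability) supply the two adjoints.  Reflection of isomorphisms is immediate since $U'U$ reflects them.

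Your approach instead reproves \eqref{prop:algcat-to-E-alg-plethyistic} in the $\Gamma$-algebra context: you verify limit and colimit preservation directly, appealing to strong monoidality for coproducts, and then build the adjoints by hand.  This works, and has the virtue of making the left adjoint explicit (the free $\Gamma$-algebra $\Sym^*(M\otimes_{E_0}\Gamma)$ is sent to $\algapprox(M)$), but it duplicates effort already invested in \eqref{prop:algcat-to-E-alg-plethyistic}.  The paper's factorization argument is the more economical route once that proposition is in hand.
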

\begin{proof}
In the sequence of forgetful functors
\[
\gralgcat \xra{U} \grAlg{\Gamma} \xra{U'} \Alg{E_*},
\]
the functor $U'$ is clearly plethyistic, and the composite $U'U$ is plethyistic
by \eqref{prop:algcat-to-E-alg-plethyistic}.  The result follows.
\end{proof}

\section{Rational algebras}
\label{sec:rational-algebras}

We say that an object of $\gralgcat$ (resp.\ an $E_*$-algebra) is
\dfn{rational} if the underlying $E_*$-module is a (graded) rational
vector space.   In this section, we prove that if $B$ is an
$\algapprox$-algebra, then $\Q\otimes B$ is also an
$\algapprox$-algebra in a canonical way; and also that a rational
$\algapprox$-algebra is the same thing as a rational $\Gamma$-algebra.
Our argument follows that of
Knutson \cite{knutson-lambda-rings-and-representation}.   Recall that
$U\colon \gralgcat \ra \Alg{E_*}$ admits a right adjoint $G$
(\S\ref{subsec:plethyistic}).  
\begin{lemma}
If $A\in \Alg{E_*}$ is rational, then so is $UG(A)$.
\end{lemma}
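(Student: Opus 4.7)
The plan is to compute each graded piece of $UG(A)$ directly as a $\Hom$-set via the adjunctions at hand and to read off the rationality from Proposition~\ref{prop:rational-prim-gen}. More precisely, for each $q \in \Z$, applying in sequence the free--forgetful adjunction between $\gralgcat$ and $\Mod{E_*}$ and the adjunction $U \dashv G$ yields natural bijections
\[
\Hom_{\Mod{E_*}}(\omega^{q/2}, UG(A)) \approx \Hom_{\gralgcat}(\algapprox(\omega^{q/2}), G(A)) \approx \Hom_{\Alg{E_*}}(U\algapprox(\omega^{q/2}), A),
\]
which identify the graded pieces of $UG(A)$ (parameterized by $q$) with sets of $E_*$-algebra homomorphisms out of the underlying $E_*$-algebra of a free $\algapprox$-algebra on a rank-one shift.

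Since $A$ is rational, every $E_*$-algebra map $U\algapprox(\omega^{q/2}) \to A$ factors uniquely through $U\algapprox(\omega^{q/2}) \otimes \Q$. By Proposition~\ref{prop:rational-prim-gen}, the inclusion $\Gamma^q \otimes \omega^{q/2} \hookrightarrow \algapprox(\omega^{q/2})$ extends, via universality of the symmetric algebra, to an $E_*$-algebra homomorphism $\Sym_{E_*}(\Gamma^q \otimes \omega^{q/2}) \to \algapprox(\omega^{q/2})$ which becomes an isomorphism after $\otimes \Q$; moreover, since $\Gamma^q \otimes \omega^{q/2}$ lands in the $\Delta^+$-primitives, the resulting isomorphism is compatible with the ``additive'' coproducts on both sides. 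Substituting, I obtain
\[
\Hom_{\Alg{E_*}}(U\algapprox(\omega^{q/2}), A) \approx \Hom_{\Alg{\Q \otimes E_*}}\bigl(\Sym_{\Q \otimes E_*}(\Q \otimes \Gamma^q \otimes \omega^{q/2}),\ A\bigr) \approx \Hom_{\Mod{\Q \otimes E_*}}(\Q \otimes \Gamma^q \otimes \omega^{q/2},\ A),
\]
which is visibly a $\Q$-vector space. Transporting this back through the bijections of the first display identifies each graded piece of $UG(A)$ with a $\Q$-module, so $UG(A)$ is rational.

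The step requiring the most care is verifying that the abelian-group structure one uses to deduce rationality on the right-hand $\Hom$-sets is exactly the native one on the corresponding graded piece of $UG(A)$. This reduces to the standard Yoneda-type observation that $\algapprox(\omega^{q/2})$ corepresents the abelian-group-valued functor ``extract the appropriate graded piece'' on $\gralgcat$, with co-addition given by $\Delta^+$. Once this compatibility is checked, the $\Q$-module structure constructed on the rightmost $\Hom$-set propagates back through naturality of all the adjunctions to the required $\Q$-module structure on the graded pieces of $UG(A)$.
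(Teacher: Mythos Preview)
Your argument is correct and arrives at the same identification
\[
\Hom_{\Mod{E_*}}(\omega^{q/2}, UG(A)) \;\approx\; \Hom_{\Alg{E_*}}(\algapprox(\omega^{q/2}), A)
\]
as the paper does, but then diverges in how rationality is extracted.  The paper filters $\algapprox(\omega^{q/2})$ by weight: it defines quotient groups $X_q(A)$ detecting only the first $q$ weight-pieces, observes that $X(A)=\lim_q X_q(A)$, and shows inductively that each $X_q(A)$ is rational because the successive kernels are $\Hom_{\Mod{E_*}}(M_q,A)$ with $M_q$ the weight-$q$ indecomposables.  In effect, the paper treats $X$ as a pro-unipotent abelian group scheme and checks rationality layer by layer.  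You instead invoke Proposition~\ref{prop:rational-prim-gen} to identify $\algapprox(\omega^{q/2})\otimes\Q$ with a symmetric algebra on its primitives as a Hopf algebra (this is legitimate: the map $\Sym_{E_*}(\Gamma^q\otimes\omega^{q/2})\to\algapprox(\omega^{q/2})$ is a bialgebra map since the generators go to $\Delta^+$-primitives, and Proposition~\ref{prop:rational-prim-gen} then makes it a rational isomorphism of Hopf algebras), which collapses $X(A)$ directly to $\Hom_{\Mod{\Q\otimes E_*}}(\Q\otimes\Gamma^q\otimes\omega^{q/2},A)$.  Your route is shorter and more structural, at the cost of using the Hopf-algebra structure theorem packaged in Proposition~\ref{prop:rational-prim-gen}; the paper's filtration argument is more elementary and would work even without that proposition.
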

\begin{proof}
Let $i=0$ or $1$, and let $X(A)$ denote the $i$th
graded piece of $UG(A)$, regarded as an abelian group.  We have that
$$X(A) \approx \Mod{E_*}(\omega^{i/2}, UG(A)) \approx
\Alg{E_*}(\algapprox (\omega^{i/2}), A),$$
where the abelian group structure on $X(A)$ is determined by the
coproduct on $\algapprox(\omega^{i/2})$.  We need to show that $X(A)$
is rational, and our proof amounts to observing that $X$ is a
\emph{nilpotent}  abelian group scheme.  More precisely, let
$X_q(A)$ denote the 
quotient of the set $X(A)$ under the following equivalence relation;
we say that $f$ and $g$ in $X(A)$ are equivalent if their
restrictions to $\bigoplus_{k=0}^q \algapprox_k(\omega^{i/2})$ are equal.
Because the coproduct map $\Delta^+\colon \algapprox(\omega^{i/2})\ra
\algapprox(\omega^{i/2})\otimes \algapprox(\omega^{i/2})$ restricts to
a defined on this summand, $X_q(A)$ is in fact
a quotient abelian group of $X(A)$.
We have that $X_0(A)=0$, and 
$$X(A) =\lim_q X_q(A),$$
so that it suffices to show that each $X_q(A)$ is rational.  
Furthermore, we have 
$$\ker[ X_q(A)\ra X_{q-1}(A) ] = \Mod{E_*}(M_q, A),$$
where $M_q= \cok[ \bigoplus_{0<i<q} \algapprox_i(\omega^{i/2})\otimes
\algapprox_{q-i}(\omega^{i/2}) \ra \algapprox_q(\omega^{i/2}) ]$.
Since $A$ is rational, 
this kernel must be rational, and thus inductively we may conclude
that $X_q(A)$ is rational for all $q$.
\end{proof}

\begin{prop}\label{prop:initial-rational-object}
Let $B\in \gralgcat$.  There exists a morphism $B\ra B'\in \gralgcat$ which is
initial among morphisms from $B$ to rational objects of $\gralgcat$.
Furthermore, the evident map $\Q\otimes UB \ra UB'$ is an
isomorphism of rational $E_*$-algebras.
\end{prop}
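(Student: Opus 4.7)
The plan is to exploit the comonadic description of $\gralgcat$ afforded by the plethyistic structure of \eqref{prop:algcat-to-E-alg-plethyistic}. Writing $C=UG$ for the associated comonad on $\Alg{E_*}$, we may identify $\gralgcat$ with the category of $C$-coalgebras, so that an object $B\in\gralgcat$ corresponds to a pair $(UB,\alpha_B)$ with $\alpha_B\colon UB\ra C(UB)$ satisfying the counit and coassociativity axioms. The essential input is the preceding lemma, which says that $C$ preserves rational $E_*$-algebras.

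Given $B$, I would construct $B'$ by equipping $\Q\otimes UB$ with a canonical $C$-coalgebra structure. Let $i\colon UB\ra \Q\otimes UB$ denote the canonical inclusion. The composite $C(i)\circ\alpha_B\colon UB\ra C(\Q\otimes UB)$ has rational target by the lemma, so it factors uniquely as an $E_*$-algebra map through $i$, yielding $\alpha'\colon \Q\otimes UB\ra C(\Q\otimes UB)$ with $\alpha'\circ i=C(i)\circ\alpha_B$. The coalgebra axioms for $\alpha'$ follow by short diagram chases: precomposing the counit identity $\epsilon_C\alpha'=\id$ and the coassociativity identity $C(\alpha')\alpha'=\delta_C\alpha'$ with $i$, both sides collapse to $i$ and to $C^2(i)\circ\delta_C\circ\alpha_B$ respectively, using naturality of $\epsilon_C,\delta_C$ together with the axioms for $\alpha_B$; since the common targets $\Q\otimes UB$ and $C^2(\Q\otimes UB)$ are rational (the latter by applying the lemma twice), uniqueness of factorization through $i$ forces the identities to hold. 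Let $B'$ be the $\algapprox$-algebra corresponding to $(\Q\otimes UB,\alpha')$; then $UB'=\Q\otimes UB$ by construction, and the map $B\ra B'$ is the one whose underlying $E_*$-algebra map is $i$, which is a $C$-coalgebra map essentially by definition of $\alpha'$.

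For initiality, given $f\colon B\ra D$ in $\gralgcat$ with $D$ rational, $Uf$ factors uniquely as $UB\xra{i}\Q\otimes UB\xra{g}UD$ in $\Alg{E_*}$. I would then check that $g$ is automatically a $C$-coalgebra map by the same device: precomposing $\alpha_D\circ g$ and $C(g)\circ\alpha'$ with $i$, both become $\alpha_D\circ Uf = C(Uf)\circ\alpha_B$, the equality holding because $f$ is a coalgebra map. Since $C(UD)$ is rational by the lemma, this forces $\alpha_D g = C(g)\alpha'$, so $g$ lifts to a unique $\bar f\colon B'\ra D$ in $\gralgcat$ extending $f$; uniqueness of $\bar f$ reduces to uniqueness of $g$ as an $E_*$-algebra factorization.

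The entire argument reduces to the preceding lemma (that $C$ preserves rationality) together with the universal property of rationalization of $E_*$-algebras. The only substantive point is the coalgebra-axiom verification, and the main obstacle is really just bookkeeping: the ``factor through $i$'' trick turns every required coherence into a statement about uniqueness of an $E_*$-algebra map into a rational target, which is automatic.
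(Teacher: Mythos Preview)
Your proposal is correct and follows essentially the same approach as the paper: both use the comonadic description of $\gralgcat$ via $C=UG$, invoke the preceding lemma to know $C(\Q\otimes UB)$ is rational, and construct the $C$-coalgebra structure on $\Q\otimes UB$ by factoring through the rationalization map. The paper's proof is considerably more terse, essentially just drawing the diagram defining the dotted arrow $\psi'$ and leaving the verification of the coalgebra axioms and initiality implicit, whereas you have spelled out these checks via the ``factor through $i$'' uniqueness argument.
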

\begin{proof}
Let $\psi\colon UB\ra (UG)(UB)$
denote the unit of the adjunction.  
We claim that there is a unique map $\psi'\colon \Q\otimes UB \ra
(UG)(\Q\otimes UB)$ making $\Q\otimes UB$ into a coalgebra for the 
comonad $UG$ such that $UB\ra \Q\otimes UB$ is a map of coalgebras.  
Consider the diagram
$$\xymatrix{
{UB} \ar[r]^{\psi} \ar[d]
& {(UG)(UB)} \ar[d]
\\
{\Q\otimes UB} \ar@{.>}[r]
& {(UG)(\Q\otimes UB)}
}$$
Since $(UG)(\Q\otimes UB)$ is a rational $E_*$-algebra by the above,
it is clear that there is a unique dotted arrow making the diagram commute.
\end{proof}

Let $\gralgcatQ\subset \gralgcat$ and $\grAlg{\Gamma,\Q}\subset
\grAlg{\Gamma}$ denote the full subcategories of rational objects.

\begin{prop}
Let $B$ be a rational object of $\grAlg{\Gamma}$.  Then there is a
unique $\algapprox$-algebra structure on $B$ compatible with the given
$\Gamma$-algebra structure.  
\end{prop}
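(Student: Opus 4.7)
Plan: By \eqref{prop:algcat-to-gamma-alg-is-plethyistic}, the forgetful functor $U\colon \gralgcat\to \grAlg{\Gamma}$ is plethyistic, so it admits a left adjoint $F$ and is monadic by Beck's theorem. Write $T = UF$ for the resulting monad on $\grAlg{\Gamma}$; then equipping $B\in \grAlg{\Gamma}$ with a $\algapprox$-algebra structure compatible with its $\Gamma$-algebra structure is precisely equipping $B$ with a $T$-algebra structure. The plan is to prove that for every rational $B\in \grAlg{\Gamma}$, the unit $\eta_B\colon B\to T(B)$ is an isomorphism. Once this is known, $\eta_B^{-1}\colon T(B)\to B$ furnishes a $T$-algebra structure (the unit axiom is immediate, and the associativity axiom follows from the monad identity $\mu_B\circ T\eta_B=\id_{TB}$, which forces $\mu_B = T(\eta_B^{-1})$), and it is the only possible $T$-algebra structure because any such $\alpha$ must satisfy $\alpha\circ \eta_B = \id_B$.

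Because $U$ is plethyistic it preserves all colimits, and $F$ is a left adjoint, so $T = UF$ preserves all colimits in $\grAlg{\Gamma}$. Hence the class of $B$ for which $\eta_B$ is an iso is closed under colimits. Every $\Gamma$-algebra is a reflexive coequalizer of free $\Gamma$-algebras via its canonical bar presentation, so it suffices to treat $B = F_\Gamma(M)$ for a rational $E_*$-module $M$, where $F_\Gamma$ is the free $\Gamma$-algebra functor of \S\ref{sec:bialgebras}. Since $F\circ F_\Gamma = F_\algapprox$ by composition of left adjoints, $TF_\Gamma = UF_\algapprox$ also preserves all colimits (again because $U$ is plethyistic), so the class of rational $M$ with $\eta_{F_\Gamma(M)}$ an iso is itself colimit-closed. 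Every rational $E_*$-module is a cokernel of a map between direct sums of shifted spheres $\omega^{q/2}\otimes\Q$, so the reduction comes down to the single case $M = \omega^{q/2}\otimes\Q$.

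For this case, the underlying $E_*$-module map of $\eta_{F_\Gamma(M)}$ is the canonical comparison $\Sym_{E_*}(\Gamma^q\otimes \omega^{q/2})\otimes\Q\to \algapprox(\omega^{q/2})\otimes\Q$, realizing $\Gamma^q\otimes\omega^{q/2}$ as the $\Gamma$-submodule of primitives in $\algapprox(\omega^{q/2})$, and is an isomorphism by \eqref{prop:rational-prim-gen}. The principal obstacle in the argument is bookkeeping: verifying that the unit $\eta_{F_\Gamma(M)}$ really does factor through this specific comparison map, and that the twisted $\Z/2$-graded tensor structure of \S\ref{sec:twisted-z2-cat} is handled correctly when odd-degree generators are involved. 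The substantive mathematical content, however, is already packaged in \eqref{prop:rational-prim-gen}, which is ultimately a consequence of the structure theorem for graded Hopf algebras over a field of characteristic zero.
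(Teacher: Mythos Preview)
Your reduction via the bar resolution has a genuine gap. You write the rational $\Gamma$-algebra $B$ as the reflexive coequalizer
\[
F_\Gamma(M_1)\rightrightarrows F_\Gamma(M_0)\to B,\qquad M_0=VB,\quad M_1=V\,F_\Gamma\,VB,
\]
and then assert that it suffices to check $\eta_{F_\Gamma(M)}$ is an isomorphism for \emph{rational} $M$. But $M_1$ is \emph{not} rational: the underlying $E_*$-module of $F_\Gamma(VB)=\Sym_{E_*}(VB\otimes\Gamma)$ always contains the non-rational summand $\Sym^0=E_*$, regardless of whether $VB$ is rational. Worse, writing $M_1\cong E_0\oplus N$ with $N$ rational, the map $\eta_{F_\Gamma(M_1)}$ factors through $\eta_{F_\Gamma(E_0)}\colon \Sym_{E_*}(\Gamma)\to U\algapprox(E_0)$, and this is \emph{not} an isomorphism integrally (the primitives $\Gamma$ do not freely generate $\bigoplus_m\cE{0}B\Sigma_m$ before tensoring with $\Q$). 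So your colimit argument does not close up.

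The paper sidesteps this by reducing instead to $S\otimes\Q$ where $S=F_\Gamma(\omega^{q/2})$, i.e.\ to the free \emph{rational} $\Gamma$-algebras rather than to $F_\Gamma(\text{rational module})$. These are genuinely rational objects, and every rational $B$ is a colimit of them (take the bar resolution in $\grAlg{\Gamma,\Q}$; the inclusion of rational objects into $\grAlg{\Gamma}$ preserves colimits since the forgetful functor to $\Alg{E_*}$ does). For $S\otimes\Q$ the paper identifies $F(S\otimes\Q)$ with $P\otimes\Q$ using \eqref{prop:initial-rational-object}, and then $f\otimes\Q\colon S\otimes\Q\to U(P\otimes\Q)$ is the isomorphism of \eqref{prop:rational-prim-gen}. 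Your argument can be repaired along exactly these lines: replace ``$F_\Gamma(M)$ for rational $M$'' by ``$F_\Gamma(M)\otimes\Q$'' throughout, and invoke \eqref{prop:initial-rational-object} to identify $F(F_\Gamma(M)\otimes\Q)$ with $\algapprox(M)\otimes\Q$. (A minor side point: your identification of $\eta_{F_\Gamma(\omega^{q/2}\otimes\Q)}$ with the map of \eqref{prop:rational-prim-gen} is also slightly off in weight $0$, since $\algapprox_0(M\otimes\Q)=E_0$ while $\algapprox_0(M)\otimes\Q=E_0\otimes\Q$; but both weight-$0$ pieces are $E_0$ and the map there is the identity, so this does not affect the conclusion.)
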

\begin{proof}
It is enough to check this for $S\otimes \Q$, where $S\approx
\Sym_{E_0}(\Gamma\otimes \omega^{q/2})$ is the free $\Gamma$-algebra on one generator in
some degree $q/2$;
observe that $S\otimes \Q$ is clearly an object of $\grAlg{\Gamma}$.  
Let $P\approx \algapprox(\omega^{q/2})$ denote the free
$\algapprox$-algebra on 
one generator, and consider the tautological map $f\colon S\ra P$.  
By \eqref{prop:initial-rational-object}, we see that $P\otimes \Q$
is a $\algapprox$-algebra.  The result follows from the observation
that $f\otimes \Q\colon S\otimes \Q \ra P\otimes \Q$ is an
isomorphism \eqref{prop:rational-prim-gen}.
\end{proof}
\begin{cor}
The forgetful functor $U\colon \gralgcat\ra \grAlg{\Gamma}$ induces an
equivalence $\gralgcatQ\ra \grAlg{\Gamma,\Q}$.
\end{cor}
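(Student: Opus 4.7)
The plan is to show that $U\colon \gralgcatQ \to \grAlg{\Gamma,\Q}$ is faithful, full, and essentially surjective. Faithfulness is automatic, since $U$ acts as the identity on underlying $E_*$-module morphisms. Essential surjectivity is immediate from the previous proposition: each rational $\Gamma$-algebra admits a unique compatible $\algapprox$-algebra structure. The substantive task is to establish fullness.

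The plan for fullness is to promote the previous proposition to the statement that the morphism of monads $\phi\colon T_\Gamma \to \algapprox$ on $\Mod{E_*}$ induced by $U$ becomes a natural isomorphism after rationalization. Here $T_\Gamma$ denotes the free $\Gamma$-algebra monad, so $T_\Gamma(M) \approx \Sym_{E_0}(M \otimes \Gamma)$, and $\phi$ is the map adjoint to the inclusion of $E_*$-module generators into $\algapprox(M)$. Once $T_\Gamma(M) \otimes \Q \to \algapprox(M) \otimes \Q$ is known to be an isomorphism for all $M$, the categories of $T_\Gamma$- and $\algapprox$-algebras coincide on rational modules, at the level of both objects and morphisms, which delivers the equivalence.

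To verify that $\phi \otimes \Q$ is a natural isomorphism, I would reduce to the rank-one case. Both $T_\Gamma$ and $\algapprox$ preserve reflexive coequalizers (for $\algapprox$, this is \eqref{prop:algapprox-colimits}), and both convert direct sums into tensor products (for $\algapprox$, by \eqref{prop:algapprox-sum-isomorphism}; for $T_\Gamma$ this is the standard property of free commutative algebras). Since every $E_*$-module is a reflexive coequalizer of coproducts of the free rank-one modules $\omega^{q/2}$, it suffices to verify that $\phi_{\omega^{q/2}} \otimes \Q$ is an isomorphism. On the appropriate graded component, $\phi_{\omega^{q/2}}$ is identified with the canonical map $\Sym_{E_*}(\Gamma^q \otimes \omega^{q/2}) \to \algapprox(\omega^{q/2})$, and \eqref{prop:rational-prim-gen} asserts exactly that this becomes an isomorphism after rationalization.

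The main obstacle is the identification of the relevant graded component of $\phi_{\omega^{q/2}}$ with the map appearing in \eqref{prop:rational-prim-gen}. This requires tracing the universal property of $T_\Gamma$ against the explicit embedding $\Gamma^q \hookrightarrow \Hom_{\Mod{E_*}}(\omega^{q/2}, \algapprox(\omega^{q/2}))$ constructed in \S\ref{sec:gr-gamma}, so that the unit $\omega^{q/2} \otimes \Gamma \to \algapprox(\omega^{q/2})$ that determines $\phi$ is seen to factor through the primitives $\Gamma^q \otimes \omega^{q/2}$. Once this compatibility is in hand, the reduction from arbitrary $E_*$-modules to the rank-one case via coequalizers and direct sums is routine, and the equivalence $\gralgcatQ \approx \grAlg{\Gamma,\Q}$ follows.
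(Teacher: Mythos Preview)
Your approach is correct and is essentially what the paper has in mind. The paper states the corollary without proof, treating it as immediate from the preceding proposition; but the reason it is immediate is exactly the monad-level statement you extract: the tautological map $f\colon S\to P$ from the free $\Gamma$-algebra to the free $\algapprox$-algebra on $\omega^{q/2}$ becomes an isomorphism after tensoring with $\Q$ (this is precisely what the paper's proof of the proposition establishes, citing \eqref{prop:rational-prim-gen}), and the reduction to the rank-one case via reflexive coequalizers and the exponential isomorphism is the content of the paper's ``It is enough to check this for $S\otimes\Q$''. Your observation that one cannot read off fullness from the bare \emph{statement} of the proposition (unique existence of a compatible $\algapprox$-structure) is well taken; it is the \emph{proof} of the proposition, not just its statement, that carries the corollary.

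One small point of care: the ``main obstacle'' you flag---matching $\phi_{\omega^{q/2}}$ with the map of \eqref{prop:rational-prim-gen}---does require passing through the equivalence of \S\ref{subsec:z2-graded-gamma-modules} between the $\Z$-graded picture (where $\Gamma^q$ lives) and the $\omega$-twisted $\Z/2$-graded picture (where the free $\Gamma$-module on $\omega^{q/2}$ is $\omega^{q/2}\otimes\Gamma$). The paper's proof of the proposition makes this identification silently by writing $S\approx \Sym_{E_0}(\Gamma\otimes\omega^{q/2})$ and calling the comparison ``the tautological map''; you are right that unwinding this is where the bookkeeping lies, but no new idea is needed.
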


\section{Critical weights}
\label{sec:critical-weight}

Let $U\colon \mathcal{D}\ra \mathcal{A}$ be a plethyistic functor,
where $\mathcal{A}$ is the category of commutative monoid objects in
an abelian tensor category $\mathcal{C}$.  Let 
$T\colon \mathcal{C}\ra \mathcal{C}$ be the
monad associated to this situation.  Recall from
\S\ref{subsec:weight-decomp} that such a monad comes 
with maps $\eta\colon I\ra T$ and $\mu\colon TT\ra T$ defining the
monad structure, and natural maps $\iota\colon \Bbbk\ra T(M)$ and
$\delta\colon T(M)\otimes T(M)\ra T(M)$ defining the commutative ring
structure on $T(M)$.  We will assume that $T$ is equipped with a
weight decomposition, as in \S\ref{subsec:weight-decomp}.

Recall that a collection of morphisms $\{f_j\colon A_j\ra B\}$ in a
category called an
\dfn{epimorphic family} if for any two morphisms $g,h\colon B\ra C$,
$gf_j=hf_j$ for all $f_j$ implies $g=h$.
We say that $m\geq2$ is a \dfn{regular weight} of $T$ if the family
of maps in $\mathcal{C}$
\begin{equation}\label{eq:critical-map-family}
\delta\colon T_i(M)\otimes T_{m-i}(M)\ra T_m(M),\qquad \mu\colon
T_dT_{m/d}(M) \ra T_m(M)
\end{equation}
is an epimorphic family, where $i$ ranges over integers such that $0<i<m$ and
$d$ ranges over 
divisors of $m$ such that $1<d<m$.  In addition, we say that $0$ is a
regular weight if $\iota\colon \Bbbk\ra T_0(M)$ is an
epimorphism, and that $1$ is a regular weight if $\eta\colon M\ra
T_1(M)$ is an epimorphism.
We say that $m\geq0$ is a \dfn{critical weight} of $T$ if it is not a
regular weight.

The idea of the following proposition is that
$T$ is in some sense ``generated'' by phenomena in critical
weights. 
\begin{prop}\label{prop:t-algebra-structure-critical-wt}
Let $\psi\colon T(M)\ra M$ be a $T$-algebra structure on $M\in
\mathcal{C}$.  Let $N\subseteq M$ be a subobject of $M$ in $\mathcal{A}$.  If
$\psi(T_m(N))\subseteq N$ for each critical weight $m$, then there is
a unique $T$-algebra structure on $N$ such that the inclusion $N\ra M$
is a morphism of $T$-algebras.
\end{prop}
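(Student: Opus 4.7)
The plan is to prove, by induction on the weight $m\geq 0$, that for every $m$ the composite
\[
\psi_m\colon T_m(N)\xra{T_m(\iota)} T_m(M) \xra{\psi} M
\]
factors (necessarily uniquely) through the subobject $\iota\colon N\hookrightarrow M$. Granting this, the collection of factorizations $\psi_m^N\colon T_m(N)\ra N$ assemble into a single map $\psi_N\colon T(N)\ra N$ characterized by $\iota\circ\psi_N = \psi\circ T(\iota)$. The unit and associativity axioms for $(N,\psi_N)$ then follow from those for $(M,\psi)$ by postcomposing with the monomorphism $\iota$ and using the naturality of $\eta$ and $\mu$ to push the relevant diagrams through $T(\iota)$ and $TT(\iota)$; uniqueness of the $T$-algebra structure on $N$ is immediate for the same reason.

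For the inductive step I would first dispose of the base cases. If $m=0$ is regular, then $\iota_0\colon\Bbbk\ra T_0(N)$ is an epimorphism, and $\psi_0\circ\iota_0 = \psi\circ\iota_M$ is the unit of the ring $M$, which factors through the subring $N$ since $N$ is a subobject in $\mathcal{A}$; the epimorphism then propagates the factorization to all of $T_0(N)$. If $m=1$ is regular, $\eta_N\colon N\ra T_1(N)$ is an epimorphism and $\psi_1\circ\eta_N = \iota$ by the unit axiom, so again the epimorphism does the work. If $m$ is critical, the factorization is exactly the hypothesis of the proposition.

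For regular $m\geq 2$ I would apply the regularity condition with $N$ in place of $M$ (the definition being natural in the module) to obtain the epimorphic family
\[
\delta\colon T_i(N)\otimes T_{m-i}(N)\ra T_m(N),\qquad \mu\colon T_d T_{m/d}(N)\ra T_m(N),
\]
with $0<i<m$ and $1<d<m$ dividing $m$. To check that $\psi_m$ factors through $N$ it suffices to check the composites with each arrow in this family. For the $\delta$-arrows, the fact that $\psi$ is a ring homomorphism (automatic since $\psi$ is a morphism in $\mathcal{A}$) rewrites the composite as $\delta_M\circ(\psi_i\otimes\psi_{m-i})\circ(T_i(\iota)\otimes T_{m-i}(\iota))$; the inductive hypothesis at weights $i$ and $m-i$ replaces this by a composite $\delta_M\circ(\iota\otimes\iota)\circ(\psi_i^N\otimes\psi_{m-i}^N)$, which factors through $N$ because $N$ is a subring of $M$. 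For the $\mu$-arrows, the monad identity $\psi\circ\mu_M = \psi\circ T(\psi)$ restricted to the $T_d T_{m/d}$-summand rewrites the composite as $\psi_d\circ T_d(\psi_{m/d})\circ T_d T_{m/d}(\iota)$; the induction at weight $m/d$ replaces $\psi_{m/d}\circ T_{m/d}(\iota)$ by $\iota\circ\psi_{m/d}^N$, and the induction at weight $d$ then factors $\psi_d\circ T_d(\iota)$ through $N$, completing the step.

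The main obstacle I expect is bookkeeping: keeping straight the compatibilities between $\psi$, the weight-graded monad multiplication $\mu$, and the ring-structure multiplication $\delta$, in particular the identity $\psi\circ\delta_{T(M)} = \delta_M\circ(\psi\otimes\psi)$ (which is automatic in the plethyistic setup, since $\psi$ is a morphism in $\mathcal{A}$ into the commutative monoid $M$) and the compatibility of $\mu$ with the weight grading. Given these identities and the naturality of the epimorphic-family condition defining regularity, the argument reduces to a clean two-level induction in which each regular weight is handled using only the strictly smaller weights $i$, $m-i$, $d$, and $m/d$.
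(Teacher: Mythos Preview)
Your proof is correct and follows essentially the same approach as the paper's: induction on the weight $m$, with critical weights handled by hypothesis, the base cases $m=0,1$ handled by the defining epimorphisms and the subring/unit axiom, and regular $m\geq 2$ handled by checking the factorization after precomposing with each member of the epimorphic family, invoking the induction hypothesis at the smaller weights $i$, $m-i$, $d$, $m/d$. Your write-up in fact spells out more explicitly than the paper does the two ingredients that make the inductive step go through (that $\psi$ is a ring map for the $\delta$-branch, and the algebra associativity $\psi\circ\mu=\psi\circ T(\psi)$ for the $\mu$-branch), but the argument is the same.
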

\begin{proof}
It suffices to show that the image of the composite $T(N)\ra
T(M)\xra{\psi} M$ is contained in $N$.  We will show by induction on
$m\geq0$ that $\alpha_m\colon T_m(N) \ra T_m(M)\xra{\psi} M$ factors
through $N$. 
When $m$ is a critical weight this is true by hypothesis.  If $m=0$ is
not critical, then $\iota\colon\Bbbk \ra T_0(M)$ is epimorphic, and thus
$\alpha_0$ factors through $N$ since $N$ is a subobject.  If $m=1$ and
is not critical, then $\eta\colon M\ra T_1(M)$ is epimorphic, and thus
$\alpha_1$ must factor through $N$ since $\psi\circ \eta=\id_M$.  If
$m\geq2$ and is not critical, then there is a unique dotted arrow
making the diagrams
$$\xymatrix{
{T_i(N)\otimes T_{m-i}(N)}
\ar[r]^-{\delta} 
\ar[d]_{\psi\otimes \psi} 
& {T_m(N)} \ar[r] \ar@{.>}[d]
& {T_m(M)} \ar[d]^{\psi}
\\
{N\otimes N} \ar[r]_-{\text{product}}
& {N} \ar@{>->}[r]
& {M}
}$$
and
$$\xymatrix{
{T_dT_{m/d}(M)} \ar[r]^-{\mu} \ar[d]_{T_d(\psi)}
& {T_m(N)} \ar[r] \ar@{.>}[d]
& {T_m(M)} \ar[d]^{\psi}
\\
{T_d(N)} \ar[r]_{\psi}
& {N} \ar@{>->}[r]
& {M}
}$$
commute.
\end{proof}

\section{Proof of the congruence criterion}
\label{sec:congruence-proof}

In this section we prove Theorem A, using the results of
\S\ref{sec:rational-algebras} and \S\ref{sec:critical-weight}. 

\subsection{The only critical weight of $\algapprox$ is $p$}

The proof of the following proposition owes something to McClure's
discussion in \cite[Ch.\ IX]{bmms-h-infinity-ring-spectra}.  
\begin{prop}\label{prop:only-critical-wt-is-p}
With respect to the standard weight decompositions, the only critical
weight for the algebraic approximation functor $\algapprox$ is $p$.
\end{prop}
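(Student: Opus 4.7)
My approach is case analysis on $m$, leveraging the topological identification $\algapprox_m(M) \approx \pi_* L\Pfree_m(M)$ valid on finite free inputs, together with the transfer-splitting principle \eqref{prop:transfer}. The cases $m=0$ and $m=1$ are immediate: by construction $\algapprox_0(E_*) \approx E_*$ and $\algapprox_1 \approx \id$, so $\iota$ and $\eta$ are isomorphisms and these weights are regular. The real content is in the three subcases $m \geq 2$ not a power of $p$, $m = p^k$ with $k \geq 2$, and $m = p$.

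For $m \geq 2$ not a power of $p$, Kummer's theorem on $p$-adic valuations of binomials supplies $0 < a < m$ with $\binom{m}{a}$ coprime to $p$ (if $m$ has several nonzero base-$p$ digits, take $a$ to be the contribution of one of them; if $m = c p^j$ with $2 \leq c \leq p-1$, take $a = p^j$). Then $\Sigma_a \times \Sigma_{m-a} \subseteq \Sigma_m$ has index prime to $p$ and hence contains a $p$-Sylow subgroup. By \eqref{prop:transfer} (extended from $L\Pfree$ to $\algapprox$ via left Kan extension), the product map $\delta \colon \algapprox_a(M) \otimes \algapprox_{m-a}(M) \to \algapprox_m(M)$ is a split surjection, so $m$ is regular.

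For $m = p^k$ with $k \geq 2$, a short $p$-adic valuation calculation shows $\Sigma_{p^{k-1}} \wr \Sigma_p \subseteq \Sigma_{p^k}$ has Sylow $p$-subgroup of maximal order, hence contains a $p$-Sylow subgroup of the ambient group. Under the wreath-product identification $\algapprox_p \algapprox_{p^{k-1}}(M) \approx \pi_* L\Pfree_{\Sigma_{p^{k-1}} \wr \Sigma_p}(M)$ (on finite free $M$, extended by Kan extension), the monad multiplication $\mu$ corresponds to the natural map into $\algapprox_{p^k}(M) \approx \pi_* L\Pfree_{\Sigma_{p^k}}(M)$, which \eqref{prop:transfer} again splits, making $p^k$ regular.

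The remaining case $m = p$ is the critical one, and I expect it to be the main obstacle. Since $p$ has no nontrivial factorization, the $\mu$ maps contribute nothing; only the family of $\delta$'s is available. To show this family is not epimorphic, I rationalize: by \eqref{prop:rational-prim-gen}, $\algapprox(\omega^{q/2}) \otimes \Q \approx \Sym_{E_*}(\Gamma^q \otimes \omega^{q/2}) \otimes \Q$, and since $\Gamma^q[k]$ sits in weight $p^k$, the weight-$p$ rational summand splits as
\[
\Sym^p(\Gamma^q[0] \otimes \omega^{q/2}) \otimes \Q \;\oplus\; (\Gamma^q[1] \otimes \omega^{q/2}) \otimes \Q.
\]
Each $\algapprox_i(\omega^{q/2}) \otimes \Q$ with $i < p$ is rationally generated by products of the weight-$1$ elements from $\Gamma^q[0]$ (as $\Gamma^q[k]$ contributes only in weight $p^k \geq p$), so the image of $\bigoplus_{0 < i < p} \delta$ lies in the decomposable summand and misses $(\Gamma^q[1] \otimes \omega^{q/2}) \otimes \Q$. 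The required non-vanishing $\Gamma^q[1] \neq 0$ follows from Strickland's computation cited in \eqref{prop:gamma-summand-even} and \eqref{prop:odd-degree-summand}: the free cokernel of the transfer has positive rank governed by the height $n \geq 1$. Thus the family fails to be epimorphic after rationalization, a fortiori integrally, so $p$ is critical. The delicate step is this last one, since identifying the image of $\delta$ with exactly the decomposable summand and citing the non-vanishing of $\Gamma^q[1]$ are both essential; the regular cases, by contrast, are largely combinatorial applications of \eqref{prop:transfer}.
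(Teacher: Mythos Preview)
Your argument is correct. For the regularity of weights $m\neq p$, your approach is essentially the paper's: reduce to finite free modules and invoke the transfer splitting \eqref{prop:transfer} for a subgroup of index prime to $p$. The only difference is cosmetic: the paper partitions into ``$m$ prime to $p$'' (using $\Sigma_{p^r}\times\Sigma_{m-p^r}$ with $p^r<m$ maximal) versus ``$p\mid m$, $m\neq p$'' (using $\Sigma_{m/p}\wr\Sigma_p$), whereas you partition into ``$m$ not a prime power'' versus ``$m=p^k$, $k\geq2$''. Both partitions work, and the Kummer/Legendre bookkeeping you do is equivalent to the paper's choice of subgroup.

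Where you genuinely diverge is in the treatment of $m=p$. The paper's proof of this proposition does \emph{not} verify that $p$ is critical; it only shows the other weights are regular and defers the criticality of $p$ to the later explicit computation \eqref{prop:p-critical-basic}, which exhibits $\algapprox_p(E_0)$ as a pushout and thereby pins down exactly the extra class $\beta$ not hit by $(u,v)$. Your rationalization argument via \eqref{prop:rational-prim-gen} is a legitimate and more conceptual shortcut for the bare statement that $p$ is critical: after $\otimes\,\Q$ the weight-$p$ piece of $\algapprox(\omega^{q/2})$ splits as decomposables plus $\Gamma^q[1]\otimes\omega^{q/2}$, the $\delta$'s land in the decomposables, and $\Gamma^q[1]\neq 0$ by Strickland. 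This is cleaner if all you want is non-epimorphicity, but it gives less: the paper's pushout square \eqref{prop:p-critical-basic} is what actually drives the proof of Theorem~A, since it identifies the missing class as $(\sigma x - x^p)/p$.
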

\begin{proof}
We will see later in this  section exactly how $p$ is a critical weight.
Right now we prove that all other weights are regular.  

It is clear
that $0$ and $1$ are regular weights for $\algapprox$, since
$\iota\colon E_*\ra
\algapprox_0(M_*)$ 
and $\eta\colon M\ra \algapprox_1(M_*)$ are isomorphisms for all
$E_*$-modules.   

To show that $m$ is a regular weight,  it suffices to show that
the collection of maps of \eqref{eq:critical-map-family} is epimorphic
when $M_*$ is a finitely generated and free $E_*$-module, since
$\algapprox$ commutes with filtered colimits and reflexive
coequalizers.  Suppose that $M_*$ is a finite free $E_*$-module.
For $m\geq 2$ and $m\neq p$, there are two cases.

\textit{Case of $m$ relatively prime to $p$.}  In this case, let
$r\geq 0$ be
the largest integer such that $p^r< m$.  I claim that
\[
\delta\colon \algapprox_{p^r}(M_*)\otimes \algapprox_{m-p^r}(M_*) \ra
\algapprox_m(M_*)
\]
is surjective.

\textit{Case of $p|m$ and $m\neq p$.}  In this case, let $d=m/p>1$.  I
claim that 
\[
\mu\colon \algapprox_d \algapprox_p (M_*)\ra \algapprox_m(M_*)
\]
is surjective.

In either case, the claim proves regularity.  To prove these claims,
choose a finite free $E$-module $N$ such that $\pi_*N\approx M_*$.
Then the claims amount to showing that
\[
L\Pfree_{\Sigma_{p^r}\times \Sigma_{m-p^r}}N \ra L\Pfree_m N
\]
and
\[
L\Pfree_{\Sigma_d\wr \Sigma_p} N \ra L\Pfree_m N
\]
are surjective on homotopy groups; in fact, each of these maps admits
a section by \eqref{prop:transfer}, since $\Sigma_{p^r}\times
\Sigma_{m-p^r}\subset \Sigma_m$ (in the first case) and $\Sigma_d\wr
\Sigma_p \subset 
\Sigma_m$ (in the second case) have index prime to $p$. 
\end{proof}

\subsection{The Frobenius class}
\label{subsec:frobenius-class}

Let $I\subset E^0B\Sigma_{p^k}$ denote the kernel of the augmentation
map $i^*\colon E^0B\Sigma_{p^k}\ra E^0$, and let $J\subset E^0B\Sigma_{p^k}$
denote the ideal generated by the images of transfer maps from
subgroups $\Sigma_i\times \Sigma_{p^k-i}\subset \Sigma_{p^k}$, for
$0<i<p^k$. 
\begin{lemma}\label{lemma:augmentation-quotient}
For $k>0$, the image $i^*(J)$ of $J$ under the augmentation map is
$pE_0$, and $E^0B\Sigma_{p^k}/(I+J)\xra{\sim} E_0/(p)$.
\end{lemma}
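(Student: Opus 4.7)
The plan is to reduce the computation of $i^*(J)$ to a statement about $p$-adic valuations of binomial coefficients, and then deduce the quotient identification from the first isomorphism theorem.

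First, I would analyze, for each $0<j<p^k$, the composite
\[
E^0 B(\Sigma_j\times\Sigma_{p^k-j}) \xra{\mathrm{tr}} E^0 B\Sigma_{p^k} \xra{i^*} E_0.
\]
Here $i^*$ is induced by the basepoint inclusion $\point\ra B\Sigma_{p^k}$, and the stable map
\[
\point_+ \ra B\Sigma_{p^k,+} \xra{\mathrm{tr}} B(\Sigma_j\times\Sigma_{p^k-j})_+
\]
is governed by the fiber of the covering $B(\Sigma_j\times\Sigma_{p^k-j})\ra B\Sigma_{p^k}$ over the basepoint, a finite set of cardinality $[\Sigma_{p^k}:\Sigma_j\times\Sigma_{p^k-j}]=\binom{p^k}{j}$; it is therefore a wedge of $\binom{p^k}{j}$ copies of the basepoint inclusion into $B(\Sigma_j\times\Sigma_{p^k-j})_+$. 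Applying $E^0$, the composite $i^*\circ\mathrm{tr}$ sends $x$ to $\binom{p^k}{j}\cdot e(x)$, where $e\colon E^0 B(\Sigma_j\times\Sigma_{p^k-j})\ra E_0$ is the (split surjective) augmentation. Consequently the image of $i^*\circ\mathrm{tr}$ equals $\binom{p^k}{j}E_0$.

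Summing over $0<j<p^k$ gives $i^*(J)=\sum_{j}\binom{p^k}{j}E_0$. By Kummer's theorem, $v_p\binom{p^k}{j}$ counts the number of base-$p$ carries in the addition $j+(p^k-j)=p^k$, which is at least $1$ for $0<j<p^k$ and equals exactly $1$ when $j=p^{k-1}$. Thus $\binom{p^k}{p^{k-1}}=pu$ for some unit $u\in E_0^\times$, forcing $p\in i^*(J)$, while every $\binom{p^k}{j}$ already lies in $pE_0$. Together these give $i^*(J)=pE_0$.

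For the final identification, the augmentation fits into a short exact sequence
\[
0 \ra I \ra E^0 B\Sigma_{p^k} \xra{i^*} E_0 \ra 0,
\]
so $E^0 B\Sigma_{p^k}/I\approx E_0$, and under this identification $(I+J)/I$ corresponds to $i^*(J)=pE_0$. The third isomorphism theorem then yields $E^0 B\Sigma_{p^k}/(I+J)\approx E_0/(p)$, as desired. The only non-routine ingredient is the restriction-transfer double-coset identity invoked in the first paragraph; this is classical in equivariant stable homotopy theory and specializes cleanly to the trivial subgroup case.
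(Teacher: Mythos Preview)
Your proof is correct and follows essentially the same approach as the paper: both compute $i^*\circ\mathrm{tr}$ via base-change along the pullback square over the basepoint (the paper calls this the double-coset formula), obtaining image $\binom{p^k}{j}E_0$, and then reduce to the arithmetic fact that $\gcd\{\binom{p^k}{j}:0<j<p^k\}=p$. Your use of Kummer's theorem and the explicit isomorphism-theorem argument for the quotient are just slightly more detailed versions of what the paper leaves implicit.
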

\begin{proof}
We compute the projection of the transfer ideal along the augmentation
map.  Let $m\geq1$, and consider the fiber square 
\[\xymatrix{
{\Sigma_m/(\Sigma_i\times \Sigma_{m-i})} \ar[r] \ar[d]^j &
{B(\Sigma_i\times \Sigma_{m-i})} \ar[d]
\\
{*} \ar[r]_i & {B\Sigma_m.}
}\]
Applying the double-coset formula in $E$-cohomology shows that the
image of $E^0B(\Sigma_i\times \Sigma_{m-i}) \xra{\text{transfer}} E^0B\Sigma_m
\xra{i^*} E^0$ is precisely the ideal generated by the binomial
coefficient $\binom{m}{i}$.  The result follows from the observation
that $\gcd\set{\binom{p^k}{i}}{0<i<p^k}=p$ for $k>0$.  (Note that for
$m$ prime to $p$, $E^0B\Sigma_m/J\approx 0$ already.)
\end{proof}

\begin{prop}\label{prop:sigma-p-pullback}
The augmentation map $i^*\colon E^0B\Sigma_p\ra E^0$
restricts induces an isomorphism
$J\ra pE^0$, and induces a homomorphism $\sigma^* \colon
E^0B\Sigma_p/J \ra E^0/(p)$ 
on quotients.  Furthermore, 
the commutative square
\[
\xymatrix{
{E^0B\Sigma_p} \ar[r]^-{\pi^*} \ar[d]_{i^*}
& {E^0B\Sigma_p/J} \ar[d]^{\sigma^*}
\\
{E^0} \ar[r]_-{\pi^*}
& {E^0/(p)}
}\]
is a pullback square of $E^0$-modules.  
\end{prop}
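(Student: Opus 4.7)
First, I would verify the well-definedness of $\sigma^*$ and the commutativity of the square directly from Lemma~\ref{lemma:augmentation-quotient}: the lemma gives $i^*(J)\subseteq pE^0$, so $i^*$ descends modulo $J$ to a map $\sigma^*\colon E^0B\Sigma_p/J\to E^0/(p)$, and the commutativity of the square is then tautological. The same lemma also gives the surjectivity half, $i^*(J)=pE^0$, of the first claim of the proposition.

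Next, to establish the pullback property, I would analyze the induced map $\phi\colon E^0B\Sigma_p\to E^0\times_{E^0/(p)}(E^0B\Sigma_p/J)$. Surjectivity of $\phi$ is a straightforward lifting argument: given $(a,[b])$ with $\pi^*(a)=\sigma^*([b])$, pick any preimage $b$ of $[b]$, observe that $a-i^*(b)\in pE^0=i^*(J)$, choose $y\in J$ with $i^*(y)=a-i^*(b)$, and note that $b+y$ maps to $(a,[b])$ under $\phi$. Injectivity of $\phi$ is equivalent to $I\cap J=0$, where $I=\ker(i^*)$, i.e., to injectivity of $i^*|_J$; combined with the surjectivity from the lemma, this would also yield the remaining half of the first claim, that $i^*|_J\colon J\xra{\sim}pE^0$.

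The main obstacle is therefore injectivity of $i^*|_J$, and my plan is to deduce this from a rank count. Strickland's theorem (invoked already in the proof of Proposition~\ref{prop:gamma-summand-even}) tells us that $E^0B\Sigma_p/J$ is a finite free $E^0$-module, so the short exact sequence $0\to J\to E^0B\Sigma_p\to E^0B\Sigma_p/J\to 0$ splits, and $J$ is a direct summand of a finite free module, hence itself free over the local ring $E^0$. I would then compute the ranks: $E^0B\Sigma_p/J$ has rank $(p^n-1)/(p-1)$, by Strickland's identification with the ring of functions on the scheme of order-$p$ subgroups of the formal group, while $E^0B\Sigma_p$ has rank $1+(p^n-1)/(p-1)$, either via Hopkins--Kuhn--Ravenel character theory or by direct analysis of $E^0B\Sigma_p$ as the $C_{p-1}$-invariants of the rank-$p^n$ module $E^0BC_p$ (cf.\ Lemma~\ref{lemma:prime-cyclic-free}). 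Hence $J$ is free of rank $1$, and $i^*|_J\colon J\to pE^0$ is a surjection of free rank-one modules over the domain $E^0$, therefore an isomorphism, which completes the argument.
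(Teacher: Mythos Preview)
Your proof is correct, but the paper reaches the key fact ``$J$ is free of rank~$1$'' by a more direct and elementary route. Rather than computing ranks via Strickland's theorem and HKR character theory, the paper observes that for $k=1$ every proper Young subgroup $\Sigma_i\times\Sigma_{p-i}$ has order prime to $p$, so $E^0B(\Sigma_i\times\Sigma_{p-i})\approx E^0$ and each partial transfer factors (up to a unit) through the transfer from the trivial subgroup. Hence $J$ is the cyclic $E^0$-module generated by the single element $t^*(1)$. Since $E^0B\Sigma_p$ is torsion-free over the domain $E^0$ and $J\neq 0$ (its image under $i^*$ is $pE^0$), this cyclic module is free of rank~$1$, and the surjection $J\to pE^0$ is an isomorphism. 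The pullback square then follows exactly as in your argument. Your approach has the virtue of being a uniform rank-counting template, but it invokes two substantial external theorems where the paper needs only the elementary transfer factorization specific to $\Sigma_p$.
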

\begin{proof}
Since $k=1$, the ideal $J$ is equal to the image in cohomology of the
stable transfer map $t\colon \Sip B\Sigma_p \ra \Sip B\{e\}$
associated to the inclusion of the trivial subgroup $\{e\}\subset
\Sigma_p$.  Thus $J$ is a free
$E_0$-module on the one generator $t^*(1)\in \rE^0B\Sigma_p$, and
therefore the surjective map $J\ra pE_0$ must be an isomorphism.
The existence of $\sigma^*$ is immediate
from \eqref{lemma:augmentation-quotient}, and the square is clearly a
pullback.  
\end{proof}

For any $E_*$-module $M_*$, there are maps 
\[
u\colon M_*\otimes{}_{E_*}\Gamma[1]\ra \algapprox_p(M_*),\qquad v\colon
\Sym^p_{E_*}(M_*)\ra \algapprox_p(M_*).
\]
The map $u$ is induced by the standard $\Gamma$-module structure
$\algapprox M_*\otimes \Gamma\ra \algapprox M_*$, which carries
$\algapprox_1 M_*$ into $\algapprox_p M_*$.  The map $v$ is induced by
the tautological commutative $E_*$-algebra map $\Sym^*_{E_0}(M_*) \ra \algapprox
(M_*)$.  We will be particularly be interested in the case when
$M_*=E_*$, so that all objects are concentrated in even degree and the
maps have the form
\[
u\colon \Gamma[1]\ra \algapprox_p(E_0),\qquad v\colon E_0\ra
\algapprox_p(E_0).
\]  
In this case, observe that $\hom_{E_0}(E^0B\Sigma_p, E_0)\approx
\algapprox_p(E_0)$, 
and that the inclusion $u\colon \Gamma[1]\ra \algapprox_p(E_0)$ is dual to the
projection $E^0B\Sigma_p\ra E^0B\Sigma_p/I$.  
Likewise, observe that $v$ is dual to the augmentation map $i^*\colon
E^0B\Sigma_p \ra 
E^0$.  

Let $\bar{\sigma}\in \Gamma[1]\otimes_{E_0} E_0/pE_0$ denote the element
corresponding via duality to the homomorphism $\sigma^*\colon
E^0B\Sigma_p/I\ra 
E^0/pE^0$ of \eqref{prop:sigma-p-pullback}.  We will call
$\bar{\sigma}$ the \dfn{Frobenius class}. 
Pick a representative $\sigma\in \Gamma[1]$ of this congruence class.

\begin{prop}\label{prop:p-critical-basic}
There is a pushout square in $E_0$-modules of the form
\[\xymatrix{
{E_0} \ar[r]^{\cdot p} \ar[d]_{(\sigma,-\id)}
& {E_0} \ar[d]^{\beta}
\\
{\Gamma[1]\oplus E_0} \ar[r]_{(u,v)}
& {\algapprox_p(E_0)}
}\]
\end{prop}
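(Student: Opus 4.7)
The strategy is to dualize via $(-)^\vee = \hom_{E_0}(-,E_0)$, converting the desired pushout statement into a pullback statement that follows directly from Proposition~\eqref{prop:sigma-p-pullback}. Every $E_0$-module appearing in the square is finite free: $\algapprox_p(E_0)\approx \cE{0}B\Sigma_p$ by \eqref{prop:free-ring-preserves-finite-frees}, and $\Gamma[1]$ by \eqref{prop:gamma-summand-even}.  Consequently $(-)^\vee$ is an exact contravariant functor on the modules involved, so the original square is a pushout of $E_0$-modules if and only if its $(-)^\vee$-dual is a pullback.  Moreover $E_0\cong \mathbb{W}_k\powser{u_1,\dots,u_{n-1}}$ is an integral domain, so $p$ is not a zero-divisor, a fact I will use crucially.

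Under the duality identifications $\algapprox_p(E_0)^\vee \approx E^0B\Sigma_p$ and $\Gamma[1]^\vee \approx E^0B\Sigma_p/J$, the $(-)^\vee$-duals of $u$ and $v$ are the canonical projection $\pi^*\colon E^0B\Sigma_p \to E^0B\Sigma_p/J$ and the augmentation $i^*\colon E^0B\Sigma_p\to E_0$.  A representative $\sigma\in \Gamma[1]$ of the Frobenius class amounts to a lift $\sigma^\vee\colon E^0B\Sigma_p/J \to E_0$ of the canonical map $\sigma^*$ of \eqref{prop:sigma-p-pullback}.  The mod-$p$ commutativity of the square in \eqref{prop:sigma-p-pullback} gives $\sigma^\vee\pi^*(z) \equiv i^*(z) \pmod{p}$ for every $z\in E^0B\Sigma_p$, so $\sigma^\vee\pi^* - i^*$ takes values in $pE_0\subseteq E_0$; since $p$ is not a zero-divisor, I define $\beta^\vee\colon E^0B\Sigma_p\to E_0$ to be the unique map satisfying $p\beta^\vee = \sigma^\vee\pi^* - i^*$, and let $\beta$ be the $E_0$-dual of $\beta^\vee$.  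Commutativity of the original square is then exactly this defining relation.

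The $(-)^\vee$-dual of the square in the statement takes the form
\[\xymatrix{
{E^0B\Sigma_p} \ar[r]^-{(\pi^*,i^*)} \ar[d]_{\beta^\vee}
& {E^0B\Sigma_p/J \oplus E_0} \ar[d]^{\Phi}
\\
{E_0} \ar[r]_-{\cdot p}
& {E_0}
}\]
with $\Phi(x,y) = \sigma^\vee(x)-y$, and it remains to verify that this is a pullback.  Given $(c,(x,y)) \in E_0 \oplus (E^0B\Sigma_p/J \oplus E_0)$ with $pc = \sigma^\vee(x)-y$, reducing mod $p$ yields $\sigma^*(x) \equiv y \pmod{p}$, so the pullback square of \eqref{prop:sigma-p-pullback} produces a unique $z\in E^0B\Sigma_p$ with $\pi^*(z)=x$ and $i^*(z)=y$; then $p\beta^\vee(z) = \sigma^\vee(x)-y = pc$ forces $\beta^\vee(z)=c$, invoking the non-zero-divisor property of $p$ once more.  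Conversely, every $z\in E^0B\Sigma_p$ produces such a compatible triple, establishing the universal property.  The main point to watch is the dual role of $p$ as a non-zero-divisor (allowing ``division by $p$'' both in defining $\beta^\vee$ and in recovering $c$ from $pc$); beyond that, the argument is a straightforward diagram chase initiated by \eqref{prop:sigma-p-pullback}.
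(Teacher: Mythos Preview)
Your approach coincides with the paper's: both dualize and appeal to the pullback square of \eqref{prop:sigma-p-pullback}.  The paper then returns to a pushout via the $\Ext^*_{E^0}(-,E^0)$ long exact sequence applied to the resulting map of short exact sequences; you instead verify the dual pullback by a direct element chase.  The underlying content is the same.

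There is, however, one genuine gap.  The claim that a commutative square of finite free $E_0$-modules is a pushout if and only if its $(-)^\vee$-dual is a pullback is false in general: take all four corners equal to $E_0$, both maps out of the initial vertex equal to multiplication by $p$, and both maps into the terminal vertex equal to the identity; the dual \emph{is} a pullback (since $p$ is a non-zero-divisor), but the original is not a pushout (the actual pushout acquires a summand $E_0/p$).  So the sentence ``$(-)^\vee$ is an exact contravariant functor on the modules involved, so\ldots'' does not justify the implication you need.  The repair is immediate and already implicit in your computation: the difference map $((x,y),c)\mapsto \sigma^\vee(x)-y-pc$ from $(E^0B\Sigma_p/J \oplus E_0)\oplus E_0$ to $E_0$ is visibly surjective, so your pullback verification actually yields a \emph{short} exact sequence of finite free modules, which splits (the quotient $E_0$ being projective); dualizing a split short exact sequence gives a split short exact sequence, hence the desired pushout.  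Alternatively, note at the outset that $(\sigma,-\id)\colon E_0\to\Gamma[1]\oplus E_0$ is a split monomorphism via the second projection, forcing the pushout to be finite free and reflexive.  With either remark added, your proof is complete.
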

\begin{proof}
Consider the diagram
\[\xymatrix{
{0} \ar[r]
& {E^0B\Sigma_p} \ar[r]_-{(\pi^*,j^*)} \ar@{.>}[d]_{\beta^*}
& {E^0B\Sigma_p/I \oplus E^0} \ar[r]_-{(\sigma^*,-\pi^*)}
\ar[d]_{(\sigma^*,-\id)} 
& {E^0/pE^0} \ar[r]\ar@{=}[d]
& {0}
\\
{0} \ar[r]
& {E^0} \ar[r]_{\cdot p}
& {E^0} \ar[r]
& {E^0/pE^0} \ar[r]
& {0}
}\]
where the top row is obtained from the pullback square of
\eqref{prop:sigma-p-pullback}, and $\beta^*$ is the unique map making
the diagram commute.  The long exact sequence associated to
$\Ext_{E^0}^*({-},E^0)$ gives exact sequences
\[\xymatrix{
{0} \ar[r]
& {E_0} \ar[r]^{\cdot p} \ar[d]_{(\sigma,-\id)}
& {E_0} \ar[r] \ar[d]_{\beta}
& {\Ext_{E^0}^1(E^0/pE^0,E^0)} \ar[r] \ar@{=}[d]
& {0}
\\
{0} \ar[r]
& {\Gamma[1]\oplus E_0} \ar[r]_{(u,v)}
& {\algapprox_p (E_0)} \ar[r]
& {\Ext_{E^0}^1(E^0/pE^0,E^0)} \ar[r]
& {0}
}\]
\end{proof}

The corresponding odd degree result is simpler.
\begin{prop}\label{prop:p-noncritical-odd}
The inclusion $\Gamma[1]\otimes \omega^{q/2} \ra
\algapprox_p(\omega^{q/2})$ is an isomorphism when $q$ is odd.
\end{prop}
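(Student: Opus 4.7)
The plan is to exploit the primitively generated exterior algebra structure on $\algapprox(\omega^{q/2})$ established in the proof of Proposition~\ref{prop:odd-degree-summand}, and use a weight-combinatorics argument to show that for $q$ odd, $\algapprox_p(\omega^{q/2})$ coincides with its submodule of primitives.

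First I would identify the weight-$p$ primitives of $\algapprox(\omega^{q/2})$ with the source $\Gamma[1]\otimes\omega^{q/2}$ of the map in the statement. By the construction of $\Gamma^q$ in \S\ref{sec:gr-gamma} (as additive operations), the weight-$p$ primitives of $\algapprox(\omega^{q/2})$ are $\Gamma^q[1]\otimes\omega^{q/2}$; via the suspension isomorphism $e_1\colon \Gamma^1\xra{\sim}\Gamma^0=\Gamma$ of Proposition~\ref{prop:suspension-odd-to-even-iso} (extended to all odd $q$ by $\omega$-periodicity), these are canonically identified with $\Gamma[1]\otimes\omega^{q/2}$, so that the map of the statement becomes the inclusion of primitives.

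The main step is to show that $\algapprox_p(\omega^{q/2})$ has no nonzero weight-$p$ decomposables. The space of primitives $P=\bigoplus_k P_{p^k}$ is supported only on weights in $\{1,p,p^2,\ldots\}$, since at a non-prime-power weight $m$ the relevant cohomology transfer has index prime to $p$ and no new primitives appear (same argument as in the proof of Proposition~\ref{prop:gamma-summand-even}); explicitly $P_1=\omega^{q/2}$ and $P_{p^k}=\Gamma^q[k]\otimes\omega^{q/2}$ for $k\geq 1$. The weight-$p$ part of the exterior algebra on $P$ then decomposes along sequences $(j_k)_{k\geq 0}$ with $\sum_k p^k j_k=p$. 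Only two sequences contribute: $j_0=p$ (yielding a $p$-fold exterior self-product $P_1^{\wedge p}$ of $\omega^{q/2}$) and $j_1=1$ (recovering the primitives $\Gamma^q[1]\otimes\omega^{q/2}$). The main obstacle is the vanishing of the first contribution, which holds because $\omega^{q/2}$ is a rank-one $E_0$-module and $p\geq 2$. Hence $\algapprox_p(\omega^{q/2})$ equals its submodule of primitives, and the map of the statement is an isomorphism.
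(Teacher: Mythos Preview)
Your argument is correct, and it takes a genuinely different route from the paper's. The paper reduces by periodicity to $q=1$ and then appeals directly to Corollary~\ref{cor:pth-power-odd-degree}: after dualizing, the claim becomes the vanishing of the transfer ideal in $E^*B\Sigma_p^{\rho}$, which follows from the fact that $\pi_*L\Pfree_p(\Sigma^cE)$ is concentrated in odd degree for $c$ odd. Your approach instead leans on the exterior-algebra structure established inside the proof of Proposition~\ref{prop:odd-degree-summand} and does the weight bookkeeping explicitly: the only possible decomposable contribution in weight~$p$ is the $p$-fold exterior power of the rank-one module $\omega^{q/2}$, which vanishes for $p\geq2$. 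Your route makes the vanishing mechanism transparent and is purely Hopf-algebraic once Proposition~\ref{prop:odd-degree-summand} is in hand, at the cost of invoking that heavier input (the bar spectral sequence argument) together with the \S\ref{sec:gr-gamma} suspension machinery to identify the map of the statement with the inclusion of primitives. The paper's route is shorter and returns to the primitive topological input of \S\ref{sec:completed-e-theory}, bypassing the global exterior-algebra description entirely.
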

\begin{proof}
By periodicity, we may assume that $q=1$.  In this case, this amounts
to the claim that
\[
E^* B\Sigma_p^{\rho} \ra E^* B\Sigma_p^{\rho}/I
\]
is an isomorphism, where $I$ is the image of the transfer map, which
is immediate from \eqref{cor:pth-power-odd-degree}.
\end{proof}

\subsection{The case of weight $p$}

\begin{prop}\label{prop:p-critical-main}
Let $M_*$ be an $E_*$-module.  The image of the map
\[
\algapprox_p(M_*)\ra 
\algapprox_p(M_*)\otimes \Q
\]
is generated by the images of $u$, $v$, and by elements of the form
$(\sigma x- x^p)/p$ as $x$ ranges over elements of $M_0$.
\end{prop}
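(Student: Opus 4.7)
The plan is to reduce via naturality and the colimit preservation of $\algapprox_p$ to the base case $M_*=E_*$, where the result is essentially a rearrangement of \eqref{prop:p-critical-basic}.  The first task is to interpret $(\sigma x - x^p)/p$ as an element of $\algapprox_p(M_*)$ itself, not merely of its rationalization: given $x\in M_0$ viewed as a morphism $\tilde{x}\colon E_0\to M_*$ of $E_*$-modules, define the lift to be $\algapprox_p(\tilde{x})(\beta(1))$, where $\beta\colon E_0\to\algapprox_p(E_0)$ is supplied by \eqref{prop:p-critical-basic}.  Naturality of $u$ and $v$ in the module argument shows that $p$ times this lift equals $\sigma x-x^p$ integrally in $\algapprox_p(M_*)$.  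For the base case $M_*=E_*$ itself, the pushout square of \eqref{prop:p-critical-basic} exhibits $\algapprox_p(E_0)$ as the $E_0$-module generated by the images of $u$, $v$, and the single element $\beta(1)$, yielding precisely the claimed generation statement in the case $x=1$.

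Next, I would reduce the general case to finite free modules.  Every $E_*$-module admits a presentation as a reflexive coequalizer of free modules, and every free module is a filtered colimit of its finite free submodules.  Since $\algapprox_p$ preserves both filtered colimits and reflexive coequalizers by \eqref{prop:algapprox-colimits}, and since $u$, $v$, and the divided elements are natural in $M_*$, it suffices to prove the statement when $M_*\cong\bigoplus_{j=1}^n \omega^{q_j/2}$ is finite free.

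For such an $M_*$, iterate the exponential isomorphism of \eqref{prop:algapprox-sum-isomorphism} to obtain
\[
\algapprox_p(M_*)\cong\bigoplus_{i_1+\cdots+i_n=p}\algapprox_{i_1}(\omega^{q_1/2})\otimes_{E_*}\cdots\otimes_{E_*}\algapprox_{i_n}(\omega^{q_n/2}).
\]
The ``pure'' summands, where a single $i_j=p$ and the rest vanish, reduce to the rank-one case $M_*=\omega^{q/2}$: for even $q$ this follows from the base case by the periodicity of $\algapprox$ under twisting by $\omega$, and for odd $q$ it is immediate from \eqref{prop:p-noncritical-odd}, which asserts that $u$ alone surjects onto $\algapprox_p(\omega^{q/2})$.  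The ``mixed'' summands (more than one $i_j$ nonzero, each strictly less than $p$) lie in the image of $v$: the regularity of all weights $0<i<p$ proved in \eqref{prop:only-critical-wt-is-p} forces $\Sym^i\to\algapprox_i$ to be surjective on finite free modules, so the restriction of $v\colon\Sym^p(M_*)\to\algapprox_p(M_*)$ to $\bigotimes_j \Sym^{i_j}(\omega^{q_j/2})$ surjects onto the corresponding mixed summand.

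The main obstacle is this last step — identifying the mixed summands with the image of $v$ — which requires extracting from the regularity of weights below $p$ the surjectivity of $\Sym^i\to\algapprox_i$ for $0<i<p$ on finite frees.  Given this technical input, the pure-summand analysis, the exponential-decomposition bookkeeping, and the colimit reductions are essentially formal, and the proposition follows.
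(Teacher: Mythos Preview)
Your proposal is correct and takes essentially the same approach as the paper: reduce to finite free modules via \eqref{prop:algapprox-colimits}, split via the binomial formula of \eqref{prop:algapprox-sum-isomorphism}, and handle the rank-one summands by \eqref{prop:p-critical-basic} (even degree) and \eqref{prop:p-noncritical-odd} (odd degree). The only minor difference is that you extract surjectivity of $\Sym^i \to \algapprox_i$ for $i<p$ on finite frees by routing through the regularity statement \eqref{prop:only-critical-wt-is-p}, whereas the paper simply asserts this map is an isomorphism (since $|\Sigma_i|$ is prime to $p$, the extended power $\Pfree_i$ is a retract of the smash power); the direct observation is cleaner and avoids what you flag as the ``main obstacle.''
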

\begin{proof}
The functors $\algapprox_p$ and $\Sym^p$ commute with filtered
colimits and reflexive coequalizers, so it suffices to check the
result when $M_*$ is a finite free $E_*$-module.   The ``binomial
formula''
\[
\algapprox_p (M_*\oplus N_*) \approx \bigoplus_{i+j=p}
\algapprox_i(M_*)\otimes \algapprox_j(N_*),
\]
together with the fact that $u\colon \Sym^i(M_*)\ra \algapprox_i(M_*)$
is an isomorphism for $i\leq p$, allows us to reduce to the case that
$M_*$ is free on one generator in some degree, which amounts to
\eqref{prop:p-critical-basic} for even degree and
\eqref{prop:p-noncritical-odd} for odd degree.
\end{proof}

\subsection{Proof of Theorem A}

Say an object of $\gralgcat$ (resp.\ $\Alg{\Gamma}^*$) is \dfn{torsion
free} its  underlying $E_*$-module is torsion free as an abelian
group.  Say that an object $B$ of $\Alg{\Gamma}^*$ satisfies the
\dfn{congruence condition} if 
\[
\sigma x \equiv x^p \mod pB
\]
for all $x\in B^0$.

Let $U\colon \gralgcat\ra \Alg{\Gamma}^*$ denote the forgetful functor.
If $B\in \gralgcat$, then $U(B)$ satisfies the congruence condition;
this is the content of \eqref{prop:p-critical-basic}.  In fact, if
$x\in B^0$, represented by a map $f\colon \algapprox (E_0)\ra B$, then
$\sigma x- x^p\in B^0$ is represented by the composite
\[
E_0 \xra{(\sigma, -\id)} \Gamma[1]\oplus E_0 \xra{(u,v)}
\algapprox_p(E_0) \xra{f} B,
\]
which factors through multiplication by $p$ by
\eqref{prop:p-critical-basic}.

\begin{proof}[Proof of Theorem A]
By the above remarks, it suffices to show that an object $B$ of
$\grAlg{\Gamma}$ which is torsion free and which satisfies the
congruence condition admits a unique structure of
$\algapprox$-algebra.  

Let $B'=B\otimes \Q$; this is  an object of
$\grAlg{\Gamma,\Q}$ by \eqref{prop:initial-rational-object}, and thus
there is a unique $\algapprox$-algebra 
structure on $B'$ compatible with the $\Gamma$-algebra structure.
By \eqref{prop:t-algebra-structure-critical-wt} and
\eqref{prop:only-critical-wt-is-p}, it 
suffices to show that the dotted 
arrow exists in
\[
\xymatrix{
{\algapprox_p(B)} \ar[r] \ar@{.>}[d]
& {\algapprox_p(B')} \ar[d]^{\psi}
\\
{B} \ar@{>->}[r]
& {B'}
}\]
Because $B$ is a $\Gamma$-algebra, we know that
$\psi(u(B\otimes \Gamma[1]))\subseteq B$ and
$\psi(v(\Sym^p(B)))\subseteq B$.  The map $\algapprox_p (B) \ra B'$
factors uniquely through a map $\algapprox_p (B)\otimes \Q\ra B'$, and thus by 
\eqref{prop:p-critical-main} it suffices to check that 
$\psi((\sigma x-x^p)/p)\in B$ for all $x\in B_0$; this is precisely
the congruence condition.
\end{proof}

\section{Sheaves on categories of deformations}
\label{sec:deformations-of-frobenius}

Fix a prime $p$ and an integer $n\geq1$.  Let $k$ be a perfect field
of characteristic $p$, and let $G_0$ be a (one-dimensional,
commutative) formal group of height
$n$ defined over $k$.

\subsection{Deformations of $G_0$}

Let $\hRing$ denote the category whose objects are complete local rings
whose residue 
field has characteristic $p$, and whose morphisms are continuous local
homomorphisms.   If $R\in \hRing$, we write $\mathfrak{m}\subset R$ for
the maximal ideal, and $\pi\colon R\ra R/\mathfrak{m}$ for the
quotient map.

Let $R\in \hRing$.  A \dfn{deformation} of $G_0$ to $R$ is a triple
$(G,i,\alpha)$, consisting of 
\begin{enumerate}
\item [(a)] a formal group $G$ defined over $R$,
\item [(b)] a homomorphism $i\colon k\ra R/\mathfrak{m}$, and
\item [(c)] an isomorphism $\alpha\colon i^*G_0\ra \pi^*G$ of
  formal groups over $R/\mathfrak{m}$.
\end{enumerate}

\subsection{The Frobenius isogeny}

Suppose that $R\in \hRing$ has characteristic $p$.  We write
$\phi\colon R\ra R$ for the $p$th power homomorphism ($\phi(x)=x^p$).
For each formal group $G$ over such a ring $R$, the \dfn{Frobenius
  isogeny} $\Frob\colon G\ra \phi^*G$ is the homomorphism of formal
groups over $R$ induced by the relative Frobenius map on rings.  We
write $\Frob^r\colon G\ra (\phi^r)^*G$ for the 
homomorphism inductively defined by $\Frob^r= \phi^*(\Frob^{r-1})\circ
\Frob$.  

\subsection{The ``deformations of Frobenius'' category}

Let $(G,i,\alpha)$ and $(G',i',\alpha')$ be two deformations of
$G_0$ to $R$.  We say that a homomorphism $f\colon G\ra G'$ of
formal groups over $R$ is a \dfn{deformation of $\Frob^r$} if 
\begin{enumerate}
\item [(i)] $i\circ \phi^r=i'$, so that $(\phi^r)^*i^*G_0\approx
  (i')^*G_0$, and
\item [(ii)] the square
\[\xymatrix{
{i^*G_0} \ar[r]^{\Frob^r} \ar[d]_{\alpha}
& {(\phi^r)^*i^*G_0} \ar[d]^{\alpha'}
\\
{\pi^*G} \ar[r]^f
& {\pi^*G'}
}\]
of homomorphisms of formal groups over $R/\mathfrak{m}$ commutes.
\end{enumerate}

We let $\DefFrob_R$ denote the category whose objects are
deformations of 
$G_0$ to $R$, and whose morphisms are homomorphisms which are a
deformation of $\Frob^r$ for some (unique) $r\geq0$.  

\begin{rem}
Say that a morphism in $\DefFrob_R$ has a  \dfn{height} $r$ (where
$r\geq0$), if it is a deformation of $\Frob^r$.  The height of a
composition of morphisms is the sum of the heights.  For $G$, $G'$
objects of $\DefFrob_R$, let $\DefFrob^r_R(G,G')\subset
\DefFrob_R(G,G')$ denote the subset of the hom-set consisting of
morphisms of height $r$.  Note that $\DefFrob^0_R(G,G')$ are
precisely the isomorphisms from $G$ to $G'$ in $\DefFrob$, i.e., the
isomorphisms 
$G\ra G'$ of formal groups which are deformations of the identity map
of $G_0$.
\end{rem}

Observe that if $R$ is an $\F_p$-algebra, then for every object
$(G,i,\alpha)$ in $\DefFrob_R$ the Frobenius isogeny defines a
morphism $\Frob \colon (G,i,\alpha)\ra (\phi^*G, \phi i,
\phi^*\alpha)$ in this category.

Given $f\colon R\ra R'\in \hRing$, we define a functor $f^*\colon
\DefFrob_{R'}\ra \DefFrob_R$ by base change.  The base change functors
are coherent, in the sense that if $R\xra{f} R'\xra{g} R''$, then
there are natural isomorphisms $g^*f^*\approx (gf)^*$ satisfying the
evident coherence property.

\begin{rem}
We can summarize the above using $2$-categorical language: $\DefFrob$ is a
pseudofunctor from $\hRing$ to the $2$-category $\grCat$ of graded categories. 
\end{rem}

\subsection{Lubin-Tate theory}

According to the deformation theory of Lubin-Tate, there exists a
ring $L\in \hRing$ , and a deformation $(G_\univ,\id,\alpha_\univ)$
which is a deformation of 
$G_0$ to $L$, such that for each $G\in \DefFrob_R$ there exists a
unique map $f\colon L\ra R \in \hRing$ for which there is an
isomorphism $u\colon G\ra f^*G_\univ$ in $\DefFrob_R$, and that
furthermore the 
isomorphism $u$ is itself unique.  The ring $L$ has the form $L\approx
\mathbb{W}k\powser{u_1,\dots,u_{n-1}}$.  As we hope the reader is
already aware, $L$ is canonically identified with $\pi_0E$ for the
Morava $E$-theory 
spectrum associated to $G_0$.

Say that a groupoid $D$ is \dfn{thin} if between any two objects of
$D$ there is at most one isomorphism.  As a consequence of the Lubin-Tate
theorem, the groupoids $\DefFrob^0_R$ are thin.

Given a category $C$ and a subcategory $D\subseteq C$ which is a thin
groupoid and which contains all the objects of $C$, we can define a
quotient category $C/D$ as follows.  The object set $\ob (C/D)$ is
defined to be $\ob C/\sim$, where we say that $X\sim Y$ if there
exists $f\colon X\ra Y\in D$; morphisms in $C/D$ are defined by
$(C/D)([X],[Y])=C(X,Y)$.  That this becomes a category uses the fact
that $D$ is thin.  
There is an evident quotient functor $C\ra C/D$, which is an
equivalence of categories.

\subsection{Subgroups of formal groups}

Let $\Sub_R\defeq \DefFrob_R/\DefFrob^0_R$.  Thus the quotient
functors $\DefFrob_R\ra \Sub_R$ are equivalences of categories for all
$R$.  The assignment $R\mapsto \Sub_R$ is a functor $\hRing\ra \Cat$,
(i.e., not merely a pseudofunctor.)

\begin{prop}
There is a one-to-one
correspondence $\ob \Sub_R \approx \hRing(L,R)$.  
\end{prop}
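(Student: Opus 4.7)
The plan is to unwind the definition $\Sub_R = \DefFrob_R/\DefFrob^0_R$ and apply the Lubin-Tate theorem directly; the proposition is essentially a restatement of the universal property of the pair $(L, G_\univ)$. I would begin by defining a map $\Phi\colon \ob \DefFrob_R \to \hRing(L,R)$ as follows. Given a deformation $G = (G, i, \alpha)$ of $G_0$ to $R$, the Lubin-Tate theorem provides a unique $f\colon L \to R$ in $\hRing$ together with a unique isomorphism $u_G\colon G \to f^*G_\univ$ in $\DefFrob^0_R$, and I set $\Phi(G) = f$.

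Next I would check that $\Phi$ depends only on the equivalence class of $G$ in $\ob\Sub_R$. Recall that $\ob\Sub_R$ is the quotient of $\ob\DefFrob_R$ by the relation $G \sim G'$ when there is a morphism $G \to G'$ in $\DefFrob^0_R$. Given such a morphism $g\colon G \to G'$, the composite $u_{G'}\circ g\colon G \to \Phi(G')^*G_\univ$ is again a morphism in $\DefFrob^0_R$ (since $\DefFrob^0_R$ is a subcategory), and in fact an isomorphism there, because $\DefFrob^0_R$ is a thin groupoid. Applying the uniqueness clause of Lubin-Tate to $G$ then forces $\Phi(G) = \Phi(G')$, so $\Phi$ descends to $\bar\Phi\colon \ob\Sub_R \to \hRing(L,R)$.

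For injectivity of $\bar\Phi$: if $\Phi(G) = \Phi(G') = f$, then $u_{G'}^{-1}\circ u_G\colon G \to G'$ is a morphism in $\DefFrob^0_R$, so $[G] = [G']$ in $\Sub_R$. For surjectivity: given $f\in \hRing(L,R)$, the pullback $f^*G_\univ$ (equipped with the inherited residue field map and $f^*\alpha_\univ$) lies in $\DefFrob_R$, and its classifying map under Lubin-Tate must be $f$ itself, as witnessed by the identity isomorphism $f^*G_\univ \to f^*G_\univ$ in $\DefFrob^0_R$ together with the uniqueness clause.

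There is no serious technical obstacle: each step is a direct application of the universal property of $(L, G_\univ)$ together with the fact that $\DefFrob^0_R$ is thin. The only point that deserves attention is confirming that the equivalence relation used to form the quotient $\Sub_R$ really is the relation ``there exists an isomorphism in $\DefFrob^0_R$''; this is immediate from the construction of $C/D$ described in the paper, since thinness ensures the quotient is well-defined and the object-level equivalence relation collapses precisely to the existence of some, equivalently the unique, morphism in $\DefFrob^0_R$.
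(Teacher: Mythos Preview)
Your proposal is correct and is precisely what the paper intends: the paper's own proof is the single sentence ``This is Lubin-Tate deformation theory,'' and you have simply unwound that reference into an explicit construction of the bijection using the universal property of $(L,G_\univ)$ and the thinness of $\DefFrob^0_R$.
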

\begin{proof}
This is Lubin-Tate deformation theory.
\end{proof}

For objects $G,G'\in
\Sub_R$, let $\Sub^r_R(G,G')\subset \Sub_R(G,G')$ denote the image of
$\DefFrob^r_R(G,G')$.  

\begin{prop}
The assignment $f\mapsto \ker f$ is a one-to-one correspondence between
$\coprod_{[G']\in \ob\Sub_R}\Sub^r_R(G,G')$ and the set of finite
subgroup schemes of $G$ 
which have rank $p^r$.
\end{prop}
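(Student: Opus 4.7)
The plan is to construct an inverse to $f\mapsto \ker f$ by sending a finite subgroup scheme $H\subset G$ of rank $p^r$ to the quotient isogeny $q\colon G\to G/H$, endowed with an appropriate deformation structure. First I would verify that $f\mapsto \ker f$ descends to the coproduct: post-composing $f\colon G\to G'$ with a height-$0$ isomorphism $u\colon G'\to G''$ leaves the kernel subscheme of $G$ unchanged, so $\ker f$ depends only on the class of $f$ in the quotient category, and it is evidently of rank $\deg f = p^r$.

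For surjectivity, given $H\subset G$ of rank $p^r$, I would invoke the construction of the quotient of a one-dimensional commutative formal group by a finite subgroup scheme from \cite{strickland-finite-subgroups-of-formal-groups}, obtaining a formal group $G/H$ over $R$ together with an isogeny $q\colon G\to G/H$ of degree $p^r$. To equip $G/H$ with a deformation structure, reduce modulo $\mathfrak{m}$: the reduction $\bar H \subseteq \pi^*G$ is a connected finite subgroup scheme of rank $p^r$ of $i^*G_0\cong \pi^*G$ (using $\alpha$). Since every isogeny between connected one-dimensional formal groups over a perfect field of characteristic $p$ factors uniquely as a power of relative Frobenius followed by an isomorphism, the only rank-$p^r$ finite subgroup scheme of $i^*G_0$ is $\ker(\Frob^r\colon i^*G_0\to (\phi^r)^*i^*G_0)$. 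Hence $\bar H = \ker\Frob^r$, and the canonical identification $\pi^*(G/H)\cong (\phi^r)^*i^*G_0$ supplies the Lubin--Tate datum $\alpha'$ making $q$ a morphism of $\DefFrob_R$ of height $r$.

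For injectivity, if $f_1\colon G\to G_1'$ and $f_2\colon G\to G_2'$ are two morphisms of $\DefFrob_R$ of height $r$ with the same kernel $H$, the universal property of the quotient produces a unique isomorphism $u\colon G_1'\to G_2'$ of formal groups with $u f_1 = f_2$. Reducing modulo $\mathfrak{m}$, the equality $\bar f_1 = \Frob^r = \bar f_2$ under the given Lubin--Tate data forces $\bar u$ to be the identity of $(\phi^r)^*i^*G_0$, so $u$ lies in $\DefFrob^0_R$; hence $f_1$ and $f_2$ represent the same element of $\coprod_{[G']}\Sub^r_R(G,G')$.

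The main obstacle is the construction of $G/H$ as a one-dimensional commutative formal group over $R$ whose reduction supplies the right deformation, i.e., the existence and good flatness properties of the quotient; this is the geometric heart of \cite{strickland-finite-subgroups-of-formal-groups}. The elementary but essential auxiliary input is that any finite subgroup scheme of rank $p^r$ of a connected one-dimensional formal group over a perfect field of characteristic $p$ coincides with $\ker\Frob^r$, which follows from the standard factorization of isogenies between connected formal groups through the appropriate power of Frobenius. Once these two ingredients are in hand, the bijection is immediate from the universal property of the quotient.
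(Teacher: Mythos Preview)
Your argument is correct and complete. The paper itself states this proposition without proof, treating it as a standard consequence of Lubin--Tate theory together with the quotient construction from \cite{strickland-finite-subgroups-of-formal-groups}; you have supplied exactly the argument that the paper leaves implicit, invoking the same reference for the existence of $G/H$ and the universal property of the quotient for injectivity.

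One small refinement: you invoke the factorization of isogenies through Frobenius ``over a perfect field,'' but the residue field $R/\mathfrak{m}$ is only assumed to have characteristic $p$, not to be perfect. This does not harm your argument. The fact you actually need---that a one-dimensional formal group over a field $F$ of characteristic $p$ has a unique finite subgroup scheme of each rank $p^r$, namely $\ker\Frob^r$---holds over any such $F$: subgroup schemes of $\operatorname{Spf} F\powser{x}$ are cut out by principal ideals of the DVR $F\powser{x}$ and are therefore totally ordered by inclusion, while $\ker\Frob^r = \operatorname{Spec} F[x]/(x^{p^r})$ visibly has rank $p^r$. With that adjustment your proof goes through verbatim.
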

Thus any pair $([G],K)$ consisting of an object $[G]$ of $\Sub_R$ and
a subgroup $K$ of $G$ of rank $p^r$ determines an object $[G']$ of
$\Sub_R$, which is represented by any deformation of Frobenius
$f\colon G\ra G'$ with $\ker f=K$; we'll write $[G/K]$ for the object $[G']$.

\begin{prop}
For each $r\geq0$ there is a complete local ring $L[r]$ such that
$\hRing(L[r],R)$ is in one to one correspondence with the set of pairs
$([G],K)$, where $[G]$ is an object of $\Sub_R$ and $K$ is a subgroup of
$G$ of degree $p^r$.  Under the continuous homomorphism $L\ra L[r]$
which classifies $([G],K)\mapsto [G]$, the ring $L[r]$ is finite and
free as an $L$-module.
\end{prop}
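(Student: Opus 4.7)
The plan is to reformulate the moduli problem in terms of the universal deformation $G_\univ$ over $L$, and then invoke Strickland's representability theorem from \cite{strickland-finite-subgroups-of-formal-groups}. By Lubin-Tate theory, an object $[G] \in \Sub_R$ is equivalent to a continuous local homomorphism $f\colon L \ra R$ in $\hRing$, under which $G$ is identified with $f^*G_\univ$. Thus a pair $([G],K)$ consisting of a deformation class together with a rank-$p^r$ subgroup scheme $K \subset G$ corresponds exactly to a pair $(f, K')$, where $f\colon L \ra R$ is continuous local and $K' \subset f^*G_\univ$ is a subgroup of rank $p^r$.

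Next I would invoke the main construction of Strickland in \cite{strickland-finite-subgroups-of-formal-groups}: the functor on (continuous) $L$-algebras $R$ sending $R$ to the set of rank-$p^r$ subgroup schemes of $f^*G_\univ$ is corepresented by an $L$-algebra $L[r]$, which is finitely generated and free as an $L$-module. The structure map $L \ra L[r]$ classifies the operation of forgetting $K$ and remembering only the underlying deformation $[G]$. Concretely, one can take $L[r]$ to be Strickland's quotient of $E^0B\Sigma_{p^r}$ by the ideal generated by transfers from proper Young subgroups, though for the statement of the proposition only the abstract representability and finite freeness are needed.

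It remains to verify that $L[r]$ is a \emph{complete local} ring, so that $\hRing(L[r], R)$ (continuous local homomorphisms) in fact corepresents the desired functor. Since $L$ is complete local and $L[r]$ is a finitely generated $L$-module, $L[r]$ is automatically $\mathfrak{m}_L$-adically complete and decomposes uniquely as a finite product of complete local rings, one for each maximal ideal; equivalently, $L[r]$ is local if and only if $L[r] \otimes_L k$ is local. Now $L[r] \otimes_L k$ corepresents the functor of rank-$p^r$ subgroup schemes of $G_0$ on $k$-algebras. Since $G_0$ is a connected formal group of finite height, it has no nontrivial étale subgroup schemes, and the unique rank-$p^r$ subgroup scheme is $\ker(\Frob^r)$; hence $L[r] \otimes_L k$ has a single geometric point, is Artinian local, and $L[r]$ itself is local.

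The main obstacle is the representability statement, which is the substantive input and is handled by citing Strickland's theorem; the rest is bookkeeping to match the two formulations of the moduli problem and to verify the local/complete properties needed to phrase the result inside $\hRing$.
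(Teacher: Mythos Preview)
Your proposal is correct and takes essentially the same approach as the paper, which simply cites \cite{strickland-finite-subgroups-of-formal-groups} for the entire statement. Your additional verification that $L[r]$ is complete local is a useful elaboration the paper omits here; your argument via uniqueness of $\ker(\Frob^r)$ as the rank-$p^r$ subgroup over the residue field is slightly different from the approach the paper uses later (Lemma~\ref{lemma:local-rings}) to prove the analogous fact for $A[r]\approx E^0B\Sigma_{p^r}/(\text{transfer})$, where locality is deduced from nilpotence of the augmentation ideal of $K^0B\Sigma_{p^r}$.
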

\begin{proof}
See \cite{strickland-finite-subgroups-of-formal-groups}.
\end{proof}
\begin{cor}
Let $\Sub^r_R=\coprod_{[G],[G']\in \ob \Sub_R}\Sub^r_R(G,G')$.  Then
$\Sub^r_R\approx 
\hRing(L[r], R)$.  
\end{cor}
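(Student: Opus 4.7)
The plan is to chain together the two preceding propositions, since the corollary is essentially a bookkeeping statement once both bijections are in hand. First I would invoke the proposition that $f \mapsto \ker f$ gives a one-to-one correspondence between $\coprod_{[G'] \in \ob \Sub_R} \Sub^r_R(G,G')$ and the set of rank-$p^r$ subgroup schemes of $G$. Taking the disjoint union of this correspondence over $[G] \in \ob \Sub_R$, one obtains a bijection
\[
\Sub^r_R \;\approx\; \set{([G],K)}{[G] \in \ob \Sub_R,\ K \subseteq G \text{ a subgroup of rank } p^r},
\]
where the source has been unpacked according to its defining coproduct over both endpoints.

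Next I would apply the preceding proposition, which identifies the right-hand side with $\hRing(L[r],R)$. Composing these two bijections yields the claimed identification $\Sub^r_R \approx \hRing(L[r],R)$.

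There is essentially no obstacle here: the first bijection is a restatement of the Lubin--Tate-style classification of deformations of Frobenius by their kernels (already cited in the text from Strickland), and the second is the definition of the representing ring $L[r]$. The only point that requires a moment's care is ensuring that the decomposition $\Sub^r_R = \coprod_{[G],[G']} \Sub^r_R(G,G')$ matches the parametrization of pairs $([G],K)$: given $f \in \Sub^r_R(G,G')$, the source $[G]$ is recorded directly, and $[G']$ is recovered as $[G/\ker f]$, which is consistent with the notation $[G/K]$ introduced just before the statement. Since both bijections are evidently natural in $R \in \hRing$, the identification in fact upgrades to an isomorphism of functors $\hRing \to \Set$, although the statement as given only asserts the underlying set-level bijection.
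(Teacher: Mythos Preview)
Your proposal is correct and matches the paper's intended argument: the corollary is stated without proof because it follows immediately by chaining the two preceding propositions, exactly as you describe. Your additional remarks about recovering $[G']$ as $[G/\ker f]$ and about naturality in $R$ are accurate and do not diverge from the paper.
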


\subsection{The formal graded category scheme $\Sub$}
\label{subsec:fml-graded-cat-scheme-sub}

Let $\can^*\colon L[r]\ra E_0/(p)$ be the map which classifies the kernel of
$\Frob^r \colon \pi^*G_\univ\ra (\phi^r)^*\pi^*G_\univ$, where
$\pi\colon E_0\ra E_0/(p)$.  Observe that if a deformation $g$ of
$\Frob^r$ is represented by some ring map $f\colon L[r]\ra R$, then $f$
factors through $\can^*$ if and only if $g$ is \emph{equal} to $\Frob^r$.

Let $\can_0^*\colon L[r]\ra k$ denote the
composite of $\can^*$ with $E_0/(p)\ra k$; it classifies the kernel
of $\Frob^r\colon G_0\ra (\phi^r)^*G_0$.  
The maps $\can^*$ and  $\can^*_0$ are
surjective, and the kernel of $\can^*_0$ is  precisely the maximal ideal
of $L[r]$.  Let $s^*,t^*\colon L[0]\ra L[r]$ be the maps in $\hRing$
which classify $([G],K) \mapsto [G]$ and $([G],K)\mapsto [G/K]$
respectively.  It is clear that the diagram
\[\xymatrix{
{L[0]} \ar[r]^{s^*} \ar[d]_{\can^*_0}
& {L[r]} \ar[d]_{\can^*_0}
& {L[0]} \ar[l]_{t^*} \ar[d]^{\can^*_0}
\\
{k} \ar[r]_\id & {k} & {k} \ar[l]^{\phi^r}
}\]
commutes.  Since $k$ is perfect, we see that the tensor product
$L[r]_s\otimes_{E_0}{}_t L[r']$ is also a complete local ring with
residue field $k$.  There are maps $c^*\colon L[r+r']\ra
L[r]_s\otimes_{E_0}{}_t L[r']$ and $i^*\colon L[0]\ra E_0$ in
$\hRing$, corresponding to composition and identity maps in $\Sub$;
from what we have already shown, it follows that $c^*$ and $i^*$ are
local homomorphisms, inducing isomorphisms on the residue fields.  In
fact, $i^*$ is an isomorphism.

Thus, $\Sub$ is what we might call a \dfn{formal affine category
  scheme}; it is represented by data
$\mathcal{L}=(E_0,L[r],s^*,t^*,i^*,c^*)$, which are objects and morphisms
in $\hRing$.

\subsection{Sheaves on the deformations category}

For $R\in \hRing$, let $\Mod{R}$ and $\Alg{R}$ denote the category of
$R$-modules and commutative $R$-algebras respectively.  If $f\colon
R\ra R'\in \hRing$, let $f^*\colon \Mod{R}\ra \Mod{R'}$ (or $f^*\colon
\Alg{R}\ra \Alg{R'}$) denote the evident basechange functors, defined
in either case by $f^*(M)\approx R'\otimes_R M$.  These functors are
coherent, in the sense that if $R\xra{f} R'\xra{g}R''$ then there are
natural isomorphisms $g^*f^*\approx (gf)^*$ satisfying the evident
coherence property.

A \dfn{quasi-coherent sheaf of $\O$-modules} on $\DefFrob$ consists of data
$M=(\{M_R\}, \{M_f\})$, where for each $R\in \hRing$ we have a functor
$M_R\colon \DefFrob_R^\op \ra \Mod{R}$, and for each $f\colon R\ra
R'\in \hRing$ we are given natural isomorphisms $M_f\colon
f^*M_R\xra{\sim} M_{R'}f^*$, such that for all $R\xra{f} R'\xra{g}
R''$, both ways of constructing a natural isomorphism $g^*f^*M_R\ra M_{R'}g^*f^*$ coincide, and for $\id_R\colon R\ra R$ the
natural map $M_{\id_R}\colon \id_R^*M_R\ra M_R\id_R^*$ is the identity
transformation.

A morphism $\gamma\colon M\ra N$ of quasi-coherent sheaves modules
consists of data $\{\gamma_R\}$, where $\gamma_R\colon M_R\ra N_R$ for
each $R\in \hRing$  is a natural transformation compatible with base
change, in the sense that if $f\colon R\ra R'$ in $\hRing$, then
$N_f(f^*\gamma_R) = (\gamma_Rf^*)M_f$.

Let $\ShMod$ denote the category of quasi-coherent sheaves of
$\O$-modules.  It is a tensor category, with tensor product defined
``objectwise'', so that $(M\otimes N)_R(G) = M_R(G)\otimes_R N_R(G)$.
The unit object is $\O$, defined so that $\O_R\colon \DefFrob_R\ra
\Mod{R}$ is the constant functor sending every object to $R$.  

In a similar way, we define a category $\ShAlg$ of \dfn{quasi-coherent
sheaves of $\O$-algebras}, by replacing ``$\Mod{R}$'' in the above
definitions with ``$\Alg{R}$''.  The category $\ShAlg$ is plainly
isomorphic to the category of commutative monoid objects in $\ShMod$.

It is clear from the definition that $\ShAlg$ and $\ShMod$ have all
small colimits and finite products.

\begin{rem}\label{rem:pseudofunc}
The above is easily summarized using $2$-categorical language:
$\DefFrob$, $\mathrm{Mod}$, and $\mathrm{Alg}$ are pseudofunctors
$\hRing\ra \Cat$, and $\ShMod$ and $\ShAlg$ are defined to 
be the categories of pseudonatural transformations and modifications between
pseudofunctors 
$\DefFrob\ra \mathrm{Mod}$ and $\DefFrob\ra \mathrm{Alg}$ respectively.
\end{rem}

\subsection{Sheaves are comodules}

Recall that for a graded affine category scheme such as defined by the
data $\mathcal{L}$,
there is an associated category of comodules $\Comod{\mathcal{L}}$
\eqref{subsec:gr-bialg-affine-cat-scheme}.  

\begin{prop}
There is an
equivalence of tensor categories between $\ShMod$ and
$\Comod{\mathcal{L}}$. 
\end{prop}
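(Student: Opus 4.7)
The plan is to construct mutually inverse tensor functors $\Phi \colon \ShMod \to \Comod{\mathcal{L}}$ and $\Psi \colon \Comod{\mathcal{L}} \to \ShMod$ using the universal deformation $G_\univ \in \DefFrob_{E_0}$ as a pivot, and then to exploit Lubin--Tate uniqueness to match them up.

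First I would define $\Phi$ by evaluation: set $\Phi(M) \defeq M_{E_0}(G_\univ)$, an $E_0$-module. For each $r \geq 0$, the source and target maps $s^*, t^* \colon E_0 \to L[r]$ produce deformations $s^*G_\univ, t^*G_\univ \in \DefFrob_{L[r]}$, and the universal property of $L[r]$ (as the classifying ring for pairs $([G],K)$ with $K$ of rank $p^r$) furnishes a canonical morphism $\xi_r \colon s^*G_\univ \to t^*G_\univ$ of height $r$ whose kernel is the universal rank-$p^r$ subgroup. Applying $M_{L[r]}$ and composing with the base-change coherence isomorphisms $M_{s^*}$ and $M_{t^*}$ yields an $L[r]$-linear map $L[r]_t \otimes_{E_0} \Phi(M) \to L[r]_s \otimes_{E_0} \Phi(M)$, whose restriction along $m \mapsto 1 \otimes m$ gives the $r$-th component $\psi_r \colon \Phi(M) \to L[r]_s \otimes_{E_0} \Phi(M)$ of the coaction. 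The counit axiom follows because $\xi_0$ is the identity morphism and $i^* \colon L[0] \to E_0$ is an isomorphism, while coassociativity reduces to the composition law for Frobenius-deformations packaged by $c^*$.

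Next I would define $\Psi$ by extending along classifying maps. Given a comodule $(N,\psi)$, for each $R \in \hRing$ and each $G \in \DefFrob_R$ with classifying map $f_G \colon E_0 \to R$, set $\Psi(N)_R(G) \defeq f_G^* N$. A morphism $g \colon G \to G'$ of height $r$ is classified by some $h_g \colon L[r] \to R$ satisfying $h_g s^* = f_G$ and $h_g t^* = f_{G'}$, and the induced map $f_{G'}^* N \to f_G^* N$ is the composite of $\psi_r$ followed by $h_g \otimes \id$. Base-change coherence isomorphisms arise from the universal property of pullback, and functoriality in $\DefFrob_R$ follows from the comodule axioms.

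That $\Phi$ and $\Psi$ are mutually inverse is straightforward in one direction, since $\Phi\Psi(N) = \Psi(N)_{E_0}(G_\univ) = \id^* N = N$ with compatible comodule structures by construction. The other direction uses that every $G \in \DefFrob_R$ is uniquely isomorphic in $\DefFrob_R^0$ to $f_G^* G_\univ$ by Lubin--Tate uniqueness (thinness of $\DefFrob_R^0$), yielding canonical isomorphisms $M_R(G) \cong f_G^* M_{E_0}(G_\univ)$ through the sheaf coherence data; compatibility with morphisms of arbitrary height follows because $\xi_r$ is the universal height-$r$ morphism. Both tensor structures are objectwise---sheaves via $(M \otimes N)_R(G) = M_R(G) \otimes_R N_R(G)$, and comodules via the ``diagonal'' coaction obtained from multiplication $L[r] \otimes_{E_0} L[r] \to L[r]$---so preservation of tensor structure is a direct verification. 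The main technical obstacle will be bookkeeping of bimodule structures: tracking at each stage which $E_0$-action (source or target) on $L[r]$ is active, so that the base-change coherence isomorphisms $M_f$ in the sheaf formalism align with the tensor products over $E_0$ in the comodule formalism.
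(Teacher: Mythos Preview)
Your proposal is correct and follows essentially the same approach as the paper: evaluate a sheaf at the universal deformation and at the universal height-$r$ morphism over $L[r]$ to extract a comodule, and conversely rebuild a sheaf from a comodule by base change along classifying maps. The paper's own proof is in fact sketchier than yours---it says ``this is straightforward'' and writes out only the comodule-to-sheaf direction, implicitly working with the strict functor $\Sub$ (where objects literally \emph{are} maps $E_0\to R$ and height-$r$ morphisms \emph{are} maps $L[r]\to R$) rather than with $\DefFrob$; your explicit appeal to Lubin--Tate thinness is exactly what gets absorbed into the equivalence $\DefFrob_R\simeq\Sub_R$.
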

\begin{proof}
This is straightforward.  Given an $\mathcal{L}$-comodule
$(M,\psi\colon M\ra VM)$, we produce
a sheaf by the following procedure.
Let 
\[
\tilde{\psi}_r \colon L[r]{}_t\otimes_{E_0}M\ra
L[r]{}_s\otimes_{E_0}M
\]
denote the extension of $\psi_r \colon M\ra VM\ra
L[r]{}_s\otimes_{E_0}M$ to a map of $L[r]$-modules. 

On objects $h\colon
E_0\ra R$ of $\DefFrob_R$, the functor $M_R\colon \DefFrob_R\ra \Mod{R}$ is
given on objects of $\DefFrob_R$ by 
\[
M_R(h) = R{}_h\otimes_{E_0}M.
\]
On morphisms $g\colon L[r]\ra R$ of $\Pow_R$, the functor $M_R$ is
given by
\[
M_R(g\colon L[r]\ra R) = (R{}_{t^*g}\tensor{E_0}M \approx
R{}_g\tensor{L[r]} L[r]{}_t\tensor{E_0}M\xra{\tilde{\id\otimes\psi_r}}
R{}_g\tensor{L[r]} L[r]{}_s\tensor{E_0}M\approx
R{}_{s^*g}\tensor{E_0}M).
\]
Likewise, given an object $M$ of $\ShMod$, we obtain an
$\mathcal{L}$-comodule by evaluating $M$ at the the universal
deformation of $G_0$, and at the universal examples of deformations of
$\Frob^r$.  
\end{proof}
(Note that although $\mathcal{L}$ is really a \emph{formal} graded
category scheme, formality does not play a role here.)

\subsection{The sheaves $\FuncG$, $\omega$, and $\Z/2$-graded sheaves} 

If $G$ is a formal group over a local ring $R$, let $F_G$
denote the ring 
of functions on $G$; it is an $R$-algebra non-canonically isomorphic
to $R\powser{x}$.  We define $\FuncG\in \ShAlg$ by 
\[
\FuncG_R((G,i,\alpha)) \defeq F_G.
\]
The evident algebra maps $\O\ra \FuncG \xra{e} \O$ induce a direct sum
splitting $\FuncG\approx \O\oplus \mathcal{I}$ as modules.  Let 
\[
\omega \defeq \mathcal{I}/\mathcal{I}^2
\]
as an object of $\ShMod$; observe that the $R$-modules
$\omega_R((G,i,\alpha))$ are non-canonically isomorphic to $R$, so
that $\omega$ is a symmetric object in the sense of
\S\ref{subsec:symmetric-objects}. 

Let $\grShMod$ denote the $\Z/2$-graded $\omega$-twisted tensor
category (\S\ref{sec:twisted-z2-cat}) associated to $\ShMod$.  Let
$\grShAlg$ denote the category 
of commutative monoid objects in $\grShMod$.

\subsection{Frobenius congruence}
\label{subsec:frobenius-cong}

Suppose that $R\in \hRing $ has characteristic $p$.  The $p$th power
map $\phi$ defines a functor $\phi^*\colon \Alg{R}\ra \Alg{R}$ by base
change, together with a natural transformation $\Frob\colon \phi^*\ra
\id$, where $\Frob\colon \phi^*A\ra A$ is the relative Frobenius
defined  as the 
unique map making the diagram
\[\xymatrix{
{R} \ar[r]^\phi \ar[d] 
& {R} \ar@{=}[r] \ar[d]
& {R} \ar[d]
\\
{A} \ar[r]  \ar@/_1pc/[rr]_{\phi}
& {\phi^*A} \ar[r]^{\Frob}
& {A}
}\]
commute.

Given an object $B\in \ShAlg$, we say it satisfies the \dfn{Frobenius
  congruence} if for all $R\in \hRing$ of characteristic $p$, and for
all $G\in \DefFrob_R$, the diagram
\[\xymatrix{
{\phi^* B_R(G)} \ar[r]^{B_\phi}_{\sim} \ar[dr]_{\Frob}
& {B_R(\phi^*G)} \ar[d]^{B_R(\Frob)}
\\
& {B_R(G)}
}\]
commutes.  Roughly speaking, the Frobenius congruence condition says that $B$
carries the relative Frobenius on formal groups to the relative
Frobenius on algebras.  

\begin{exam}
Both $\O$ and $\FuncG$ satisfy the Frobenius congruence.
\end{exam}

Let $\ShAlgCong\subset \ShAlg$ denote the full subcategory consisting
of sheaves which satisfy the Frobenius congruence.  Let
$\grShAlgCong\subset \grShAlg$ denote the full subcategory consisting
of sheaves whose even degree part is in $\ShAlgCong$.

\begin{prop}\label{prop:frobenius-congruence-comodule}
Let $A$ be an object of $\ShAlg$, and let $(B,\psi\colon B\ra CB)$ be
the corresponding commutative monoid object in $\Comod{\mathcal{A}}$.
Then $A$ satisfies the Frobenius congruence if and only if the
composite map 
\[
B \xra{\psi_1} L[1]{}_s\otimes_{E_0} B \xra{\can^*\otimes\id}
(E_0/(p)){}_s\otimes_{E_0} B\approx B/pB
\]
is equal to $x\mapsto x^p$.
\end{prop}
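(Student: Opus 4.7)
The plan is to unpack both sides of the Frobenius-congruence square in the language of the comodule $(B,\psi)$ and compare. First, I would use that $R$ has characteristic $p$ to observe that the classifying map $h\colon E_0\to R$ of $G$ satisfies $h(p)=0$, and hence factors canonically as $h=\bar h\circ\pi$ through $E_0/(p)$. Under the equivalence $\ShMod\approx\Comod{\mathcal{A}}$ of the preceding proposition, the Frobenius morphism $\Frob\colon G\to\phi^*G$ in $\DefFrob_R$ is classified by a unique map $L[1]\to R$ representing the kernel of relative Frobenius on $G$; by the very definition of $\can^*$ in \S\ref{subsec:fml-graded-cat-scheme-sub}, this classifying map is the composite $L[1]\xra{\can^*}E_0/(p)\xra{\bar h}R$. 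In particular, $s^*\circ(\bar h\can^*)=h$ and $t^*\circ(\bar h\can^*)=\phi h$, so the formula for the functor $B_R$ applies directly.

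Second, I would specialize the explicit comodule formula to compute $B_R(\Frob)\colon B_R(\phi^*G)\to B_R(G)$. Writing $B_R(G)=R_h\otimes_{E_0}B$ and $B_R(\phi^*G)=R_{\phi h}\otimes_{E_0}B$, this map becomes the composite
\[
R_{\phi h}\otimes_{E_0}B\xra{\id\otimes\psi_1}R_{\phi h}\otimes_{E_0}L[1]_s\otimes_{E_0}B\xra{\id\otimes\can^*\otimes\id}R_{\phi h}\otimes_{E_0}(E_0/(p))_s\otimes_{E_0}B\ra R_h\otimes_{E_0}B,
\]
the last arrow contracting the first two factors via $\bar h\circ\can^*$. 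Precomposing with the base-change iso $B_\phi\colon\phi^*B_R(G)\xra{\sim}B_R(\phi^*G)$, which sends $1\otimes r'\otimes b$ to $(r')^p\otimes b$, shows that the top-right path of the Frobenius-congruence diagram sends $1\otimes r'\otimes b$ to $(r')^p\cdot\sum\bar h(\can^*(\ell_i))\otimes b_i$, where $\psi_1(b)=\sum\ell_i\otimes b_i$.

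Third, I would compute the diagonal arrow. By definition of relative Frobenius, $\Frob\colon\phi^*B_R(G)\to B_R(G)$ sends $r\otimes a\mapsto r\cdot a^p$, and on generators this becomes $1\otimes r'\otimes b\mapsto(r')^p\otimes b^p$ in $R_h\otimes_{E_0}B$. Comparing the two expressions, the Frobenius-congruence condition at $(R,G)$ is precisely the assertion that, for every $b\in B$,
\[
1\otimes b^p=\sum\bar h(\can^*(\ell_i))\otimes b_i\quad\text{in } R_h\otimes_{E_0}B.
\]

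Fourth, I would carry out the universal reduction. Taking $R=B/pB$ with $h$ the canonical composite $E_0\to B\twoheadrightarrow B/pB$, and post-composing with the ring multiplication $R_h\otimes_{E_0}B\to B/pB$, the displayed equation becomes exactly the statement that $\bar b^p$ equals the image of $b$ under the composite in the proposition. Conversely, once this universal equation holds in $B/pB$, it propagates to every $(R,h)$ by applying the induced ring map $B/pB\to R\otimes_h B$ coming from $\bar h$, which recovers the original identity in $R_h\otimes_{E_0}B$. This yields both directions of the equivalence. The main obstacle is simply unwinding the base-change identifications, particularly the iso $B_\phi$ and the factorization of the classifier of $\Frob$ through $\can^*$; once these are in hand, both sides of the Frobenius-congruence diagram become manifestly the composites described above, and the claimed equivalence drops out.
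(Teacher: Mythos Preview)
Your unpacking in steps 1--3 is correct and is exactly what the paper means by ``checking the congruence condition on the universal example.'' The gap is in step 4: you take $R = B/pB$, but there is no reason for $B/pB$ to lie in $\hRing$ --- it need not be a complete local ring, and the structure map $E_0 \to B \to B/pB$ need not be local. The Frobenius congruence is quantified only over $(R,G)$ with $R \in \hRing$, so you cannot simply instantiate it at $B/pB$, and your subsequent ``propagation'' from $B/pB$ inherits the same problem.

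The correct universal example is $R = E_0/(p)$ with $h = \pi \colon E_0 \to E_0/(p)$ the quotient map, and $G = \pi^* G_\univ$. Here $E_0/(p) \approx k\powser{u_1,\dots,u_{n-1}}$ is genuinely an object of $\hRing$, and $\bar h = \id_{E_0/(p)}$, so $A_R(G) = (E_0/(p)) \otimes_{E_0} B \approx B/pB$ on the nose, with no need to post-compose with any multiplication map. Your displayed equation at the end of step 3 then becomes exactly the comodule condition of the proposition. For the converse direction, any $(R',h')$ with $R'$ of characteristic $p$ factors as $h' = \bar{h'} \circ \pi$, and the Frobenius-congruence diagram at $(R',h')$ is obtained from the one at $(E_0/(p),\pi)$ by applying $\bar{h'}\otimes\id$; so the universal case suffices. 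With this single correction your argument goes through and coincides with the paper's.
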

\begin{proof}
This amounts to checking the congruence condition on the universal
example. 
\end{proof}

\section{Bialgebra rings as sheaves on the deformation category} 
\label{sec:bialgebra-and-sheaves}

We retain the notation of the previous section.

\subsection{The formal graded category scheme $\Pow$}

The discussion of \S\ref{subsec:gr-bialg-affine-cat-scheme} applies to
our twisted $L$-bialgebra $\Gamma$ of power operations defined in
\S\ref{sec:additive-bialgebra}.  Thus, let 
$\mathcal{A}= (E_0, A[r],s^*,t^*,i^*,c^*)$ be the data obtained by
``dualizing'' 
$\Gamma$, so that $A[r]\approx H_{\Gamma[r]}(E_0)\approx
E^0B\Sigma_{p^r}/(\text{transfer})$, and in particular $A[0]\approx E_0$.
Our first order of business is to show that, like $\mathcal{L}$, the 
data $\mathcal{A}$ consists of objects and morphisms of $\hRing$, and
thus defines a 
formal graded category scheme
\eqref{subsec:fml-graded-cat-scheme-sub}, which we will call $\Pow$. 

For $r>0$, let $\bA[r]$ denote the quotient ring of $A[r]$ defined by 
\[
\bA[r] \defeq E^0B\Sigma_{p^r}/((\text{transfer})+(\text{aug})),
\]
where 
$(\text{aug})$ denotes the kernel of the augmentation map
$E^0B\Sigma_{p^r} \ra E^0$ induced by inclusion of the base point
$*\ra B\Sigma^{p^r}$.  Additionally, we set $\bA[0]\defeq E_0/pE_0$.  
By \eqref{lemma:augmentation-quotient}, the augmentation quotient
induces an isomorphism $\bA[r]\ra E_0/(p)$.
\begin{lemma}\label{lemma:local-rings}
The rings $A[r]$ are local rings, with maximal ideal given by the
kernel of the composite $A[r]\ra \bA[r]\approx E_0/pE_0 \ra k$.
\end{lemma}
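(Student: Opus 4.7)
The plan is to exhibit $A[r]$ as a module-finite $E_0$-algebra and then appeal to Strickland's theorem to pin down its local structure. The case $r=0$ is immediate: $A[0]\approx E_0$ with $\bA[0]=E_0/pE_0$ by definition, so the statement reduces to the fact that $\mathfrak{m}_{E_0}=\ker(E_0\to k)$ is the maximal ideal of the complete local ring $E_0$. Thus I will assume $r\geq 1$.

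First I will observe that $\Gamma[r]$ is a finitely generated free $E_0$-module by \eqref{prop:gamma-summand-even}, hence so is its $E_0$-linear dual $A[r]=H_{\Gamma[r]}(E_0)$. Regarded as an $E_0$-algebra via $s^*\colon E_0\to A[r]$, the ring $A[r]$ is therefore module-finite over the complete Noetherian local ring $E_0$. Such an algebra is automatically a complete semilocal Noetherian ring, decomposing as a finite product of complete Noetherian local rings, and its maximal ideals are in bijection with those of the finite-dimensional $k$-algebra $A[r]\otimes_{s^*,E_0}k$.

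Next I will invoke the theorem of Strickland \cite{strickland-finite-subgroups-of-formal-groups}, which identifies $A[r]=E^0B\Sigma_{p^r}/(\mathrm{transfer})$ with the deformation ring $L[r]$ of \S\ref{sec:deformations-of-frobenius} as $E_0$-algebras, both corepresenting the functor of pairs $([G],K)$ with $K\subset G$ a rank $p^r$ subgroup. Since $L[r]$ was already shown to be a complete Noetherian local ring with maximal ideal $\ker(\can_0^*)$, this identification shows that $A[r]$ is local. The identification of the maximal ideal with the stated kernel is then automatic: the composite $\pi\colon A[r]\to\bA[r]\approx E_0/pE_0\to k$ is a surjection onto a field, so $\ker\pi$ is maximal in $A[r]$, and by locality $\ker\pi$ is the unique maximal ideal.

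The main obstacle is the appeal to Strickland, since it rests on a deep identification of the cohomology of $B\Sigma_{p^r}$ modulo transfers with a deformation moduli ring; however, this correspondence is part of the background of the paper (indeed, Theorem B is largely an abstract reformulation of it), and can be used freely here. All other steps are routine commutative algebra.
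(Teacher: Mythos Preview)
Your argument is correct but follows a different route from the paper. The paper gives an elementary, direct proof: since $A[r]$ is finite over $E_0$ (via $s^*$), going-up forces every maximal ideal of $A[r]$ to contract to $\mathfrak{m}_{E_0}$, so it suffices to show that $A[r]\otimes_{E_0}k$ is local; this holds because $A[r]\otimes_{E_0}k$ is a quotient of $E^0B\Sigma_{p^r}\otimes_{E_0}k\approx K^0B\Sigma_{p^r}$ (where $K$ is the $2$-periodic Morava $K$-theory obtained by killing $\mathfrak{m}$), and the augmentation ideal of $K^0B\Sigma_{p^r}$ is nilpotent since $B\Sigma_{p^r}$ is connected of finite type. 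You instead invoke Strickland's identification $A[r]\approx L[r]$ together with the fact from \S\ref{sec:deformations-of-frobenius} that $L[r]$ is a complete local ring. This is logically valid---Strickland's theorem is an external result, proved in \cite{strickland-morava-e-theory-of-symmetric} without any reliance on the present lemma---but within the paper's narrative it runs the exposition backwards: Lemma~\ref{lemma:local-rings} is proved precisely in order to place $A[r]$ in $\hRing$, set up the formal graded category scheme $\Pow$, and construct the comparison map $F^*\colon L[r]\to A[r]$ of \S\ref{sec:bialgebra-and-sheaves}, \emph{after} which Strickland's theorem is applied. The paper's proof keeps the argument self-contained and the logical flow linear; yours is shorter but front-loads the deepest ingredient, and in doing so renders your opening semilocal reduction superfluous.
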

\begin{proof}
The ring $A[r]$ is finite as an $E_0$-module (using $s^*$), and thus every
maximal ideal of $A[r]$ must contract to the maximal ideal of $E_0$ by
the ``going up theorem''.  Thus it suffices to show that
$A[r]\otimes_{E_0}k$ is a local ring.  Since $A[r]$ is a quotient of
$E^0B\Sigma_{p^r}$, it suffices to show that 
$E^0B\Sigma_{p^r}\otimes_{E_0}k$ is a local ring.  Let $K$ denote the
generalized cohomology theory obtained by killing the maximal ideal of
$E$, so that $E^0B\Sigma_{p^r}\otimes_{E_0}k \approx
K^0B\Sigma_{p^r}$; the result follows from the observation that the
augmentation ideal of $K^0B\Sigma_{p^r}$ is nilpotent, since
$B\Sigma_{p^r}$ is connected of finite type.
\end{proof}

\begin{lemma}\label{lemma:modulo-abar}
Let $B$ be an object in $\algcat$.  Then the composite
\[
B_0 \ra \Hom_{E_*}(\Gamma[r], B) \approx B\otimes_{E_0}{}_s A[r] \ra
B\otimes_{E_0}{}_s \bA[r] \approx B/pB
\]
sends $x\mapsto x^{p^r}$.
\end{lemma}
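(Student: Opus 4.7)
The plan is to reduce the claim to \eqref{lemma:power-op-is-power-mod-aug} by identifying the quotient $A[r]\twoheadrightarrow \bA[r]$ with the augmentation map followed by reduction modulo $p$. By construction, the first map $B_0\to \Hom_{E_*}(\Gamma[r],B)$ is the restriction of the algebraic power operation
\[
P_{p^r}\colon B_0\to \Hom_{E_*}(\algapprox_{p^r}(E_0),B)
\]
along the inclusion $\Gamma[r]\hookrightarrow \algapprox_{p^r}(E_0)\approx \cE{0}(B\Sigma_{p^r})$. Since this latter module is finitely generated and free over $E_0$ (by \eqref{prop:algapprox-finite-free} and \eqref{prop:free-ring-preserves-finite-frees}), Hom--tensor duality supplies natural isomorphisms $\Hom_{E_*}(\algapprox_{p^r}(E_0),B)\approx B\otimes_{E_0}{}_s E^0 B\Sigma_{p^r}$ and $\Hom_{E_*}(\Gamma[r],B)\approx B\otimes_{E_0}{}_s A[r]$, under which the restriction is the map induced by the quotient $E^0 B\Sigma_{p^r}\twoheadrightarrow A[r]$ modding out the transfer ideal $J$.

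Next I would show that the full composite $B_0\to B/pB$ factors through the augmentation $\alpha\colon E^0 B\Sigma_{p^r}\to E_0$. Because $\bA[r] = E^0 B\Sigma_{p^r}/(I+J)$ (with $I$ the augmentation ideal), and by \eqref{lemma:augmentation-quotient} the isomorphism $\bA[r]\approx E_0/pE_0$ is precisely the one induced by $\alpha$ (which kills $I$ and carries $J$ into $pE_0$), the iterated surjection $E^0 B\Sigma_{p^r}\twoheadrightarrow A[r]\twoheadrightarrow \bA[r]\approx E_0/pE_0$ is exactly $\alpha$ followed by reduction modulo $p$. Consequently the composite of the lemma coincides with
\[
B_0\xra{P_{p^r}} B\otimes_{E_0}{}_s E^0 B\Sigma_{p^r}\xra{\id\otimes \alpha} B\otimes_{E_0}E_0\approx B\twoheadrightarrow B/pB.
\]

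To conclude I would invoke \eqref{lemma:power-op-is-power-mod-aug}: under the finite-freeness identification, the restriction map $\Hom_{E_*}(\algapprox_{p^r}(E_0),B)\to \Hom_{E_*}(E_0,B)\approx B_0$ along the basepoint map $f_{p^r}\colon E_0\to \algapprox_{p^r}(E_0)$, whose cohomological dual is $\alpha$, becomes precisely $\id\otimes \alpha$. Hence \eqref{lemma:power-op-is-power-mod-aug} forces the composite $B_0\to B$ displayed above to send $x\mapsto x^{p^r}$, and reducing modulo $p$ yields the claim. The case $r=0$, where $\bA[0]=E_0/pE_0$ and $\Gamma[0]=E_0$ by convention, is immediate since the composite is then just $x\mapsto x\bmod p$. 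The main nuisance is bookkeeping with the source ($s$) versus target ($t$) $E_0$-actions when invoking Hom--tensor duality and verifying that $f_{p^r}$ is dual to $\alpha$, but this is routine once the identifications are fixed.
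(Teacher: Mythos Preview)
Your proposal is correct and follows exactly the same approach as the paper, which records the proof as ``Immediate using \eqref{lemma:power-op-is-power-mod-aug} and \eqref{lemma:augmentation-quotient}.'' You have simply unpacked this one-liner: identifying $A[r]\twoheadrightarrow\bA[r]\approx E_0/pE_0$ with augmentation followed by reduction mod $p$ via \eqref{lemma:augmentation-quotient}, and then invoking \eqref{lemma:power-op-is-power-mod-aug} to see that $P_{p^r}$ composed with augmentation is the $p^r$th power.
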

\begin{proof}
Immediate using \eqref{lemma:power-op-is-power-mod-aug} and
\eqref{lemma:augmentation-quotient}. 
\end{proof}

Let $\can^*\colon A[r]\ra \bA[r]\approx k$ denote the evident
projection.  
The diagram
\[\xymatrix{
{A[0]} \ar[r]^{s^*} \ar[d]_{\can^*}
& {A[r]} \ar[d]_{\can^*}
& {A[0]} \ar[l]_{t^*} \ar[d]^{\can^*}
\\
{k} \ar[r]_\id & {k} & {k} \ar[l]^{\phi^r}
}\]
commutes, because $s^*$ is induced by the ``standard'' inclusion
$E_0\ra E^0B\Sigma_{p^k}$, while $t^*$ is induced by the structure map
$E_0\ra \Hom_{E_0}(\Gamma[r],E_0)$ which defines the canonical
$\Gamma$-module structure on the ground ring $E_0$, and thus is the
$p^r$th power map on quotients by \eqref{lemma:modulo-abar}.  Thus the
rings 
$A[r]{}_s\otimes_{A[0]}A[r']$ are complete 
local with residue field $k$, and the homomorphisms $s^*$, $t^*$,
$c^*$, $i^*$ are all local homomorphisms.  

Thus, $\Pow$ is a formal
graded category scheme.  Observe that $\Pow^0_R$ is a thin groupoid
for all $R\in \hRing$, and that $\Pow\colon \hRing\ra \grCat$ is a
functor (not just a pseudofunctor).

\subsection{Isomorphism between $\Sub$ and $\Pow$}

We are going to construct a pseudonatural transformation $F\colon
\Pow\ra \DefFrob$ of pseudofunctors $\hRing\ra \grCat$, 
and we will show, by applying a theorem of Strickland, that
$F_R\colon \Pow_R\ra \DefFrob_R$ is an equivalence of graded categories
for each $R\in \hRing$.  This immediately implies the following.
\begin{prop}
The natural transformation 
\[
F'\colon \Pow\xra{F} \DefFrob\ra \Sub
\]
of functors $\hRing\ra \grCat$ induces an isomorphism $\Pow_R\ra
\Sub_R$ of graded categories for all $R$ in $\hRing$.
\end{prop}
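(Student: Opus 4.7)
The plan is to deduce the proposition by applying Strickland's theorem of \cite{strickland-finite-subgroups-of-formal-groups} to identify the representing objects of $\Pow$ and $\Sub$. Since both pseudofunctors are in fact corepresented by formal graded category schemes $\mathcal{A}$ and $\mathcal{L}$ whose object schemes are both $\Spec E_0$, and $F'$ is to act as the identity on these object schemes, the data of $F'$ is equivalent to a collection of ring homomorphisms $\rho_r\colon L[r]\ra A[r]$ in $\hRing$ compatible with the structure maps $s^*$, $t^*$, $i^*$, and $c^*$. Such an $F'$ induces an isomorphism $\Pow_R\ra\Sub_R$ for every $R$ exactly when each $\rho_r$ is an isomorphism.

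First, I would construct $F$ (and hence $\rho_r$) Yoneda-style, by exhibiting, for each $r$, a rank-$p^r$ subgroup scheme $K_r$ of the universal deformation over $\Spec A[r]$: the tautological element of $A[r]\approx E^0B\Sigma_{p^r}/(\text{transfer})$ determines $K_r$ via the geometric interpretation of $E^0B\Sigma_{p^r}$ supplied by Strickland. The homomorphism $\rho_r$ then classifies the pair consisting of the universal deformation over $A[r]$ together with $K_r$, and the induced Frobenius-type isogeny $G_{\univ}\ra G_{\univ}/K_r$ defines $F$ on the corresponding morphism of $\Pow_{A[r]}$.

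Second, I would verify that $\rho_r$ is compatible with the structure maps of $\mathcal{L}$ and $\mathcal{A}$. Compatibility with $s^*$ is immediate from the construction, since both sides classify the underlying deformation. Compatibility with $t^*$ is essentially the content of \eqref{lemma:modulo-abar}, which identifies the target of the $\rho_r$-classified isogeny with the $p^r$-power reduction, matching the $t^*$-map of $\mathcal{L}$ which classifies $G_{\univ}/K_r$. Compatibility with $i^*$ at $r=0$ corresponds to the trivial subgroup. Compatibility with $c^*$ is the subtle point: it translates the wreath-product inclusion $\Sigma_{p^r}\wr\Sigma_{p^{r'}}\hookrightarrow\Sigma_{p^{r+r'}}$ governing composition of power operations into the short exact sequence $0\ra K_r\ra K_{r+r'}\ra K_{r'}\ra 0$ governing composition of Frobenius isogenies.

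Third, I would invoke Strickland's theorem, which asserts that each $\rho_r$ so constructed is an isomorphism in $\hRing$. Applying $\hRing(-,R)$ then produces bijections on morphism sets at each height $r$, which together with the identity on objects assemble into an isomorphism $F'_R\colon\Pow_R\ra\Sub_R$ of graded categories for every $R\in\hRing$. The main obstacle is Strickland's theorem itself, which carries the substantial computational content; the remaining steps are essentially formal given the explicit formulas for the structure maps of $\mathcal{A}$ and $\mathcal{L}$, with compatibility with $c^*$ being the point that requires the most careful bookkeeping.
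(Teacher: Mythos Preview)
Your overall strategy matches the paper's: reduce to showing the classifying maps $\rho_r\colon L[r]\ra A[r]$ (the paper calls them $F^*$) are isomorphisms, and invoke Strickland's theorem \cite{strickland-morava-e-theory-of-symmetric}. But the construction of $F$ and the identification of $K_r$ are where the actual content lies, and your proposal is vague precisely there. Saying that ``the tautological element of $A[r]$ determines $K_r$ via the geometric interpretation of $E^0B\Sigma_{p^r}$ supplied by Strickland'' risks circularity: Strickland's theorem \emph{is} the statement that a certain explicitly constructed subgroup over $A[r]$ has classifying map an isomorphism, so you need an independent construction of $K_r$ that you can then match to his. The paper does this concretely: it builds $F$ by applying the $\Gamma$-module structure to $E^0\CP^\infty$, obtaining for each morphism in $\Pow_R$ an actual isogeny of formal groups, and then computes the kernel of the universal such isogeny via a projective-bundle lemma identifying $\mathcal{O}_K$ with $E^0\mathbb{P}(\rho_{p^r}^\C)/(\text{tr})$. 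That lemma is the computational heart of the argument and is what lets Strickland's theorem apply; your proposal skips it.

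There is also a structural difference worth noting. You propose to verify compatibility with $s^*$, $t^*$, $i^*$, $c^*$ by hand, flagging $c^*$ as the delicate case. The paper avoids all of this: because any $\Gamma$-module $M$ automatically yields a functor $M_R\colon \Pow_R\ra \Mod{R}$, taking $M=E^0\CP^\infty$ produces $F\colon \Pow_R\ra \DefFrob_R$ as a genuine functor from the outset, and functoriality (hence compatibility with $c^*$) is built in. Your direct-subgroup approach forfeits this, which is why you are forced into the wreath-product bookkeeping. So the paper's route is both more concrete about $K_r$ and more economical about functoriality.
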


Observe that an object $M\in \Mod{\Gamma}$ determines functors
$M_R\colon \Pow_R\ra \Mod{R}$, as follows.  Let $\psi\colon M\ra
H_{\Gamma[r]}(M) \approx A[r]{}_s\otimes_{E_0} M$ denote the structure
map of the $\Gamma$-module $M$; it is a map of $E_0$-modules, using
the module structure on $A[r]$ given by the ring homomorphism
$t^*\colon E_0\ra A[r]$.  
If $X$ is a space, then $E^0X$ is an object of $\Alg{\Gamma}$, and
thus we get a functor $(E^0X)_R\colon \Pow_R\ra \Alg(R)$.

The next observation is that if we take $X=\CP^\infty$, then we get a
functor from $\Pow_R$ to $\DefFrob_R$.  More precisely, for
each $f\colon E_0\ra R$, the
projective system $\{R\otimes_{E_0}E^0\CP^m\}$  determines a
formal scheme 
over $R$, and the maps $\{R\otimes_{E_0} E^0\CP^{i+j}\ra
R\otimes_{E_0}E^0(\CP^i\times \CP^j)\}$ 
give it the structure of a formal group over $R$, which we will call
$f^*G_\univ$.  Let $f_0\colon k\ra R/\mathfrak{m}$ be the
homomorphism obtained from $f$ by passing to residue fields.  Then
the triple $F(f)=(f^*G_\univ, f_0, \id\colon f_0^*G_0\ra
\pi^*f^*G_\univ)$ is a deformation of $G_0$ to $R$.    

A map $g\colon A[r]\ra R$ determines a homomorphism of formal groups
$(s^*g)^*G_\univ\ra (t^*g)^*G_\univ$ over $R$.
\begin{lemma}
This homomorphism is a deformation of $\Frob^r$; i.e., it determines a
morphism $F(s^*g)\ra F(t^*g)$ in $\DefFrob^r_R$.
\end{lemma}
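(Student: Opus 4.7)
The plan is to unwind the definition of $F$ on morphisms and then reduce the verification to the residue field, where \eqref{lemma:modulo-abar} applies directly. Concretely, for a morphism $g\colon A[r]\to R$ in $\hRing$, the formal groups $(s^*g)^*G_\univ$ and $(t^*g)^*G_\univ$ have rings of functions obtained by completed base change of $E^0\CP^\infty$ along $s^*g$ and $t^*g$ respectively, and the homomorphism $h\colon (s^*g)^*G_\univ\to (t^*g)^*G_\univ$ is dual to the map obtained by base change along $g$ of the $\Gamma[r]$-module structure map $\psi_r\colon E^0\CP^\infty\to A[r]\hat{\otimes}_{E_0}E^0\CP^\infty$ (with $A[r]$ being an $E_0$-module via $s^*$ on the left and via $t^*$ when viewed as the source). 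That $h$ is a homomorphism of formal groups (rather than merely of formal schemes) follows because $\Delta$ makes $\Gamma$ act by algebra maps on $E^0\CP^\infty$, compatibly with the multiplication on $\CP^\infty$.

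To show that $h$ is a deformation of $\Frob^r$, I would verify the two conditions of the definition in turn. For condition (i), observe that the commutative square relating $s^*$, $t^*$, $\can^*$, and $\phi^r$ established earlier in \S\ref{subsec:fml-graded-cat-scheme-sub} analog and \eqref{lemma:local-rings} implies, after composing with the residue field map of $g$, that the residue field maps classifying $F(s^*g)$ and $F(t^*g)$ differ precisely by $\phi^r$. For condition (ii), since the structural isomorphisms $\alpha$ in the construction of $F$ on objects are chosen to be identities, the relevant square collapses to the assertion that $\pi^*h=\Frob^r$ as homomorphisms of formal groups over $R/\mathfrak{m}$.

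The heart of the argument is this last assertion, and the key observation is that the reduction modulo $\mathfrak{m}_R$ of $h$ factors $g$ through the residue field of $A[r]$, which by \eqref{lemma:local-rings} is $k$ and for which the projection is $\can^*\colon A[r]\to\bA[r]\approx k$. Thus $\pi^*h$ corresponds, on functions, to the composite
\[
E^0\CP^\infty \xra{\psi_r} A[r]\hat{\otimes}_{E_0}E^0\CP^\infty \xra{\can^*\otimes\id} \bA[r]\hat{\otimes}_{E_0}E^0\CP^\infty \approx E^0\CP^\infty/p,
\]
followed by base change along $g_0\colon k\to R/\mathfrak{m}$. Applying \eqref{lemma:modulo-abar} to $B=E^0\CP^\infty$ (regarded as a pro-object in $\algcat$, so the lemma applies degreewise to $E^0\CP^m$ and passes to the limit), this composite is the $p^r$th power map. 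Since the relative Frobenius isogeny $\Frob^r$ on a formal group over a characteristic $p$ ring is by definition the $p^r$th power map on a coordinate, this identifies $\pi^*h$ with $\Frob^r$.

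The principal obstacle is bookkeeping rather than genuine difficulty: one must carefully track the two distinct $E_0$-module structures on $A[r]$ (source via $s^*$ and target via $t^*$) through the base change, and take proper account of completions in working with $E^0\CP^\infty$. The substantive input is \eqref{lemma:modulo-abar}, which in turn rests on the computation of the augmentation quotient of the transfer ideal in \eqref{lemma:augmentation-quotient}.
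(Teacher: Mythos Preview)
Your proposal is correct and follows essentially the same approach as the paper: both reduce the verification to showing that, modulo the maximal ideal, the induced map on $E^0\CP^\infty$ is the $p^r$th power map, and both invoke \eqref{lemma:modulo-abar} for this. The only cosmetic difference is that the paper first passes to the universal example $g=\id_{A[r]}$ before reducing modulo the maximal ideal, whereas you reduce modulo $\mathfrak{m}_R$ directly for arbitrary $g$; since $g$ is local, these amount to the same thing. (One small slip: $\bA[r]\approx E_0/p$, not $k$; the residue field $k$ is a further quotient. This does not affect your argument.)
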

\begin{proof}
It suffices to check the universal example, corresponding to the
identity map of $A[r]$.  In this case, it is enough to note that the
composite
\[
E^0\CP^\infty \xra{\psi} H_{\Gamma[k]}(E^0\CP^\infty) \ra
E^0\CP^\infty\otimes_{E_0}E_0/(p) \ra E^0\CP^\infty\otimes_{E_0}k
\]
is the map $x\mapsto x^{p^r}$, and that this composite is the same as 
\[
E^0\CP^\infty \ra A[r]{}_s\otimes_{E_0}E^0\CP^\infty \ra
\bA[r]{}_s\otimes_{E_0} E^0\CP^\infty.
\]
\end{proof}

Thus we have obtained functors $F\colon \Pow_R\ra \DefFrob_R$, which
are clearly natural in $R$, and thus a functor $F'\colon \Pow_R\ra
\Sub_R$.  By the Yoneda lemma, the functor $F'$ determines and is
determined by a 
collection of homomorphisms $F^*\colon L[r]\ra A[r]$ in $\hRing$.

It remains to show that $F'$ is an isomorphism of graded
categories.  It is clear that $F'$ is a bijection on objects.  To show
that $F$ is a bijection on morphism, it suffices to check that
$F^*\colon L[r]\ra A[r]$ is an isomorphism for each $r\geq0$.  By the
definition of 
$L[r]$, this map classifies a pair $([G],K)$, where $G=F(s^*\colon
E_0\ra A[r])$ and $K$ is the kernel of the isogeny
$F(\id_{A[r]})\colon F(s^*\colon E_0\ra A[r])\ra F(t^*\colon E_0\ra
A[r])$.   The kernel $K$ is a closed subscheme of $G$; its function
ring $\mathcal{O}_K$ is the pushout of $E_0$-algebras
\[\xymatrix{
{E^0\CP^\infty} \ar[r]^-{\bP_{p^r}}  \ar[d]_{i^*}
& {E^0(\CP^\infty\times B\Sigma_{p^r})/(\text{tr})} \ar[d]
\\
{E^0} \ar[r]
& {\mathcal{O}_K}
}\]
\begin{lemma}
Let $\rho_{p^r}^\C$ denote the complex vector bundle over
$B\Sigma_{p^r}$ associated to the permutation representation, and let
$\pi\colon \mathbb{P}(\rho_{p^r}\otimes \C)\ra B\Sigma_{p^r}$ denote the
projective bundle associated to this vector bundle, with tautological
line bundle $L$ classified by $\ell\colon \mathbb{P}(\rho_{p^r}^\C)\ra
\CP^\infty$.  Then there is a pushout square of $E_0$-algebras 
of the form
\[\xymatrix{
{E^0\CP^\infty} \ar[r]^-{\bP_{p^r}}  \ar[d]_{i^*}
& {E^0(\CP^\infty\times B\Sigma_{p^r})/(\text{tr})} \ar[d]^{(\ell,\pi)^*}
\\
{E^0} \ar[r]
& {E^0\mathbb{P}(\rho_{p^r}^\C)/(\text{tr})}
}\]
\end{lemma}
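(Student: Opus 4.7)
The plan is to show that both sides of the asserted pushout diagram are isomorphic to $A[r][x]/(f(x))$ for the same monic polynomial $f(x)$ of degree $p^r$, namely the Chern polynomial of $\rho_{p^r}^{\C}$ modulo transfers, which coincides with the total power operation $\bar{P}_{p^r}(x)$ modulo transfers.

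First I would compute the pushout. By the K\"unneth theorem (which applies since $E^0 B\Sigma_{p^r}$ is a finitely generated free $E_0$-module), $E^0(\CP^\infty \times B\Sigma_{p^r}) \cong E^0(B\Sigma_{p^r})\powser{x}$, and hence $E^0(\CP^\infty \times B\Sigma_{p^r})/(\mathrm{tr}) \cong A[r]\powser{x}$. The pushout along $i^*\colon E^0\CP^\infty \ra E_0$ (which sends $x \mapsto 0$) and $\bar{P}_{p^r}$ is therefore $A[r]\powser{x}/(\bar{P}_{p^r}(x))$. On the other side, the projective bundle formula for the rank-$p^r$ complex bundle $V = \rho_{p^r}^{\C}$ gives $E^0\mathbb{P}(V) \cong E^0(B\Sigma_{p^r})[y]/(c_V(y))$, where $y = \ell^*x$ is the Euler class of the tautological line bundle and $c_V(y) = y^{p^r} + c_1(V) y^{p^r-1} + \cdots + c_{p^r}(V)$ is the Chern polynomial. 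Modding out transfers gives $A[r][y]/(\bar{c}_V(y))$, and since $c_V$ is monic of degree $p^r$, Weierstrass preparation identifies this with $A[r]\powser{y}/(\bar{c}_V(y))$.

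The remaining step is the key identification $\bar{P}_{p^r}(x) = \bar{c}_V(x)$ in $A[r]\powser{x}$. To establish this, I would apply the relative power operation of \S\ref{sec:completed-e-theory} to the Thom class $u_\mathcal{L} \in \tilde{E}^0\CP^{\infty\mathcal{L}}$ of the tautological complex line bundle $\mathcal{L}$ on $\CP^\infty$. The relative operation sends
\[
P_{p^r}(u_\mathcal{L}) = u_{\mathcal{L}\boxtimes \rho_{p^r}^{\C}},
\]
the Thom class of the twisted bundle; this is a general feature of Thom classes under the power operation, arising from the characterization of Thom classes as fiber-generators together with the naturality of $P_{p^r}$. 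Restricting along the zero section $(\CP^\infty \times B\Sigma_{p^r})_+ \ra (\CP^\infty\times B\Sigma_{p^r})^{\mathcal{L}\boxtimes \rho_{p^r}^{\C}}$ converts Thom classes to Euler classes, and combining with the naturality square for $P_{p^r}$ yields
\[
P_{p^r}(x) = e(\mathcal{L}\boxtimes \rho_{p^r}^{\C}) \in E^0(\CP^\infty \times B\Sigma_{p^r}).
\]
Now $e(\mathcal{L}\boxtimes \rho_{p^r}^{\C})$, expressed as a polynomial in $x = e(\mathcal{L})$ with coefficients in $E^0B\Sigma_{p^r}$, is exactly the Chern polynomial $c_V(x)$ of $V$ (by the splitting principle: if $V$ would split as $L_1 \oplus \cdots \oplus L_{p^r}$, then $e(\mathcal{L}\boxtimes V) = \prod_i (x +_F e(L_i)) = c_V(x)$ after re-expression in Chern classes via the formal group law). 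Passing to the transfer quotient gives $\bar{P}_{p^r}(x) = \bar{c}_V(x)$.

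The main obstacle is the identification $P_{p^r}(u_\mathcal{L}) = u_{\mathcal{L}\boxtimes \rho_{p^r}^{\C}}$. This is the claim that the relative power operation carries Thom classes to Thom classes, which requires unpacking the construction of the relative operation and carefully reconciling the real-representation formulation of \S\ref{sec:completed-e-theory} with the complex structure on $\mathcal{L}$, noting that for a complex line bundle $\mathcal{L}$, the real twist $\mathcal{L}\otimes_\R \rho_{p^r}$ inherits a canonical complex structure from $\mathcal{L}$ and coincides with the complex bundle $\mathcal{L}\boxtimes \rho_{p^r}^{\C}$. Beyond this bookkeeping, all remaining steps are routine applications of K\"unneth, Weierstrass preparation, and the projective bundle formula.
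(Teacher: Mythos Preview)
Your approach is essentially correct and parallels the paper's closely. Both proofs rest on the identification $P_{p^r}(x)=e(\mathcal{L}\boxtimes\rho_{p^r}^{\C})$, deduced from the fact that the relative power operation carries the Thom class $u_{\mathcal{L}}$ to the Thom class of $\mathcal{L}\boxtimes\rho_{p^r}^{\C}$; the paper invokes exactly this fact (``the class $P_m(u)$ is a Thom class for $L\boxtimes\rho_m^{\C}$'').

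The organizational difference is this. The paper works directly with the cofiber sequence of the pair $(D(\mathcal{L}\boxtimes V),S(\mathcal{L}\boxtimes V))$ over $\CP^\infty\times B\Sigma_m$: it checks that the quotient-by-transfer column of that sequence remains short exact, concludes that the kernel of $\bar\pi^*\colon E^0(\CP^\infty\times B\Sigma_m)/I\to E^0S(\mathcal{L}\boxtimes V)/I''$ is the principal ideal on $\bar P_m(x)$, and then identifies the space $S(\mathcal{L}\boxtimes V)$ with $\mathbb{P}(V)$. You instead appeal to the projective bundle formula for $E^0\mathbb{P}(V)$ and then match the resulting relation with $\bar P_{p^r}(x)$. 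These are two packagings of the same computation, since the projective bundle formula is itself proved from exactly that sphere-bundle cofiber sequence. The paper's route has the advantage that the homotopy equivalence $S(\mathcal{L}\boxtimes V)\simeq\mathbb{P}(V)$ automatically produces the map $(\ell,\pi)$ with the correct conventions; in your version you must still verify that $(\ell,\pi)^*e(\mathcal{L}\boxtimes V)$ is precisely the relation in the projective bundle, which is sensitive to whether $\ell$ classifies the tautological line or its dual (the tautological section lives in $\Hom(L,\pi^*V)=L^*\otimes\pi^*V$, not $L\otimes\pi^*V$). Your use of ``Chern polynomial $c_V$'' for this relation is also a slight abuse at a nontrivial formal group law, though you flag the $+_F$ correction; it would be cleaner to name the relation as the Euler class $e(L^{\pm1}\otimes\pi^*V)$ from the outset and avoid the symbol $c_V$ altogether.
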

\begin{proof}
Examine the diagram
\begin{equation}\label{eq:kernel-power-op-calc}
\vcenter{
\xymatrix{
{\rE^0(\CP^\infty)^L} \ar[r]^-{P_m} \ar[d]_{z^*}
& {\rE^0(\CP^\infty\times B\Sigma_m)^{L\boxtimes \rho_m^\C}}  \ar[r] \ar[d]_{z^*}
& {\rE^0(\CP^\infty\times B\Sigma_m)^{L\boxtimes \rho_m^\C}/I'} \ar[d]
\\ 
{E^0\CP^\infty} \ar[r]^-{P_m} \ar[d]_{\pi^*}
& {E^0 \CP^\infty\times B\Sigma_m} \ar[r] \ar[d]_{\pi^*}
& {E^0 \CP^\infty \times B\Sigma_m/I} \ar[d]_{\bar{\pi}^*}
\\
{E^0 S(L)} \ar[d] \ar@{.>}@/_2pc/[rr]
& {E^0S(L\boxtimes \rho_m^\C)} \ar[r] \ar[d]
& {E^0S(L\boxtimes \rho_m^\C)/I''} \ar[d]
\\
{0}
& {0}
& {0}
}
}
\end{equation}
The  left-hand and middle columns of vertical maps of
\eqref{eq:kernel-power-op-calc} arise from the
cofiber sequence coming from the pair $(D(V),S(V))$ associated to the
Thom space $X^V$; they are exact, since the Euler class of any complex
bundle of the form $L\boxtimes V\ra \CP^\infty\times X$ is a non-zero divisor.
The right-hand vertical column is obtained
by quotienting the middle column by transfer ideals, denoted $I$,
$I'$, and $I''$.  We claim that the right-hand column of
\eqref{eq:kernel-power-op-calc} is also exact,
which amounts to showing that $\pi^* I=I''$, which in turns amounts to
the observation that the diagrams
\[\xymatrix{
{E^0(\CP^\infty \times B\Sigma_i\times B\Sigma_{m-i})}
\ar[r]^-{\text{transfer}} \ar[d]_{\pi_i^*}
& {E^0\CP^\infty \times B\Sigma_m} \ar[d]^{\pi^*}
\\
{E^0S(L\boxtimes \rho_i^\C\boxtimes \rho_{m-i}^\C)} \ar[r]_-{\text{transfer}}
& {E^0S(L\boxtimes \rho_m^\C)}
}\]
commute, and that the map marked $\pi_i^*$ is also surjective.  
(Note
that if $m$ is not a $p$th power, the right-hand column of
\eqref{eq:kernel-power-op-calc} is identically
$0$.) 

The two long
horizontal composites in \eqref{eq:kernel-power-op-calc} are ring
homomorphisms, which we denote $\bP_m$.

Choose any complex orientation for $E$, let $u\in \rE^0(\CP^\infty)^L$
denote the Thom class of $L$, and let $x=z^*(u)\in E^0\CP^\infty$
denote the associated Euler class, so that $E^0S(L)\approx
E^0\CP^\infty/(x)$. 

We have that $i^*P_m(x)=i^*z^*P_m(u) =0$, and therefore there is a
ring homomorphism indicated by the dotted arrow in
\eqref{eq:kernel-power-op-calc} making the square
\[\xymatrix{
{E^0\CP^\infty} \ar[r]^-{\bP_m} \ar[d]_{\pi^*}
& {E^0 \CP^\infty \times B\Sigma_m/I}  \ar[d]_{\bar{\pi}^*}
\\
{E^0 S(L)}  \ar[r]
& {E^0S(L\boxtimes \rho_m^\C)/I''}
}\]
commute.  Since the class $P_m(u)$ is a Thom class for $L\boxtimes
\rho_m^\C$, the kernel of $\pi^*$ is a cyclic ideal generated by
$P_m(x)=z^*P_m(u)$, and thus the kernel of $\bar{\pi}^*$ is also a
cyclic ideal generated by $\bP_m(x)$.  Therefore we conclude that the
square is a pushout square in rings; since $S(L)\approx *$ and
$S(L\boxtimes \rho_m^\C)\approx \mathbb{P}(\rho_m^\C)$, we obtain the
desired result.
\end{proof}

\begin{thm}[Strickland, \cite{strickland-morava-e-theory-of-symmetric}]
The map $L[r]\ra A[r]\approx E^0B\Sigma_{p^r}/(\text{tr})$ classifying
the subgroup $K$ of $G$ described above is an isomorphism.
\end{thm}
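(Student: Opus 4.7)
The plan is to appeal directly to the main theorem of \cite{strickland-morava-e-theory-of-symmetric}, which asserts that the $E_0$-algebra $A[r]=E^0B\Sigma_{p^r}/(\text{tr})$ is the coordinate ring of the formal scheme classifying rank $p^r$ subgroups of the universal deformation $G_\univ$. That theorem produces a distinguished isomorphism $L[r]\xrightarrow{\sim} A[r]$ together with an explicit universal subgroup inside $(s^*)^*G_\univ$ over $\Spec A[r]$. So once that theorem is in hand, the only remaining work is to verify that Strickland's isomorphism is the same as the map $F^*\colon L[r]\to A[r]$ extracted here from the pseudonatural transformation $F\colon \Pow\to \DefFrob$.

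To compare the two maps, I would use the universal property of $L[r]$: a homomorphism $L[r]\to A[r]$ is determined by the pair $([G],K)$ it classifies, where $G$ is the formal group $(s^*)^*G_\univ$ and $K\subseteq G$ is a rank $p^r$ subgroup. For $F^*$, the subgroup $K$ is by construction the kernel of the isogeny $F(\id_{A[r]})\colon F(s^*)\to F(t^*)$, and the preceding lemma computes its function ring as the pushout
\[
\mathcal{O}_K \approx E^0\mathbb{P}(\rho_{p^r}^\C)/(\text{tr}).
\]
The same ring, realized in the same way through the projective bundle of the complexified permutation representation, is precisely Strickland's description of the universal rank $p^r$ subgroup scheme inside $G_\univ$ pulled back along $s^*$. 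Matching the two subgroup schemes inside $G$ identifies the two maps, and completes the proof.

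The main obstacle is of course Strickland's theorem itself, which is quite deep: one must control the behaviour of the total power operation $\bP_{p^r}$ on the Euler class $x\in E^0\CP^\infty$, check that the resulting quotient of $E^0(\CP^\infty\times B\Sigma_{p^r})/(\text{tr})$ by the ideal generated by $\bP_{p^r}(x)$ genuinely defines a degree $p^r$ subgroup of $G_\univ$, and then verify that this construction is universal. These non-trivial facts, which ultimately rest on Kashiwabara's computation \cite{kashiwabara-bp-coh-of-oi-si-s2n} of $BP$-cohomology of symmetric groups and on Strickland's freeness and duality theorems for $E^0B\Sigma_{p^r}/(\text{tr})$, are precisely what is proved in \cite{strickland-morava-e-theory-of-symmetric} (with the correction \cite{strickland-morava-e-theory-of-symmetric-corr}). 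By contrast, the matching step above is essentially a bookkeeping exercise once the correct universal description of $\mathcal{O}_K$ has been recorded.
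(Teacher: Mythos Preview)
Your proposal is correct and matches the paper's approach: the paper does not prove this theorem but simply cites Strickland, having arranged the preceding lemma so that the kernel $K$ of $F(\id_{A[r]})$ is identified (via $\mathcal{O}_K\approx E^0\mathbb{P}(\rho_{p^r}^\C)/(\text{tr})$) with Strickland's universal rank $p^r$ subgroup, whence his theorem applies verbatim to the map $F^*$. Your account of the matching step and of where the real depth lies is accurate.
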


As an immediate corollary, the maps $F^*\colon L[r]\ra A[r]$ are
isomorphisms as desired.

\subsection{Equivalence of $\ShMod$ and $\Mod{\Gamma}$, and proof of
  Theorem B}

We have equivalences of tensor categories
\[
\ShMod \approx \Comod{\mathcal{L}}\approx \Comod{\mathcal{A}}\approx
\Mod{\Gamma},
\]
which clearly induces an equivalence $\ShAlg \approx \Alg{\Gamma}$.
The object $\omega$ of  $\ShMod$ defined above corresponds to the
$\Gamma$-module $\omega$ defined above, and thus we obtain an
equivalence of tensor categories $\grShAlg\approx \grAlg{\Gamma}$.
Theorem B  follows immediately, with the statement about congruence
conditions proved using \eqref{prop:frobenius-congruence-comodule}.

\begin{bibdiv}
\begin{biblist}

\bib{ando-hopkins-strickland-h-infinity}{article}{
  author={Ando, Matthew},
  author={Hopkins, Michael J.},
  author={Strickland, Neil P.},
  title={The sigma orientation is an $H\sb \infty $ map},
  journal={Amer. J. Math.},
  volume={126},
  date={2004},
  number={2},
  pages={247--334},
  issn={0002-9327},
  review={\MR {2045503 (2005d:55009)}},
}

\bib{borger-wieland-plethystic-algebra}{article}{
  author={Borger, James},
  author={Wieland, Ben},
  title={Plethystic algebra},
  journal={Adv. Math.},
  volume={194},
  date={2005},
  number={2},
  pages={246--283},
  issn={0001-8708},
  review={\MR {2139914 (2006i:13044)}},
}

\bib{bousfield-on-lambda-rings-k-theory-inf-loop}{article}{
  author={Bousfield, A. K.},
  title={On $\lambda $-rings and the $K$-theory of infinite loop spaces},
  journal={$K$-Theory},
  volume={10},
  date={1996},
  number={1},
  pages={1--30},
  issn={0920-3036},
  review={\MR {1373816 (98a:55006)}},
}

\bib{bmms-h-infinity-ring-spectra}{book}{
  author={Bruner, R. R.},
  author={May, J. P.},
  author={McClure, J. E.},
  author={Steinberger, M.},
  title={$H\sb \infty $ ring spectra and their applications},
  series={Lecture Notes in Mathematics},
  volume={1176},
  publisher={Springer-Verlag},
  place={Berlin},
  date={1986},
  pages={viii+388},
  isbn={3-540-16434-0},
  review={\MR {836132 (88e:55001)}},
}

\bib{ekmm}{article}{
  author={Elmendorf, A. D.},
  author={K{\v {r}}{\'{\i }}{\v {z}}, I.},
  author={Mandell, M. A.},
  author={May, J. P.},
  title={Modern foundations for stable homotopy theory},
  conference={ title={Handbook of algebraic topology}, },
  book={ publisher={North-Holland}, place={Amsterdam}, },
  date={1995},
  pages={213--253},
  review={\MR {1361891 (97d:55016)}},
}

\bib{goerss-hopkins-moduli-spaces}{article}{
  author={Goerss, P. G.},
  author={Hopkins, M. J.},
  title={Moduli spaces of commutative ring spectra},
  conference={ title={Structured ring spectra}, },
  book={ series={London Math. Soc. Lecture Note Ser.}, volume={315}, publisher={Cambridge Univ. Press}, place={Cambridge}, },
  date={2004},
  pages={151--200},
  review={\MR {2125040 (2006b:55010)}},
}

\bib{hopkins-kuhn-ravenel}{article}{
  author={Hopkins, Michael J.},
  author={Kuhn, Nicholas J.},
  author={Ravenel, Douglas C.},
  title={Generalized group characters and complex oriented cohomology theories},
  journal={J. Amer. Math. Soc.},
  volume={13},
  date={2000},
  number={3},
  pages={553--594 (electronic)},
  issn={0894-0347},
  review={\MR {1758754 (2001k:55015)}},
}

\bib{hovey-morava-e-filtered-colim}{article}{
  author={Hovey, Mark},
  title={Morava $E$-theory of filtered colimits},
  journal={Trans. Amer. Math. Soc.},
  volume={360},
  date={2008},
  number={1},
  pages={369--382 (electronic)},
  issn={0002-9947},
  review={\MR {2342007 (2008g:55007)}},
}

\bib{hovey-strickland-morava-k-theories}{article}{
  author={Hovey, Mark},
  author={Strickland, Neil P.},
  title={Morava $K$-theories and localisation},
  journal={Mem. Amer. Math. Soc.},
  volume={139},
  date={1999},
  number={666},
  pages={viii+100},
  issn={0065-9266},
  review={\MR {1601906 (99b:55017)}},
}

\bib{kashiwabara-bp-coh-of-oi-si-s2n}{article}{
  author={Kashiwabara, Takuji},
  title={Brown-Peterson cohomology of $\Omega \sp \infty \Sigma \sp \infty S\sp {2n}$},
  journal={Quart. J. Math. Oxford Ser. (2)},
  volume={49},
  date={1998},
  number={195},
  pages={345--362},
  issn={0033-5606},
  review={\MR {1645564 (2000d:55013)}},
}

\bib{knutson-lambda-rings-and-representation}{book}{
  author={Knutson, Donald},
  title={$\lambda $-rings and the representation theory of the symmetric group},
  series={Lecture Notes in Mathematics, Vol. 308},
  publisher={Springer-Verlag},
  place={Berlin},
  date={1973},
  pages={iv+203},
  review={\MR {0364425 (51 \#679)}},
}

\bib{leinster-higher-operads-higher-cats}{collection}{
  author={Leinster, Tom},
  title={Higher operads, higher categories},
  series={London Mathematical Society Lecture Note Series},
  volume={298},
  publisher={Cambridge University Press},
  place={Cambridge},
  date={2004},
  pages={xiv+433},
  isbn={0-521-53215-9},
  review={\MR {2094071 (2005h:18030)}},
}

\bib{lurie-dag2}{article}{
  author={Lurie, Jacob},
  title={Derived Algebraic Geometry II: Noncommutative Algebra},
  date={2007},
  eprint={arXiv:math/0702299v5},
}

\bib{maclane}{book}{
  author={MacLane, Saunders},
  title={Categories for the working mathematician},
  note={Graduate Texts in Mathematics, Vol. 5},
  publisher={Springer-Verlag},
  place={New York},
  date={1971},
  pages={ix+262},
  review={\MR {0354798 (50 \#7275)}},
}

\bib{rezk-dyer-lashof-example}{article}{
  author={Rezk, Charles},
  title={Power operations for Morava $E$-theory of height $2$ at the prime $2$},
  date={2008},
  eprint={arXiv:0812.1320 (math.AT)},
}

\bib{strickland-finite-subgroups-of-formal-groups}{article}{
  author={Strickland, Neil P.},
  title={Finite subgroups of formal groups},
  journal={J. Pure Appl. Algebra},
  volume={121},
  date={1997},
  number={2},
  pages={161--208},
  issn={0022-4049},
  review={\MR {1473889 (98k:14065)}},
}

\bib{strickland-morava-e-theory-of-symmetric}{article}{
  author={Strickland, N. P.},
  title={Morava $E$-theory of symmetric groups},
  journal={Topology},
  volume={37},
  date={1998},
  number={4},
  pages={757--779},
  issn={0040-9383},
  review={\MR {1607736 (99e:55008)}},
}

\bib{strickland-morava-e-theory-of-symmetric-corr}{article}{
  author={Strickland, N. P.},
  title={Correction to: ``Morava $E$-theory of symmetric groups'' [Topology {\bf 37} (1998), no. 4, 759--779; MR1607736 (99e:55008)]},
  journal={Topology},
  volume={38},
  date={1999},
  number={4},
  pages={931},
  issn={0040-9383},
  review={\MR {1679805 (2000a:55010)}},
}

\bib{sweedler-groups-simple-algebras}{article}{
  author={Sweedler, Moss E.},
  title={Groups of simple algebras},
  journal={Inst. Hautes \'Etudes Sci. Publ. Math.},
  number={44},
  date={1974},
  pages={79--189},
  issn={0073-8301},
  review={\MR {0364332 (51 \#587)}},
}

\bib{takeuchi-groups-algebras}{article}{
  author={Takeuchi, Mitsuhiro},
  title={Groups of algebras over $A\otimes \overline A$},
  journal={J. Math. Soc. Japan},
  volume={29},
  date={1977},
  number={3},
  pages={459--492},
  review={\MR {0506407 (58 \#22151)}},
}

\bib{voevodsky-reduced-power-operations}{article}{
  author={Voevodsky, Vladimir},
  title={Reduced power operations in motivic cohomology},
  journal={Publ. Math. Inst. Hautes \'Etudes Sci.},
  number={98},
  date={2003},
  pages={1--57},
  issn={0073-8301},
  review={\MR {2031198 (2005b:14038a)}},
}

\bib{wilkerson-lambda-rings}{article}{
  author={Wilkerson, Clarence},
  title={Lambda-rings, binomial domains, and vector bundles over ${\bf C}P(\infty )$},
  journal={Comm. Algebra},
  volume={10},
  date={1982},
  number={3},
  pages={311--328},
  issn={0092-7872},
  review={\MR {651605 (83f:55003)}},
}

\end{biblist}
\end{bibdiv}

\end{document}